\documentclass[11pt]{article}
\usepackage{amsfonts,latexsym,rawfonts,amsmath,amssymb,amsthm,a4wide, times, color}
\usepackage{graphicx}
\numberwithin{equation}{section}
\usepackage{hyperref}
\usepackage[titletoc,title]{appendix}
\usepackage{hyperref}

\numberwithin{equation}{section}

\newcommand{\pd}[2]{\frac {\partial #1}{\partial #2}}

\newcommand{\al}{\alpha}
\newcommand{\bb}{\beta}
\newcommand{\la}{\lambda}
\newcommand{\La}{\Lambda}
\newcommand{\oo}{\omega}
\newcommand{\Om}{\Omega}

\newcommand{\dd}{\delta}
\newcommand{\Na}{\nabla}
\def\Ga{\Gamma}
\def\ga{\gamma}

\newcommand{\ee}{\epsilon}
\newcommand{\si}{\sigma}
\newcommand{\Si}{\Sigma}
\newcommand{\Te}{\Theta}
\newcommand{\te}{\theta}

\newcommand{\beq}{\begin{equation}}
\newcommand{\eeq}{\end{equation}}
\newcommand{\beqs}{\begin{eqnarray*}}
\newcommand{\eeqs}{\end{eqnarray*}}
\newcommand{\beqn}{\begin{eqnarray}}
\newcommand{\eeqn}{\end{eqnarray}}
\newcommand{\beqa}{\begin{array}}
\newcommand{\eeqa}{\end{array}}

\def\td{\tilde}
\def\p{\partial}
\def\m{{\frak m}}
\def\b{\backslash}

\def\RR{{\mathbb R}}
\def\NN{{\mathbb N}}

\def\ri{\rightarrow}

\def\un{\underline}

\def\si{\sigma}

\def\tr{{\rm tr}}

\def\vol{{\rm vol}}
\def\r{{\mathbf r}}

\def\cA{{\mathcal A}}
\def\cC{{\mathcal C}}
\def\cB{{\mathcal B}}
\def\cD{\mathcal D}

\def\cJ{{\mathcal J}}

\def\cM{{\mathcal M}}

\def\cS{{\mathcal S}}
\def\cT{{\mathcal T}}

\def\ka{{\kappa}}

\def\x{{\mathbf{x}}}

\def\n{{\mathbf{n}}}

\def\Area{{\mathrm {Area}}}
\def\Supp{{\mathrm {Supp}}}

\def\loc{{\mathrm {loc}}}
\renewcommand\div{{\rm div}}
\newtheorem{prop}{Proposition}[section]
\newtheorem{theo}[prop]{Theorem}
\newtheorem{lem}[prop]{Lemma}

\newtheorem{cor}[prop]{Corollary}

\newtheorem{defi}[prop]{Definition}
\newtheorem{conj}[prop]{Conjecture}

\title{On Ilmanen's multiplicity-one conjecture for  mean curvature flow with type-$I$ mean curvature }
\author{Haozhao Li \footnote{Supported by NSFC grant  No. 12071449.} \, \footnote{Supported by  the Fundamental Research Funds for the Central Universities.} \quad  and \quad  Bing Wang \footnote{Supported by  NSFC grant No. 11971452 and No. 12026251.}\, \footnotemark[2]}

\begin{document}
\bibliographystyle{plain}


\maketitle

\begin{abstract}
In this paper, we show that if the mean curvature of a closed smooth embedded mean curvature flow in $\RR^3$ is of type-$I$, then the rescaled flow   at the first finite singular time converges smoothly to a self-shrinker flow with multiplicity one. This result confirms Ilmanen's multiplicity-one conjecture  under the assumption that the mean curvature is of type-$I$. As a corollary, we show that the mean curvature at the first singular time of a closed smooth embedded mean curvature flow in $\RR^3$ is at least of type-$I$.
\end{abstract}

\tableofcontents

\section{Introduction}
In this paper, we study  finite time singularities of closed smooth embedded mean curvature flow in $\RR^3. $
A one-parameter family of hypersurfaces $\x(p, t): \Si^n\ri \RR^{n+1}$ is called a mean curvature flow, if $\x$  satisfies the equation
\beq
\pd {\x}t=-H\n,\quad \x( 0)=\x_0, \label{eq:MCF}
\eeq where $H$ denotes the mean curvature of the hypersurface $\Si_t:=\x(t)(\Si)$ and $\n$ denotes the outward unit normal   of $\Si_t$.
In the previous paper \cite{[LW]}, we proved that the mean curvature of (\ref{eq:MCF}) must blow up at the first finite singular time for a closed smooth embedded mean curvature flow in $\RR^3. $ This paper can be viewed as a continuation of \cite{[LW]}, and we will develop the techniques in \cite{[LW]}  further to study the finite time singularities of mean curvature flow.

\subsection{Singularities of mean curvature flow}

The mean curvature flow with convexity conditions has been well studied during the past several decades. In \cite{[Hui1]}, Huisken proved that if the initial hypersurface is uniformly convex, then after rescaling the mean curvature flow exists for all time and converges smoothly to a round sphere. When the initial hypersurface is mean-convex or two-convex, there are a number of  estimates for the mean curvature flow (cf. Huisken-Sinestrari  \cite{[HS99a]}\cite{[HS99b]}, Haslhofer-Kleiner \cite{[HK13]}), and these estimates are important to study  the surgery of mean curvature flow(cf. Huisken-Sinestrari\cite{[HS09]},  Brendle-Huisken \cite{[BH]}, Haslhofer-Kleiner \cite{[HK14]}). Moreover, for mean curvature flow with mean convex initial hypersurfaces,   B. White  gave some structural
 properties of the singularities in  \cite{[White00]} \cite{[White03]}, and  B. Andrews also showed  a noncollapsing estimate in \cite{[Andrews]}.

However, all these results rely on  convexity conditions of  initial hypersurfaces, and it is very difficult to study  general cases. For the curve shortening flow in the plane, following the work Gage \cite{[Gage1]}\cite{[Gage2]} and Gage-Hamilton \cite{[GH]} on convex curves Grayson \cite{[Grayson]} proved that any embedded closed curve in the plane evolves to a convex curve and subsequently shrinks to a point, and Andrews-Bryan \cite{[AB]} gave a direct proof of  Grayson's theorem without using the monotonicity formula or classification of singularities.
In the higher dimensions, we know very little results without convexity conditions. Colding-Minicozzi studied the generic singularities of the mean curvature flow in \cite{[CM2]}\cite{[CM3]}. For the classification of self-shrinkers without convexity conditions,  S. Brendle \cite{[Brendle]} proved that the round sphere is the only compact embedded self-shrinkers in $\RR^3$ with genus $0$, and L. Wang \cite{[WangLu]} showed that each end of  a noncompact self-shrinker in $\RR^3$ of finite
topology is smoothly asymptotic to either a regular cone or a self-shrinking
round cylinder. However, it still remains wide open to understand the behavior of mean curvature flow at the singular time in the general cases.

\subsection{The multiplicity-one conjecture and the main theorems}

To study the singularities of mean curvature flow without convexity conditions,
 Ilmanen proposed a series of conjectures in \cite{[Il]}\cite{[I2]}.  Suppose that the mean curvature flow (\ref{eq:MCF}) reaches a singularity at $(x_0, T)$ with $T<+\infty.$ For any sequence $\{c_j\}$ with $c_j\ri +\infty, $ we rescale the flow (\ref{eq:MCF}) by
\beq
\Si^j_t:=c_j\Big(\Si_{T+c_j^{-2}t}-x_0\Big),\quad t\in [-Tc_j^2, 0). \label{eq:MCF3}
\eeq
By Huisken's monotonicity formula \cite{[Hui2]} and Brakke's compactness theorem \cite{[Bra]}, a subsequence of $\Si^j_t$ converges weakly to a limit flow $\cT_t$, which is called a tangent flow at $(x_0, T)$. In \cite{[Il]} Ilmanen showed that   the tangent flow at the first singular time must be smooth for a smooth embedded mean curvature flow in $\RR^3$, and he conjectured

\begin{conj}(Ilmanen \cite{[Il]}\cite{[I2]}, the multiplicity-one conjecture)
For a smooth one-parameter family of closed embedded surfaces in $\RR^3$ flowing by mean curvature, every tangent flow at the first singular time has multiplicity one.
\label{cje:PK16_1}
\end{conj}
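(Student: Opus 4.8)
\section*{Proof proposal}

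The plan is to argue by contradiction: suppose some tangent flow $\cT_t$ at the first finite singular time $(x_0,T)$ has multiplicity $m\ge 2$. By Ilmanen's regularity theorem quoted above, $\cT_t$ is carried by a smooth properly embedded self-shrinker $\Si_\infty\subset\RR^3$, so that $\cT_t=m\,[\sqrt{-t}\,\Si_\infty]$ as Radon measures for $t<0$, and (by Huisken's monotonicity \cite{[Hui2]} and Brakke's compactness \cite{[Bra]}) the rescaled flows $\Si^j_t$ of (\ref{eq:MCF3}) converge to $\cT_t$ in the sense of Brakke. First I would record the structural constraints on $\Si_\infty$: by Brendle \cite{[Brendle]} it is a plane, a round sphere, or a round cylinder when it has genus zero, and using L.~Wang's asymptotic description \cite{[WangLu]} together with the genus and entropy bounds inherited from the ambient closed flow, in general a shrinker whose ends are asymptotically cylindrical. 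In every case $\Si_\infty$ is a smooth surface, its only possible singularity under parabolic rescaling is the origin, and its stability (Jacobi) operator $L=\DD_{\Si_\infty}-\tfrac12\lr{x}{\Na\,\cdot}+|A|^2+\tfrac12$ is explicit.

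The heart of the argument is to promote the weak multiplicity-$m$ convergence to a picture of $m$ disjoint graphical sheets over each compact regular piece of $\Si_\infty$. Away from the origin $\cT_t$ is a smooth multiplicity-$m$ flow, so by White's local regularity theorem together with Brakke's $\ee$-regularity, once the local Gaussian density ratios are controlled and interior curvature concentration is excluded, on every compact $K\Subset\Si_\infty\setminus\{0\}$ and every compact $I\Subset(-\infty,0)$ the surfaces $\Si^j_t$ are, for $j$ large, the union of $m$ smooth graphs $u^j_1\le u^j_2\le\cdots\le u^j_m$ over $\sqrt{-t}\,K$, each tending to $0$ in $C^\infty_{loc}$. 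This graphical decomposition is exactly the step which, in the type-$I$ setting of the present paper, is produced by the pseudolocality and $\ee$-regularity estimates of \cite{[LW]}; unconditionally it is the main obstacle, since a priori one has only a Brakke flow and cannot rule out that part of the ``mass $m$'' is carried by a highly curved sheet rather than by $m$ honest graphs. I would attack it by point selection: a failure of local smoothness yields, after a further blow-up, a nonflat ancient limit of the already-ancient flow, and I would try to exclude this using the entropy drop it forces together with the Colding--Minicozzi classification of entropy-minimizing shrinkers \cite{[CM2]}\cite{[CM3]} and the embeddedness and smoothness of $\Si_t$ for $t<T$.

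Granting the graphical picture, the remainder is a Sturmian/maximum-principle comparison. The gap $w^j:=u^j_2-u^j_1\ge 0$ between two adjacent sheets solves a linear uniformly parabolic equation $\p_t w^j=a^{kl}_j\p_k\p_l w^j+b^k_j\p_k w^j+c_j w^j$ whose coefficients converge in $C^\infty_{loc}$ to those of the Jacobi equation $\p_t w=\DD w-\tfrac12\lr{x}{\Na w}+(|A|^2+\tfrac12)w$ on the shrinker flow; since $\Si_t$ is embedded, the strong maximum principle gives $w^j>0$. Normalizing $\hat w^j:=w^j/\sup_{K_0\times I_0}w^j$ and passing to the limit along a diagonal exhaustion, I obtain a nonnegative, hence strictly positive, ancient solution $\hat w$ of the Jacobi equation on all of $\Si_\infty\setminus\{0\}$. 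The final step is a rigidity statement: on the plane, sphere, cylinder, and the asymptotically cylindrical shrinkers above, every positive ancient Jacobi field is classified --- it is generated by the dilation, translation, and time-translation symmetries, and either fails to be globally positive with the admissible growth, or forces $\Si_\infty$ to split off a line --- and in each case its existence contradicts either embeddedness of $\Si^j_t$, or the fact that the $\Si^j_t$ converge to exactly $m$ (rather than fewer) sheets, or Huisken's monotonicity for the normalized flow. This forces $m=1$.

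I expect the decisive difficulty to be the second step: upgrading Brakke convergence of multiplicity $m$ to smooth graphical convergence without the type-$I$ hypothesis. The type-$I$ bound is used precisely to obtain uniform curvature estimates on the rescaled flows via pseudolocality; replacing it would require either a genuinely new a priori estimate for embedded surface flows in $\RR^3$ at the first singularity, or a way to run the Sturmian comparison directly at the level of varifolds/Brakke flows in codimension one (e.g.\ via Ilmanen's one-sided minimization or a measure-theoretic sheeting theorem). Absent such an input, the theorem one can actually prove with these techniques is the conjecture under the assumption that $H$ is of type-$I$, which is the statement established in this paper.
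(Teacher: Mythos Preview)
The statement you are addressing is recorded in the paper as a \emph{conjecture}; the paper does not prove it unconditionally, and you yourself conclude in your last paragraph that your outline only goes through under the type-$I$ hypothesis on $H$. So there is no ``paper's own proof'' of this statement to compare against. What can be compared is your sketch, restricted to the type-$I$ case, against the paper's proof of Theorem~\ref{theo:main1}.

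You correctly isolate the decisive obstacle: without a mean-curvature bound one cannot upgrade Brakke convergence with multiplicity to a smooth sheeting picture, because pseudolocality and $\ee$-regularity in the form used here require control of $H$. This is exactly the role of (\ref{eq:H}) in the paper, and your honest assessment that this step is the genuine gap is accurate.

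Even granting the type-$I$ bound and the sheeting, your endgame differs from the paper's and is incomplete as stated. You propose to \emph{classify} positive ancient Jacobi fields on $\Si_\infty$ and derive a contradiction case by case. No such classification is available on a general embedded self-shrinker in $\RR^3$; your list (plane, sphere, cylinder, asymptotically cylindrical) does not exhaust the possible limits, and the rigidity you invoke on those models is not established in the literature. The paper avoids this entirely: from the mere existence of a positive solution $w$ of $\p_t w=Lw$ it deduces, via the $\log w$ substitution, that $\Si_\infty$ is $L$-stable in every ball, and then invokes Colding--Minicozzi's Theorem~\ref{theo:CM1} that no complete self-shrinker with polynomial volume growth is $L$-stable. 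This needs no classification of $\Si_\infty$ or of $w$.

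Two further points your outline omits and which account for most of the paper's length. First, the graphical decomposition is \emph{not} available on all of $\Si_\infty\setminus\{0\}$: the convergence of the rescaled flow to the shrinker has its own singular set $\cS_t$ (curvature concentration points), a time-dependent finite collection of Lipschitz curves in $\Si_\infty$, not just the origin. The function $w$ is defined only away from $\cS_t$, and running the stability inequality requires controlling $\log w$ near $\cS_t$; this is the content of Section~\ref{sec:analysis}, which adapts Kan--Takahashi's analysis of time-dependent singularities to multiple, possibly coinciding curves. Second, your normalization $\hat w^j=w^j/\sup_{K_0\times I_0}w^j$ does not by itself give a limit with uniform-in-time $L^1$ bounds; the paper needs a careful time-selection via the quantity $|\mathbf{TN}|$ (Lemma~\ref{lem:A002}) together with the Section~\ref{sec:analysis} estimates to produce a $w$ for which the $L$-stability inequality survives the limit $b\to\infty$ in (\ref{eq:E005}). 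Finally, a separate argument (Lemma~\ref{lem:G2}) is needed to pass from ``one subsequence has multiplicity one'' to ``every tangent flow has multiplicity one,'' since different subsequential limits may a priori be different shrinkers.
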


Moreover,  Ilmanen pointed out that the multiplicity-one conjecture implies a conjecture on the asymptotic structure of self-shrinkers in $\RR^3$, and the latter conjecture has been confirmed recently by L. Wang \cite{[WangLu]}. If the initial hypersurface is mean convex or satisfies the Andrews condition, then the multiplicity-one conjecture holds (cf. White \cite{[White00]}, Haslhofer-Kleiner \cite{[HK13]},  Andrews \cite{[Andrews]}). Recently, A. Sun \cite{[Sun]} proved that  the generic singularity of mean curvature flow
of closed embedded surfaces in $\RR^3$ modelled by closed self-shrinkers with multiplicity has multiplicity one.
In  general the multiplicity-one conjecture is still wide open.  It is well-known to experts that this conjecture holds if the second fundamental form $A$ is of type-$I$.
The main contribution of this paper is to confirm the multiplicity-one conjecture under the assumption that the mean curvature is of type-$I$, which is a much weaker condition.

To state our result, we first introduce some notations. A hypersurface $\x: \Si^n \ri \RR^{n+1}$ is called a self-shrinker, if $\x$ satisfies the equation
$$H=\frac 12\langle \x, \n\rangle. $$
If $\Si$ is a self-shrinker, then we call $\Si_t:=\sqrt{-t}\,\Si \,(t<0)$  a self-shrinker flow. The main theorem of this paper is the following result.

\begin{theo}\label{theo:main1}Let
 $\x(t): \Si^2\ri \RR^3 (t\in [0, T))$  a  closed smooth embedded  mean curvature flow with the first  singular time $T<+\infty$.  If the mean curvature satisfies
\beq
\max_{\Si_t}|H|(p, t)\leq \frac {\La}{\sqrt{T-t}},\quad \forall\;t\in [0, T),\label{eq:H}
\eeq for some $\La>0$,  then for any $a, b\in \RR$ with $-\infty <a<b<0$ and any sequence $c_j\ri +\infty$ there exists a subsequence, still denoted by $\{c_j\}$, such that  the flow $\{\Si_t^j,  a<t<b\}$ defined by (\ref{eq:MCF3}) converges smoothly to a  self-shrinker flow with multiplicity one as $j\ri +\infty$.
\end{theo}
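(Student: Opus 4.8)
The plan is to exploit the type-$I$ bound on the mean curvature to gain compactness and then rule out higher multiplicity by a local curvature estimate. First I would set up the rescaled flows $\Si^j_t = c_j(\Si_{T+c_j^{-2}t}-x_0)$ and observe that the hypothesis \eqref{eq:H} transforms into a uniform bound $\max_{\Si^j_t}|H^j| \le \La/\sqrt{-t}$ on the rescaled mean curvature, valid on any compact time interval $[a,b]\Subset(-\infty,0)$ for all $j$ large. By Huisken's monotonicity formula the Gaussian density ratios of the $\Si^j_t$ are uniformly bounded, so Brakke compactness produces a subsequential limit Brakke flow $\cT_t$; by Ilmanen's local regularity / White's regularity theorem (using the uniform $H$-bound to control the flows), $\cT_t$ is a smooth self-shrinker flow $\sqrt{-t}\,\Si_\infty$ possibly with some integer multiplicity $m\ge 1$, and the convergence is smooth away from a set of parabolic measure zero. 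The task is to upgrade this to: (i) the convergence is smooth everywhere on $\Si_\infty\times(a,b)$, and (ii) $m=1$.

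The heart of the argument is a uniform local curvature estimate for the rescaled flows, and this is where I would bring in the techniques of the previous paper \cite{[LW]}. The key point is that a type-$I$ bound on $H$ alone does not a priori bound the full second fundamental form $|A|$; the danger is precisely that $|A|^j$ blows up in thin necks while $H^j$ stays bounded, which is exactly the mechanism that would produce a multiplicity-$m$ sheeted limit. So I would aim to prove: there exist $\ee_0>0$ and $C$ such that if the Gaussian density of $\Si^j$ at a point is within $\ee_0$ of that of an $m\ge 2$ cover, one derives a contradiction — or, more constructively, a pointwise estimate $|A|^j \le C$ on parabolic balls where the local Gaussian density is close to $1$, combined with a clearing-out/stratification argument showing that on a smooth self-shrinker limit the density is everywhere close to $1$ up to the multiplicity factor. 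Since a type-$I$ $H$-bound gives, via the evolution equation $\p_t H = \Delta H + |A|^2 H$ and integral estimates à la \cite{[LW]}, control on $\int |A|^2$ and hence (in dimension $2$, where $|A|^2 = H^2 - 2K$ ties curvature to the intrinsic geometry and Gauss-Bonnet) control on the genus and area, one should be able to promote $L^2$ control of $A$ to $L^\infty$ control by a pseudolocality or Schauder bootstrap once the sheets are known not to collide.

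Concretely, the steps I would carry out, in order: (1) rescale and record the uniform $H^j$-bound and the uniform bound on Gaussian densities/areas from Huisken monotonicity; (2) extract a Brakke limit $\cT_t$ and identify it, using Ilmanen's theorem from the excerpt, as a smooth self-shrinker flow with some multiplicity $m$ and convergence smooth off a small set; (3) prove the crucial uniform estimate $|A^j|\le C$ on $\Si^j_t$ over $[a,b]$ — this is the main obstacle — by combining the $H$-bound with the \cite{[LW]} integral/Simons-type estimates and a contradiction argument against a hypothetical neck-pinch concentration, using that in $\RR^3$ the limit is a surface so that a local sheet structure is available; (4) from the uniform $|A^j|$-bound and standard interior estimates for mean curvature flow (Ecker-Huisken), get uniform $C^\infty$ bounds, so the convergence $\Si^j_t \to m\,\Si_\infty$ is smooth and $\Si^j_t$ is, for large $j$, an $m$-sheeted graph over $\Si_\infty$; (5) finally rule out $m\ge 2$: if $\Si^j_t$ were $m$ disjoint nearby sheets over a closed self-shrinker (Brendle's theorem forces $\Si_\infty$ to be a sphere or, in the noncompact case, L. Wang's asymptotics apply), the outermost and innermost sheets would each be embedded hypersurfaces very close to $\Si_\infty$ whose mean curvatures satisfy the flow; comparing them via the avoidance principle and the strong maximum principle (two smooth sheets that are $C^\infty$-close and both evolve by MCF while bounding a thin region must coincide) forces the sheets to collapse onto one, i.e. $m=1$. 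The delicate point in step (5), as in \cite{[LW]}, is excluding the possibility that the limit self-shrinker is itself non-embedded or that the sheets interpolate in a way that evades the maximum principle; the type-$I$ hypothesis is what makes the graphical reduction in step (4) available and thereby makes step (5) a genuine maximum-principle argument rather than a weak-convergence one.
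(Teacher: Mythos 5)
Your proposal and the paper agree on the skeleton up to a point: rescale, use the type-$I$ bound to control Gaussian density, extract a subsequential limit which is a smooth self-shrinker with some multiplicity $m$, with convergence smooth off a small singular set. After that, however, your steps (3) and (5) diverge from what can actually be made to work, and both contain genuine gaps.

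On step (3), a uniform bound $|A^j|\le C$ on $\Si^j_t$ cannot be derived from the type-$I$ $H$-bound, and in the multiplicity-$\ge 2$ scenario it is in fact \emph{false}. If a connected embedded closed $\Si^j_t$ converges to a single sheet $\Si_\infty$ with multiplicity $\ge 2$, the two sheets must be joined by thin necks somewhere, and there $|A|$ necessarily blows up. This is precisely why the paper develops the machinery of a space-time singular set $\cS$, consisting of locally finitely many $\si$-Lipschitz curves, where the curvature concentrates (Proposition \ref{prop:weakcompactness}, Lemma \ref{prop:weakcompactness2}). The bounded-$H$ hypothesis yields pseudolocality (Theorem \ref{theo:pseudoA}) and an $\ee$-regularity / energy-concentration statement (Lemma \ref{lem:energy concen1}, Corollary \ref{cor:energy}) which control $|A|$ away from $\cS$, not everywhere. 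So the entire bootstrap in your step (4), and your reduction to an $m$-sheeted global graph, is unavailable: the graph representation only exists off $\cS_t$, and the sheets meet at $\cS_t$.

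On step (5), the avoidance principle runs in the opposite direction from what you need. Avoidance (or the parabolic strong maximum principle) says that two disjoint flows \emph{stay disjoint}; it does not say that two nearby sheets flowing by (rescaled) mean curvature must collapse onto one another. Indeed the assumed picture for multiplicity $\ge 2$ is exactly that of two disjoint sheets both approaching the same self-shrinker, and nothing in the maximum principle forbids this. The paper rules out $m\ge 2$ by a completely different mechanism: it forms the height-difference function $u_i=u_i^+-u_i^-$ between the top and bottom sheets, normalizes it to $w_i$ so that the limit $w$ is a positive solution of the linearized (Jacobi-type) equation $\partial_t w=\Delta_0 w-\tfrac12\langle x,\Na w\rangle+|A|^2w+\tfrac12 w$ on $\Si_\infty$ away from $\cS$, controls the singular behavior of $w$ near the Lipschitz singular curves via the Kan--Takahashi-style analysis of Section 4 and the heat-kernel expansion, obtains a uniform $L^1$ bound by a careful choice of time slices (the $\mathbf{TN}$-decomposition and Lemma \ref{lem:A002}), and concludes that $\Si_\infty$ is $L$-stable (Lemma \ref{lem:stable1}). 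This contradicts Colding--Minicozzi's theorem that no complete self-shrinker with polynomial volume growth is $L$-stable (Theorem \ref{theo:CM1}). Finally, the independence of the multiplicity from the choice of subsequence is established not by a maximum-principle comparison but by a volume-continuity argument using the smooth compactness of the space of self-shrinkers (Lemmas \ref{lem:G1}--\ref{lem:G2}). Your invocation of Brendle's genus-zero classification and L.~Wang's asymptotics is also not how the argument goes; they do not enter the proof of multiplicity one.

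So while the set-up and the reduction to the rescaled flow are correct, the two decisive ingredients you propose — a global $|A|$ bound and an avoidance/maximum-principle collapse of sheets — are respectively unprovable and logically inverted. The argument the paper uses instead is an $L$-stability contradiction built on the linearized equation for the height difference, together with a delicate analysis of the singular set where the sheets meet.
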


It is not hard to see that Theorem~\ref{theo:main1} is equivalent to the following result.

\begin{theo}
\label{theo:removable_intro}
Let $\{(\Si^2, \x(t)), 0\leq t<+\infty\}$ be a closed smooth embedded rescaled mean curvature flow
\beq \Big(\pd {\x}t\Big)^{\perp}=-\Big(H-\frac 12\langle \x, \n\rangle\Big)\n\label{eq:RMCF0}\eeq
 satisfying
\beq d(\Si_t, 0)\leq D,\quad \hbox{and}\quad \max_{\Si_t}|H(p, t)|\leq \La \label{eqn:PK07_2}\eeq
for two constants $D, \La>0$.  Then for any $t_i\ri +\infty$ there exists a subsequence of $\{\Si_{t_i+t}, -1<t<1\}$ such that it  converges in smooth topology to a complete smooth self-shrinker with multiplicity one as $i\ri +\infty$.
\end{theo}

In \cite{[LW]}, we showed Theorem~\ref{theo:removable_intro} under the assumption that the mean curvature decays exponentially to zero.
In this special case,  the flow (\ref{eq:RMCF0}) converges smoothly to a  plane passing through the origin with multiplicity one. Theorem \ref{theo:removable_intro} means that under the assumption that the mean curvature is bounded for all time the flow (\ref{eq:RMCF0}) also converges smoothly to a self-shrinker with multiplicity one.  In fact, Theorem~\ref{theo:removable_intro} is not stated with the optimal condition.
Checking the proof carefully, one can see that the conclusion of Theorem \ref{theo:removable_intro} still holds under the assumption that the mean curvature is uniformly bounded on any ball for all time:
\beq
\max_{B_R(0)\cap \Si_t}|H|(p, t)\leq C_R, \label{eq:A000}
\eeq where $C_R$ is a constant depending on $R$. Note that if the flow (\ref{eq:RMCF0}) converges smoothly to a self-shrinker with multiplicity one, the condition (\ref{eq:A000}) automatically holds by the self-shrinker equation.
Thus, the condition (\ref{eq:A000}) is also necessary for the smooth convergence of the flow (\ref{eq:RMCF0}).
Therefore, the solution of  the multiplicity-one conjecture, i.e.,  Conjecture~\ref{cje:PK16_1},  is equivalent to the examination of (\ref{eq:A000}),  which will be an interesting subject of study in the near future.

The multiplicity-one conjecture is closely related to the extension problem of mean curvature flow.  Huisken  \cite{[Hui1]} proved that if the flow (\ref{eq:MCF}) develops a singularity at time $T<\infty$, then the second fundamental form will blow up at time $T$. A natural question is whether the mean curvature will blow up at the finite singular time of a mean curvature flow. Toward this question, A. Cooper \cite{[Cooper]} proved that $|A||H|$ must blow up at the singular time of the flow. In \cite{[LS1]}  Le-Sesum  affirmirtively answered this question under the assumption that  the multiplicity-one conjecture holds, or the condition that the second fundamental form is of type-$I$ at the singular time
\beq
\max_{\Si_t}|A|\leq \frac {C}{\sqrt{T-t}},\quad \forall\; t\in [0, T). \label{eq:TypeI}
\eeq
Furthermore, Le-Sesum \cite{[LS3]} proved that the mean curvature is at least of type-$I$ for a mean curvature flow satisfying (\ref{eq:TypeI}).
Using Theorem \ref{theo:main1},  we can remove the type-$I$ condition (\ref{eq:TypeI}) of Le-Sesum's result as follows, which can also be viewed as an improvement of the extension theorem in \cite{[LW]}.

\begin{cor}\label{cor:main} If $\;\x(t): \Si^2\ri \RR^3 (t\in [0, T))$ is  a  closed smooth embedded  mean curvature flow with the first singular time $T<+\infty$, then there is a constant $\dd>0$ such that
$$\limsup_{t\ri T}\sqrt{T-t}\max_{\Si_t}|H|\geq \dd,\quad \forall\; t\in [0, T). $$
\end{cor}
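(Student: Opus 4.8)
The plan is to argue by contradiction and reduce the statement to Theorem~\ref{theo:main1}. Suppose the conclusion fails. Then there is a sequence $t_j \ri T$ with
\[
\max_{\Si_{t_j}}|H| \leq \frac{\dd_j}{\sqrt{T-t_j}}, \qquad \dd_j \ri 0.
\]
The first step is to upgrade this sequence of pointwise-in-time bounds to a uniform type-$I$ bound of the form \eqref{eq:H}, so that Theorem~\ref{theo:main1} applies. For this I would use the evolution equation for $|H|$ along the flow, namely $\partial_t H = \Delta H + |A|^2 H$, together with a continuity/bootstrap argument on intervals $[t_j, t_{j+1})$: on each such interval one controls the growth of $\max|H|$ in terms of $\max|A|^2$, but since we are above the first singular time one only knows $|A|$ blows up, not that it is type-$I$ — so a direct ODE comparison does not close. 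A cleaner route is to observe that the hypothesis $\dd_j \ri 0$ forces the rescaled flows $\Si^j_t$ (rescaling about the singular point $x_0$ at scales $c_j = (T-t_j)^{-1/2}$) to have mean curvature tending to zero in the relevant time window; one then either invokes Theorem~\ref{theo:main1} after checking that \eqref{eq:H} does hold for a suitable subsequence, or one works directly with the rescaled mean curvature flow \eqref{eq:RMCF0} and the characterization \eqref{eq:A000}.

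Concretely, the second step is: passing to the rescaled flow \eqref{eq:MCF3} with $c_j = (T-t_j)^{-1/2}$, the surfaces $\Si^j_0$ have $\max |H| \leq \dd_j \ri 0$. Applying Theorem~\ref{theo:removable_intro} (equivalently Theorem~\ref{theo:main1}) along this subsequence, the flow converges smoothly with multiplicity one to a self-shrinker $\Si_\infty$ which, by the vanishing of the mean curvature on $\Si^j_0$ in the limit, satisfies $H \equiv 0$; by the self-shrinker equation $H = \tfrac12\langle \x, \n\rangle$ this forces $\langle \x, \n\rangle \equiv 0$, so $\Si_\infty$ is a minimal cone through the origin, hence (being smooth and properly embedded) a plane through the origin. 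Thus the tangent flow at $(x_0, T)$ is a static multiplicity-one plane, which by Brakke's local regularity theorem (or White's regularity theorem) implies that the flow is smooth through time $T$ near $x_0$ — contradicting the assumption that $T$ is a singular time.

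The main obstacle I expect is the first step: converting the sequential bound $\max_{\Si_{t_j}}|H|\leq \dd_j(T-t_j)^{-1/2}$ into something that legitimately feeds into Theorem~\ref{theo:main1}, whose hypothesis \eqref{eq:H} is a genuine bound \emph{for all} $t\in[0,T)$. One must either show that the type-$I$ mean curvature bound propagates from the discrete times $t_j$ to all nearby times (using the parabolic maximum principle for $H$, which requires enough control on $|A|^2$ near the singular time), or — more robustly — avoid Theorem~\ref{theo:main1} as a black box and instead re-run its proof using only the rescaled data at scales $c_j$, where $\max|H|\ri 0$ is automatic; the latter is essentially the content of the "checking the proof carefully" remark following Theorem~\ref{theo:removable_intro}. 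Once past this point, the identification of the limit shrinker as a plane and the final appeal to local regularity are standard.
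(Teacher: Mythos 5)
Your proposal follows the same route as the paper's own proof of Corollary~\ref{cor:main}: rescale about the singular point, apply Theorem~\ref{theo:removable_intro} (equivalently Theorem~\ref{theo:main1}), identify the multiplicity-one limit self-shrinker as a plane, and invoke Huisken's monotonicity together with White's local regularity theorem for a contradiction. Your way of seeing the limit is a plane --- since $\dd_j\to 0$ and the convergence is smooth, $\Si_\infty$ has $H\equiv 0$, so the shrinker equation forces $\langle\x,\n\rangle\equiv 0$ and $\Si_\infty$ is a smooth complete embedded minimal cone, hence a plane --- is the cleaner version of what the paper writes. The paper instead records only $\sup_{\Si_\infty}|H|\leq\dd_0$ and appeals to Lemma~\ref{lem:gap}, but that lemma gives a plane only when the mean curvature bound lies below a universal threshold $\dd(D,N,\rho)$; the step closes only if one chooses the bad subsequence $s_j=-\log(T-t_j)$ along which the rescaled $\max|H|\to 0$, which is exactly what you do.

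The obstacle you flag in your first step is genuine and is not resolved in the paper's proof either. To invoke Theorem~\ref{theo:removable_intro} one needs $\max_{\Si_t}|H|\leq\Lambda$ along the \emph{entire} rescaled flow, i.e.\ $\dd_0:=\sup_{\Si\times[0,T)}\sqrt{T-t}\,|H|<\infty$; the paper's proof simply imposes this at the outset (equation~\eqref{eq:A106}). But the negation of the corollary yields only $\liminf_{t\to T}\sqrt{T-t}\max_{\Si_t}|H|=0$, which is perfectly compatible with $\limsup=\infty$, so $\dd_0<\infty$ is an additional standing hypothesis rather than a consequence of the negation. Your suspicion that neither a maximum-principle propagation of the bound off the discrete times $t_j$ nor a ``re-run the proof only at scale $c_j$'' argument closes it is correct: the first has no control on $|A|^2$ near $T$, and the second still requires a uniform $|H|$ bound at all rescaled times, not just at the slice $s=-1$ corresponding to $t_j$. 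So you have accurately reconstructed both the intended argument and the point where it is incomplete; the case $\dd_0=\infty$ is equally unaddressed in the paper, and strictly speaking what is proved there is the corollary under the extra assumption that the mean curvature already satisfies the type-$I$ upper bound \eqref{eq:H}.
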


\subsection{Outline of the proof}

Now we sketch the proof of Theorem \ref{theo:removable_intro}. Assume that the mean curvature satisfies the type-$I$ condition (\ref{eq:H}) along the flow (\ref{eq:MCF}) and the first singular time $T<+\infty$.
Then the mean curvature is uniform bounded along the rescaled flow (\ref{eq:RMCF0}).
We have to show that  the  flow (\ref{eq:RMCF0}) converges smoothly to a self-shrinker with multiplicity one.
The strategy is similar to~\cite{[LW]}, we first show a weak-compactness  theorem and obtain the flow convergence is smooth away from a singular set.
Then we use stability argument to remove the singular set.  However,  the technique here is much more involved.
The proof consists of three steps:\\

\emph{ Step 1. Convergence of the rescaled mean curvature flow with multiplicities.} In this step, since the mean curvature is uniformly bounded along the flow,   we have the short-time pseudolocality theorem and the energy concentration property, and we can follow the arguments in \cite{[LW]} to develop the weak compactness theory of mean curvature flow. However, compared with \cite{[LW]}, since the mean curvature doesn't decay to zero, we have the following difficulties:
\begin{itemize}
  \item No long time pseudolocality theorem;
  \item The space-time singularities in the limit  don't move along straight lines.
\end{itemize}
Because of lacking  these results, we face a number of new technical difficulties to show the $L$-stability of the limit self-shrinker.
These difficulties force us to use analysis tools to study the asymptotical behavior of the solution of the limit parabolic equation near the singular set.
\\

\emph{ Step 2. Show that the multiplicity of the convergence is one for one subsequence.}
 As in \cite{[LW]}, it suffices to show that the limit self-shrinker is $L$-stable. By the convergence of the flow away from the singular set, if every limit has multiplicity greater than one,
we can renormalize the ``height-difference" function to obtain a positive solution of the equation
\beq
\pd wt=\Delta w-\frac 12\langle x, \Na w\rangle+|A|^2w+\frac 12 w, \label{eq:A000a}
\eeq away from the singular set. To show the $L$-stability of the limit self-shrinker, we have to show the following two estimates:
\begin{itemize}
\item For each time, the asymptotical behavior of $w$ is ``good" near the singular set.
  \item Uniform $L^1$ norm of $w$ independent of time.

\end{itemize}
By its construction, $w$ is defined on any compact set away from the singular set and we have no estimates near the singular set by the geometric method. However, we found that $w$ satisfies many good properties from the PDE point of view. In \cite{[KT]}, Kan-Takahashi  studied similar problem for some semilinear parabolic equations along time-dependent singularities in the Euclidean spaces. Kan-Takahashi  showed their result for one time-dependent singularity, and the solution of the equation looks like $\log \frac 1r$ in dimension $2$, where $r$ is the distance from any point $x$ to the singularity.  However, in our case the solution of (\ref{eq:A000a}) may have multiple singularities, and these singularities may coincide at one point. Thus, we cannot apply Kan-Takahashi's result directly, and we need to develop their techniques  to show  that the solution $w$ is in $L^1$ across the singularities and  near the singular set the solution $w$    roughly looks like
$$w(x, t)\sim \sum_{k=1}^l\; c_k( t) \log \frac 1{\r_k(x, t)},$$
where  $\r_k(x, t)$ denotes the intrinsic distance from a point $x$ to a singularity $\xi_k(t)$ at time $t$. Here  the constant $c_i$ may depend on $t$. In general, the $L^1$ norm of $w$ may tend to infinity as $t\ri +\infty$. In order to show uniform $L^1$ norm of $w$, we refine the argument in \cite{[LW]} and also use the estimate of $w$ near the singularities to choose a  sequence of time slices $\{t_i\}$, and then we  show that for such a special sequence the corresponding function $w$ has uniform $L^1$ bound independent of $t$.  Thus, for the special sequence ${t_i}$,  the auxiliary function $w$ satisfies the two desired estimates. Then we can follow the argument in~\cite{[LW]} to show that the convergence of (\ref{eq:RMCF0}) is smooth and of multiplicity one,
         for the special sequence $\{t_i\}$. \\

 \emph{ Step 3. Show the multiplicity-one convergence for each subsequence.}
  This step is a new difficulty beyond~\cite{[LW]}.  In~\cite{[LW]}, each limit, no matter what multiplicity it is, must be a flat plane passing through the origin.  Therefore, up to rotation, different limits can be regarded as the same.
  By the monotonicity of the entropy, it is clear that if one limit is a multiplicity-one plane, then each limit must also be a multiplicity-one plane.  However,  in the current setting, each limit is only a self-shrinker and the limits may vary as the
  time sequences change.  A priori, it is possible that one sequence converge to a multiplicity-one self-shrinker $A$, and the other sequence converge to a multiplicity two self-shrinker $B \neq A$.  This possibility cannot be ruled out by only
  using the monotonicity of the entropy.   To overcome this difficulty,  we essentially use the smooth compactness theorem of self-shrinkers by Colding-Minicozzi \cite{[CM1]}.
  Since the limit self-shrinkers form a compact set, we know that the local behavior of limit self-shrinkers are very close to that of planes on a fixed small scale.
  From this and the volume continuity, we derive an argument to show that the multiplicity is independent of the choice of subsequences.   Therefore, every subsequence converges with multiplicity one.
  \\

It is interesting to know whether the above argument still works for the multiplicity-one conjecture without the mean curvature bound assumption (\ref{eq:H}).
The main difficulty is the loss of pseudolocality result as in \cite{[LW]}, since the points in the evolving surfaces may move drastically if the mean curvature is large.
Furthermore, the loss of mean curvature bound also induces difficulties in applying PDE tools to analyze the singular set.
However, as we discussed around (\ref{eq:A000}), it is also logically possible to develop the estimate (\ref{eq:A000}) directly.

\subsection{Relation with other geometric flows}
It is interesting to compare the mean curvature flow with the Ricci flow.
The extension problem for Ricci flow has been extensively studied recently.   Corollary~\ref{cor:main} has a cousin theorem in the Ricci flow.
In Theorem 1 of~\cite{[WangBing11]}, it was shown that along the  Ricci flow $\{(M, g(t)), 0 \leq t<T\}$ with the singular time $T<+\infty$, we have
\begin{align}
   \max_{M} |Ric|_{g(t)} \geq \frac{\delta}{\sqrt{T-t}}, \quad  t \in [0,T),  \label{eqn:PK07_3}
\end{align}
which extends the famous Ricci extension theorem of N. Sesum \cite{[Sesum1]}.
Up to rescaling, the gap inequality (\ref{eqn:PK07_3}) is equivalent to
$
  \max_{M} |Ric|_{g(t)}  \geq \delta
$
along the rescaled Ricci flow solution
\begin{align}
   \partial_t g= -Ric + g, \quad \; t \in [0, \infty).   \label{eqn:PK07_4}
\end{align}
Actually, we even believe that a gap for scalar curvature holds for a rescaled Ricci flow solution. In other words, along the rescaled Ricci flow (\ref{eqn:PK07_4}) we should have
\begin{align*}
  \max_{M} |R|_{g(t)}  \geq \delta.
\end{align*}
It is easy to see that the scalar extension conjecture of the Ricci flow will hold automatically if one can prove the above inequality along the rescaled Ricci flow (\ref{eqn:PK07_4}),
just like the extension theorem of mean curvature in~\cite{[LW]} follows directly from Corollary \ref{cor:main}.

The similarity between the regularity theory of rescaled mean curvature flow (\ref{eq:RMCF0}) and the rescaled Ricci flow (\ref{eqn:PK07_4}) was noticed for a while.
For example, such similarity was discussed in the introduction of~\cite{[LW]}.
Along the rescaled flows, the mean curvature bound condition (\ref{eqn:PK07_2})  is comparable to the scalar curvature bound condition $|R| \leq C$.
Note that the Fano K\"ahler-Ricci flow provides many examples of the global solutions of the rescaled Ricci flow (\ref{eqn:PK07_4}) and Perelman showed that $|R| \leq C$ holds automatically.
The boundedness of the scalar curvature is crucial to study the convergence of K\"ahler-Ricci flow to a limit flow(cf.  Theorem 1.5 of \cite{[CW2]}, with journal version~\cite{[CW2A]} and~\cite{[CW2B]}).
For time-slice convergence, see Tian-Zhang~\cite{[TZ]}, Bamler~\cite{[Bam]} and Chen-Wang~\cite{[CW3]} for example.
Since (\ref{eqn:PK07_2}) is the comparable condition of  Perelman's estimates, we can view Theorem \ref{theo:main1} as the analogue of the convergence results in the Fano K\"aher-Ricci flow.
However, we have to confess that we do not know any non-trivial examples  satisfying the condition (\ref{eqn:PK07_2}).  
By non-triviality we mean that the flow (\ref{eq:RMCF0}) is neither a self-shrinker nor convex.  It will be very interesting to find out such examples.

The rescaled mean curvature flow can also be compared with the Calabi flow. In \cite{[Calabi]} E. Calabi studied the gradient flow of the $L^2$-norm of the scalar curvature among K\"ahler metrics in a fixed cohomology class
 on a compact K\"ahler manifold, which is now well-known as the Calabi flow.  X. X. Chen conjectured that the Calabi flow always exists globally for any initial smooth K\"ahler potential. Very recently, Chen-Cheng \cite{[CC]}
 proved that the Calabi flow   exists as long as the scalar curvature is uniformly bounded. Therefore, to study the long time existence of Calabi flow, it is crucial to control the scalar curvature, which is  similar to the mean curvature
 condition (\ref{eqn:PK07_2})  for the rescaled mean curvature flow.  Assuming the long time existence and the uniform boundedness of the scalar curvature, the current authors and K. Zheng showed the convergence of the
 Calabi flow in \cite{[LWZ]}, just as Theorem \ref{theo:removable_intro} for rescaled mean curvature flow.

\subsection{List of notations}
In the following, we list the important notations in this paper.

\begin{itemize}
 \item $d(x, y):$ the Euclidean distance from $x$ to $y$. Defined in Definition \ref{defi:refined}.
  \item $B_r(p):$ the open ball in $\RR^3$ centered at $p$ with radius $r$. Defined in Definition \ref{defi:notations}.
  \item $d_g(x, y):$ the intrinsic distance of $(\Si, g)$ from $x$ to $y$. First appears in the beginning of Section 4.
\item $\cB_r(p):$ the intrinsic geodesic ball in $(\Si, g)$ centered at $p$ with radius $r$. Defined in Definition \ref{defi:notations}.

\item $C_x(B_r(p)\cap \Si):$ the connected component of $ B_r(p)\cap \Si$ containing $x\in \Si. $ Defined in Definition \ref{defi:notations}.

\item $\frak m(x, t):$ the multiplicity at $(x, t)$. Defined in (\ref{eqn:PE18_2}).

\item $\cS:$ the space-time singular set. Defined in Proposition \ref{prop:weakcompactness}.

\item $\cS_t=\{x\in \RR^3\;|\; (x, t)\in \cS\}$: the singular set at time $t$.   Defined in Proposition \ref{prop:weakcompactness}.

\item $\xi(t):$ a Lipschitz singular curve in $\cS$. First appears in Lemma \ref{prop:weakcompactness2}.

\item $\rho: \RR^+\ri \RR^+$: an increasing positive function. First appears in Definition \ref{defi:soliton}.

\item $|\Omega|:$ the volume of a set $\Omega\subset \RR^3$ with respect to the standard metric on $\RR^3$. Defined in Lemma \ref{lem:TN}.

\item $\Om_{\ee, R}(t)$: a subset of the limit self-shrinker away from singularities. Defined in (\ref{eqn:Omega2}).

\item $\cS_I$: the union of the singular set on a time interval $I$. Defined in (\ref{eqn:defi}).

\item $u_i$: the height difference function defined in (\ref{eq:u}).

\item $w_i$: the normalized difference function defined in (\ref{eq:F001}).

\item $d_H:$ the Hausdorff distance in the Euclidean space.

\item $\r(x, t)$: the intrinsic distance function from $x$ to the singular curve $\xi(t)$. Defined in (\ref{eq:rk}).

\item $\r_k(x, t)$: the intrinsic distance function from $x$ to the singular curve $\xi_k(t)$. Defined in (\ref{eq:rk2}) and Theorem \ref{theoA:main}.

\item $F_t^{(k)}(\dd)$ and $A_t^{(k)}(\dd, \rho)$: a subset around the singularities on the limit self-shrinker. Defined in (\ref{eq:G008}) and (\ref{eq:G009}).

\item $\cM_{k, m}(\rho, \Xi) $: a subset of a Riemannian manifold defined in Definition \ref{defi:003}.

\item $\Ga_{\un t, \bar t}:$ the union of space-time singular curves. Defined in (\ref{eqE:001}) and (\ref{eq:E006}).

\item $Q_{r, \un t, \bar t} $ and $ \hat Q_{r, \un t, \bar t}:$ the neighborhood of the singular curves. Defined in and (\ref{eqE:002}) and (\ref{eq:E006}).

\item $\phi_{\xi}$:  cutoff functions around the singular curves.  Defined in Definition \ref{defi:001} and Definition \ref{defi:002}.

\item $I_{\xi}:$ a functional associated with a singular curve $\xi.$ Defined in Definition \ref{defi:002}.

\item $\td \r(x, t)$ and $\td v(x, t)$: defined in  Definition \ref{defi:002}.

\item $H(z):$ a cutoff function defined in Definition \ref{defi:001}. Note that the function $H(z)$ is only used in Section 4. Since the mean curvature doesn't appear in Section 4, we keep the same notation $H(z)$ as in \cite{[KT]}.
\end{itemize}

\subsection{The organization}

The organization of this paper is as follows. In Section 2 we recall some  facts on the pseudolocality theorem and energy concentration property. Moreover, we will show the weak compactness of mean curvature flow under some geometric conditions and we show the multiplicity of the convergence is a constant. In Section 3 we show the rescaled mean curvature flow with bounded mean curvature  converges smoothly to a self-shrinker with multiplicity one, under the assumption that the auxiliary function satisfies good growth properties at the singular set. In Section 4 we will show the estimates of  the auxiliary function by developing  Kan-Takahashi's argument.  Finally, we finish the proof of Theorem \ref{theo:main1} in Section 5. In the appendices, we include two versions of the parabolic Harnack inequality and give the full details on the calculation of the linearized equation of rescaled mean curvature flow.  \\

\subsection{Acknowledgement}

Bing Wang would like to thank  Lu Wang for many helpful conversations.
Both authors are grateful to the anonymous referees for many useful suggestions to improve the exposition of
this paper.

\section{Weak compactness of refined sequences}

\subsection{The pseudolocality theorem and energy concentration property}
In this subsection, we recall  some results in \cite{[LW]}.
First, we have  the following definition.

\begin{defi}\label{defi:notations}
\begin{enumerate}
\item[(1).]We denote by $B_r(p)$ the  ball in $\RR^{n+1}$ centered at $p$ with radius $r$ with respect to the standard Euclidean metric, and $\cB_r(p)\subset (M, g)$ the intrinsic geodesic ball on $M$ centered at $p$ with radius $r$ with respect to the metric $g$.

\item[(2).]
For any $r>0, p\in \RR^{n+1}$ and $\Si^n\subset \RR^{n+1}$, we denote by {$C_x(B_r(p)\cap \Si)$} the connected component of $ B_r(p)\cap \Si$ containing $x\in \Si. $

\end{enumerate}
\end{defi}

We first recall the following result of
Chen-Yin \cite{[ChenYin]}.

\begin{lem}\label{lem:graph1}(cf. Lemma 7.1 of \cite{[ChenYin]}) Let $\Si^n\subset  \RR^{n+1}$ be properly embedded in $B_{r_0}(x_0)$ for some $x_0\in \Si$ with
$$|A|(x)\leq \frac 1{r_0}\,\quad x\in B_{r_0}(x_0)\cap \Si. $$
Let $\{x^1\, \cdots, x^{n+1}\}$ be the standard coordinates in $\RR^{n+1}$.
Assume that $x_0=0$ and the tangent plane of $\Si$ at $x_0$ is $x^{n+1}=0.$ Then there is a map
$$u: \Big\{x'=(x^1, \cdots, x^n)\,\Big|\, |x'|<\frac {r_0}{96}\Big\}\ri \RR$$
with $u(0)=0$ and $|\Na u|(0)=0$ such that the connected component containing $x_0$ of $\Si\cap
\{(x', x^{n+1})\in \RR^{n+1}\;|\;|x'|<\frac {r_0}{96}\}$  can be written as a graph $\{(x', u(x'))\,|\,|x'|<\frac {r_0}{96}\}$ and
$$|\Na u|(x')\leq \frac {36}{r_0}|x'|.$$
\end{lem}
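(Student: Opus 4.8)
\emph{Proof strategy.}
The hypothesis and conclusion are scale-invariant, so I would first normalize $r_0 = 1$ by replacing $\Si$ with $r_0^{-1}\Si$; then $|A| \le 1$ on $B_1(0)\cap\Si$, $0\in\Si$, and the Gauss map $\nu$ of $\Si$ satisfies $\nu(0) = e_{n+1}$, with the constant $\frac{36}{r_0}$ recovered at the end by scaling back. The only geometric input needed is the Weingarten relation: along any $C^1$ curve $\ga(s)$ in $\Si\cap B_1(0)$ one has $\frac{d}{ds}\nu(\ga(s)) = -W(\ga'(s))$ with $|W(\ga'(s))| \le |A|\,|\ga'(s)| \le |\ga'(s)|$, so the Gauss map turns no faster than arc length is traversed; integrating, $|\nu(\ga(b)) - \nu(\ga(a))| \le \mathrm{length}(\ga)$.

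The plan is a continuity argument in the radius of the horizontal disk $D_\rho := \{x'\in\RR^n : |x'|<\rho\}$. Let $\rho_*$ be the supremum of those $\rho\in(0,\frac{1}{96}]$ such that the connected component of $\Si\cap(D_\rho\times\RR)$ through $0$ is a graph $\{(x',u(x')) : x'\in D_\rho\}$ with $u(0)=0$, $|\Na u|\le\frac{1}{2}$, and $(x',u(x'))\in B_1(0)$ for all $x'\in D_\rho$. Since $\nu(0)\cdot e_{n+1}=1\ne 0$ and $\Si$ is embedded, the implicit function theorem realizes $\Si$ near $0$ as such a graph with $\Na u(0)=0$, so $\rho_*>0$. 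The key point is a self-improvement estimate valid on any admissible $D_\rho$: for $x'\in D_\rho$ the radial path $\ga(t)=(tx',u(tx'))$, $t\in[0,1]$, lies in the component through $0$, has speed $|\ga'(t)| = (|x'|^2 + \langle\Na u(tx'),x'\rangle^2)^{1/2}\le\frac{\sqrt5}{2}|x'|$, and stays well inside $B_1(0)$, so the curvature bound applies along it. Hence $|\nu(x')-e_{n+1}|\le\mathrm{length}(\ga)\le\frac{\sqrt5}{2}|x'|$; writing $\nu=(-\Na u,1)/\sqrt{1+|\Na u|^2}$ and reading off its horizontal part, $\frac{|\Na u(x')|}{\sqrt{1+|\Na u(x')|^2}}\le|\nu(x')-e_{n+1}|$, which yields $|\Na u(x')|\le\frac{2}{\sqrt{3}}\,|\nu(x')-e_{n+1}|\le\sqrt{\frac{5}{3}}\,|x'|$. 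On $D_{1/96}$ this is far below $\frac{1}{2}$, and in particular $\le 36|x'|$, so the a priori bound $|\Na u|\le\frac{1}{2}$ self-improves to a strict bound and the graph image lies strictly inside $B_1(0)$.

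With the strict self-improvement in hand, the set of admissible $\rho$ is both relatively closed in $(0,\frac{1}{96}]$ (the graph and all inequalities extend to $\overline{D_{\rho_*}}$ and the strict inequalities survive the limit) and relatively open (at a boundary point $x'_0$, $|x'_0|=\rho_*<\frac{1}{96}$, one has $\nu\cdot e_{n+1}$ bounded below and $(x'_0,u(x'_0))$ strictly inside $B_1(0)$, so the implicit function theorem extends the graph a little past $D_{\rho_*}$; uniqueness of graphs over connected sets together with embeddedness, and the fact that such a graph is open and closed in $\Si\cap(D_{\rho_*+\ee}\times\RR)$, identify the extension with the component through $0$, and the new points remain in $B_1(0)$ so the curvature bound still applies). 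Therefore $\rho_*=\frac{1}{96}$, the graph $u$ is defined on all of $D_{1/96}$, and the self-improvement estimate gives exactly $|\Na u(x')|\le 36|x'|$, i.e. $|\Na u(x')|\le\frac{36}{r_0}|x'|$ after scaling back, with $u(0)=0$ and $\Na u(0)=0$ built in. The main obstacle is the continuity argument itself: one must guarantee that as $\rho\nearrow\frac{1}{96}$ the component through $0$ neither folds over (losing the graph property) nor escapes $B_{r_0}(x_0)$, the only region where the curvature bound is available, and one must pick the numerical constant so that the whole chain of estimates (arc length $\lesssim|x'|$, image inside $B_{r_0}$, normal tilt $\lesssim|x'|$, gradient $\lesssim|x'|<\frac{1}{2}$) closes consistently; the Weingarten rotation estimate and the algebra converting a bound on $|\nu-e_{n+1}|$ into one on $|\Na u|$ are routine.
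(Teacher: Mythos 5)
The paper does not supply a proof of this lemma; it is cited directly as Lemma~7.1 of Chen--Yin, so there is no in-paper argument to compare against. Your self-contained proof is correct. The core mechanism --- normalizing $r_0=1$, using the Weingarten relation $\tfrac{d}{ds}\nu(\ga(s))=-W(\ga'(s))$ with $\|W\|\le|A|\le 1$ to bound the rotation of the Gauss map by arc length along the radial lifts $\ga(t)=(tx',u(tx'))$, controlling the speed of $\ga$ under the a priori hypothesis $|\Na u|\le\tfrac12$, converting the tilt $|\nu(x')-e_{n+1}|$ into a gradient bound via $\tfrac{|\Na u|}{\sqrt{1+|\Na u|^2}}\le|\nu-e_{n+1}|$, and closing the loop with an open--closed continuity argument in the disk radius --- all checks out. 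You correctly keep track of the two side conditions that make the curvature bound usable along the whole path (the graph stays well inside $B_1(0)$, since $|u(x')|\lesssim|x'|^2$) and that make the component through $0$ coincide with the graph (the graph is relatively open and closed in $\Si\cap(D_\rho\times\RR)$). One remark: your estimate gives $|\Na u(x')|\le\sqrt{5/3}\,|x'|/r_0\approx 1.3\,|x'|/r_0$, considerably sharper than the stated $36/r_0$; the constant in the lemma (inherited from Chen--Yin) is conservative, and your sharper bound trivially implies it, so there is no discrepancy to resolve.
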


Using Lemma \ref{lem:graph1}, we show that the local area ratio of the surface is very close to $1$.

\begin{lem}\label{lem:ratio}(cf. Lemma 3.3 of \cite{[LW]}) Suppose that $\Si^n\subset B_{r_0}(p)\subset\RR^{n+1}$ is a hypersurface with $\p \Si\subset \p B_{r_0}(p)$ and
$$ \sup_{\Si}\,|A|\leq \frac 1{r_0}.$$
For any $\dd>0$, there is a constant $\rho_0=\rho_0(r_0, \dd)$ such that for any $r\in (0, \rho_0)$ and any $x\in B_{\frac {r_0}{2}}(p)\cap \Si$ we have
 \beq
1-\dd\leq \frac {\vol_{\Si}(C_x(B_r(x)\cap \Si))}{\oo_n r^n}\leq 1+\dd.\label{lem:ratio:007}
\eeq

\end{lem}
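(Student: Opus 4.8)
\textbf{Proof proposal for Lemma~\ref{lem:ratio}.}
The plan is to deduce the area-ratio estimate from the graphical representation supplied by Lemma~\ref{lem:graph1}. First I would fix $x\in B_{r_0/2}(p)\cap\Si$ and apply Lemma~\ref{lem:graph1} with base point $x_0=x$ (after a rigid motion sending $x$ to the origin and $T_x\Si$ to $\{x^{n+1}=0\}$); the hypothesis $|A|\le 1/r_0$ on all of $\Si$ guarantees the curvature bound needed on $B_{r_0/2}(x)\cap\Si$, so the connected component $C_x(B_\rho(x)\cap\Si)$ is, for $\rho$ below a fixed fraction of $r_0$, a graph $\{(x',u(x'))\}$ over a domain in the tangent plane with $u(0)=0$, $\nabla u(0)=0$ and the gradient bound $|\nabla u|(x')\le \tfrac{36}{r_0}|x'|$. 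The key point is that this last inequality forces $|\nabla u|\to 0$ uniformly on the relevant domain as $\rho\to 0$.

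Next I would compute the volume of the graph: $\vol_\Si\big(C_x(B_\rho(x)\cap\Si)\big)=\int_{U}\sqrt{1+|\nabla u|^2}\,dx'$, where $U$ is (roughly) the $n$-dimensional disk of radius slightly less than $\rho$. Because $|\nabla u|\le \tfrac{36}{r_0}\rho$ on $U$, the integrand lies between $1$ and $1+C(n)(\rho/r_0)^2$, so up to an error of size $O((\rho/r_0)^2)$ the volume equals $\vol(U)$. The only subtlety is that the Euclidean ball $B_\rho(x)$ intersected with the graph corresponds not to the flat disk of radius $\rho$ in $U$ but to the region where $|x'|^2+|u(x')|^2<\rho^2$; since $|u(x')|\le \tfrac{18}{r_0}|x'|^2\le \tfrac{18\rho}{r_0}|x'|$ on $U$, this region contains the flat disk of radius $\rho/\sqrt{1+(18\rho/r_0)^2}$ and is contained in the flat disk of radius $\rho$, so $\vol(U)=\omega_n\rho^n(1+O((\rho/r_0)^2))$ as well. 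Combining, $\vol_\Si\big(C_x(B_\rho(x)\cap\Si)\big)=\omega_n\rho^n\big(1+O((\rho/r_0)^2)\big)$, and choosing $\rho_0=\rho_0(r_0,\dd)$ small enough that the error term is $\le\dd$ gives (\ref{lem:ratio:007}) for all $r\in(0,\rho_0)$ and all such $x$ simultaneously (the constant is uniform in $x$ because the only input was the global bound $|A|\le 1/r_0$).

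The main obstacle — really the only place requiring care — is bookkeeping the difference between the intrinsic/graphical disk and the extrinsic Euclidean ball $B_r(x)$, i.e.\ verifying that the graph over $U$ is exactly the connected component $C_x(B_r(x)\cap\Si)$ and that $U$ is squeezed between two concentric flat disks whose radii differ by a factor $1+O((r/r_0)^2)$. This is handled by the quadratic bounds $|u|\le\tfrac{18}{r_0}|x'|^2$ and $|\nabla u|\le\tfrac{36}{r_0}|x'|$ coming from Lemma~\ref{lem:graph1} together with a connectedness argument (the graph piece is connected and meets $B_r(x)$, and any point of $B_r(x)\cap\Si$ in the same component must, by the curvature bound, still lie in the graphical region), so no new ideas are needed beyond elementary estimates. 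Everything else is a routine Taylor expansion of $\sqrt{1+|\nabla u|^2}$.
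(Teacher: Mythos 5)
Your argument is correct and follows essentially the same route as the paper's proof: apply Lemma~\ref{lem:graph1} to write $C_x(B_r(x)\cap\Si)$ as a graph over the tangent plane, use the quadratic bounds on $u$ and linear bounds on $\nabla u$ to squeeze the projection $\Omega_r$ between two concentric flat disks whose radii differ by $1+O((r/r_0)^2)$, and then Taylor-expand $\sqrt{1+|\nabla u|^2}$. One small bookkeeping remark: since $x\in B_{r_0/2}(p)$, the surface $\Si$ is only known to be properly embedded in $B_{r_0/2}(x)$, so Lemma~\ref{lem:graph1} must be invoked with radius $r_0/2$, giving the gradient bound $|\nabla u|\le\tfrac{72}{r_0}|x'|$ rather than $\tfrac{36}{r_0}|x'|$ — this does not affect the argument, only the constant in the $O((r/r_0)^2)$ error.
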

\begin{proof}By Lemma \ref{lem:graph1}, for any $x\in B_{\frac {r_0}{2}}(p)\cap \Si$ the component $C_x(B_{\rho_0}(x)\cap \Si)$ with $\rho_0=\frac {r_0}{192}$ can be written as a graph of a function $u$ over the tangent plane at $x$, which we assume to be $P=\{(x_1, \cdots, x_n, x_{n+1})\in \RR^{n+1}\;|\; x_{n+1}=0\}$,   with $|\Na u|(x')\leq \frac {72}{r_0}|x'|$ where $x'=(x_1, \cdots, x_n).$   Let $r\in (0, \rho_0)$. We denote by $\Om_r$ the projection of $C_x(B_r(x)\cap \Si)$ to the plane $P$. Then for any $x'\in \p\Om_r$ we have
\beq
u(x')^2+|x'|^2= r^2. \label{lem:ratio:001}
\eeq
On the other hand, for any $x'\in   \Om_{\rho_0}$ we have the inequality \beq
|u(x')|\leq |u(0)|+\max_{t\in [0, 1]}|\Na u|(tx')\cdot |x'|\leq \frac {72}{r_0}|x'|^2. \label{lem:ratio:002}
\eeq
Note that (\ref{lem:ratio:001}) and (\ref{lem:ratio:002}) imply that for any $x'\in \p\Om_r$,
\beq |x'|^2\leq r^2=u(x')^2+|x'|^2\leq |x'|^2\Big(1+\frac {5184}{r_0^2}\rho_0^2\Big). \label{lem:ratio:003}
\eeq Thus, we have
\beq
 \td r:=\frac r{\sqrt{1+\frac {5184}{r_0^2}\rho_0^2}}\leq|x'|\leq r,\quad \forall\;  x'\in \p\Om_r,
\eeq which implies that
\beq
B_{\td  r}(x)\cap P\subset \Om_r\subset B_r(x)\cap P. \label{lem:ratio:004}
\eeq
Thus, the volume ratio of $C_x(B_r(x)\cap \Si)$ is bounded from above
\beqn \frac {\vol_{\Si}(C_x(B_r(x)\cap \Si))}{\oo_n r^n}&=&\frac 1{\oo_n r^n}\int_{\Om_r}\,\sqrt{1+|\Na u|^2}\,d\mu \nonumber\\&\leq &\frac 1{\oo_n r^n}\int_{B_r(x)\cap P}\,\sqrt{1+|\Na u|^2}\,d\mu\nonumber\\
&\leq& \sqrt{1+\frac {5184}{r_0^2}r^2},  \label{lem:ratio:005}\eeqn
where we used (\ref{lem:ratio:002}) and (\ref{lem:ratio:004}). Moreover, the volume ratio of $C_x(B_r(x)\cap \Si)$ is bounded from below
 \beqn \frac {\vol_{\Si}(C_x(B_r(x)\cap \Si))}{\oo_n r^n}&\geq &\frac 1{\oo_n r^n}\int_{B_{\td r}(x)\cap P}\,\sqrt{1+|\Na u|^2}\,d\mu\nonumber\\
&\geq& \frac {\td r^n}{r^n}\geq \Big(1+\frac {5184}{r_0^2}\rho_0^2\Big)^{-\frac n2}. \label{lem:ratio:006}\eeqn
Combining (\ref{lem:ratio:005}) with (\ref{lem:ratio:006}), for any $\dd>0$  we can choose $\rho_0=\rho_0(n, \dd, r_0)$ further small  such that (\ref{lem:ratio:007}) holds. The lemma is proved.

\end{proof}

Next we recall the two-sided pseudolocality theorem in \cite{[LW]}. If the initial hypersurface can be locally written as a graph of a single-valued function, then we have the pseudolocality type results of the  mean curvature flow   by Ecker-Huisken  \cite{[EH1]} \cite{[EH2]},   M. T. Wang \cite{[WangMT]}, Chen-Yin \cite{[ChenYin]} and Brendle-Huisken \cite{[BH]}. However, in our case we have to apply the pseudolocality theorem for the hypersurfaces which may converge with multiplicities. Thus, we use the boundedness of the mean curvature to get the two-sided pseudolocality theorem in \cite{[LW]}.

\begin{theo}[\textbf{Two-sided pseudolocality}]\label{theo:pseudoA}(cf. \cite{[LW]})
For any $r_0\in (0, 1], \La, T>0$,  there exist $\eta=\eta(n, \La), \ee=\ee(n, \La)>0$ satisfying
\beq
\lim_{\La\ri 0}\eta(n, \La)=\eta_0(n)>0,\quad  \lim_{\La\ri 0}\ee(n, \La)=\ee_0(n)>0\label{eq:B006}\\
\eeq
and the following properties.  Let $\{(\Si^n, \x(t)), -T\leq t\leq T\} $ be a closed smooth embedded mean curvature flow (\ref{eq:MCF}).  Assume  that
\begin{enumerate}

  \item[(1)]  the second fundamental form satisfies
$|A|(x, 0)\leq \frac 1{r_0}$ for any $ x\in C_{p_0}( B_{r_0}(p_0)\cap \Si_0) $ where $p_0=\x_0(p)$ for some $p\in \Si$;
  \item[(2)] the mean curvature of $\{(\Si^n, \x_t), -T\leq t\leq T\}$ is  bounded by $\La$.
\end{enumerate}
Then for any $(x, t)$ satisfying
\beq
x\in C_{p_t}(\Si_t\cap B_{\frac 1{16} r_0}(p_0)), \quad t\in \Big[-\frac {\eta r_0^2}{2(\La+\La^2)}, \frac {\eta r_0^2}{2(\La+\La^2)}\Big]\cap [-T, T] \nonumber
\eeq where $p_t=\x_t(p),$  we have the estimate $$|A|(x, t)\leq \frac {1}{\ee r_0}. $$

\end{theo}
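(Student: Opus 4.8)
The plan is to extract a displacement bound from the mean curvature hypothesis, reduce the time-$0$ picture to a graph by Lemma~\ref{lem:graph1}, obtain the forward half from the standard one-sided pseudolocality theorem, and then upgrade to the two-sided statement by a blow-up argument in which the bound $|H|\le\La$ forces any limit to be flat. \textbf{Step 1 (displacement and component tracking).} Since the flow is purely normal, integrating $|\p_t\x|=|H|\le\La$ over $[-T,T]$ gives $d(\x(p,t),\x(p,0))\le\La|t|$ for all $p\in\Si$, and, because the area element evolves by $-H^2\,d\mu$, one also has $e^{-\La^2|t|}\le d\mu_t/d\mu_0\le e^{\La^2|t|}$. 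With $\tau:=\frac{\eta r_0^2}{2(\La+\La^2)}$ this yields $\La\tau,\La^2\tau\le\tfrac12\eta r_0^2\ll r_0$ once $\eta=\eta(n,\La)$ is small and $r_0\le 1$; hence $p_t=\x_t(p)$ stays in $B_{r_0/100}(p_0)$ for $|t|\le\tau$, the component $C_{p_t}(\Si_t\cap B_{r_0/16}(p_0))$ is well defined, and it follows the sheet of $\Si_0$ through $p_0$. This is exactly where the mean curvature bound is needed: it keeps the relevant piece from drifting and excludes other sheets entering $B_{r_0/16}(p_0)$, which a priori could happen since the flow may converge with higher multiplicity.

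\textbf{Step 2 (reduction to a graph) and Step 3 (forward half).} By Lemma~\ref{lem:graph1} at $p_0$, the component of $\Si_0$ through $p_0$ over $\{|x'|<r_0/96\}\subset T_{p_0}\Si_0$ is the graph of a function $u_0$ with $u_0(0)=0$, $\Na u_0(0)=0$, $|\Na u_0|(x')\le\tfrac{36}{r_0}|x'|$; so it is $\ll 1$-Lipschitz near $p_0$ and confined to a slab of height $O(r_0/96^2)$. For $0\le t\le\tau$ this is precisely the hypothesis of the one-sided pseudolocality theorem for mean curvature flow (Ecker--Huisken, M.-T.~Wang, Chen--Yin, Brendle--Huisken): a small-Lipschitz graph at time $0$ over a disk of radius $\sim r_0$ gives $|A|\le C/r_0$ on $B_{\ee r_0}(p_0)\cap\Si_t$ for a parabolic time $\sim r_0^2$. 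One only checks $\tau\le$ (the pseudolocality time) $\sim\eta r_0^2$, which holds, and invokes Step~1 to identify the connected component. The mean curvature bound plays essentially no role here.

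\textbf{Step 4 (backward half).} Here the mean curvature bound is indispensable, and I would argue by contradiction. If the statement failed, there would be a sequence of flows $\{\Si^{(j)}_t\}$, rescaled so that $r_0^{(j)}=1$, satisfying the hypotheses with $\eta_j,\ee_j\to 0$, together with $t_j\in[-\tau_j,0]$ and $x_j$ in the relevant component of $\Si^{(j)}_{t_j}\cap B_{1/16}$ with $|A|(x_j,t_j)>1/\ee_j\to\infty$. Applying Perelman-type point selection on a backward parabolic neighborhood of definite parabolic size (using Step~1), one replaces $(x_j,t_j)$ by a point of comparably large curvature $Q_j\to\infty$ with a priori curvature control on a backward neighborhood, rescales by $Q_j$, and extracts a smooth, complete, properly embedded limit flow. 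Its mean curvature is $\le\La_j/Q_j\to 0$, so the limit is an eternal flow with $H\equiv 0$, i.e.\ a time-independent complete minimal surface in $\RR^3$ with $|A|\le 1$ and $|A|=1$ at the base point. But Step~1 traps the relevant piece of $\Si^{(j)}_t$ within $\La_j|t|\le\tfrac12\eta_j$ of the graph $\Si^{(j)}_0$, whose rescaled Lipschitz constant over any fixed ball tends to $0$; after the standard bookkeeping this forces the limit minimal surface to be, over each fixed ball, a graph with zero gradient --- a plane --- contradicting $|A|=1$ (alternatively, once the limit is seen to be a proper minimal surface in a halfspace, the halfspace theorem gives the plane). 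The precise form $\tau_j=\frac{\eta_j}{2(\La_j+\La_j^2)}$ --- balancing the displacement $\La_j|t_j|$ and the area distortion $\La_j^2|t_j|$ against $\eta_j$ before and after rescaling --- is what makes this trapping survive, and it also produces the asserted limits $\eta(n,\La)\to\eta_0(n)$, $\ee(n,\La)\to\ee_0(n)$ as $\La\to 0$, the $\La=0$ case being ordinary static pseudolocality.

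The main obstacle is Step~4, specifically the bookkeeping that guarantees the blow-up limit is genuinely static and flat rather than a nontrivial ancient minimal-type flow: one must control how the blow-up scale $Q_j$ compares with the time length $\tau_j$ and with the size of the graphical region, and this is exactly where the short-time pseudolocality and energy-concentration estimates of~\cite{[LW]}, together with the displacement and area-distortion bounds of Step~1, are used. Steps~1--3 are routine given the existing one-sided pseudolocality literature.
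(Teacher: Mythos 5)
Your Steps 1--3 are sound: the displacement bound $|\x(p,t)-\x(p,0)|\le\La|t|$ and the area-element distortion follow directly from $|\p_t\x|=|H|\le\La$ and $\p_t\,d\mu=-H^2\,d\mu$, Lemma~\ref{lem:graph1} gives the graph reduction, and the forward half then reduces to standard one-sided pseudolocality, with the mean-curvature bound needed only to track components. The difficulty, as you correctly flag, is the backward half, and there the bookkeeping in Step~4 does not close. The trapping you propose is that $\Si^{(j)}_{t_j}$ lies within $\La_j|t_j|\le\tfrac12\eta_j r_0^2$ of the nearly flat graph $\Si^{(j)}_0$, and that after rescaling the limit is squeezed into a plane. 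But the rescaling at scale $Q_j=|A|(x_j,t_j)\to\infty$ multiplies every extrinsic distance by $Q_j$, so the slab half-width becomes $Q_j\La_j|t_j|$, which is of order $Q_j\eta_j$. There is no control on this product: you are taking $\eta_j\to0$ and $\ee_j\to0$ independently, $Q_j\ge(\ee_j r_0)^{-1}$, and the theorem's hypotheses impose no quantitative link forcing $Q_j\eta_j\to0$ (e.g.\ with $\eta_j=\ee_j$ one gets $Q_j\eta_j\ge r_0^{-1}$). Hence the rescaled surfaces are not trapped in a slab of bounded width, the halfspace theorem does not apply, and the blow-up limit is merely some complete minimal surface with $|A|\le2$ and $|A|(0)=1$ (catenoids, helicoids, etc.\ are not excluded). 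So the contradiction is not reached.

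This is a genuine gap, not routine bookkeeping: without an additional mechanism --- for instance a Gaussian-density/monotonicity argument that survives the backward direction, or a different selection of the blow-up time so that the curvature threshold is exceeded ``for the first time'' and the displacement from a curvature-controlled slice is itself at the scale $Q_j^{-1}$ rather than at the fixed scale $\La|t_j|$ --- the minimality of the limit alone does not give planarity. In short, your outline makes the correct structural observations (displacement bound, graph reduction, forward pseudolocality, blow-up with $H\to0$ so the limit is minimal), but the step that makes the backward estimate work is precisely the one left as ``the main obstacle,'' and the slab trapping you gesture at does not survive the parabolic rescaling. (The paper itself defers the proof to the companion paper \cite{[LW]}, so there is no in-text proof to compare against beyond this assessment of internal consistency.)
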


Using the pseudolocality theorem, we have the energy concentration property.

\begin{lem}[\textbf{Energy concentration}]\label{lem:energy concen1}(cf. \cite{[LW]})  For any $\La, K, T>0$, there exists a constant $\ee(n, \La, K, T)>0$ with the following property.
Let $\{(\Si^n, \x(t)), -T\leq t\leq T\} $ be a closed smooth embedded mean curvature flow (\ref{eq:MCF}).  Assume that $\max_{ \Si_t\times [-T, T]}|H|(p, t)\leq \La.$
Then we have
\beq
\int_{\Si_0\cap B_{Q^{-1}}(q)}\,|A|^n\,d\mu_0\geq \ee(n, \La, K, T)
\eeq whenever $q\in \Si_0$ with $Q:=|A|(q, 0)\geq K.$

\end{lem}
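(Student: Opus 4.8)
The plan is to argue by contradiction through a blow-up argument, with the two-sided pseudolocality theorem (Theorem~\ref{theo:pseudoA}) as the main engine: I will show that if $\int|A|^n$ were arbitrarily small on $B_{Q^{-1}}(q)$, then after rescaling one could extract a smooth limit mean curvature flow that is totally geodesic at its base point at time $0$ and yet has $|A|=1$ there. Concretely, suppose the lemma fails for some $\La,K,T>0$. Then there are closed smooth embedded mean curvature flows $\{(\Si_i^n,\x_i(t)),-T\le t\le T\}$ with $\max_{\Si_i\times[-T,T]}|H_i|\le\La$, points $q_i\in\Si_i^0$ with $Q_i:=|A_i|(q_i,0)\ge K$, but $\int_{\Si_i^0\cap B_{Q_i^{-1}}(q_i)}|A_i|^n\,d\mu_0\to0$. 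Parabolically rescaling by $Q_i$ about $q_i$, i.e.\ replacing the flow by $\td\Si_i^t:=Q_i(\Si_i^{Q_i^{-2}t}-q_i)$ on $[-Q_i^2T,Q_i^2T]$, one gets $|\td A_i|(0,0)=1$, $\max|\td H_i|\le\La/Q_i\le\La/K=:\La'$, and — by the scale invariance of $\int|A|^n$ in dimension $n$ — $\int_{\td\Si_i^0\cap B_1(0)}|\td A_i|^n\,d\mu_0\to0$.

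The first substantive step is a point-selection to promote the pointwise equality $|\td A_i|(0,0)=1$ to an honest local curvature bound at time $0$. On the connected component $U_i:=C_0(B_{1/2}(0)\cap\td\Si_i^0)$ the function $x\mapsto|\td A_i|(x,0)\cdot\mathrm{dist}\big(x,\partial B_{1/2}(0)\big)$ is continuous, vanishes towards $\partial B_{1/2}(0)$, and equals $\tfrac12$ at $0$, so it attains a positive maximum at some $y_i\in U_i$; writing $\la_i:=|\td A_i|(y_i,0)\ge1$, a standard maximum manipulation gives $|\td A_i|\le2\la_i$ on $C_{y_i}\big(B_{c\la_i^{-1}}(y_i)\cap\td\Si_i^0\big)$ for a dimensional constant $c$. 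Now apply Theorem~\ref{theo:pseudoA} to $\td\Si_i$ with $r_0=2c\la_i^{-1}$ (shrunk below $1$), base point $y_i$, and mean curvature bound $\La'$ — the required time interval lies inside $[-Q_i^2T,Q_i^2T]$ once $i$ is large since $Q_i\to\infty$ — and rescale once more by $\la_i$ about $y_i$ to form $\hat\Si_i^t:=\la_i(\td\Si_i^{\la_i^{-2}t}-y_i)$. Crucially the factors $\la_i$ cancel in this last scaling, so one obtains an $i$-independent $\tau=\tau(n,\La')>0$ and $C=C(n,\La')$ with
\[
|\hat A_i|(x,t)\le C\qquad\text{for }x\in C_0\big(\hat\Si_i^t\cap B_{1/32}(0)\big),\ \ t\in[-\tau,\tau],
\]
while still $|\hat A_i|(0,0)=1$ and $\max|\hat H_i|\le\La'$.

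By Lemma~\ref{lem:ratio} this curvature bound also yields uniform local area bounds, and the interior derivative estimates of Ecker--Huisken \cite{[EH1]}\cite{[EH2]} give uniform $C^k$ bounds on the graphical representations (Lemma~\ref{lem:graph1}) of $\hat\Si_i^t$ over the tangent planes at the origin; hence a subsequence converges in $C^\infty$ on a slightly smaller parabolic cylinder about $B_{1/32}(0)\times[-\tau,\tau]$ to a smooth mean curvature flow $\{\hat\Si_\infty^t\}$ with $0\in\hat\Si_\infty^0$ and $|\hat A_\infty|(0,0)=1$. On the other hand $y_i\in B_{1/2}(0)$ and $\la_i\ge1$, so $B_{1/32}(0)$ in the $\hat\Si_i$ frame corresponds at time $0$ to $B_{(32\la_i)^{-1}}(y_i)\subset B_1(0)$ in the $\td\Si_i$ frame; therefore, by scale invariance and monotonicity of the integrand,
\[
\int_{C_0(\hat\Si_\infty^0\cap B_{1/32}(0))}|\hat A_\infty|^n\,d\mu_0=\lim_{i\to\infty}\int_{C_0(\hat\Si_i^0\cap B_{1/32}(0))}|\hat A_i|^n\,d\mu_0\le\lim_{i\to\infty}\int_{\td\Si_i^0\cap B_1(0)}|\td A_i|^n\,d\mu_0=0.
\]
Thus $|\hat A_\infty|\equiv0$ on the component of $\hat\Si_\infty^0\cap B_{1/32}(0)$ through the origin, contradicting $|\hat A_\infty|(0,0)=1$, and the lemma follows.

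I expect the main obstacle to be the bookkeeping around the two successive rescalings feeding Theorem~\ref{theo:pseudoA}: one must check both that after the point-selection rescaling the pseudolocality time span and curvature bound become genuinely $i$-independent (so the limit flow lives on a nondegenerate parabolic cylinder), and, simultaneously, that the region on which pseudolocality provides curvature control stays inside the ball where the $L^n$-energy of the second fundamental form is known to vanish. Once these are arranged, the remaining compactness input — local area bounds from Lemma~\ref{lem:ratio}, interior estimates, and graphical convergence — is routine.
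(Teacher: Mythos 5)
Your argument is correct and is the natural blow-up proof (the paper defers the proof to \cite{[LW]}, but this is almost certainly the route taken there): rescale to unit curvature, run a Schoen-type point selection inside $B_{1/2}(0)$, rescale a second time to the scale of the selected curvature, invoke the two-sided pseudolocality theorem to obtain a nondegenerate parabolic cylinder with uniform curvature bounds, and pass to a smooth limit that is simultaneously totally geodesic at time $0$ (zero $L^n$-energy of $A$) and has $|A|=1$ at its base point. One small slip to flag: the remark that ``$Q_i\to\infty$'' is unjustified (we only know $Q_i\ge K$), but it is also unnecessary — the doubly rescaled flow exists on $[-\lambda_i^2Q_i^2T,\,\lambda_i^2Q_i^2T]\supset[-K^2T,\,K^2T]$ since $\lambda_i\ge1$ and $Q_i\ge K$, so simply replacing the pseudolocality time span $\tau(n,\Lambda')$ by $\min\{\tau(n,\Lambda'),\,K^2T\}$ makes the parabolic cylinder $i$-independent; this is precisely where the $K$- and $T$-dependence of $\epsilon(n,\Lambda,K,T)$ enters.
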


A direct corollary of Lemma \ref{lem:energy concen1} is the following $\ee$-regularity of the mean curvature flow, which can be viewed as a generalization of the result of Choi-Schoen \cite{[CS]}.

\begin{cor}[\textbf{$\ee$-regularity}]\label{cor:energy}  (cf. \cite{[LW]})  There exists $\ee_0(n)>0$ satisfying the following property.
Let $\{(\Si^n, \x(t)), -1\leq t\leq 1\} $ be a closed smooth embedded mean curvature flow (\ref{eq:MCF}).
Suppose that the mean curvature satisfies $\max_{ \Si_t\times [-1, 1]}|H|(p, t)\leq 1.$  For any $q\in \Si_0$, if
  $$\int_{\Si_0\cap B_r(q)}\,|A|^n\,d\mu_0\leq \ee_0(n)$$ for some $r>0$, then we have
\beq \max_{B_{\frac r2}(q)\cap \Si_0}|A|\leq \max\{1, \frac 2r\}. \label{eq:AA004}\eeq
\end{cor}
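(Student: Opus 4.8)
The plan is to obtain Corollary~\ref{cor:energy} as an essentially immediate consequence of the energy concentration Lemma~\ref{lem:energy concen1}, by a short contradiction argument. Unlike the classical $\ee$-regularity of Choi--Schoen \cite{[CS]}, no separate point-picking step is needed here: the analytic content (the pseudolocality theorem, plus the blow-up behind Lemma~\ref{lem:energy concen1}) has already been absorbed into Lemma~\ref{lem:energy concen1}, which directly produces a definite amount of curvature energy in the ball $B_{|A|(q,0)^{-1}}(q)$ around \emph{any} point $q\in\Si_0$ at which $|A|(q,0)$ exceeds a fixed threshold.

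Concretely, I would set $\ee_0(n):=\tfrac12\,\ee(n,1,1,1)$, where $\ee(n,\La,K,T)$ is the constant furnished by Lemma~\ref{lem:energy concen1}; since $\La=K=T=1$ are now fixed, $\ee_0$ depends only on $n$. Suppose, for contradiction, that~(\ref{eq:AA004}) fails for some $q\in\Si_0$ and $r>0$ with $\int_{\Si_0\cap B_r(q)}|A|^n\,d\mu_0\leq\ee_0(n)$; then there is a point $y_0\in B_{r/2}(q)\cap\Si_0$ with $|A|(y_0,0)>\max\{1,2/r\}$. Write $Q:=|A|(y_0,0)$. From $Q>\max\{1,2/r\}$ I read off two facts: first $Q>1$, so $Q$ clears the curvature threshold $K=1$; and second $Q^{-1}<r/2$ --- indeed, if $r\leq 2$ this follows from $Q>2/r$, and if $r>2$ it follows from $Q>1>2/r$. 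Combining the second fact with $|y_0-q|<r/2$ and the triangle inequality gives $B_{Q^{-1}}(y_0)\subseteq B_r(q)$.

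Now I would apply Lemma~\ref{lem:energy concen1} with $\La=K=T=1$ at the point $y_0$ (legitimate since $|A|(y_0,0)=Q\geq K=1$), obtaining
\[
\ee(n,1,1,1)\ \leq\ \int_{\Si_0\cap B_{Q^{-1}}(y_0)}|A|^n\,d\mu_0\ \leq\ \int_{\Si_0\cap B_r(q)}|A|^n\,d\mu_0\ \leq\ \ee_0(n)=\tfrac12\,\ee(n,1,1,1),
\]
where the second inequality uses $B_{Q^{-1}}(y_0)\subseteq B_r(q)$ and the third is the hypothesis. This is a contradiction, so $\sup_{B_{r/2}(q)\cap\Si_0}|A|\leq\max\{1,2/r\}$, which is~(\ref{eq:AA004}).

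I do not expect a genuine obstacle in this step; it is bookkeeping once Lemma~\ref{lem:energy concen1} is granted. The only point needing (a small amount of) care is the calibration of the threshold $\max\{1,2/r\}$ appearing in~(\ref{eq:AA004}): the ``$1$'' is precisely what forces $Q\geq K=1$ so that Lemma~\ref{lem:energy concen1} applies, while the ``$2/r$'' is precisely what forces $Q^{-1}<r/2$ so that the energy-concentration ball $B_{Q^{-1}}(y_0)$ is contained in $B_r(q)$; the normalizations $\La=T=1$ are the ones matching the hypotheses $|H|\leq 1$ on $[-1,1]$. If one preferred a more conservative route, one could first replace $y_0$ by a maximizer of $x\mapsto (r-|x-q|)\,|A|(x,0)$ on $B_r(q)\cap\Si_0$, but this Schoen-type refinement is not needed here.
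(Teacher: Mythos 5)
Your proof is correct and is exactly the argument the paper intends: the authors state that Corollary~\ref{cor:energy} is ``a direct corollary of Lemma~\ref{lem:energy concen1}'' without spelling out the details, and your contradiction argument—calibrating $\ee_0(n)=\tfrac12\,\ee(n,1,1,1)$, reading off $Q>1$ and $Q^{-1}<r/2$ from the threshold $\max\{1,2/r\}$, and using the triangle inequality to place $B_{Q^{-1}}(y_0)\subseteq B_r(q)$—is the natural and correct way to make that deduction precise. Your closing remark that a Choi--Schoen point-picking step is unnecessary because the hard analysis has already been packaged into Lemma~\ref{lem:energy concen1} is also accurate.
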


\subsection{ Weak compactness}
As in \cite{[LW]}, we use the pseudolocality theorem and the energy concentration property to develop the weak compactness of the mean curvature flow. Here we will replace the zero mean curvature condition
in \cite{[LW]} by the boundedness of the mean curvature in the definition of refined sequences.  By ``refined", we mean that the sequence is taken after a point-selecting process such that many good properties already hold for the objects in this sequence.
The name of refined sequence originates from~\cite{[CW1]}.

\begin{defi}[\textbf{Refined sequences}]\label{defi:refined}Let  $\{(\Si_i^2, \x_i(t)), -1< t< 1 \}$ be a one-parameter family of closed smooth embedded surfaces satisfying the mean curvature flow equation (\ref{eq:MCF}). It is called a refined sequence if the following properties are satisfied for every $i:$
\begin{enumerate}
  \item[(1)] There exists a constant $D>0$ such that $d(\Si_{i, t}, 0)\leq D$ for any $t\in (-1, 1)$, where $d(\Si, 0)$ denotes the Euclidean distance from the point $0\in \RR^3$ to the surface $\Si\subset \RR^3$ and $\Si_{i, t}=\x_i(t)(\Si_i)$;

  \item[(2)] There is a uniform constant $\La>0$ such that
  \beq \max_{\Si_{i, t}\times (-1, 1)}|H|(p, t)\leq \La,\label{eq:A203}\eeq
  \item[(3)] There exists an increasing positive function $\rho: \RR^+\ri \RR^+$ such that for any $R>0,$
  \beq \int_{\Si_{i, t}\cap B_R(0)}\,|A|^2 \,d\mu_{i, t}\leq \rho(R), \quad \forall \,t\in (-1, 1);\label{eqn:201} \eeq
  \item[(4)] There is uniform $N>0$ such that for all $r>0$ and $p\in \RR^3$ we have
  \beq \Area_{g_i(t)}(B_r(p)\cap \Si_{i, t})\leq N \pi r^2, \quad \forall \,t\in (-1, 1). \label{eqn:202}\eeq

  \item[(5)] There exist  uniform constants $\bar r, \ka>0$ such that for any $r\in (0, \bar r]$ and any $p\in \Si_{i, t}$ we have
\beq \Area_{g_i(t)}(B_r(p)\cap \Si_{i, t})\geq \kappa r^2, \quad \forall \,t\in (-1, 1).\label{eqn:203}\eeq

\item[(6)] There exists $T>1$ such that
\beq \lim_{i\ri +\infty}\int_{-1}^1\,dt\int_{\Si_{i, t}}\, e^{-\frac {|\x_i|^2}{4(T-t)}}\Big|H_i-\frac 1{2(T-t)}\langle \x_i, \n \rangle\Big|^2\,d\mu_{i, t}=0. \label{eqn:204}\eeq

\end{enumerate}

\end{defi}

Following the arguments as in minimal surfaces (cf. White \cite{[White3]}, or \,Colding-Minicozzi \cite{[CMbook2]}), we have the weak compactness  for mean curvature flow.
\begin{prop}\label{prop:weakcompactness} Let $\{(\Si_i^2, \x_i(t)), -1< t< 1 \}$ be a refined sequence. Then there exists a subsequence, still denoted by $\{(\Si_i^2, \x_i(t)), -1<t< 1 \}$, a smooth self-shrinker flow $\{(\Si_{\infty}, \x_{\infty}(t)), -1<t<1\}$ satisfying
\beq
H=\frac 1{2(T-t)}\langle \x_{\infty}, \n\rangle, \label{eq:ssflow}
\eeq for some $T>1$,
 and a space-time singular set $\cS=\{(x, t)\,|\,t\in (-1, 1), x\in  \RR^3\}$ satisfying the following properties:
\begin{enumerate}
  \item[(1).] The sequence $\{(\Si_i^2, \x_i(t)), -1<t< 1 \}$ converges locally smoothly, possibly with multiplicity at most $N_0$, to  $\{(\Si_{\infty}, \x_{\infty}(t)), -1<t<1\}$ away from
$\cS$;

  \item[(2).] For each time $t\in (-1, 1)$ the singular set $\cS_t=\{x\in \RR^3\;|\;(x, t)\in \cS\}$ is locally finite in the sense that   $\sharp\{\cS_t\cap B_R(0)\}$ is uniformly bounded by a number depending only on $\rho(R)$.

\item[(3).] The sequence in (1) also converges in extrinsic Hausdorff distance.

\end{enumerate}

\end{prop}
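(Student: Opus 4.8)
The plan is to follow the classical Schoen--White style argument for compactness of minimal surfaces, as adapted to parabolic flows, with the pseudolocality theorem of Theorem~\ref{theo:pseudoA} and the energy concentration of Lemma~\ref{lem:energy concen1} playing the role of the $\ee$-regularity for minimal surfaces. First I would use conditions (1)--(6) of Definition~\ref{defi:refined} together with Huisken's monotonicity formula applied to (\ref{eqn:204}): the uniform area bounds (\ref{eqn:202}) and the locally bounded total curvature (\ref{eqn:201}) give, via Brakke-type compactness, a subsequential limit in the sense of integral Brakke flows, and the vanishing of the weighted drift term in (\ref{eqn:204}) forces the limit to satisfy the self-shrinker flow equation (\ref{eq:ssflow}) for the appropriate $T>1$. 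The lower area ratio bound (\ref{eqn:203}) prevents the limit from vanishing and shows the convergence is with positive multiplicity.

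Next I would define the space-time singular set $\cS$ as the set of points $(x,t)$ where the second fundamental form of the approximating flows blows up, i.e.\ where for every $r>0$ one has $\liminf_i \int_{\Si_{i,s}\cap B_r(x)}|A|^2\,d\mu_{i,s}\geq \ee_0$ for $s$ near $t$; by Corollary~\ref{cor:energy} (the $\ee$-regularity), at every point \emph{not} in $\cS$ the curvature of $\Si_{i,t}$ is uniformly bounded on a small parabolic neighborhood, hence by standard interior estimates for mean curvature flow all higher derivatives are controlled, and Arzel\`a--Ascoli gives local smooth convergence (possibly to a multi-sheeted graph, whence the multiplicity, bounded by $N_0$ coming from (\ref{eqn:202})). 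This establishes (1). For (2), the key point is that each point of $\cS_t$ carries a definite amount of concentrated $L^2$ curvature by the energy concentration Lemma~\ref{lem:energy concen1}: if $x\in\cS_t$ then a fixed amount $\ee(n,\La,K,T)$ of $\int|A|^2$ concentrates near $x$, so by the uniform bound $\rho(R)$ in (\ref{eqn:201}) only finitely many such points can lie in $B_R(0)$. For (3), the extrinsic Hausdorff convergence on compact sets follows from the smooth convergence away from $\cS$ together with the local finiteness of $\cS_t$ and the uniform area bounds, which prevent ``hidden'' pieces of $\Si_{i,t}$ from accumulating away from the smooth limit.

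The main obstacle, I expect, is the passage from weak (Brakke) convergence to the structure of the limit as a smooth self-shrinker flow with a locally finite singular set — specifically, ensuring that the two-sided pseudolocality of Theorem~\ref{theo:pseudoA} can actually be invoked. That theorem requires a one-sided curvature bound $|A|(\cdot,0)\leq 1/r_0$ on a connected component at the \emph{initial} time of a parabolic neighborhood, and a mean curvature bound $|H|\leq\La$ throughout; one must verify that at a good point (where the local $L^2$ curvature is $<\ee_0$ at \emph{some} time slice) the $\ee$-regularity Corollary~\ref{cor:energy} indeed provides such a one-sided $|A|$ bound, and then propagate this bound forward \emph{and backward} in time via pseudolocality to get a genuine space-time neighborhood of smooth convergence. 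A secondary subtlety is showing the multiplicity is globally bounded by a single $N_0$: this uses (\ref{eqn:202}) to bound the number of sheets, combined with the fact (Lemma~\ref{lem:ratio}) that each smooth sheet has area ratio close to one, so that $N$ sheets give area ratio close to $N$, forcing $N\leq N_0$ for $N_0\sim N$. Once these local statements are in place, assembling them into the global statement via a diagonal/exhaustion argument over an increasing sequence of radii $R\to\infty$ and a countable dense set of times is routine.
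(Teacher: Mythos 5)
Your proposal follows essentially the same route as the paper: define the singular set via concentration of the Radon measures $|A_i|^2\,d\mu_{i,t}$, use the $\ee$-regularity (Corollary~\ref{cor:energy}, which rests on the energy concentration Lemma~\ref{lem:energy concen1}) to get a fixed-time curvature bound at a good point, then invoke the two-sided pseudolocality Theorem~\ref{theo:pseudoA} together with the uniform mean-curvature bound (\ref{eq:A203}) to propagate this bound to a space-time neighborhood, and finally let the drift term (\ref{eqn:204}) force the self-shrinker equation (\ref{eq:ssflow}) on the limit. The multiplicity bound from the area ratio conditions (\ref{eqn:202})--(\ref{eqn:203}) via Lemma~\ref{lem:ratio}, the local finiteness of $\cS_t$ from $\rho(R)/\ee_0$, and the diagonal exhaustion over $R$ and times are all as in the paper's proof.

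Two small remarks. First, the preliminary appeal to Brakke compactness to produce a weak limit is not needed and not used by the paper: once pseudolocality gives a space-time neighborhood of smooth convergence away from the concentration set $\cS_0$ defined at the time slice $t=0$, one directly gets a smooth limit flow on that neighborhood (Theorem~2.6 of \cite{[LW]}), and (\ref{eqn:204}) identifies it as a self-shrinker flow; repeating at every $t_0$ and diagonalizing in $s\to 0$ produces the global statement without passing through a varifold limit. Second, and more substantively, you should be explicit about why the limit surface $\Si_{\infty,t}$ extends smoothly and embeddedly across the finitely many points of $\cS_t$, rather than having a genuine puncture or branch point there; the paper handles this by viewing $\Si_\infty$ as a minimal surface in the conformal metric $e^{-|x|^2/4}\delta_{ij}$ on $\RR^3$ and citing the removable-singularity/structure theory of White \cite{[White3]} and Colding--Minicozzi \cite{[CMbook2]}. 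This step is also what yields the Hausdorff convergence claim in part~(3); your appeal to the uniform area bound to exclude hidden pieces is the right intuition but by itself does not give the removable singularity.
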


\begin{proof}
 We first show that after taking a subsequence if necessary,  $\Si_{i, 0}$ converges locally smoothly to $\Si_{\infty, 0}$ away from a locally finite set  $\cS_0. $
  Fix large  $R>0$ and let $\Om=B_R(0)\subset \RR^3$. By Property (1) in Definition \ref{defi:refined}, we have $\Si_{i, t}\cap \Om\neq \emptyset$ for large $R>0$ and any $t\in (-1, 1)$.
 For any $U\subset \Omega$, we define the measures $\nu_i$ by
$$\nu_i(U)=\int_{U\cap \Si_{i, 0}}\,|A_i|^2\,d\mu_{i, 0}\leq \rho(R),$$ where we used (\ref{eqn:201}) in the   inequality.
The general compactness of Radon measures implies that there is a subsequence, which we still denote by $\nu_i$, converges weakly to a Radon measure $\nu$ with
$\nu(\Om)\leq \rho(R).$
We define the set
$$\cS_0=\{x\in \Omega\;|\;\nu(x)\geq \ee_0\},$$
where $\ee_0$ is the constant in Corollary \ref{cor:energy}.
 It follows that $\cS_0$ contains at most $\frac {\rho(R)}{\ee_0}$ points. Given any $y\in \Omega\backslash \cS_0.$ There exists some $s>0$ such that $B_{10s}(y)\subset \Om$ and
$\nu(B_{10s}(y))<\ee_0.$ Since $\nu_i\ri \nu$, for $i$ sufficiently large we have
$$\int_{B_{10s}(y)\cap \Si_{i, 0}}\,|A_i|^2\,d\mu_{i, 0}<\ee_0. $$
Corollary \ref{cor:energy} implies that for $i$ sufficiently large we have the estimate
\beq \max_{B_{5s}(y)\cap \Si_{i, 0}}|A|(x, 0)\leq \max\{1, \frac 1{5s}\}.  \label{eq:A100}\eeq
By Theorem \ref{theo:pseudoA} and (\ref{eq:A203}) , there exists $\ee=\ee(s, n)>0$ such that
\beq
\max_{B_{\ee r_0}(y)\cap \Si_{i, t}}|A|(x, t)\leq \frac 1{\ee r_0},\quad \forall\, t\in [-\ee r_0^2, \ee r_0^2] \label{eq:A101}
\eeq where $r_0=5s. $ Therefore, for large $i$ we have all higher order estimates of the second fundamental form at
any point in $\Si_{i, 0}\b B_{2r_0}(\cS_0)$, where $B_r(\cS_0)=\{x\in \RR^3\,|\,d(x, \cS_0)\leq r\}$.
Using a diagonal sequence argument and taking $s\ri 0$ we can show that a subsequence of $\Si_{i, 0}$ converges in smooth topology, possibly with multiplicities, to a limit surface $\Si_{\infty, 0}$  away from the singular set $\cS_0.$ The properties (\ref{eqn:202})-(\ref{eqn:203})  imply that the multiplicity of the convergence is bounded by a constant $N_0$.

Note that by (\ref{eq:A101})  the second fundamental form is uniformly bounded for any point
$(x, t)\in (\Si_{i, t}\b B_{2r_0}(\cS_0))\times ([-\ee r_0^2, \ee r_0^2]\cap (-1, 1)).$ By compactness of mean curvature flow (cf. Theorem 2.6 of \cite{[LW]}),  the flow $\{\Si_{i, t}\b B_{2r_0}(\cS_0), t\in (-\ee r_0^2, \ee r_0^2)\cap (-1, 1) \}$ converges smoothly to a limit flow $\{\Si_{\infty, t}\b B_{2r_0}(\cS_0), t\in (-\ee r_0^2, \ee r_0^2)\cap (-1, 1) \}$ and by Property (6) in definition \ref{defi:refined} $\Si_{\infty, t}\b B_{2r_0}(\cS_0)$ satisfies the self-shrinker equation (\ref{eq:ssflow}) for $t\in (-\ee r_0^2, \ee r_0^2)\cap (-1, 1) \}$. We can also replace $t=0$ by any other $t_0\in (-1, 1)$ and the above argument still works for the time interval $(-\ee r_0^2+t_0, \ee r_0^2+t_0)\cap (-1, 1).$ Since $r_0=5s>0$ is arbitrary small, by using a diagonal sequence argument and taking $ s\ri 0$ we have that  $\{(\Si_i^2, \x_i(t)), -1<t< 1 \}$ converges locally smoothly to the flow  $\{(\Si_{\infty}, \x_{\infty}(t)), -1<t<1\}$ away from
$\cS$ and  $\Si_{\infty, t}$ satisfies the equation (\ref{eq:ssflow}). Note that $\Si_{\infty}$ is a self-shrinker in $\RR^3$ and it can be viewed as a minimal surface in $(\RR^3, g_{ij})$ with $g_{ij}=e^{-\frac {|x|^2}4}\dd_{ij}$.
Thus,  we can follow the argument in minimal surfaces (cf.   White \cite{[White3]}, or \,Colding-Minicozzi \cite{[CMbook2]}) to show that $\Si_{\infty, t}\cup \cS_t$ is smooth and embedded and $\Si_{i, t}$ converges to $\Si_{\infty, t}$ in Hausdorff distance.
  The proposition is proved.

\end{proof}

As in \cite{[LW]}, we show that the multiplicity in Proposition \ref{prop:weakcompactness} is constant.
To study the multiplicity, we define a function
\beq
\Te(x, r, t) :=\lim_{i\ri +\infty}\frac {\Area_{g_i(t)}(\Si_{i, t}\cap B_r(x))}{\pi r^2},\quad
\forall \;(x, t)\in \Si_{\infty, t}\times (-1, 1).\label{eqD:002}
\eeq
Then the multiplicity at $(x, t)\in \Si_{\infty, t}\times (-1, 1)$  is defined by
\beq
\frak m(x, t) :=\lim_{r\ri 0}\Te(x, r, t).  \label{eqn:PE18_2}
\eeq
It is clear that $\frak m(x, t)$ is an integer. In the following result, we show that $\frak m(x, t)$ is independent of $x$ and $t$. Note that in Lemma 3.14 of  \cite{[LW]} we proved the same result under the assumption that the mean curvature decays exponentially to zero. The first two steps of the proof here are similar to that of \cite{[LW]} while the third step is different. We give all the details  for completeness.

\begin{lem}\label{lem:multiplicity}
Under the  assumption of Proposition~\ref{prop:weakcompactness}, the function $\frak m(x, t)$ is a constant integer on $\Si_{\infty, t}\times (-1, 1)$. Namely, $\frak m(x, t)$ is independent of $x$ and $t$.
\label{lma:PE16_1}
\end{lem}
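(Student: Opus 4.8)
The plan is to show that $\frak m(x,t)$ is locally constant on the connected space-time set $\Si_{\infty,t}\times(-1,1)$, and then invoke connectedness to conclude it is globally constant. The proof breaks naturally into three steps: (i) $\frak m(\cdot,t)$ is constant in $x$ on each time-slice $\Si_{\infty,t}$; (ii) $\frak m$ does not jump across the space-time singular set $\cS$; (iii) $\frak m$ is constant in $t$. Throughout I would exploit the fact that away from $\cS$ the convergence $\Si_{i,t}\to\Si_{\infty,t}$ is smooth with some integer multiplicity, so that $\Te(x,r,t)$ for small $r$ equals exactly $\frak m(x,t)$ times the (nearly Euclidean, by Lemma~\ref{lem:ratio}) area ratio of a single smooth sheet.

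First I would handle the constancy in $x$ away from singularities. Fix $t$ and two nearby regular points $x_1,x_2\in\Si_{\infty,t}\setminus\cS_t$ lying in a connected piece of the smooth limit. For $i$ large, $\Si_{i,t}$ consists of $\frak m(x_1,t)$ (resp. $\frak m(x_2,t)$) nearly-parallel smooth graphical sheets over a neighborhood of $x_1$ (resp. $x_2$) by the curvature bounds from Theorem~\ref{theo:pseudoA} and Corollary~\ref{cor:energy}. Since these sheets are embedded and the limit surface $\Si_{\infty,t}\cup\cS_t$ is connected and smooth (by Proposition~\ref{prop:weakcompactness}), one can transport the local sheet count along a path from $x_1$ to $x_2$ avoiding $\cS_t$: the number of sheets of $\Si_{i,t}$ lying over a small disk cannot change as the center moves continuously through the regular set, because the sheets are disjoint graphs that vary smoothly. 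This gives $\frak m(x_1,t)=\frak m(x_2,t)$, i.e. $\frak m(\cdot,t)$ is constant on each connected component of the regular set; since $\Si_{\infty,t}\cup\cS_t$ is connected and $\cS_t$ is finite, it is constant on all of $\Si_{\infty,t}$. Call this common value $\frak m(t)$.

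Next, the constancy in $t$. This is where I expect the main difficulty, and where the argument should differ from \cite{[LW]} (in that paper the limit is always a plane, so there is more rigidity). Fix $t_0$ and a regular point $x_0\in\Si_{\infty,t_0}\setminus\cS_{t_0}$; since $\cS$ is a closed space-time set whose time-slices are finite, for a short time interval around $t_0$ the point stays regular and the curvature estimate from the two-sided pseudolocality theorem (Theorem~\ref{theo:pseudoA}) holds uniformly on a fixed parabolic neighborhood of $(x_0,t_0)$. Hence on that neighborhood $\Si_{i,t}$ is, for $i$ large, a union of exactly $\frak m(t_0)$ disjoint smooth graphical sheets moving by (nearly) mean curvature flow, with uniform higher-order estimates; the sheet count therefore persists for $t$ near $t_0$, giving $\frak m(t)=\frak m(t_0)$ for $t$ in a neighborhood of $t_0$. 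Thus $t\mapsto\frak m(t)$ is locally constant, hence constant on $(-1,1)$ by connectedness of the interval. Combining with Step~(i), $\frak m(x,t)$ is a single integer.

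The one subtlety to treat carefully is that in Step (i)/(iii) the ``sheet count'' argument must be made over a fixed scale independent of $i$, and near $\cS$ one must argue that the count on either side of a singular point agrees — otherwise $\frak m(t)$ could a priori be multivalued on $\Si_{\infty,t}$ near an isolated singularity. Here I would use a monotonicity/semicontinuity property of $\Te(x,r,t)$ in $r$ (upper semicontinuity of the Gaussian density, a consequence of Huisken monotonicity for the limit self-shrinker flow, together with the area bounds \eqref{eqn:202}–\eqref{eqn:203}): the density $\Te(x,r,t)$ at a singular point $x\in\cS_t$ dominates the limit of the sheet counts from the regular set approaching $x$, and in fact equals it because the singular set carries no area. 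This forces the two one-sided sheet counts around a singular point to coincide with the density, hence with each other, closing the gap. The remaining steps are routine given the compactness package already established in Proposition~\ref{prop:weakcompactness} and the $\ee$-regularity of Corollary~\ref{cor:energy}.
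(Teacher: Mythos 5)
Your three-step decomposition (constancy in $x$ on the regular set, no jump across $\cS_t$, constancy in $t$) matches the paper's proof exactly, and Steps (i)–(ii) are correct. In Step (ii) you invoke upper semicontinuity of the density plus the fact that $\cS_t$ carries no area; the paper carries out the same idea more concretely, by computing $\Area(\Si_{i,t_0}\cap B_r(p_0))$ on the annulus $B_r\setminus B_\ee$ (where the convergence is smooth with multiplicity $m$) and bounding the contribution of $B_\ee$ by $N\pi\ee^2$ using \eqref{eqn:202}, then sending $\ee\to 0$. Both give the same conclusion.

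The soft spot is Step (iii), precisely the step you flag as where the argument must diverge from \cite{[LW]}. You assert that ``the sheet count persists'' because the sheets are disjoint graphs varying smoothly with uniform estimates, but uniform curvature bounds from two-sided pseudolocality by themselves do not prevent two of the followed sheets from becoming the same connected component of $\Si_{i,t}\cap B_\rho(\cdot)$ at a nearby time $t$: a priori, a short path in $\Si_{i,t}$ joining the tracked points $x_{i,k,t}$ and $x_{i,k',t}$ could exist even though no such short path exists at $t_0$. The paper closes exactly this gap by using the hypothesis $\max|H|\le\Lambda$ a second time, through the displacement estimate $|\x(p,t)-\x(q,t)|\le|\x(p,s)-\x(q,s)|+2\Lambda|t-s|$ (inequality \eqref{Y:005}): if such a short connecting curve $\gamma_{\tau_0}$ existed at time $t_0+\tau_0$, flowing it back to $t_0$ keeps it short, forcing the $k$-th and $k'$-th components to coincide at $t_0$ and contradicting \eqref{Y:004}. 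That is the statement \eqref{Y:003} in the paper, and it is the crux of Step 3; your sketch should make this point explicit rather than appealing to smooth dependence. With \eqref{Y:003} in hand, one only gets $\m(x_t,t)\ge m_0$ at first (the limit points $x_t$ may themselves be singular), and the paper then feeds this back through Step 2 and a two-sided argument in $t$ to upgrade the inequality to equality; your phrasing ``the sheet count persists, so $\frak m(t)=\frak m(t_0)$'' elides this upgrade as well.
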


\begin{proof}The proof can be divided into three steps.\\

{\it Step 1. For each $t\in (-1, 1)$, $\frak m(x, t)$ is  constant on $\Si_{\infty, t}\b \cS_t$.} Fix $t_0\in (-1, 1), R>0$ and  $x_0\in (\Si_{\infty, t_0}\cap B_R(0))\b \cS_{t_0}$.
There exists $r_0>0$ such that for large $i$,
\beq
|A|(x, t_0)\leq \frac 1{r_0}, \quad \, \forall\; x\in B_{r_0}(x_0)\cap \Si_{\infty, t_0}.\label{eqE:001xa}
\eeq
By Lemma \ref{lem:graph1}, we can assume $r_0$ small such that $B_{r_0}(x_0)\cap \Si_{\infty, t_0}$ can be written as a graph over the tangent plane of $\Si_{\infty,  t_0}$ at $x_0$. Since $x_0$ is regular, we have $d(x_0, \cS_{t_0})>0$.
Let $r_1=\frac 14\min\{r_0, d(x_0, \cS_{t_0})\}$. For any $p\in B_{r_1}(x_0)\cap \Si_{i, t_0}$, we have $B_{r_1}(p)\subset B_{r_0}(x_0)$. Thus, (\ref{eqE:001xa}) implies that for large $i,$
\beq
|A|(x, t_0)\leq \frac 1{r_1}, \quad \, \forall\; x\in B_{r_1}(p)\cap \Si_{i, t_0}.\label{eqE:002}
\eeq By Lemma \ref{lem:ratio}, for any $\dd>0$ there exists $\rho_0=\rho_0(r_1, \dd)\in (0, \frac {r_1}{200})$ such that for any $r\in (0, \rho_0)$ and any $p\in B_{\frac {r_1}2}(x_0)\cap \Si_{i, t_0}$  we have
\beq
 1-\dd\leq \frac {\Area_{g_i(t_0)}(C_p(B_r(p)\cap \Si_{i, t_0}))}{\pi r^2}\leq 1+\dd. \label{eqE:003}
\eeq
Suppose that $B_{r_1}(x_0)\cap \Si_{i, t_0}$ has $m_i$ connected components, where $m_i$ is an integer bounded by a constant independent of $i$ by Proposition \ref{prop:weakcompactness}. After taking a subsequence of $\{\Si_{i, t_0}\}$ if necessary, we can assume that $m_i$ are the same
integer denoted by $m$ with $m\geq 1.$
For any $x\in B_{\frac {r_1}2}(x_0)\cap \Si_{\infty, t_0}$, we denote by $\al_x$ the normal line passing through $x$ of $\Si_{\infty, t_0}$. Since each component of $B_{r_1}(x_0)\cap \Si_{i, t_0}$ converges to $B_{r_1}(x_0)\cap \Si_{\infty, t_0}$ smoothly and $B_{r_1}(x_0)\cap \Si_{\infty, t_0}$ is a graph over the tangent plane of $\Si_{\infty, t_0}$ at $x_0$, $\al_x$ intersects transversally each component of $\Sigma_{i, t_0}$  at exactly one point.
Suppose that
$$\al_x\cap \Big(B_{r_1}(x_0)\cap \Si_{i, t_0}\Big)=\{p_i^{(1)}, p_i^{(2)}, \cdots, p_i^{(m)}\}. $$
Then (\ref{eqE:003})  implies that for any integer $j$ with $1\leq j\leq m$ and any $r\in (0, \rho_0)$,
\beq
1-\delta \leq \frac {\Area_{g_i(t_0)}(C_{p_i^{(j)}}(B_r(p_i^{(j)})\cap \Si_{i, t_0}))}{\pi r^2}\leq 1+\dd. \label{eqE:004q}
\eeq
After shrinking $r_0$ if necessary, we can assume that $B_r(x)\cap \Si_{\infty, t_0}$
has only one component for any $r\in (0, \frac {r_1}2)$ and any $x\in B_{\frac {r_1}2}(x_0)\cap \Si_{\infty, t_0}$.
Since for any $1\leq j\leq m$ and $r\in (0, \rho_0)$ we have $p_i^{(j)}\ri x$ and
$C_{p_i^{(j)}}(B_r(p_i^{(j)})\cap \Si_{i, t_0})$ converges smoothly to $B_r(x)\cap \Si_{\infty, t_0}$ as $i\ri +\infty$,  (\ref{eqE:004q}) implies that
 \beq
 m(1-\delta) \leq \lim_{i\ri +\infty}\frac {\Area_{g_i(t_0)}(B_r(x)\cap \Si_{i, t_0})}{\pi r^2}\leq m(1+\dd).
 \eeq
In other words, for any $x\in B_{\frac {r_1}2}(x_0)\cap \Si_{\infty, t_0}$ and any $r\in (0, \rho_0)$ we have
\beq m(1-\delta) \leq \Te(x, r, t_0)\leq m(1+\dd). \label{eqE:005}\eeq
Taking $r\ri 0$ in (\ref{eqE:005}), we have
$$\m(x, t_0)=m,\quad \forall\; x\in B_{\frac {r_1}2}(x_0)\cap \Si_{\infty, t_0}.$$
By the connectedness of $\Si_{\infty, t_0}\b \cS_{t_0}$, we know that $\m(x, t_0)$ is constant on  $\Si_{\infty, t_0}\b \cS_{t_0}$.\\

{\it Step 2. For each $t\in (-1, 1)$, $\frak m(x, t)$ is  constant on $\Si_{\infty,  t}$.}  Fix $t_0\in (-1, 1)$. It suffices to consider a singular point $p_0\in \cS_{t_0}$. Suppose that $B_{r}(p_0)\cap \Si_{\infty, t_0}$ has no other singular points except $p_0$ for any $r\in (0, r_0).$ Then all points in $(B_r(p_0)\b B_{\ee}(p_0))\cap \Si_{\infty, t_0}$ are regular and $(B_r(p_0)\b B_{\ee}(p_0))\cap \Si_{i, t_0}$ has $m$ connected components.  Thus, we have
\beqn
\Area_{g_i(t_0)}(\Si_{i, t_0}\cap B_r(p_0))&\leq&\Area_{g_i(t_0)}(\Si_{i, t_0}\cap (B_r(p_0)\b B_{\ee}(p_0)))+\Area_{g_i(t_0)}(\Si_{i, t_0}\cap B_{\ee}(p_0))\nonumber\\
&\leq &\Area_{g_i(t_0)}(\Si_{i, t_0}\cap (B_r(p_0)\b B_{\ee}(p_0)))+N \ee^2,\label{eqF:001xx}
\eeqn where we used (\ref{eqn:202}) in the last inequality. Since each  component of $\Si_{i, t_0}\cap (B_r(p_0)\b B_{\ee}(p_0)) $ converges to $(B_r(p_0)\b B_{\ee}(p_0))\cap \Si_{\infty}$ smoothly, we have
\beq
\lim_{i\ri +\infty}\Area_{g_i(t_0)}(\Si_{i, t_0}\cap (B_r(p_0)\b B_{\ee}(p_0)))=m\,\Area_{g_\infty(t_0)}(\Si_{\infty, t_0}\cap (B_r(p_0)\b B_{\ee}(p_0))).\label{eqF:002xx}
\eeq  Note that  $m$ is also the multiplicity at each regular point in $\Si_{\infty, t_0}$ by Step 1.
Combining (\ref{eqF:001xx}) with (\ref{eqF:002xx}), we have
\beqn &&m\,\Area_{g_\infty(t_0)}(\Si_{\infty, t_0}\cap (B_r(p_0)\b B_{\ee}(p_0)))\nonumber\\&\leq&
\lim_{i\ri +\infty}\Area_{g_i(t_0)}(\Si_{i, t_0}\cap B_r(p_0))\nonumber\\&\leq& m\,\Area_{g_\infty(t_0)}(\Si_{\infty, t_0}\cap (B_r(p_0)\b B_{\ee}(p_0)))+N \ee^2.\label{eqF:003}
\eeqn
Taking $\ee\ri 0$ in (\ref{eqF:003}), we have
\beq
\lim_{i\ri +\infty}\Area_{g_i(t_0)}(\Si_{i, t_0}\cap B_r(p_0))=m\,\Area_{g_\infty(t_0)}(\Si_{\infty, t_0}\cap B_r(p_0)). \label{eqF:004}
\eeq Thus, we have
\beqs
\m(p_0, t_0)&=&\lim_{r\ri 0}\frac {\Area_{g_i(t_0)}(\Si_{i, t_0}\cap B_r(p_0))}{\pi r^2}\\
&=&m \lim_{r\ri 0}\frac {\Area_{g_\infty(t_0)}(\Si_{\infty, t_0}\cap B_r(p_0))}{\pi r^2}=m.
\eeqs
This implies that the multiplicity of each singular point is the same as that of any regular point. \\

{\it Step 3. $\frak m(x, t)$ is constant in $t$.} Fix any $t_0\in (-1, 1), R>0$ and $x_0\in (\Si_{\infty, t_0}\cap B_R(0))\b\cS_{t_0}$. There exists $r_0>0$ such that for large $i$,
\beq
|A|(x, t_0)\leq \frac 1{r_0}, \quad \forall \,x\in B_{r_0}(x_0)\cap \Si_{\infty, t_0},\label{eq:lem218:001}
\eeq and for any $r\in (0, r_0)$ the surface $B_{r}(x_0)\cap \Si_{\infty, t_0}$ has only one component.
Let $m_0=\frak m(x_0, t_0)$ and $r_1=\frac 14\min\{r_0, d(x_0, \cS_{t_0})\}>0$. For large $i$,   $B_{r_1}(x_0)\cap \Si_{i, t_0}$ has $m_0$ connected components, which we denote by $\Om_{i, 1}, \cdots, \Om_{i, m_0}$. Since for each integer $k\in [1, m_0]$ the component $\Om_{i, k}$ converges smoothly to $\Si_{\infty, t_0}\cap B_{r_1}(x_0)$ as $i\ri +\infty$. Similar to Step 1, we can find  $x_{i, k}\in \Om_{i, k}$ such that $\lim_{i\ri +\infty}d(x_{i, k}, x_0)=0 $. By the choice of $r_1$, we have
\beq
B_{r_1}(x_{i, k})\subset B_{r_0}(x_0). \label{Z:016}
\eeq
Thus, (\ref{eq:lem218:001}) implies that for any integer $k\in [1, m_0]$ and large $i$,
\beq
|A|(x, t_0)\leq \frac 1{r_1}, \quad \forall \,x\in C_{x_{i, k}}(B_{r_1}(x_{i, k})\cap \Si_{i, t_0}).\label{Z:017}
\eeq
By Lemma \ref{lem:ratio}, for any $\dd>0$ there exists $\rho_0=\rho_0(r_1, \dd)\in (0, \frac {r_1}2)$ such that for any $r\in (0, \rho_0)$ we have
\beq
1-\dd\leq \frac {\Area_{g_{i, t_0}}(C_{x_{i, k}}(B_{r}(x_{i, k})\cap \Si_{i, t_0}))}{\pi r^2}\leq 1+\dd. \label{Y:001}
\eeq
Note that by (\ref{Z:016}) and the definition of $\Om_{i, k}$,   for any large $i $ we have
\beq
C_{x_{i, k}}(B_{2\rho_0}(x_{i, k})\cap \Si_{i, t_0})\neq C_{x_{i, k'}}(B_{2\rho_0}(x_{i, k'})\cap \Si_{i, t_0}),\quad \forall \; k\neq k'. \label{Y:004}
\eeq
Using (\ref{Z:017}) and the assumption that $\max_{\Si_{i, t}}|H|\leq \La$ by (\ref{eq:A203}), Theorem \ref{theo:pseudoA} implies that  there exists $\eta(\La)$ and $ \ee(\La)>0$ such that
\beq
|A|(x, t)\leq \frac 1{\ee\, r_1},\quad \forall\; x\in C_{x_{i, k, t}}(B_{\frac 1{16}r_1}(x_{i, k, t})\cap \Si_{i, t}),\quad t\in [t_0-\eta r_1^2, t_0+\eta r_1^2],
\eeq
 where $x_{i, k, t}=\x_t(\x_{t_0}^{-1}(x_{i, k}))$. Similar to (\ref{Y:001}), there exists $\rho_1=\rho_1(r_1, \dd)\in (0, \frac {\rho_0}{10})$ such that for any $r\in (0, \rho_1)$ we have
\beq
1-\dd\leq \frac {\Area_{g_{i, t}}(C_{x_{i, k, t}}(B_{r}(x_{i, k, t})\cap \Si_{i, t}))}{\pi r^2}\leq 1+\dd, \quad t\in [t_0-\eta r_1^2, t_0+\eta r_1^2]. \label{Y:002}
\eeq\\

We show that we can choose $\rho_1$ and $\tau=\tau(r_0, \dd, \La)\in (0, \eta r_1^2]$ small such that for any $k\neq k'$
\beq
C_{x_{i, k, t}}(B_{\rho_1}(x_{i, k, t})\cap \Si_{i, t})\neq C_{x_{i, k', t}}(B_{\rho_1}(x_{i, k', t})\cap \Si_{i, t}), \quad \forall\; t\in [t_0-\tau, t_0+\tau].\label{Y:003}
\eeq  Suppose not, we can find $\tau_0\in (0, \eta r_1^2]$,  a continuous curve $\ga_{\tau_0}(s)(s\in [0, 1])$ connecting $x_{i, k, t_0+\tau_0}$ and $x_{i, k', t_0+\tau_0}$ with
\beq
\ga_{\tau_0}\subset B_{\rho_1}(x_{i, k, t_0+\tau_0})\cap \Si_{i, t_0+\tau_0},\quad \ga_{\tau_0}\subset B_{\rho_1}(x_{i, k', t_0+\tau_0})\cap \Si_{i, t_0+\tau_0}. \label{Y:006}
\eeq
Let $\ga_{\tau}=\x_{t_0+\tau}(\x_{t_0+\tau_0}^{-1}(\ga_{\tau_0}))$. Then $\ga_{0}(s)(s\in [0, 1])$ is a curve connecting $x_{i, k}$ and $x_{i, k'}$. Since the mean curvature satisfies $\max_{\Si_{i, t}}|H|\leq \La$, we have
\beq
|\x(p, t)-\x(q, t)|\leq |\x(p, s)-\x(q, s)|+2\La|t-s|. \label{Y:005}
\eeq
For small $\tau_0$,  (\ref{Y:005}) with (\ref{Y:006}) implies that
\beq
\ga_{0}\subset B_{\rho_0}(x_{i, k})\cap \Si_{i, t_0},\quad \ga_{\tau_0}\subset B_{\rho_0}(x_{i, k})\cap \Si_{i, t_0},
\eeq which contradicts (\ref{Y:004}). Therefore, (\ref{Y:003}) holds.\\

  Since $x_{i, k}\in B_{r_1}(x_0)$ and the mean curvature is uniformly bounded, $x_{i, k, t}$ lies in a bounded domain for any $t\in [t_0-\tau, t_0+\tau]$. Thus,  for each integer $k\in [1, m_0]$ and any $t\in [t_0-\tau, t_0+\tau]$ a subsequence of $C_{x_{i, k, t}}(B_{\rho_1}(x_{i, k, t})\cap \Si_{i, t})$ converges to $C_{x_t}(B_{\rho_1}(x_{t})\cap \Si_{\infty, t})$ smoothly, where $x_t\in \Si_{\infty, t}$ is a limit point of $\{x_{i, k, t}\}_{i=1}^{\infty}$. Then (\ref{Y:002}) and (\ref{Y:003}) imply that for any $r\in (0, \rho_1)$ and $t\in [t_0-\tau, t_0+\tau]$ we have
\beq
\lim_{i\ri+\infty}\frac {\Area_{g_{i}(t)}(B_{r}(x_{i, t})\cap \Si_{i, t})}{\pi r^2}\geq\lim_{i\ri +\infty}\frac {\Area_{g_{i}(t)}(C_{x_{i, t}}(B_{r}(x_{i, t})\cap \Si_{i, t}))}{\pi r^2} \geq m_0(1-\dd).
\eeq
Thus,  we have
\beq
\m(x_t, t)\geq m_0=\m(x_0, t_0),\quad \forall\; t\in [t_0-\tau, t_0+\tau].  \label{eq:lem218:002}
\eeq
By  {Step 2}, (\ref{eq:lem218:002}) implies that
for any $x\in \Si_{\infty, t}$ and $y\in \Si_{\infty, t_0}$ we have
\beq
\m(x, t)\geq\m(y, t_0),\quad \forall\; t\in [t_0-\tau, t_0+\tau].
\eeq
Thus, the multiplicity $\m(x, t)$ is a constant independent of $x$ and $t$. The lemma is proved.

\end{proof}

To characterize the singular and regular points in $\Si_{\infty, t}$, we have the following result.

\begin{lem}\label{lem:singular}The same  assumption as in Proposition~\ref{prop:weakcompactness}. Fix any $t_0\in (-1, 1)$ and any $\dd, R>0$.
\begin{enumerate}
  \item[(1).]If $x_0\in (\Si_{\infty, t_0}\cap B_R(0))\b \cS_{t_0}$ and $x_i\in \Si_{i, t_0}$ with $x_i\ri x_0$, then there exists a positive number  $r'=r'(\dd, \Si_{\infty, t_0}, R, x_0, \cS_{t_0})$ such that for any $r\in (0, r')$ we have
\beq
1-\dd\leq \lim_{i\ri +\infty}\frac {\Area_{g_i(t_0)}(C_{x_i}(B_{r}(x_i)\cap \Si_{i, t_0}))}{\pi r^2}\leq 1+\dd. \label{lem:singular:001}
\eeq
  \item[(2).] If $x_0\in \cS_{t_0}\cap B_R(0)$,  then there exist $r'=r'(\dd, \Si_{\infty, t_0}, R, \La,  x_0, \cS_{t_0})>0$ and a sequence $x_i\in \Si_{i, t_0}$ with $x_i\ri x_0$ such that for any $r\in (0, r')$  we have
\beq
\lim_{i\ri +\infty}\frac {\Area_{g_i(t_0)}(C_{x_i}(B_{r}(x_i)\cap \Si_{i, t_0}))}{\pi r^2}\geq 2(1-\dd).\label{lem:singular:002}
\eeq

\end{enumerate}

\end{lem}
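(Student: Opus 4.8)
\textbf{Proof proposal for Lemma~\ref{lem:singular}.}
The plan is to reduce both statements to the local graphical structure already established in Lemma~\ref{lem:graph1} and Lemma~\ref{lem:ratio}, together with the smooth convergence away from $\cS$ from Proposition~\ref{prop:weakcompactness} and the constancy of the multiplicity from Lemma~\ref{lem:multiplicity}. For part (1), since $x_0\in\Si_{\infty,t_0}\setminus\cS_{t_0}$ is a regular point, the convergence is smooth in a fixed ball $B_{r_0}(x_0)$; hence for large $i$ the second fundamental form of $\Si_{i,t_0}$ is bounded by $2/r_0$ on $B_{r_0/2}(x_0)\cap\Si_{i,t_0}$, exactly as in \eqref{eqE:001xa}--\eqref{eqE:002} of the proof of Lemma~\ref{lem:multiplicity}. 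Since $x_i\to x_0$, for large $i$ we have $B_{r_0/4}(x_i)\subset B_{r_0/2}(x_0)$, so $|A|\le 2/r_0$ on $C_{x_i}(B_{r_0/4}(x_i)\cap\Si_{i,t_0})$. Applying Lemma~\ref{lem:ratio} with $r_0$ replaced by $r_0/2$ and the chosen $\dd$, there is $r'=\rho_0(r_0,\dd)$ so that for every $r\in(0,r')$ the ratio of $C_{x_i}(B_r(x_i)\cap\Si_{i,t_0})$ lies in $[1-\dd,1+\dd]$; passing to the limit $i\to\infty$ gives \eqref{lem:singular:001}. (Taking $r'$ slightly smaller than $\rho_0$ causes no harm.)

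For part (2), the point $x_0\in\cS_{t_0}$ is a singular point, and by part (2) of Proposition~\ref{prop:weakcompactness} it is an isolated point of $\cS_{t_0}$, so there is $r_0>0$ with $B_{r_0}(x_0)\cap\cS_{t_0}=\{x_0\}$. The key observation is that the multiplicity, which is the constant integer $\m\ge 1$ of Lemma~\ref{lem:multiplicity}, is at least $2$ at a singular point of a self-shrinker that is smooth and embedded after including $\cS_{t_0}$: if $\m=1$ the convergence would be smooth multiplicity-one near $x_0$ and $x_0$ could not be a singular point, contradiction. Hence $\m\ge 2$. Now I would argue as in Step~1 of Lemma~\ref{lem:multiplicity}: on an annular region $(B_{r_0}(x_0)\setminus B_\sigma(x_0))\cap\Si_{\infty,t_0}$ all points are regular, $(B_{r_0}(x_0)\setminus B_\sigma(x_0))\cap\Si_{i,t_0}$ has $\m$ components, and through a nearby regular point $x$ the normal line meets $\Si_{i,t_0}$ in $\m$ points $p_i^{(1)},\dots,p_i^{(\m)}$, each of which (for small enough $r$) carries a sheet of area ratio in $[1-\dd,1+\dd]$ by Lemma~\ref{lem:ratio}. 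Since $x_i\to x_0$ and these sheets accumulate, the component $C_{x_i}(B_r(x_i)\cap\Si_{i,t_0})$ must contain at least two of these $\m\ge 2$ transverse sheets once $r$ is not too small relative to the separation of the $p_i^{(j)}$; summing their areas yields area ratio at least $2(1-\dd)$, which is \eqref{lem:singular:002}.

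The main obstacle is the bookkeeping in part (2): one must be careful that the single connected component $C_{x_i}(B_r(x_i)\cap\Si_{i,t_0})$ through $x_i$ genuinely picks up multiplicity $\ge 2$ near the singular point rather than just a single sheet. The clean way to handle this is to note that at scale $r$ with $B_r(x_i)$ centered near $x_0$ the surface $\Si_{i,t_0}\cap B_r(x_i)$ converges in Hausdorff distance (part (3) of Proposition~\ref{prop:weakcompactness}) to $\Si_{\infty,t_0}\cap B_r(x_0)$ with multiplicity $\m$, and the total area ratio of $\Si_{i,t_0}\cap B_r(x_i)$ tends to $\m$ times that of $\Si_{\infty,t_0}\cap B_r(x_0)$, which tends to $\m\ge 2$ as $r\to 0$ by \eqref{eqF:004}. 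If one can show that all of this area is, for small $r$, contained in the single component $C_{x_i}$ — which follows because for $x_i$ within distance $o(r)$ of $x_0$ and $i$ large the sheets near $x_0$ are mutually connected inside $B_r(x_i)$ through the singular point in the limit, hence through points of $\Si_{i,t_0}$ that are $o(r)$-close to $x_0$ — then \eqref{lem:singular:002} follows directly. I would phrase the final step in terms of this area comparison rather than counting transverse intersections, since it is cleaner and avoids delicate genericity of the slicing line.
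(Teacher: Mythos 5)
Your proof of part (1) is correct and follows essentially the same path as the paper: regularity of $x_0$ gives a local bound on $|A|$, which is inherited by $\Si_{i,t_0}$ for large $i$, and Lemma~\ref{lem:ratio} does the rest.

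Part (2), however, has a genuine gap at the crux of the argument. You write that the multiplicity is $\ge 2$ at a singular point because ``if $\m=1$ the convergence would be smooth multiplicity-one near $x_0$ and $x_0$ could not be a singular point, contradiction,'' but this implication is exactly what needs to be \emph{proved}, not something one can invoke. A priori, a sequence could converge smoothly with multiplicity one to $\Si_{\infty,t_0}$ on punctured neighborhoods of $x_0$ while the second fundamental forms still blow up on a shrinking region (e.g.\ a small neck or topological feature disappearing at a point); the uniform area-ratio bound does not by itself rule out energy concentration. The paper closes this gap by a nontrivial rescaling argument: one lets $Q_i = \max_{C_{x_i}(B_{r_0}(x_i)\cap\Si_{i,t_0})}|A| \to \infty$, uses the area-ratio control (under the hypothesis $m=1$) to show the area ratio of small balls around the blow-up points $x_i'$ is $\le 1+3\dd$, rescales the flows by $Q_i$ so the mean curvature goes to zero, invokes pseudolocality and flow compactness to extract a complete smooth minimal surface $\td\Si_\infty$ with $|A|\le 1$, $|A|(0)=1$, and area ratio $\le 1+3\dd$ on all scales, and finally contradicts a gap theorem for complete minimal surfaces (Lemma~3.6 of~\cite{[LW]}) which would force $\td\Si_\infty$ to be a plane. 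None of this machinery appears in your proposal, and $\m\ge 2$ does not follow without it.

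A secondary issue: your bookkeeping sentence ``once $r$ is not too small relative to the separation of the $p_i^{(j)}$'' is the wrong logic — the claim is for every $r<r'$ with $r'$ fixed and $i\to\infty$, so the relevant observation is that the separations of the $p_i^{(j)}$ shrink to zero as $i\to\infty$ (since they all converge to $x_0$), so \emph{any} fixed $r$ eventually dominates them. Your alternative phrasing via Hausdorff convergence and the total area comparison \eqref{eqF:004} does handle this point more cleanly (and is closer to what the paper does with the convergence of $C_{x_i}(B_{r_0}(x_i)\cap\Si_{i,t_0})\setminus B_r(x_i)$ with multiplicity $m$ on the annulus), so I would keep that version — but it still relies on the unproved claim $m\ge 2$.
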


\begin{proof}(1).   Since $\Si_{\infty, t_0}$ is a smooth self-shrinker, there exists $r_0=r_0(\Si_{\infty, t_0}, R)>0$ such that for large $i$ we have
\beq
|A|(x, t_0)\leq \frac 1{r_0}, \quad \, \forall\; x\in B_{r_0}(x_0)\cap \Si_{\infty, t_0}.\label{eqE:001x}
\eeq
Since $\cS_{t_0}$ is locally finite and $x_0\in (\Si_{\infty, t_0}\cap B_R(0))\b \cS_{t_0}$, the distance from $x_0$ to $\cS_{t_0}$ satisfies $d(x_0, \cS_{t_0})>0.$
Let $r_1=\frac 12\min\{r_0, d(x_0, \cS_{t_0})\}$. Then for large $i$, we have
\beq
|A|(x, t_0)\leq \frac 1{r_1}, \quad \, \forall\; x\in B_{r_1}(x_i)\cap \Si_{i, t_0}.\label{eqE:001y}
\eeq
By Lemma \ref{lem:ratio}, for any $\dd>0$ there exists $r'= \rho(\dd, r_1)>0$ such that    for any $r\in (0, r')$  the area ratio of $C_{x_i}(B_r(x_i)\cap \Si_{i, t_0})$ is given by
\beq 1-\dd\leq \frac {\Area_{g_{i, t_0}}(C_{x_i}(B_r(x_i)\cap \Si_{i, t_0}))}{\pi r^2}\leq 1+\dd. \label{lem:singular:003} \eeq Thus,   (\ref{lem:singular:001}) holds.

(2). Let $x_0\in \cS_{t_0}\cap B_R(0)$ and $r_0=r_0(\Si_{\infty, t_0}, R, x_0, \cS_{t_0})>0$ such that $\Si_{\infty, t_0}\cap B_{2r_0}(x_0)$ has only one component and no other singular points except $x_0. $
Let $ Q_i:=\max_{\overline{B_{r_0}(x_0)}\cap \Si_{i, t_0}}|A|\ri +\infty. $ Then $Q_i$ is achieved by some point $x_i\in \overline{B_{r_0}(x_0)}\cap \Si_{i, t_0}$ with $x_i\ri x_0.$
As in Step 2 of the proof of Lemma \ref{lem:multiplicity}, for any $r\in (0, r_0)$ we have
\beq
\lim_{i\ri +\infty}\frac {\Area_{g_i(t_0)}(C_{x_i}(B_r(x_i)\cap \Si_{i, t_0}))}{\pi r^2}=m \frac {\Area_{g_{\infty}(t_0)}(B_r(x_0)\cap \Si_{\infty, t_0})}{\pi r^2},\label{lem:singular:005}
\eeq where $m$ is a positive integer.
Note that Lemma \ref{lem:ratio} implies that for any $\dd>0$ there exists $r_0'=r_0'(\dd, \Si_{\infty, t_0}, R, x_0, \cS_{t_0})\in (0, r_0)$ such that
\beq
\frac {\Area_{g_{\infty}(t_0)}(B_r(x_0)\cap \Si_{\infty, t_0})}{\pi r^2}\leq 1+\dd,\quad \forall\; r\in (0, r_0']. \label{lem:singular:006}
\eeq
Assume that $m=1$. Then (\ref{lem:singular:006}) and (\ref{lem:singular:005}) imply that for  large $i$, \beq
\frac {\Area_{g_i(t_0)}(C_{x_i}(B_{r'}(x_i)\cap \Si_{i, t_0}))}{\pi {r'}^2}\leq 1+2\dd,\quad \forall\; r\in (0, r_0']. \label{lem:singular:011}
\eeq We choose $r'=r'(\dd, \Si_{\infty, t_0}, R, \La, x_0, \cS_{t_0})\in (0, r_0')$ small such that $(1+2\dd)e^{\La r'}\leq 1+3\dd.$ Since the mean curvature satisfies $\max_{\Si_{i, t}\times (-1, 1)}|H|\leq \La$, by Lemma 3.5 of \cite{[LW]} for any $r\in (0, r')$ we have
\beq
\frac {\Area_{g_i(t_0)}(C_{x_i}(B_{r}(x_i)\cap \Si_{i, t_0}))}{\pi {r}^2}\leq e^{\La r'} \frac {\Area_{g_i(t_0)}(C_{x_i}(B_{r'}(x_i)\cap \Si_{i, t_0}))}{\pi {r'}^2}\leq 1+3\dd, \label{lem:singular:015}
\eeq  where we used (\ref{lem:singular:011}).  We rescale the surface by
\beq
\td \Si_{i, s}=Q_i(\Si_{i, t_0+Q_i^{-2}s}-x_i),\quad \forall\; s\in (-(1+t_0)Q_i^2, (1-t_0)Q_i^2).\nonumber
\eeq
Then  $\{\td \Si_{i, s}, -1<s<1\}$ is a sequence of mean curvature flow with
  \beq
\max_{\td \Si_{i, s}\times (-1, 1)}|H|\leq Q_i^{-1}\La\ri 0.\nonumber
\eeq
By the choice of $Q_i$ we have
 \beq
\sup_{C_0(\td \Si_{i, 0}\cap B_{\frac 12Q_ir_0}(0))}|A|\leq 1.
\eeq
By Theorem 3.8 of \cite{[LW]}, there exists a universal constant $\ee$ such that
\beq
\sup_{C_0(\td \Si_{i, s}\cap B_{\frac 14Q_ir_0}(0))}|A|\leq \frac 1{\ee},\quad \forall \;s\in (-1, 1).\nonumber
\eeq Thus, by the compactness of mean curvature flow (cf. Theorem 2.6 of \cite{[LW]})   the surface $C_{0}(\td \Si_{i, 0}\cap B_{\frac 12 Q_ir_0}(0))$ converges in smooth topology to a complete smooth minimal surface $\td \Si_{\infty}$ with \beq \sup_{\td \Si_{\infty}}|A|\leq 1,\quad |A|(0)=1.  \label{lem:singular:010}\eeq
Since (\ref{lem:singular:015}) implies that
\beq
\frac {\Area_{\td g_i(0)}(C_0(B_r(0)\cap \td \Si_{i, 0}))}{\pi r^2}\leq 1+3\dd, \quad \forall\; r\in (0, Q_ir'), \label{lem:singular:012}
\eeq
we have
\beq
\frac {\Area_{\td g_\infty(0)}(C_0(B_r(0)\cap \td \Si_{\infty}))}{\pi r^2}\leq 1+3\dd, \quad \forall\; r>0. \label{lem:singular:013}
\eeq
By Lemma 3.6 of \cite{[LW]}, there exists a universal constant $\dd_0>0$ such that if we choose $\dd=\frac {\dd_0}4$, then $\td \Si_{\infty}$ must be a plane, which contradicts (\ref{lem:singular:010}).

Therefore, $m\geq 2$ in (\ref{lem:singular:005}) for any $r\in (0, r')$ where $r'=r'(\dd, \Si_{\infty, t_0}, R, \La, x_0, \cS_{t_0})$.
By Lemma \ref{lem:ratio} we can find $r_1=r_1(\dd, \Si_{\infty, t_0}, R, \La, x_0, \cS_{t_0})\in (0, r')$ such that for any $r\in (0, r_1)$
\beq
 \frac {\Area_{g_{\infty}(t_0)}(B_r(x_0)\cap \Si_{\infty, t_0})}{\pi r^2}\geq 1-\dd. \label{lem:singular:004}
\eeq
Since $m\geq 2$, for any $r\in (0, r_1)$ we have
\beq
\lim_{i\ri +\infty}\frac {\Area_{g_i(t_0)}(C_{x_i}(B_r(x_i)\cap \Si_{i, t_0}))}{\pi r^2}=m \frac {\Area_{g_{\infty}(t_0)}(B_r(x_0)\cap \Si_{\infty, t_0})}{\pi r^2}\geq 2(1-\dd).\label{lem:singular:014}
\eeq
  The lemma is proved.
\end{proof}

Using the boundedness of the mean curvature and Lemma \ref{lem:singular}, we  show that the singular set $\cS$ consists of locally finitely many Lipschitz curves.

\begin{lem}\label{prop:weakcompactness2}Fix large $R>0.$ Under the assumption of Proposition \ref{prop:weakcompactness}, the singular set $\cS$ is the union of locally finitely many space-time singular curves, i.e,
  $$\cS\cap \Big(B_R(0)\times (-1, 1)\Big)=\bigcup_{k=1}^{l}\Big\{( \xi_k(t), t)\,\Big|\,t\in (-1, 1), \;\xi_k(t)\in B_R(0)\cap \cS_t\Big\},$$ where  $\cS_t$ is defined in Proposition \ref{prop:weakcompactness} and  $\{\xi_k(t)\}_{k=1}^{l}$ are $\La'$-Lipschitz curves, i.e,
\beq
|\xi_k(t_1)-\xi_k(t_2)|\leq \La' |t_1-t_2|,\quad \forall\; t_1, t_2\in (-1, 1).
\eeq
Here $\La'$ depends only on the constant $\La$  in (\ref{eq:A203}).

\end{lem}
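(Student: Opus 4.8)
The plan rests on two facts. First, the slices $\cS_t$ are uniformly finite on any fixed ball: exactly as in the proof of Proposition~\ref{prop:weakcompactness}, the local energy bound (\ref{eqn:201}) gives $\int_{\Si_{i,t}\cap B_{R'}(0)}|A_i|^2\,d\mu_{i,t}\le\rho(R')$ uniformly in $t$, so for every $t\in(-1,1)$ the set $\cS_t\cap B_{R'}(0)$ consists of at most $\rho(R')/\ee_0$ points, with $\ee_0$ the constant from Corollary~\ref{cor:energy}. Second, through every $(x_0,t_0)\in\cS$ there passes a $\La$-Lipschitz curve $\xi:(-1,1)\to\RR^3$ with $\xi(t_0)=x_0$ and $\xi(t)\in\cS_t$ for all $t$. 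Granting the second fact, the lemma follows by a greedy selection: a $\La$-Lipschitz curve through a point of $B_R(0)$ stays inside $B_{R+2\La}(0)$ on $(-1,1)$, so if curves $\xi_1,\dots,\xi_m$ already chosen miss some $(x,t_*)\in\cS$ with $x\in B_R(0)$ and $\xi_{m+1}$ is a curve through that point, then $\xi_1(t_*),\dots,\xi_{m+1}(t_*)$ are $m+1$ distinct points of $\cS_{t_*}\cap B_{R+2\La}(0)$, forcing $m+1\le l:=\lfloor\rho(R+2\La)/\ee_0\rfloor$; hence at most $l$ curves (taken with respect to one common subsequence) cover $\cS\cap(B_R(0)\times(-1,1))$.

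To build such a curve, I fix $(x_0,t_0)\in\cS$ and choose, as in the proof of Lemma~\ref{lem:singular}, a curvature-concentration sequence $x_i'\in\Si_{i,t_0}$ with $x_i'\to x_0$ and $|A|(x_i',t_0)=\max_{C_{x_i'}(B_{r_0}(x_i')\cap\Si_{i,t_0})}|A|=:Q_i\to+\infty$, where $r_0>0$ is small enough that $B_{r_0}(x_0)\cap\cS_{t_0}=\{x_0\}$. Put $p_i:=\x_{i,t_0}^{-1}(x_i')$ and $x_{i,t}':=\x_{i,t}(p_i)$. The mean curvature bound (\ref{eq:A203}) gives $|\p_t\x_i|=|H_i|\le\La$, so each $t\mapsto x_{i,t}'$ is a $\La$-Lipschitz curve on $(-1,1)$; since $x_{i,t_0}'\to x_0$ and the time interval has length $<2$, all of the $x_{i,t}'$ lie in one bounded set, and after passing to a subsequence the Arzel\`a--Ascoli theorem produces a $\La$-Lipschitz limit $\xi$ with $\xi(t_0)=x_0$.

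It remains to show $\xi(t)\in\cS_t$ for every $t\in(-1,1)$; this is the main obstacle. I would show that $G:=\{t\in(-1,1):\xi(t)\in\cS_t\}$ is nonempty, closed and open, hence equal to $(-1,1)$. Nonemptiness is immediate, $t_0\in G$. For closedness, note first that $\cS$ is closed in $\RR^3\times(-1,1)$: if $(x_1,t_1)\notin\cS$, i.e. $x_1$ is a regular point of $\Si_{\infty,t_1}$, then $\Si_{i,t_1}$ has a uniform curvature bound near $x_1$ for large $i$, and Theorem~\ref{theo:pseudoA} propagates it to an entire space-time neighborhood of $(x_1,t_1)$, so $\cS^c$ is open; hence $G$, the preimage of $\cS$ under the continuous map $t\mapsto(\xi(t),t)$, is closed. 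For openness, suppose $t_1\in G$. By Lemma~\ref{lem:singular}(2) applied along $x_{i,t_1}'\to\xi(t_1)\in\cS_{t_1}$, the component $C_{x_{i,t_1}'}(B_\rho(x_{i,t_1}')\cap\Si_{i,t_1})$ has area ratio $\ge2(1-\dd)$ at every scale $\rho\in(0,\rho_1)$ for a definite $\rho_1>0$. Flowing these \emph{smooth} pieces of $\Si_{i,t}$ along the mean curvature flow, the diffeomorphism $\x_{i,t}\circ\x_{i,t_1}^{-1}$ keeps them connected and displaces them by at most $\La|t-t_1|$, while the area of a material piece changes only by a factor in $[e^{-\La^2|t-t_1|},1]$; hence for $|t-t_1|$ small the flowed component $C_{x_{i,t}'}(B_\rho(x_{i,t}')\cap\Si_{i,t})$ still carries area ratio $\ge3/2$ at a definite scale, and Lemma~\ref{lem:singular}(1) then rules out $\xi(t)=\lim_i x_{i,t}'$ being a regular point of $\Si_{\infty,t}$, so $\xi(t)\in\cS_t$. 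The delicate part is that $\rho_1$, and thus the admissible range of $|t-t_1|$, may degenerate as $\xi(t_1)$ approaches another singular point; I would handle this by working on compact time-intervals away from the (isolated) collision times of the singular curves and treating those times separately, or by tracking the persistence of a \emph{nearby} singular point rather than of $\xi$ itself. Once this is in place, $G=(-1,1)$, and together with the first paragraph the lemma is proved.
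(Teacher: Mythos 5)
Your proposal takes essentially the same route as the paper: through every singular point you follow the flow of a concentration sequence, the mean curvature bound (\ref{eq:A203}) makes these trajectories $\La$-Lipschitz, and the area-ratio dichotomy of Lemma \ref{lem:singular} together with an area-comparison along the flow (the paper cites Lemma 3.4 of \cite{[LW]} where you give an informal mass/displacement bound) rules out the limit trajectory ever leaving $\cS$. The degeneration of $\rho_1$ that you flag is precisely the point the paper handles by taking $r'$ in Lemma \ref{lem:singular}(1) uniform over $t\in(t_1-\eta,t_1+\eta)$ from the smoothness of the limit self-shrinker flow, and your proposed remedies are in the same spirit.
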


\begin{proof}For any $t_1\in (-1, 1)$ and any $p_{t_1}\in \cS_{t_1}\cap B_R(0)$, we show that there exists a Lipschitz curve in $\cS$ passing through $p_{t_1}$. Since $p_{t_1}$ is singular,  by Lemma \ref{lem:singular} we can find a sequence of points $p_{i, t_1}\in \Si_{i, t_1}$ and  $r'=r'(\Si_{\infty, t_1}, R, \La, p_{t_1}, \cS_{t_1})>0$ such that $p_{i, t_1}\ri p_{t_1}$ and for any $r\in (0, r')$,
\beq \lim_{i\ri +\infty}\frac {\Area(C_{p_{i, t_1}}(B_{r}(p_{i, t_1})\cap \Si_{i, t_1}))}{\pi r^2}\geq \frac 74. \label{X:001}\eeq
We choose $\eta_0>0$ and $M_0=200$ such that 
\beq
e^{-2\La^2 \eta_0}\Big(1+\frac 2{M_0}\Big)^{-2}\geq\frac 67,\quad M_0 \La \eta_0<r'.\label{Z:b4}
\eeq
Let $t_2\in (t_1-\eta_0, t_1+\eta_0)\cap (-1, 1)$ and $r_1=M_0\La |t_2-t_1|$. Then we have $r_1<M_0\La \eta_0<r'$.  
By Lemma 3.4 in Li-Wang \cite{[LW]}  we have
\beqn &&\frac {\Area(C_{p_{i, t_2}}(B_{r_2}(p_{i, t_2})\cap \Si_{i, t_2}))}{\pi {r_2}^2}\nonumber\\&\geq& e^{-\La^2|t_2-t_1|}\Big(1+\frac {2\La}{r_1}|t_2-t_1|\Big)^{-2}\frac {\Area(C_{p_{i, t_1}}(B_{r_1}(p_{i, t_1})\cap \Si_{i, t_1}))}{\pi r_1^2}\nonumber\\
&\geq&e^{-\La^2\eta_0}\Big(1+\frac {2}{M_0}\Big)^{-2}\frac {\Area(C_{p_{i, t_1}}(B_{r_1}(p_{i, t_1})\cap \Si_{i, t_1}))}{\pi r_1^2}, \label{X:002}
\eeqn where $r_2=r_1+2\La|t_2-t_1|$. Combining (\ref{X:001})-(\ref{X:002}), we have
\beq
\lim_{i\ri +\infty}\frac {\Area(C_{p_{i, t_2}}(B_{r_2}(p_{i, t_2})\cap \Si_{i, t_2}))}{\pi {r_2}^2}\geq \frac 32,  \label{X:003}
\eeq where $p_{i, t_2}=\x_{i, t_2}(\x_{i, t_1}^{-1}(p_{i, t_1})). $ Since the mean curvature is uniformly bounded along the flow,  all points $\{p_{i, t_2}\}_{i=1}^{\infty}$ lie in a bounded ball centered at $p_{t_1}$. Thus,  we can find a subsequence of $\{p_{i, t_2}\}_{i=1}^{\infty}$ such that it converges to a point, which we denoted by $p_{t_2}$. Since $C_{p_{i, t_2}}(B_{r_2}(p_{i, t_2})\cap \Si_{i, t_2})$ converges locally smoothly to $B_{r_2}(p_{ t_2})\cap \Si_{\infty, t_2}$ away from singularities,
by Step 2 of Lemma \ref{lem:multiplicity} we have
\beq
\lim_{i\ri +\infty}\frac {\Area(C_{p_{i, t_2}}(B_{r_2}(p_{i, t_2})\cap \Si_{i, t_2}))}{\pi {r_2}^2}=m\frac {\Area(B_{r_2}(p_{ t_2})\cap \Si_{\infty, t_2})}{\pi {r_2}^2}, \label{Z:a1}
\eeq where $m\in \NN$. Note that $B_R(0)\cap \Si_{\infty, t}$ has bounded geometry for any $t\in (-1, 1)$, we can find a uniform $r_2'>0$ such that for any $(p, t)\in (\Si_{\infty, t}\cap B_R(0))\times (-1, 1)$ and any $r\in (0, r_2')$,
\beq
\frac {\Area(B_{r}(p)\cap \Si_{\infty, t})}{\pi {r}^2}\leq \frac 54.\label{Z:a2}
\eeq
Moreover,  we can choose $\eta_0$ small such that 
\beq r_2=r_1+2\La|t_2-t_1|=(M_0+2)\La |t_2-t_1|\leq (M_0+2)\La \eta_0<r_2'. \label{z:b3}\eeq
Combining (\ref{X:003})-(\ref{z:b3}), we have $m\geq 2$ in (\ref{Z:a1}). Thus, $C_{p_{i, t_2}}(B_{r_2}(p_{i, t_2})\cap \Si_{i, t_2})$ converges locally smoothly to $B_{r_2}(p_{ t_2})\cap \Si_{\infty, t_2}$ with multiplicity $m\geq 2$.
This implies that  $B_{r_2}(p_{t_2})\cap \Si_{\infty, t_2}$ contains a singular point, which we denoted by $q_{t_2}$.
Here we used the fact that if $B_{r_2}(p_{t_2})\cap \Si_{\infty, t_2}$ contains no singular points, then $C_{p_{i, t_2}}(B_{r_2}(p_{i, t_2})\cap \Si_{i, t_2})$ will converge smoothly to $B_{r_2}(p_{t_2})\cap \Si_{\infty, t_2}$ with multiplicity one.

Note that
$$|p_{i, t_1}-p_{i, t_2}|\leq \int_{t_1}^{t_2}\,|H|\,dt\leq \La|t_1-t_2|. $$
Taking the limit $i\ri +\infty$ we have
$|p_{t_1}-p_{t_2}|\leq \La |t_1-t_2|.$
Thus, for any $t_2\in (t_1-\eta_0, t_1+\eta_0)\cap (-1, 1)$ we have
\beqs
|q_{t_2}-p_{t_1}|&\leq & |q_{t_2}-p_{t_2}|+|p_{t_2}-p_{t_1}|
\leq r_2+\La |t_1-t_2|\\
&\leq&(M_0+3)\La |t_1-t_2|.
\eeqs
Therefore, $p_{t_1}$ lies in a $\La'$-Lipschitz curve in $\cS$ with $\La'=203\La$. Since for any $t\in (-1, 1)$  the set $\cS_t$ is
locally finite by Proposition \ref{prop:weakcompactness}, the singular curves are locally finite. The lemma is proved.

\end{proof}

\section{The rescaled mean curvature flow}

In this section, we will show the smooth convergence of rescaled mean curvature flow under uniform mean curvature bound. As is pointed out in the introduction,  we have no long-time pseudolocality of the flow and the singularities don't move along straight lines. When the multiplicity of the convergence is greater than one, in order to show the $L$-stability of the limit self-shrinker we need  an estimate on  the asymptotical behavior of the positive solution near the singular set (cf. Lemma \ref{claim2} and Lemma \ref{lem:B002}), and the proof of this estimate will be delayed to Section \ref{sec:analysis}.

\begin{theo}
\label{theo:removable}
Let $\{(\Si^2, \x(t)), 0\leq t<+\infty\}$ be a closed smooth embedded rescaled mean curvature flow
\beq \Big(\pd {\x}t\Big)^{\perp}=-\Big(H-\frac 12\langle \x, \n\rangle\Big)\n\label{eq:BB001}\eeq
 satisfying
\beq d(\Si_t, 0)\leq D,\quad \hbox{and}\quad \max_{\Si_t}|H(p, t)|\leq \La \label{eq:BB002}\eeq
for two constants $D, \La>0$.  Then for any $t_i\ri +\infty$ there exists a subsequence of $\{\Si_{t_i+t}, -1<t<1\}$ such that it  converges in smooth topology to a complete smooth self-shrinker with multiplicity one as $i\ri +\infty$.
\end{theo}

We sketch the proof of Theorem \ref{theo:removable}.  First, we show the weak compactness for any sequence of the rescaled mean curvature flow in Lemma \ref{lem:A001}. Suppose that the multiplicity is at least two.
By using the decomposition of spaces(cf. Definition~\ref{def:GD18_1}) we can select a special sequence $\{t_i\}$ in Lemma \ref{lem:A002} for each $\ee>0$. This special sequence is needed to control the upper bound of the function $w_i$ away from the singular set by using the parabolic Harnack inequality (cf. Lemma \ref{lem:A004a}).  Then we can take the limit for the function $w_i$ and obtain a positive function $w$ with uniform  bounds on any compact set away from the singular set(cf. Lemma ~\ref{lem:A004b}). The function $w$ satisfies the linearized mean curvature flow equation. To study the growth behavior of $w$ near the singular set, we take a sequence of $\ee_i\ri 0$ and for each $\ee_i$ we repeat the above process to get  a sequence of functions $\{w_{i, k}\}_{k=1}^{\infty}$. After choosing a diagonal sequence and taking the limit, we get a  function $w$ with good growth estimates near the singular set (cf. Proposition \ref{lem:A003d}) by assuming Theorem \ref{theoA:main} in the next section.  The bounds of $w$ imply  the $L$-stability of the limit self-shrinker (cf. Lemma~\ref{lem:stable1}), and this step also relies on Theorem \ref{theoA:main}. However, the  limit self-shrinker  is not $L$-stable by Colding-Minicozzi's theorem (cf. Theorem \ref{theo:CM1}) and we obtain a contradiction.

\subsection{Convergence away from singularities}

We recall Ilmanen's local Gauss-Bonnet formula in \cite{[Il]} to control the $L^2$ norm of the second fundamental form. Let $\Si$ be a smooth surface with smooth boundary $\p\Si.$ We denote by  $e(\Si)$ the genus of $\Si$ which is the genus of the closed surface obtained by capping off the boundary components of $\Si$ by disks.

\begin{lem}\label{lem:ilmanen}(cf. Ilmanen \cite{[Il]})
Let $R>1$ and let $\Si$ be a surface properly immersed in $B_R(p)$. Then for any $\ee>0$ we have
\beqs
(1-\ee)\int_{\Si\cap B_1(p)}\,|A|^2\,d\mu&\leq& \int_{\Si\cap B_R}\,|H|^2\,d\mu+8\pi e(\Si\cap B_R(p))\\&&+ \frac {24\pi R^2}{\ee(R-1)^2} \sup_{r\in [1, R]}\frac {\Area(\Si\cap B_r(p))}{\pi r^2}.
\eeqs

\end{lem}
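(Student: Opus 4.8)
The plan is to run the classical Gauss--Bonnet argument on the level sets of the radial distance function $r = |x-p|$ restricted to $\Si$, controlling the error terms using the curvature of the slicing hypersurfaces and a coarea/averaging trick to absorb the area ratio. First I would recall the pointwise identity on a surface in $\RR^3$: by the Gauss equation, $2K = H^2 - |A|^2$ where $K$ is the Gauss (= sectional) curvature of $\Si$; hence $|A|^2 = H^2 - 2K$. So the issue is to bound $\int_{\Si\cap B_1(p)} K\,d\mu$ from below (equivalently $-K$ from above in integral) by the genus term plus the area-ratio term. Since $\Si\cap B_1(p)$ need not be compact with nice boundary, I would choose, via Sard's theorem, a regular value $s \in [1,R]$ — actually a family of regular values — of the function $r|_\Si$, so that $\Gamma_s := \Si \cap \partial B_s(p)$ is a finite union of smooth closed curves and $\Si_s := \Si \cap B_s(p)$ is a smooth compact surface with boundary $\Gamma_s$. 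Gauss--Bonnet on $\Si_s$ gives
\[
\int_{\Si_s} K \, d\mu + \int_{\Gamma_s} \kappa_g \, d\ell = 2\pi \chi(\Si_s) = 2\pi\bigl(2 - 2e(\Si_s) - b(\Si_s)\bigr),
\]
where $b(\Si_s)$ is the number of boundary components and $e(\Si_s)$ the genus. Dropping $2 - b \le 1 \le \ldots$ appropriately (and noting $2-2e-b \le -2e$ once $b\ge 2$, with small-case bookkeeping for $b=0,1$ absorbed into constants) we get $\int_{\Si_s} K \le -4\pi e(\Si_s) - \int_{\Gamma_s}\kappa_g\,d\ell + 2\pi b(\Si_s)$, so $\int_{\Si\cap B_1(p)} K \le \int_{\Si_s} K \le -4\pi e(\Si\cap B_R) + (\text{boundary terms})$, and the sign works out after combining with $|A|^2 = H^2 - 2K$.

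The heart of the matter is the geodesic curvature term $\int_{\Gamma_s}\kappa_g\,d\ell$ and the $b(\Si_s)$ term, which together must be controlled by the area-ratio supremum. The key observation is that $\Gamma_s$ lies on the sphere $\partial B_s(p)$ of radius $s$, and the geodesic curvature of $\Gamma_s$ inside $\Si$ is controlled by the second fundamental form of $\partial B_s(p)$ in $\RR^3$ — which is $1/s$ — plus tangential data. More precisely, for the ``sweep-out'' one uses that $|\kappa_g| \le |\nabla^{\Si} r|^{-1}\cdot(\text{bounded geometry of spheres}) \lesssim 1/(s|\nabla^\Si r|)$ pointwise along $\Gamma_s$, and then integrates in $s$ over $[1,R]$ using the coarea formula $\int_1^R \int_{\Gamma_s} \frac{d\ell}{|\nabla^\Si r|}\,ds = \Area(\Si\cap(B_R\setminus B_1))$. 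Averaging over $s$ to pick a good slice, the contribution is bounded by $\frac{C}{R-1}\cdot\frac{1}{1}\cdot \Area(\Si\cap B_R) \lesssim \frac{C R^2}{(R-1)} \sup_{r\in[1,R]} \frac{\Area(\Si\cap B_r)}{\pi r^2}$; inserting the factor $\ee$ from the choice of weight (one actually runs the slicing argument with a cutoff $\chi(r)$ interpolating between $1$ on $B_1$ and $0$ outside $B_R$, so that the ``boundary'' is smeared and the $\ee$ enters via Cauchy--Schwarz when one estimates $\int |\nabla\chi||A|$ — this is where the $(1-\ee)$ on the left and the $1/\ee$ on the right both originate) yields the stated constant $\frac{24\pi R^2}{\ee(R-1)^2}$. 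I expect the bookkeeping of the exact numerical constant $24\pi$ to be the fussiest part, but it is not conceptually hard; the genuinely important step is recognizing that the curvature of Euclidean spheres ($1/s$) plus the coarea formula is what converts ``boundary length on $\partial B_s$'' into ``area ratio''.

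The main obstacle, and the reason the statement is phrased with a weight $\ee$ rather than as a clean equality, is handling the non-compactness and the possibly wild topology of $\Si \cap B_1(p)$: one cannot simply apply Gauss--Bonnet on $\Si\cap B_1(p)$ because its boundary $\Si\cap\partial B_1(p)$ may fail to be smooth, and worse, the number of components $b(\Si_s)$ could in principle be large. The resolution — which I would carry out following Ilmanen — is to note that each boundary component of $\Si_s$ that is a ``small'' loop (not contributing to genus) still costs a definite amount of area near $\partial B_s$ by the monotonicity/area-lower-bound type reasoning, so $2\pi b(\Si_s)$ is itself absorbed into the area-ratio term (a short loop on a minimal-ish surface bounds a disk of controlled area, hence contributes to $\Area(\Si\cap B_R)$). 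Thus every term on the right of Gauss--Bonnet except the genus either is the $|H|^2$ term (after substituting $2K = H^2 - |A|^2$) or is swept into the area-ratio supremum with the weight $\ee^{-1}$; taking the average over $s\in[1,R]$ and rearranging gives the lemma. I would present this by first fixing the cutoff and deriving the weighted Gauss--Bonnet inequality on a generic slice, then averaging, deferring the routine verification of the $1/s$ geodesic-curvature bound and the constant-chasing to a line or two.
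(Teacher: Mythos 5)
The paper does not prove this lemma at all---it simply cites Ilmanen's preprint---so there is no in-paper argument to compare yours against. On the merits, your high-level strategy is the right one (Gauss equation $|A|^2=H^2-2K$, Gauss--Bonnet on the slices $\Si\cap B_s(p)$, a radial cutoff with averaging over $s$, and Young's inequality to generate the $(1-\ee)$ and $1/\ee$). But there are two genuine errors in the sketch.

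First, the claimed pointwise bound $|\kappa_g|\lesssim 1/(s|\nabla^\Si r|)$ on the geodesic curvature of $\Gamma_s=\Si\cap\partial B_s(p)$ inside $\Si$ is false. Writing $X=x-p$, $\tau$ the unit tangent to $\Gamma_s$, $\nu=\nabla^\Si r/|\nabla^\Si r|$, and $\n$ the unit normal to $\Si$, the decomposition $D_\tau\tau=\pm\kappa_g\nu+A(\tau,\tau)\n$ paired against $\langle D_\tau\tau, X/s\rangle=-1/s$ gives, up to sign,
\[
\kappa_g \;=\; -\frac{1}{|\nabla^\Si r|}\left(\frac{1}{s}+A(\tau,\tau)\,\frac{\langle X,\n\rangle}{s}\right),
\]
so $|\kappa_g|\le\frac{1}{|\nabla^\Si r|}\bigl(\frac{1}{s}+|A|\bigr)$. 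The extra term $|A|/|\nabla^\Si r|$ is not small and is exactly the source of the $\int|\nabla\phi|\,|A|$ you mention: after the coarea identity $\int_1^R\omega(s)\int_{\Gamma_s}\frac{g}{|\nabla^\Si r|}\,d\ell\,ds=\int_\Si\omega\, g\,d\mu$ (with $\omega=-\partial_s(\phi^2)$), this term becomes $\int_\Si 2\phi|\nabla\phi|\,|A|\,d\mu$, and Young's inequality produces the $\ee\int\phi^2|A|^2$ absorbed on the left and the $\ee^{-1}\int|\nabla\phi|^2$ that becomes the area-ratio term. Your sketch invokes that Cauchy--Schwarz step while simultaneously asserting a $\kappa_g$ bound with no $|A|$ in it, so the two parts of your argument do not actually connect: as written, the $|A|$ never appears in the boundary term at all, and the $\ee$ has nowhere to come from.

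Second, the treatment of the number $b(\Si_s)$ of boundary components is unjustified. You appeal to ``a short loop on a minimal-ish surface bounds a disk of controlled area,'' but the lemma is stated for an arbitrary properly immersed surface with no curvature or minimality hypothesis, so there is no monotonicity formula and no reason a small boundary circle of $\Gamma_s$ costs a definite amount of area. The Euler characteristic identity $-2\pi\chi(\Si\cap B_s)=4\pi e + 2\pi b - 4\pi c$ leaves a potentially unbounded $2\pi(b-c)$ once the genus is extracted, and this must be controlled by a different mechanism than the one you propose; it cannot be swept into the area-ratio supremum via a minimality-type argument.
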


For simplicity, we introduce the following definition.

\begin{defi}\label{defi:soliton}
Let $\rho: \RR^+\ri \RR^+$ be an increasing positive function. For any $ N>0$, we denote by $\cC( N, \rho)$ the space of all smooth embedded  self-shrinkers $\Si^2 \subset \RR^3$ satisfying the properties that for any $r>0$ and $p\in \Si$,
$$\int_{\Si\cap B_r(0)}\,|A|^2\leq \rho(r)\quad \hbox{and}\quad  \Area(B_r(p)\cap \Si)\leq \pi Nr^2.$$
\end{defi}

Note that  the space $\cC( N, \rho)$ is compact in the smooth topology by Colding-Minicozzi \cite{[CM1]}, and the distance from the origin to any self-shrinker in $\RR^3$ is at most $2$ by avoidance principle(cf. Corollary 3.6 of \cite{[Eckbook]}).
The total curvature bound in Definition~\ref{defi:soliton} can also be derived from genus bound by exploiting Lemma~\ref{lem:ilmanen}.

The following result shows that   the rescaled mean curvature flow converges  locally smoothly to a self-shrinker away from singularities.

\begin{lem}\label{lem:A001}Under the assumption of Theorem \ref{theo:removable}, for any sequence $t_i\ri +\infty$, there is a smooth self-shrinker $\Si_{\infty}\in \cC( N, \rho) $ and a space-time set $\cS\subset \Si_{\infty}\times\RR$ satisfying the following properties.
\begin{enumerate}
\item[(1).] For any $T>1$, there is a subsequence, still denoted by $\{t_i\}$,  such that $\{\Si_{t_i+t}, -T<t<T\}$ converges in smooth topology, possibly with multiplicities, to  $\Si_{\infty}$  away from  $\cS$;

\item[(2).] For any $R>0$,  $\cS\cap (B_R(0)\times (-T, T))$ consists of finite many $\si$-Lipschitz curves with Lipschitz constant $\si$ depending only on $\La, T$ and $R$;

\item[(3).] The convergence in part (1) is also in (extrinsic) Hausdorff distance;

\item[(4).] The limit self-shrinker $\Si_{\infty}$ is independent of the choice of $T$. In other words, for different $T$ we can choose two different subsequences of $\{t_i\}$ such that the corresponding flows in part (1) have  the same limit self-shrinker $\Si_{\infty}$.
\end{enumerate}

\end{lem}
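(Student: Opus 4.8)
The plan is to reduce this statement to the weak compactness result for refined sequences (Proposition~\ref{prop:weakcompactness}) plus the structure of the singular set (Lemma~\ref{prop:weakcompactness2}), the only nontrivial new ingredient being the time-independence of the limit self-shrinker in part (4). First I would fix a large $T>1$ and consider the time-translated and truncated flows $\Si^i_t := \Si_{t_i+t}$ for $t\in(-T,T)$. After a parabolic rescaling of the time interval to $(-1,1)$, I want to verify that $\{\Si^i_t\}$ forms a refined sequence in the sense of Definition~\ref{defi:refined}. Conditions (1) and (2) are immediate from the hypothesis (\ref{eq:BB002}). For condition (4), the upper area bound $\Area(B_r(p)\cap\Si^i_t)\le N\pi r^2$, I would use the mean-curvature bound together with a standard monotonicity/comparison argument (the same Lemma~3.5 of \cite{[LW]} used elsewhere in the paper) to propagate an area bound on $\Si_{t_i}$ forward and backward over a bounded time interval; the lower area bound (5) follows similarly from the pseudolocality Theorem~\ref{theo:pseudoA} applied at a scale determined by $\La$. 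Condition (3), the local $L^2$ bound on $|A|$, comes from Ilmanen's local Gauss--Bonnet formula (Lemma~\ref{lem:ilmanen}): the $|H|^2$ term is controlled by (\ref{eq:BB002}), the genus term is bounded because the genus of the evolving closed surface is nonincreasing and hence fixed, and the area-ratio term is controlled by condition (4). Condition (6), the vanishing of the soliton quantity, is where the structure of the rescaled flow enters: since (\ref{eq:BB001}) is exactly the rescaled mean curvature flow, Huisken's monotonicity formula shows that the Gaussian-weighted integral $\int e^{-|\x|^2/4}|H-\tfrac12\langle\x,\n\rangle|^2$ is integrable over $[0,\infty)$, so its integral over $[t_i-1,t_i+1]$ tends to $0$ as $t_i\to\infty$; rescaling back gives (\ref{eqn:204}) (with $T$ absorbed appropriately).

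Granting that $\{\Si^i_t\}$ is a refined sequence, Proposition~\ref{prop:weakcompactness} immediately yields parts (1) and (3): a subsequence converges locally smoothly, with bounded multiplicity, away from a space-time singular set $\cS$, the convergence is also in Hausdorff distance, and the limit $\{\Si_{\infty,t}\}$ satisfies the self-shrinker-flow equation (\ref{eq:ssflow}). Since the limit satisfies $H=\tfrac{1}{2(\bar T-t)}\langle\x_\infty,\n\rangle$ and in our situation the natural normalization forces the shrinking to be the standard one (the time parameter of the rescaled flow is already the logarithmic time), the limit at time $t$ is $\Si_\infty$ up to the trivial rescaling, and in particular $\Si_{\infty,0}$ is a genuine self-shrinker; the $L^2$, area, and distance bounds pass to the limit so that $\Si_\infty\in\cC(D,N,\rho)$. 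Part (2), that $\cS$ restricted to a bounded space-time region is a finite union of $\si$-Lipschitz curves, is exactly Lemma~\ref{prop:weakcompactness2}, with Lipschitz constant governed by $\La$ (hence depending only on the data, but I will state it as depending on $T,R$ to be safe since the rescaling of the time interval rescales the constant).

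The one genuinely new point is part (4): independence of $\Si_\infty$ from the choice of $T$. The issue is that a priori, for $T'>T$, the diagonal subsequence extracted over $(-T',T')$ is a further subsequence of the one over $(-T,T)$ only if we are careful. The clean way to handle this is to run the extraction in one pass: take an exhausting sequence $T_m\to\infty$, and by a diagonal argument extract a single subsequence of $\{t_i\}$ along which $\{\Si^i_t\}$ converges locally smoothly away from a singular set on every interval $(-T_m,T_m)$ simultaneously. The resulting limit self-shrinker flow is then defined on all of $\RR$ and in particular its time-$0$ slice $\Si_\infty$ is one fixed self-shrinker; restricting this subsequence to any given $(-T,T)$ shows that this same $\Si_\infty$ serves as the limit for that $T$. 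I expect the main obstacle — or at least the most delicate bookkeeping — to be verifying condition (6) of the refined-sequence definition with the precise constant $\bar T>1$ appearing in (\ref{eq:ssflow}): one must check that the Gaussian weight centered at the correct space-time vertex is the one for which Huisken's monotonicity gives integrability, and track how the parabolic rescaling of $(-T,T)$ to $(-1,1)$ transforms both the weight and the soliton quantity. Everything else is an assembly of results already established in Section~2 and in \cite{[LW]}.
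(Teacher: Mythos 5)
Your proposal follows essentially the same route as the paper's proof: convert the translated rescaled flow $\Sigma_{t_i+t}$ into a genuine mean curvature flow, verify the six conditions of Definition~\ref{defi:refined} (the area bounds via monotonicity, the $L^2$ bound on $|A|$ via Ilmanen's local Gauss--Bonnet formula, the soliton-quantity decay via Huisken's monotonicity), invoke Proposition~\ref{prop:weakcompactness} and Lemma~\ref{prop:weakcompactness2}, and then handle part (4) by a standard exhaustion/diagonal argument. The one place where you are vague and the paper is explicit is the conversion to a genuine MCF: the paper uses the substitution $s=1-e^{-(t-t_i)}$, $\tilde\Sigma_{i,s}=\sqrt{1-s}\,\Sigma_{t_i-\log(1-s)}$, which puts the parabolic vertex of the Gaussian weight at $s=1$ (so Definition~\ref{defi:refined}(6) reads with $T=1$) and, after translating back via $t'=t-t_i$, yields both the self-shrinker equation $H=\tfrac12\langle\x,\n\rangle$ at each time slice and the explicit Lipschitz constant $\sigma=\Lambda e^T+R$ for the singular curves in a ball of radius $R$. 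That bookkeeping is exactly the ``delicate part'' you flagged, and it closes cleanly, so your proposal is correct.
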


\begin{proof}We divide the proof into the following steps.

\emph{Step 1. The area ratio along the flow (\ref{eq:BB001}) is uniformly bounded from above.} In fact, we rescale the flow (\ref{eq:BB001}) by \beq
s=1-e^{-t},\quad \hat \Si_s=\sqrt{1-s}\,\Si_{-\log(1-s)} \label{eq:MCF2}
\eeq such that $\{\hat \Si_s, 0\leq s<1 \}$   is a mean curvature flow satisfying the equation (\ref{eq:MCF}). By Lemma 2.9 of Colding-Minicozzi \cite{[CM2]} and Lemma 2.3 in Li-Wang \cite{[LW]}, we have that the area ratio of (\ref{eq:BB001}) is uniformly bounded from above.\\

\emph{Step 2. For any large $R$ , the energy of $\Si_t\cap B_R(0)$ is uniformly bounded  along the flow (\ref{eq:BB001}).} In fact, by Lemma \ref{lem:ilmanen} we have
\beqn \int_{\Si_t\cap B_R(0)}\,|A|^2\,d\mu_t&\leq &2\int_{\Si_t\cap B_{2R}(0)}\,|H|^2\,d\mu_t+C(N, e(\Si) )\nonumber
\\&\leq& 8\pi N\La^2R^2 + C(N, e(\Si) ), \label{eq:AA005} \eeqn
where $N$ denotes the upper bound of the area ratio. Therefore, for any $t>0$ the energy of $\Si_t\cap B_R(0)$ is   bounded by a constant $C(N, \La, R, e(\Si)). $\\

\textit{Step 3. For each sequence $t_i \ri + \infty$, we  obtain a refined sequence converging to a limit self-shrinker.}
 For any sequence $t_i\ri +\infty,$ we  rescale the flow $\Si_t$ by
\beq
s=1-e^{-(t-t_i)},\quad \td \Si_{i, s}=\sqrt{1-s}\;\Si_{ t_i-\log(1-s)} \label{eq:BB003}
\eeq  such that for each $i$ the flow   $\{\td \Si_{i, s}, 1-e^{t_i}\leq s< 1\}$ is a mean curvature flow satisfying (\ref{eq:MCF}) with the following properties:
\begin{enumerate}
  \item[$(a)$.]  For any small $\la>0$, the mean curvature of $\td \Si_{i, s}$ satisfies
  $$\max_{\td \Si_{i, s}\times [1-e^{t_i}, 1-\la]}|\td H_i|(p, s)\leq \td \La:= \frac {\La}{\sqrt{\la}};$$
\item[$(b)$.] For any large $R$, the energy of $\td \Si_{i, s}\cap B_R(0)$ is uniformly bounded ;

  \item[$(c)$.] The area ratio is uniformly bounded from above;
  \item[$(d)$.]  The area ratio is uniformly bounded from below;
  \item[$(e)$.] There exists a constant $D'>0$ such that $d(\td \Si_{i, s}, 0)\leq D'$ for any $i$.
\item[$(f)$.] We have
\beq \lim_{i\ri +\infty}\int_{-T}^{1-\la}\,dt\int_{\td \Si_{i, s}}\, e^{-\frac {|\td \x_i|^2}{4(1-s)}}\Big|\td H_i-\frac 1{2(1-s)}\langle \td \x_i, \n \rangle\Big|^2\,d\td \mu_{i, s}=0. \label{eq:G006}\eeq
\end{enumerate}
In fact, Property $(a)$ and $(e)$ follow from the assumption (\ref{eq:BB002}), and Property $(b)$ follows from (\ref{eq:AA005}).
Property $(c)$  follows  from  Step 1,  and Property $(d)$ follows from Lemma 3.5 in Li-Wang \cite{[LW]}. To prove Property $(f),$   by Huisken's monotonicity formula along the rescaled mean curvature flow (\ref{eq:BB001}) we have
\beq
\frac d{dt}\int_{\Si_t}\,e^{-\frac {|\x|^2}4}\,d\mu_t=-\int_{\Si_t}\,e^{-\frac {|\x|^2}4}\Big|H-\frac 12\langle\x, \n\rangle\Big|^2\,d\mu_t.
\eeq This implies that
$$\int_0^{\infty}\,dt\int_{\Si_t}\,e^{-\frac {|\x|^2}4}\Big|H-\frac 12\langle\x, \n\rangle\Big|^2\,d\mu_t<+\infty.$$
Let $T, \la>0$ with $-T<1-\la$.
For any $t_i\ri +\infty$, we have
\beq
\lim_{t_i\ri +\infty}\int_{t_i-\log (1+T)}^{t_i-\log \la}\,dt \int_{\Si_t}\,e^{-\frac {|\x|^2}4}\Big|H-\frac 12\langle\x, \n\rangle\Big|^2\,d\mu_t=0. \label{eq:G007}
\eeq
Then (\ref{eq:G006}) follows from   (\ref{eq:BB003}) and (\ref{eq:G007}).
 Therefore, by Definition \ref{defi:refined} for any $T>0$, small $\la>0$ and any $s_0\in [-T+1, -\la]$ the sequence $\{\td \Si_{i, s_0+\tau}, -1<\tau<1\}$ is a refined sequence. By Proposition \ref{prop:weakcompactness} and Lemma \ref{prop:weakcompactness2} a subsequence of  $\{\td \Si_{i, s}, -T<s<1-\la\}$  converges in smooth topology, possibly with multiplicities, to a self-shrinker flow $\{\td \Si_{\infty, s}, -T<s<1-\la\}$ away from a space-time, $\td \La'$-Lipschitz singular set $\td \cS$ with $\td \La'=203\td \La$.\\

\emph{Step 4.}
Let $t'=t-t_i$ and  $\Si_{i, t'}=\Si_{t_i+t'}$.
Since $\{\td \Si_{i, s}, -T<s<1-\la\}$ converges locally smoothly to $\{\td \Si_{\infty, s}, -T<s<1-\la\}$  away from $\td \cS$, by (\ref{eq:BB003}) the flow $\{\Si_{i, t'}, -\log(1+T)<t'<-\log \la \}$ also converges locally smoothly to a self-shrinker  $  \Si_{\infty}$ satisfying
$$H-\frac 12\langle\x, \n\rangle=0$$
 away from a space-time singular set
$\cS$ with
\beq
\cS_{t'}=\frac 1{\sqrt{1-s}}\td \cS_{s}.
\eeq    Here $s=1-e^{-t'}$.
Now we show the Lipschitz property of $\cS$.
By (\ref{eq:BB003}), for  any curve $\xi(t')$ of $\cS $, we can find a curve $\xi(s)$ of $\td \cS$ such that
\beq
\td \xi(s)=\sqrt{1-s}\,\xi(t'),\quad t'=-\log(1-s).
\eeq
Since $\td \xi(s)$ is $\td \La'$-Lipschitz, we have
\beq
|\td \xi(s_1)-\td \xi(s_2)|\leq \td \La' |s_1-s_2|,\quad \forall\; s_1, s_2\in (-T, 1-\la),  \label{eq:lem34:001}\eeq
which implies that
\beq
|e^{-\frac {t_1}2}\xi(t_1)-e^{-\frac {t_2}2}\xi(t_2)|\leq \td \La' |e^{-\frac {t_1}2}-e^{-\frac {t_2}2}|.
\eeq
Suppose that $|\xi(t)|\leq R.$
For any $t_1', t_2'$ with $|t_1'-t_2'|\leq 1$ we have
\beqn
|\xi(t_1')-\xi(t_2')|&=&|\xi(t_1')-e^{\frac {t_1'-t_2'}2}\xi(t_2')|+|e^{\frac {t_1'-t_2'}2}-1||\xi(t_2')|\nonumber\\
&\leq&\td \La'|1-e^{\frac {t_1'-t_2'}2}|+|e^{\frac {t_1'-t_2'}2}-1||\xi(t_2')|\nonumber\\
&\leq&(\td \La'+R)|t_1'-t_2'|, \label{eq:lem34:002}
\eeqn where we used the inequality
$$|e^x-1|\leq 2|x|,\quad \forall\, x\in [-1, 1]. $$
Note that the Lipschitz constant in (\ref{eq:lem34:002}) is given by
$\si=\td \La'+R. $
Thus, if we consider the convergence of $\{\Si_{t_i+t}, -T<t<T\}$ as in part (1), then $\cS\cap (B_R(0)\times (-T, T))$ consists of Lipschitz curves with Lipschitz constant $\si. $
The convergence is also  in extrinsic Hausdorff distance by Proposition \ref{prop:weakcompactness} and the limit self-shrinker is independent of the choice of $T$ by the argument of Claim 4.3 of \cite{[LW]}.
 The lemma is proved.

\end{proof}

 \subsection{Decomposition of spaces}

In this subsection, we follow the argument in \cite{[LW]} to decompose the space and define an  almost ``monotone  decreasing" quantity, which will be used to select time slices such that the limit self-shrinker is $L$-stable. First, we
   decompose the space as follows.

 \begin{defi} \label{def:GD18_1} (\cite{[LW]}) Fix large $R>0$ and small $\ee>0.$
 \begin{enumerate}
                                   \item[(1).] We define the set $\mathbf{S}=\mathbf{S}(\Sigma_t, \epsilon, R)=\{y \in \Sigma_t\; |\; |y|<R,\; |A|(y, t)>\epsilon^{-1}  \}.$
                                   \item[(2).]  The ball $B_{R}(0)$ can be decomposed into three parts as follows:
  \begin{itemize}
  \item the high curvature part $\mathbf{H}$, which is defined by  $$\mathbf{H}=\mathbf{H}(\Sigma_t, \epsilon, R)=\Big\{x\in \RR^3\; \Big|\; |x|<R,  d(x,\mathbf{S}) <\frac {\epsilon}2 \Big\}.$$
        \item  the thick part $\mathbf{TK}$, which is defined by
       \begin{align*}
          \mathbf{TK}&=\mathbf{TK}(\Sigma_t, \epsilon, R)\\&=\Big\{x\in \RR^3\; \Big| \;|x|<R, \; \textrm{there is a continuous curve $\gamma \subset B_{R}(0) \backslash (\mathbf{H} \cup \Sigma_t)$} \\
                             &\qquad \qquad \textrm{ connecting $x$ and some $y$ with} \; B(y,\epsilon) \subset B_{R}(0) \backslash (\mathbf{H} \cup \Sigma_t)\Big\}.
       \end{align*}
  \item the thin part $\mathbf{TN}$, which is defined by  $\mathbf{TN}=\mathbf{TN}(\Sigma_t, \epsilon, R)=B_{R}(0) \backslash (\mathbf{H} \cup \mathbf{TK})$.\\

  \end{itemize}
                                 \end{enumerate}
 \end{defi}

As is pointed out in \cite{[LW]}, the high curvature part $\mathbf{H}$ is the neighborhood of points with large
second fundamental form, and the thin part $\mathbf{TN}$ is the domain between
the top and bottom sheets. Moreover, the thick part $\mathbf{TK}$ is the union of path connected components of the domain ``outside" the sheets. The readers are referred to \cite{[LW]} for more explanation on the definition.

As in Colding-Minicozzi \cite{[CM1]}, we define the $L$-stability of a self-shrinker.

\begin{defi}\label{defi:SpaceSelfshrinker}
 For any $R>0$, a complete smooth self-shrinker $\Si^n\subset \RR^{n+1}$ is called $L$-stable in the ball $B_R(0)$, if for any  function $\varphi\in W_0^{1, 2}(B_R(0))$, we have
\beq
\int_{\Si}\,-\varphi L_{\Si}\varphi \,e^{-\frac {|x|^2}{4}}\geq 0, \label{eq:F002}
\eeq where $L_{\Si}$ is the operator on $\Si$ defined by
$$L_{\Si}=\Delta-\frac 12\langle x, \Na (\cdot)\rangle+|A|^2+\frac 12. $$ The subindex $\Si$ in $L_{\Si}$ will be omitted when it is clear in the context.
We say $\Si$ is not  $L$-stable in the ball $B_R(0)$ if (\ref{eq:F002}) doesn't hold for some $\varphi\in W_0^{1, 2}(B_R(0)).$ We call that $\Si$ is $L$-stable in $\RR^{n+1}$ if $\Si$ is  $L$-stable in the ball $B_R(0)$ of $\RR^{n+1}$ for any $R>0$.
\end{defi}

Recall  Colding-Minicozzi's result:

\begin{theo}\label{theo:CM1}(cf. \cite{[CM1]}\cite{[CM2]})There are no $L$-stable smooth complete self-shrinkers without boundary and with polynomial volume growth in $\RR^{n+1}$.

\end{theo}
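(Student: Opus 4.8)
The plan is to reproduce the classical argument of Colding--Minicozzi \cite{[CM1]}: to show that \emph{every} smooth complete self-shrinker $\Si^n\subset\RR^{n+1}$ without boundary and of polynomial volume growth fails the stability inequality of Definition~\ref{defi:SpaceSelfshrinker} in a sufficiently large ball $B_R(0)$, using a test function that is essentially the constant function $1$. The mechanism is the pointwise identity $L_\Si(1)=|A|^2+\tfrac12>0$, which exhibits the constant function as a strictly positive ``supersolution'' of the stability operator; polynomial volume growth is exactly what is needed to make the error from truncating $1$ to a compactly supported function negligible.

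First I would set up the weighted formalism. Write $f=\tfrac14|x|^2$ restricted to $\Si$ and $d\nu=e^{-f}\,d\mu$. Since $\Si$ is a self-shrinker its tangential gradient satisfies $\Na f=\tfrac12\,x^{\top}$, hence $\langle x,\Na\varphi\rangle=2\langle\Na f,\Na\varphi\rangle$ for every $\varphi$, and therefore $L_\Si=\cL_f+\big(|A|^2+\tfrac12\big)$, where $\cL_f=\Delta-\langle\Na f,\Na\,\cdot\,\rangle$ is the drift Laplacian. The operator $\cL_f$ is self-adjoint with respect to $d\nu$: since $\div(e^{-f}\Na u)=e^{-f}\cL_f u$ and $\Si$ has no boundary, $\int_\Si(\cL_f u)\,v\,d\nu=-\int_\Si\langle\Na u,\Na v\rangle\,d\nu$ for compactly supported $u,v$ (and the cutoff $\varphi_R|_\Si$ below is compactly supported because $\Si$ is properly embedded, so $\Si\cap\overline{B_R(0)}$ is compact). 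Taking $u=v=\varphi$ yields, for every compactly supported Lipschitz $\varphi$,
$$-\int_\Si\varphi\,L_\Si\varphi\,d\nu=\int_\Si|\Na\varphi|^2\,d\nu-\int_\Si\Big(|A|^2+\tfrac12\Big)\varphi^2\,d\nu,$$
the first-order drift term cancelling identically.

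Next I would insert the almost-constant test function. Polynomial volume growth gives $\Area\big(B_R(0)\cap\Si\big)\le CR^{N}$ for some $C,N>0$; splitting $\Si$ into dyadic annuli and using $e^{-f}\le e^{-R^2/16}$ on $B_R(0)\setminus B_{R/2}(0)$, one obtains $V:=\int_\Si d\nu\in(0,\infty)$, $\int_{\Si\setminus B_{R/2}(0)}d\nu\le C'e^{-R^2/16}R^{N}\ri 0$, and $\int_{B_{R/2}(0)\cap\Si}d\nu\ri V$ as $R\ri\infty$. Fix $R$ large and let $\varphi=\varphi_R\in W_0^{1,2}(B_R(0))$ be a cutoff with $\varphi\equiv1$ on $B_{R/2}(0)$, $\varphi\equiv0$ off $B_R(0)$ and $|\Na\varphi|\le C/R$. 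Then $\int_\Si|\Na\varphi|^2\,d\nu\le\tfrac{C^2}{R^2}\int_{B_R(0)\setminus B_{R/2}(0)}d\nu\ri 0$, while $\int_\Si\big(|A|^2+\tfrac12\big)\varphi^2\,d\nu\ge\tfrac12\int_{B_{R/2}(0)\cap\Si}d\nu\ri\tfrac12V>0$. Hence for $R$ large enough $-\int_\Si\varphi_R\,L_\Si\varphi_R\,d\nu<0$, so $\Si$ is not $L$-stable in $B_R(0)$, and a fortiori not $L$-stable in $\RR^{n+1}$. (When $\Si$ is compact this degenerates to taking $\varphi\equiv1$ for $R$ large, which still works.)

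I do not expect a serious obstacle here; the argument is a clean logarithmic-cutoff computation, and the only points that require attention are (i) checking that the self-shrinker equation turns the first-order part of $L_\Si$ into the drift of the self-adjoint operator $\cL_f$, so that the integration by parts produces no boundary contribution, and (ii) using polynomial volume growth both to guarantee $0<V<\infty$ and to let the Gaussian decay beat the area growth in the gradient estimate. If one wanted the sharper quantitative statement that the bottom of the spectrum of $-L_\Si$ is at most $-\tfrac12$, one would take $\varphi_R$ to be a logarithmic cutoff and pass to the limit $R\ri\infty$, but this refinement is not needed for the application to Theorem~\ref{theo:removable}.
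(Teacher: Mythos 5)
The paper does not prove Theorem~\ref{theo:CM1}; it is quoted from Colding--Minicozzi and used as a black box elsewhere. Your argument --- identifying the first-order part of $L_\Si$ with the drift of the self-adjoint operator for the Gaussian weight so that $-\int_\Si\varphi L_\Si\varphi\,e^{-|x|^2/4}=\int_\Si|\Na\varphi|^2e^{-|x|^2/4}-\int_\Si\bigl(|A|^2+\tfrac12\bigr)\varphi^2e^{-|x|^2/4}$, and then testing with a large-scale cutoff of the constant function $1$ whose gradient term is annihilated by polynomial volume growth against the Gaussian decay --- is correct and is essentially the Colding--Minicozzi proof of this fact.
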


As a   corollary of Theorem \ref{theo:CM1}, we have the following result.

\begin{lem}\label{lem:A002b}Let  $ N>0$ and $\rho$ an increasing positive function. There exists $R_0=R_0( N, \rho)>0$ such that any self-shrinker $\Si\in \cC( N, \rho)$ is not $L$-stable in the ball $B_{R_0}(0)$.

\end{lem}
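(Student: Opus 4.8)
The plan is to argue by contradiction using the compactness of the space $\cC(D, N, \rho)$ together with Colding--Minicozzi's Theorem~\ref{theo:CM1}. Suppose the statement fails. Then for every $R_0 = j \in \NN$ there is a self-shrinker $\Si_j \in \cC(D, N, \rho)$ which \emph{is} $L$-stable in the ball $B_j(0)$, i.e.
\beq
\int_{\Si_j}\, -\varphi\, L_{\Si_j}\varphi\, e^{-\frac{|x|^2}{4}} \geq 0, \qquad \forall\, \varphi \in W_0^{1,2}(B_j(0)).\no
\eeq
By Definition~\ref{defi:soliton}, $\cC(D, N, \rho)$ is compact in the smooth topology (Colding--Minicozzi \cite{[CM1]}), so after passing to a subsequence $\Si_j \to \Si_\infty$ smoothly on compact subsets of $\RR^3$, where $\Si_\infty \in \cC(D, N, \rho)$ is again a smooth complete embedded self-shrinker with polynomial volume growth.

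The key step is to pass the $L$-stability inequality to the limit. Fix $R > 0$ and an arbitrary test function $\varphi \in W_0^{1,2}(B_R(0))$; by density it suffices to take $\varphi \in C_c^\infty(B_R(0))$. For $j \geq R$, $\varphi$ is an admissible test function on $\Si_j$, so the displayed inequality holds with $\Si_j$ in place of $\Si_\infty$. Because $\Si_j \to \Si_\infty$ smoothly on $B_R(0)$, the induced metrics, the second fundamental forms $|A|^2$, the gradients $\Na\varphi$ computed along $\Si_j$, and the weight $e^{-|x|^2/4}$ all converge uniformly on the (compact) support of $\varphi$; hence the integrals
\beq
\int_{\Si_j}\, -\varphi\, L_{\Si_j}\varphi\, e^{-\frac{|x|^2}{4}}
= \int_{\Si_j}\Big( |\Na\varphi|^2 - |A|^2\varphi^2 - \tfrac12\varphi^2\Big) e^{-\frac{|x|^2}{4}}\no
\eeq
converge to the corresponding integral over $\Si_\infty$. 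Therefore
\beq
\int_{\Si_\infty}\, -\varphi\, L_{\Si_\infty}\varphi\, e^{-\frac{|x|^2}{4}} \geq 0\no
\eeq
for every $\varphi \in C_c^\infty(B_R(0))$ and every $R > 0$, which says precisely that $\Si_\infty$ is $L$-stable in $\RR^3$ in the sense of Definition~\ref{defi:SpaceSelfshrinker}. This contradicts Theorem~\ref{theo:CM1}, since $\Si_\infty$ is a smooth complete self-shrinker without boundary and with polynomial volume growth. Hence such an $R_0 = R_0(D, N, \rho)$ must exist.

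I expect the only real point requiring care is the integration by parts turning $-\varphi L_{\Si}\varphi\, e^{-|x|^2/4}$ into the Dirichlet-type form $(|\Na\varphi|^2 - |A|^2\varphi^2 - \tfrac12\varphi^2)e^{-|x|^2/4}$ — this uses that $L_\Si$ is self-adjoint with respect to the weighted measure $e^{-|x|^2/4}\,d\mu$ and that $\varphi$ has compact support inside $B_R(0)$, so no boundary terms appear — and, correspondingly, checking that smooth convergence on compact sets is genuinely enough to pass the inequality to the limit (it is, since everything in the integrand is a smooth local quantity and the domain of integration is fixed inside $B_R(0)$). No delicate estimate near infinity is needed because the test functions are compactly supported; the polynomial volume growth of $\Si_\infty$ is only invoked to apply Theorem~\ref{theo:CM1} at the end.
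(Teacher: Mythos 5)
Your proof is correct, and it takes the compactness argument in the opposite direction from the paper's. Both proofs rest on the same two ingredients — smooth compactness of $\cC(D, N, \rho)$ from Colding--Minicozzi and Theorem~\ref{theo:CM1} — but they differ in how the contradiction is extracted. The paper first applies Theorem~\ref{theo:CM1} to the limit $\Si_\infty$ to produce a single destabilizing test function $\varphi_\infty \in C_0^\infty(\Si_\infty \cap B_{R_0}(0))$, then builds the normal-graph maps $f_i:\Si_\infty \to \Si_i$, transplants $\varphi_\infty$ to get $\varphi_i=(f_i^{-1})^*\varphi_\infty$ on $\Si_i$, and shows the stability integrals converge, so $\Si_i$ is already unstable in $B_{R_0+2}(0)$ for large $i$ — a contradiction. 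You instead fix an ambient $\varphi\in C_c^\infty(B_R(0))$, restrict it to each $\Si_j$ (which automatically gives a compactly supported test function on $\Si_j\cap B_R(0)$ for $j\ge R$), and pass the stability inequality forward to $\Si_\infty$. This avoids constructing the transplant maps and the associated pullback convergence entirely, and is a bit shorter.

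One small point you should make explicit: Definition~\ref{defi:SpaceSelfshrinker} quantifies over (intrinsic) test functions in $W_0^{1,2}$ on the surface, and the paper's destabilizer lives in $C_0^\infty(\Si_\infty\cap B_{R_0}(0))$; you establish the inequality on $\Si_\infty$ only for restrictions of ambient $C_c^\infty(B_R(0))$ functions. These two test classes are equivalent for the stability quadratic form — any $\varphi\in C_0^\infty(\Si_\infty\cap B_R(0))$ extends to an ambient $C_c^\infty$ function via a tubular neighborhood of the embedded surface, and the extension's restriction back to $\Si_\infty$ is the original function — but this is the step you should state rather than leave implicit when you write that the conclusion ``says precisely that $\Si_\infty$ is $L$-stable.'' With that sentence added, the argument is complete, and the integration by parts you invoke (using self-adjointness of $L$ with respect to $e^{-|x|^2/4}\,d\mu$ and the compact support of $\varphi$) is indeed the standard identity, correctly stated.
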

\begin{proof}For otherwise, we can find a sequence $R_i\ri +\infty$ and self-shrinkers $\Si_i\in  \cC(N, \rho)$ such that $\Si_i$ is $L$-stable in the ball $B_{R_i}(0)$. By smooth compactness of $\cC(N, \rho)$ in \cite{[CM1]}, a subsequence of $\{\Si_i\}$ converges smoothly to a self-shrinker $\Si_{\infty}\in \cC(N, \rho) $. By Theorem \ref{theo:CM1},  $\Si_{\infty}$ is not $L$-stable in a ball $B_{R_0}(0)$ for some $R_0>0$. This implies that there exists a smooth function $\varphi_{\infty}\in C_0^{\infty}(\Si_{\infty}\cap B_{R_0}(0))$ such that
\beq
\int_{\Si_{\infty}}\,-\varphi_{\infty} L_{\Si}\,\varphi_{\infty} \,e^{-\frac {|x|^2}{4}}< 0.\label{eq:lem38:001}\eeq
Since $\Si_i$ converges smoothly to $\Si_{\infty}$, we define the map
$f_i: \Si_{\infty}\cap B_{R_0+1}(0)\ri \Si_i $ by
\beq
f_i(x)=x+u_i(x)\n(x),\quad \forall\; x\in \Si_{\infty}\cap B_{R_0+1}(0),\nonumber
\eeq where $\n(x)$ denotes the normal vector field of $\Si_{\infty}$ and $u_i(x)$ is the graph function of $\Si_i$ over $\Si_{\infty}.$ Let $\Om=\Si_{\infty}\cap B_{R_0+1}(0)$ and $\Om_i=f_i(\Om) \subset \Si_i$.  We assume that $i$ is large such that $\Om_i\subset \Si_i\cap B_{R_0+2}(0)$.
Note that $f_i$ converges smoothly to the identity map on $\Om $ as $i\ri +\infty$ and for large $i$ its inverse map $f_i^{-1}: \Om_i\ri\Om$ exists and is also smooth. Moreover, $f_i^{-1}$ also converges smoothly to the identity map on $\Om $ as $i\ri +\infty$. We define the function $\varphi_i:=(f_i^{-1})^*\varphi_{\infty}\in C_0^{\infty}(\Om_i)$ and we can extend $\varphi_i$ to $\Si_i$ such that $\varphi$ is zero on $\Si_i\b\Om_i$.  Then by (\ref{eq:lem38:001}) the
function $\varphi_i\in C_0^{\infty}(\Si_i)$ satisfies
\beq \lim_{i\ri+\infty} \int_{\Si_{i}}\,-\varphi_{i} L_{\Si_i}\,\varphi_{i} \,e^{-\frac {|x|^2}{4}}=\int_{\Si_{\infty}}\,-\varphi_{\infty} L_{\Si}\,\varphi_{\infty} \,e^{-\frac {|x|^2}{4}}<0.\nonumber\eeq
Thus, for large $i$ we have
\beq
\int_{\Si_{i}}\,-\varphi_{i} L_{\Si_i}\,\varphi_{i} \,e^{-\frac {|x|^2}{4}}<0. \label{eqn:205}
\eeq
Note that $\Supp(\varphi_i)\subset \Om_i\subset \Si_i\cap B_{R_0+2}(0)$ for large $i$. Thus, the inequality (\ref{eqn:205}) contradicts our assumption that $\Si_i$ is  $L$-stable in the ball $B_{R_i}(0)$ and $R_i\ri +\infty$. The lemma is proved.

\end{proof}

\begin{lem}\label{lem:rx}Let $R, N>0$ and  $\rho$ an increasing positive function.  For any $\Si\in \cC(N, \rho)$ and $x\in \Si$, we define $r_{\Si}(x)$ the supreme of the radius $r$ such that
\beq
B_r(x+r\n(x))\cap \Si=\emptyset, \quad B_r(x-r\n(x))\cap \Si=\emptyset,
\eeq where $\n(x)$ denotes the normal vector of $\Si$ at $x$.
 Then there exists $\ee_0(R,  N, \rho)>0$ such that
for any $\Si\in \cC(N, \rho)$ and $x\in \Si\cap B_R(0)$ we have
$$  r_{\Si}(x)\geq \ee_0.  $$
\end{lem}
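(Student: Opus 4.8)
The plan is to argue by contradiction, using as the essential input the Colding--Minicozzi smooth compactness of the family $\cC(D, N, \rho)$. Suppose the lemma fails for some $R, D, N, \rho$; then there are self-shrinkers $\Si_i\in \cC(D, N, \rho)$ and points $x_i\in \Si_i\cap B_R(0)$ with $r_{\Si_i}(x_i)\ri 0$ (note $r_{\Si_i}(x_i)>0$ for each $i$, since $\Si_i$ is smooth and properly embedded). By Definition~\ref{defi:soliton}, after passing to a subsequence $\Si_i$ converges in the smooth topology, with multiplicity one, on compact subsets of $\RR^3$ to a self-shrinker $\Si_{\infty}\in \cC(D, N, \rho)$; since each $x_i$ lies in the fixed ball $\ov{B_R(0)}$ and on $\Si_i$, we may also assume $x_i\ri x_{\infty}\in \Si_{\infty}$ with $|x_{\infty}|\leq R$.

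The first step is to produce a definite lower bound for $r_{\Si_{\infty}}(x_{\infty})$. Since $\Si_{\infty}$ is a smooth embedded surface there is $\dd_0>0$ such that $|A|\leq \dd_0^{-1}$ on $B_{\dd_0}(x_{\infty})\cap \Si_{\infty}$, and, after shrinking $\dd_0$, Lemma~\ref{lem:graph1} lets us write $C_{x_{\infty}}(B_{\dd_0}(x_{\infty})\cap \Si_{\infty})$ as the graph of a function $u_{\infty}$ over $T_{x_{\infty}}\Si_{\infty}$ with $u_{\infty}(0)=0$, $\Na u_{\infty}(0)=0$ and $|\Na u_{\infty}|(y')\leq C_0|y'|$, while $B_{\dd_0}(x_{\infty})\cap \Si_{\infty}$ has no further component (this is where embeddedness enters). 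Exactly as in the proof of Lemma~\ref{lem:ratio} one has $|u_{\infty}(y')|\leq \tfrac{C_0}{2}|y'|^2$ on this graph, and an elementary comparison of a round ball with such a graph then shows that for every $r\leq r_{\infty}:=\min\{\dd_0/2,(2C_0)^{-1}\}$ the tangent balls $B_r(x_{\infty}\pm r\n(x_{\infty}))$ are contained in $B_{\dd_0}(x_{\infty})$ and disjoint from the graph, hence from all of $\Si_{\infty}$. Thus $r_{\Si_{\infty}}(x_{\infty})\geq r_{\infty}>0$.

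The second step is to transfer this bound to the surfaces $\Si_i$, which is where the proof really uses the strength of the convergence. By smooth multiplicity-one convergence on the compact set $\ov{B_{\dd_0}(x_{\infty})}$, for all large $i$ the set $B_{\dd_0/2}(x_i)\cap \Si_i$ is a \emph{single} graph, over a subdomain of $T_{x_{\infty}}\Si_{\infty}$, with $|\Na u_i|(y')\leq 2C_0|y'|$; repeating the comparison of the previous paragraph for $\Si_i$ yields $r_{\Si_i}(x_i)\geq r_{\infty}/2$ for all large $i$, contradicting $r_{\Si_i}(x_i)\ri 0$. The genuinely routine parts are the elementary geometry of a round ball tangent to a $C^{1,1}$ graph (essentially already carried out in Lemma~\ref{lem:ratio}) and the bookkeeping of constants; the point that must be handled with care is precisely this transfer step — one needs that $\Si_i\ri\Si_{\infty}$ is smooth and of multiplicity one, not merely in Hausdorff distance, so that no extra sheet of $\Si_i$ can enter the small ball around $x_i$ and so that the graphical bound is inherited uniformly in $i$. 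This is exactly the role of membership in the compact family $\cC(D, N, \rho)$; without it the statement is false, since two sheets of one embedded surface can be arbitrarily close.
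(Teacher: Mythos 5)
Your proposal is correct and follows essentially the same route as the paper: contradiction, Colding--Minicozzi smooth compactness of $\cC(D,N,\rho)$, a positive lower bound for $r_{\Si_\infty}(x_\infty)$ from smoothness and embeddedness of the limit, and transfer back to $\Si_i$ via the smooth multiplicity-one convergence. The paper carries out the transfer step a bit more elaborately (projecting $x_i$ onto $\Si_\infty$, building the auxiliary tubular region $\Om(r',\tfrac{\dd}{2})$ of tangent balls, and showing the balls $B_{2\dd_i}(x_i\pm 2\dd_i\n_{\Si_i}(x_i))$ avoid $\Si_i$), but this amounts to the same graph comparison you run; the only small thing to tidy in your version is that graphing $\Si_i$ over $T_{x_\infty}\Si_\infty$ does not make $\nabla u_i$ vanish at the projection of $x_i$, so it is cleaner to graph over $T_{x_i}\Si_i$ via Lemma~\ref{lem:graph1}, using the uniform $|A|$-bound near $x_i$ that the smooth convergence supplies.
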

\begin{proof}We divide the proof into several steps.

\emph{Step 1. }
For otherwise, we can find a sequence of $\Si_i\in \cC(N, \rho)$ and   points $x_i\in \Si_i\cap B_R(0)$ with $\dd_i:=r_{\Si_i}(x_i)\ri 0$. By the smooth compactness of $\cC( N, \rho)$, there is a subsequence of $\{\Si_i\}$ converging smoothly to a self-shrinker $\Si_{\infty}$ in $\cC( N, \rho)$. We assume that $x_i\ri x_{\infty}\in \Si_{\infty}\cap B_{R+1}(0)$. By the embeddedness of $\Si_{\infty}$, we have $\dd:=r_{\Si_{\infty}}(x_{\infty})>0.$ Since $\Si_{\infty}$ is smooth and embedded, there exists $r'>0$ such that $B_{r'}(x_{\infty})\cap \Si_{\infty}$ has only one component and
\beq
\inf_{y\in B_{r'}(x_{\infty})\cap \Si_{\infty}} r_{\Si_{\infty}}(y)\geq \frac {\dd}{2}. \label{eq:lem39:001}
\eeq
Moreover, we choose $r'$ sufficiently small such that $B_{r'}(x_{\infty})\cap \Si_{\infty}$ is almost flat by Lemma \ref{lem:graph1}.
Let
\beq
\Om(r', \frac {\dd}2):=\bigcup_{y\in B_{r'}(x_{\infty})\cap \Si_{\infty}}\Big(B_{\frac {\dd}2}(
y+\frac {\dd}2\n_{\Si_{\infty}}(y))\cup B_{\frac {\dd}2}(
y-\frac {\dd}2\n_{\Si_{\infty}}(y))\Big). \label{eq:lem39:002}
\eeq
Then (\ref{eq:lem39:001}) implies that $\Om(r', \frac {\dd}2)\cap \Si_{\infty}=\emptyset.$  By the smooth convergence of $\Si_{i}$ to $\Si_{\infty}$, for large $i$ we have
\beq
\Om(\frac {r'}2, \frac {\dd}4)\cap (\Si_i\b B_{r'}(x_{\infty}))=\emptyset. \label{eq:lem39:006}
\eeq
By the construction of $\Om(r', \frac {\dd}2)$, we have
\beq
B_{\frac {\dd}4}(y)\subset \Om(\frac {r'}2, \frac {\dd}4)\cup (\Si_{\infty}\cap B_{r'}(x_{\infty})),\quad \forall\; y\in B_{\frac {r'}4}(x_{\infty})\cap \Si_{\infty}. \label{eq:lem39:003}\eeq\\

\emph{Step 2. }
 Since $x_i\ri x_{\infty}$,  we can choose $r'$ sufficiently small such that for all large $i$ the projection  of $x_i$ to $\Si_{\infty}$ lie in the ball $B_{\frac {r'}2}(x_{\infty})$. This can be done since   $B_{r'}(x_{\infty})\cap \Si_{\infty}$ is almost flat. We denote by $y_i$ the projection  of $x_i$ to $\Si_{\infty}$ and we have $y_i\in B_{\frac {r'}2}(x_{\infty})\cap \Si_{\infty}$. Let $s_i\in \RR$ such that $y_i+s_i\n_{\Si_{\infty}}(y_i)=x_i$.  Combining this with $x_i\ri x_{\infty}$, we have \beq
B_{2\dd_i}(x_i\pm 2\dd_i\n_{\Si_i}(x_i))\subset B_{4\dd_i}(x_i)\subset B_{4\dd_i+|s_i|}(y_i). \label{eq:lem39:004}
\eeq
On the other hand,    $|s_i|\ri0$ and for large $i$ we have
\beq
B_{4\dd_i+|s_i|}(y_i)\subset B_{\frac {\dd}4}(y_i). \label{eq:lem39:005}
\eeq
Combining (\ref{eq:lem39:003})-(\ref{eq:lem39:005}), we have
\beq
B_{2\dd_i}(x_i\pm 2\dd_i\n_{\Si_i}(x_i))\subset  \Big(\Om(\frac {r'}2, \frac {\dd}4)\cup (\Si_{\infty}\cap B_{r'}(x_{\infty}))\Big). \label{eq:lem39:007}
\eeq\\

\emph{Step 3. } We  show that
\beq
B_{2\dd_i}(x_i\pm 2\dd_i\n_{\Si_i}(x_i))\cap \Si_i=\emptyset. \label{eq:lem39:008}
\eeq
Let $\Si_i=\Si_i^{(1)}\cup \Si_i^{(2)}$, where $\Si_i^{(1)}$ and $\Si_i^{(1)}$ are defined by
\beq
\Si_i^{(1)}=\Si_i\cap B_{r'}(x_{\infty}),\quad \Si_i^{(2)}=\Si_i\b B_{r'}(x_{\infty}).
\eeq
By the smooth convergence of $\Si_{i}$ to $\Si_{\infty}$ and the choice of $r'$ such that $B_{r'}(x_{\infty})\cap \Si_{\infty}$ is almost flat, we have that for large $i$  $B_{r'}(x_{\infty})\cap \Si_{i}$ is also almost flat. Consequently, for large $i$ we have
\beq
B_{2\dd_i}(x_i\pm 2\dd_i\n_{\Si_i}(x_i))\cap \Si_i^{(1)}=\emptyset.  \label{eq:lem39:009}
\eeq
On the other hand, (\ref{eq:lem39:006}) and (\ref{eq:lem39:007}) imply that
\beqn
B_{2\dd_i}(x_i\pm 2\dd_i\n_{\Si_i}(x_i))\cap \Si_i^{(2)}&\subset& \Big(\Om(\frac {r'}2, \frac {\dd}4)\cup (\Si_{\infty}\cap B_{r'}(x_{\infty}))\Big)\cap \Si_i^{(2)}\nonumber\\
&=&  \Om(\frac {r'}2, \frac {\dd}4)\cap \Si_i^{(2)}=\emptyset,  \label{eq:lem39:010}
\eeqn where we used the fact that $\Si_i^{(2)}\cap B_{r'}(x_{\infty})=\emptyset$. Thus,   (\ref{eq:lem39:008}) follows from (\ref{eq:lem39:009})-(\ref{eq:lem39:010}). Note that (\ref{eq:lem39:008}) contradicts the definition of $\dd_i=r_{\Si_i}(x_i)$. The lemma is proved.
\end{proof}

A direct corollary of Lemma \ref{lem:rx}
is the following result.

\begin{lem}\label{lem:TN}Let $R,  N>0$ and  an increasing positive function $\rho$. Then  there exists a constant $\ee_0(R,  N, \rho)>0$ such that for any $\ee\in (0, \ee_0)$ we have  \beq |\mathbf{TN}( \Sigma, \ee, R)| =0,\quad \forall\;\Si\in \cC( N, \rho). \label{eq:TN}\eeq
Here the notation $|\Omega|$ denotes the volume of $\Omega$ with respect to the standard metric on $\RR^3$.
\end{lem}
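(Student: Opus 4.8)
The plan is to prove the sharper statement that for $\ee$ small one has $\mathbf{TN}(\Si,\ee,R)=\Si\cap B_R(0)$, a null set; equivalently, that every connected component of $B_R(0)\setminus\Si$ contains a Euclidean ball of radius $\ee$ lying inside $B_R(0)$. First I would fix the scale: applying Lemma~\ref{lem:rx} with radius $2R$ in place of $R$ gives $\ee_0=\ee_0(R,D,N,\rho)>0$ with $r_\Si(x)\ge 2\ee_0$ for all $\Si\in\cC(D,N,\rho)$ and all $x\in\Si\cap B_{2R}(0)$. A two-sided ball condition of radius $2\ee_0$ forces the osculating spheres of $\Si$ to fit inside the tangent balls, so $|A|\le(\sqrt2\,\ee_0)^{-1}$ on $\Si\cap B_{2R}(0)$ (this also follows from the smooth compactness of $\cC(D,N,\rho)$); hence $\mathbf{S}(\Si,\ee,R)=\emptyset$ and $\mathbf{H}(\Si,\ee,R)=\emptyset$ for all $\ee\in(0,\ee_0)$. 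For such $\ee$, a point $x\in B_R(0)\setminus\Si$ lies in $\mathbf{TK}$ exactly when the component $U$ of $B_R(0)\setminus\Si$ containing it contains some ball $B(y,\ee)$ with $B(y,\ee)\subset B_R(0)$, since such a ball is automatically disjoint from $\Si$ once it meets $U$. This reduces the lemma to the displayed claim on components.

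For a component $U$ with $\overline U\cap\partial B_R(0)=\emptyset$ the claim follows cleanly from Lemma~\ref{lem:rx}. Since $R>D$ (the regime of interest, cf.\ Lemma~\ref{lem:A002b}) and $d(0,\Si)\le D$, we have $\Si\cap B_R(0)\neq\emptyset$, so $U\neq B_R(0)$ and $\partial U\subset\Si\cap B_R(0)$. Pick $q\in\partial U$; near $q$ the points of $U$ lie on one definite side of $\Si$, say the $+\n(q)$ side, so by Lemma~\ref{lem:rx} the open ball $B:=B_{2\ee_0}(q+2\ee_0\,\n(q))$ is disjoint from $\Si$ and meets $U$. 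Because $B_R(0)$ is convex, $B\cap B_R(0)$ is connected, hence $B\cap B_R(0)\subset U$; and if $B$ were not contained in $B_R(0)$, then, being connected with points both inside and outside $B_R(0)$, it would have points of $B\cap B_R(0)$ arbitrarily close to $\partial B_R(0)$, contradicting $d(\overline U,\partial B_R(0))>0$. Thus $B\subset B_R(0)\setminus\Si$ and $U$ contains a ball of radius $2\ee_0>\ee$.

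It remains to treat the components $U$ that touch $\partial B_R(0)$ and to show that no such component can be thin; this is the main obstacle, since it is not supplied directly by Lemma~\ref{lem:rx}. A thin such $U$ would force $\Si$ to run nearly parallel to the sphere $\partial B_R(0)$ over a macroscopic region; but for $R$ large the sphere is uniformly far from being a self-shrinker — along $\partial B_R(0)$ one has $H-\tfrac12\langle x,\n\rangle=\tfrac2R-\tfrac R2$, of size $\sim R$ — so $\Si$ cannot shadow it. Using the uniform bound $|A|\lesssim\ee_0^{-1}$ on $\Si\cap B_{2R}(0)$ from Lemma~\ref{lem:rx}, $\Si$ is uniformly $C^{1,1}$, and near any point where it comes $\ee_0$-close to $\partial B_R(0)$ it is graphical over its tangent plane with controlled Hessian; one then checks that either $\n$ is nearly tangential to the concentric spheres, forcing $H=\tfrac12\langle x,\n\rangle$ small and $\Si$ transverse to $\partial B_R(0)$, or $\n$ is nearly radial, forcing $|H|\sim R/2\gg R^{-1}$ so that $\Si$ peels away from the sphere at once — either way no thin boundary component can form. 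Alternatively, and perhaps more cleanly, one argues by contradiction/compactness in the spirit of Lemma~\ref{lem:rx}: were the assertion false there would be $\ee_i\ri0$ and $\Si_i\in\cC(D,N,\rho)$ with $|\mathbf{TN}(\Si_i,\ee_i,R)|\ge c>0$; after passing to a smooth limit $\Si_\infty\in\cC(D,N,\rho)$, and using the monotonicity $\mathbf{TN}(\cdot,\ee,R)\subset\mathbf{TN}(\cdot,\ee',R)$ for $\ee<\ee'$ (valid once $\mathbf H=\emptyset$) together with $\mathbf{TN}(\Si_\infty,\ee',R)\downarrow\Si_\infty\cap B_R(0)$ as $\ee'\downarrow0$, one reaches a contradiction. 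Once thin boundary components are excluded, every component of $B_R(0)\setminus\Si$ contains a ball of radius $\ee$, so $\mathbf{TK}=B_R(0)\setminus\Si$, $\mathbf{TN}=\Si\cap B_R(0)$, and hence $|\mathbf{TN}|=0$.
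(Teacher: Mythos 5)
Your proposal follows the same basic strategy the paper intends — take $\ee_0$ from Lemma~\ref{lem:rx}, observe that $\mathbf{S}=\mathbf{H}=\emptyset$ for $\ee<\ee_0$, and reduce the claim to showing every component of $B_R(0)\setminus\Si$ contains a ball of radius $\ee$ inside $B_R(0)$. Your treatment of the interior components (those with $\overline U\cap\partial B_R(0)=\emptyset$) is correct and is a careful, explicit version of what the paper's one-sentence proof is implicitly relying on: the tangent ball from Lemma~\ref{lem:rx} is disjoint from $\Si$, meets $U$, and the convexity of $B_R(0)$ plus the positive distance $d(\overline U,\partial B_R(0))$ force it to sit inside $B_R(0)$. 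You have also correctly identified something the paper's one-line proof does not address at all: components that touch $\partial B_R(0)$ are genuinely not handled by Lemma~\ref{lem:rx} alone, since the tangent balls on the $U$-side may protrude past $\partial B_R(0)$ when $\Si$ runs close to the boundary sphere, and the definition of $\mathbf{TK}$ requires the ball $B(y,\ee)$ to lie inside $B_R(0)$. This is a real point, and spotting it is to your credit.

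However, neither of your two proposed completions of the boundary case closes the gap. The geometric sketch (that $\Si$ cannot shadow $\partial B_R(0)$ because the latter has mean curvature defect of size $\sim R$) is the right heuristic, but quantifying it requires $\n$ to be far from radial, i.e.\ $|H|\ll R$ on $\Si$ near $\partial B_R(0)$; this amounts to $C_{R+1}\ll R$ in the notation of Lemma~\ref{lem:G1}, which is not established and which you do not verify. The compactness sketch has a concrete logical flaw: the negation of the lemma only yields a sequence with $|\mathbf{TN}(\Si_i,\ee_i,R)|>0$, not a uniform lower bound $|\mathbf{TN}(\Si_i,\ee_i,R)|\ge c>0$; a single thin boundary sliver of vanishing measure is exactly what one must rule out, so your contradiction hypothesis already assumes away the hard case. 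A correct compactness argument would instead have to track a degenerating thin component $U_i$, pass limit points $p_i\in\overline{U_i}\cap\partial B_R(0)$ to $p_\infty$, split into the cases $p_\infty\notin\Si_\infty$ (then $U_i$ contains a fat half-ball near $p_\infty$, contradiction) and $p_\infty\in\Si_\infty$ (then one needs transversality of $\Si_\infty$ to $\partial B_R(0)$ at $p_\infty$, which again comes down to a bound on $|H|$ versus $R$). As written, the boundary case is therefore an unresolved gap, both in your proof and, arguably, in the paper's own terse justification of this lemma.
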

\begin{proof}We choose $\ee_0$ the same constant in Lemma \ref{lem:rx}. Thus, (\ref{eq:TN}) follows from Lemma \ref{lem:rx} and the definition of $\mathbf{TN}.$
\end{proof}

Using Lemma \ref{lem:TN} we show that the quantity $|\mathbf{TN}|$ along the flow will tend to zero.
 \begin{lem} \label{lma:GD18_1} Fix $R,  N>0$, and an increasing positive function $\rho$.   Under the assumption of Theorem \ref{theo:removable}, there exists a constant $\ee_0(R,  N, \rho)>0$ such that for any  $\epsilon\in (0, \ee_0)$, we have
   $$
       \displaystyle \lim_{t \to \infty}   |\mathbf{TN}( \Sigma_{t}, \ee, R)| =0.
   $$
 \end{lem}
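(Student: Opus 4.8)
The goal is to upgrade the "static" estimate of Lemma~\ref{lem:TN}, which says that $|\mathbf{TN}(\Sigma,\ee,R)|=0$ for every fixed self-shrinker $\Sigma\in\cC(D,N,\rho)$, to a "dynamic" statement along the rescaled flow: $|\mathbf{TN}(\Sigma_t,\ee,R)|\to 0$ as $t\to+\infty$. The plan is to argue by contradiction using the compactness theory of Lemma~\ref{lem:A001}. Suppose the conclusion fails; then there is a fixed $\ee\in(0,\ee_0)$, a number $\mu>0$, and a sequence $t_i\to+\infty$ with $|\mathbf{TN}(\Sigma_{t_i},\ee,R)|\geq\mu$ for all $i$. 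Apply Lemma~\ref{lem:A001} to $\{t_i\}$: after passing to a subsequence, the flows $\{\Sigma_{t_i+t},\,-T<t<T\}$ converge in smooth topology, possibly with multiplicities, away from a space-time singular set $\cS$ consisting of finitely many Lipschitz curves, to a self-shrinker $\Sigma_\infty\in\cC(D,N,\rho)$; the convergence is also in extrinsic Hausdorff distance. In particular, at the time slice $t=0$, $\Sigma_{t_i}$ converges smoothly to $\Sigma_\infty$ away from the finite set $\cS_0$.

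The key point is a semicontinuity statement for the thin part: the thin set $\mathbf{TN}$ should essentially "converge" into the thin set of the limit plus a small neighborhood of the singular set $\cS_0$. First I would fix a small radius $\sigma>0$ and consider the set $\mathbf{TN}(\Sigma_{t_i},\ee,R)\setminus B_\sigma(\cS_0)$, where $B_\sigma(\cS_0)$ is the $\sigma$-neighborhood of the (finitely many) singular points. On the complement of $B_\sigma(\cS_0)$ the convergence $\Sigma_{t_i}\to\Sigma_\infty$ is smooth (and, crucially, the local geometry including the normal injectivity radius $r_{\Sigma}(\cdot)$ from Lemma~\ref{lem:rx} is controlled). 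I would argue that any point $x$ lying in $\mathbf{TN}(\Sigma_{t_i},\ee,R)\setminus B_\sigma(\cS_0)$ for infinitely many $i$ must, in the limit, lie in $\mathbf{TN}(\Sigma_\infty,\ee/2,R)$ or something comparable: the defining property of being in $\mathbf{TN}$ — i.e., being in $B_R(0)$, not being separated from $\Sigma$ by an $\ee$-ball in the complement, yet not being close to the high-curvature part — is stable under the smooth (Hausdorff) convergence of the surfaces away from $\cS_0$, up to shrinking $\ee$ slightly. Hence
\[
\limsup_{i\to\infty}\big|\mathbf{TN}(\Sigma_{t_i},\ee,R)\setminus B_\sigma(\cS_0)\big|\;\leq\;\big|\mathbf{TN}(\Sigma_\infty,\ee',R)\big|\;=\;0
\]
for a suitable $\ee'\in(0,\ee_0)$ by Lemma~\ref{lem:TN}. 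Since $\cS_0$ is a finite set, $|B_\sigma(\cS_0)|\leq C\sigma^3\to 0$ as $\sigma\to 0$; combining, $\limsup_i|\mathbf{TN}(\Sigma_{t_i},\ee,R)|\leq C\sigma^3$ for every $\sigma>0$, whence it is zero. This contradicts $|\mathbf{TN}(\Sigma_{t_i},\ee,R)|\geq\mu$, finishing the proof.

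The main obstacle is the semicontinuity step, i.e.\ making rigorous that $\mathbf{TN}(\Sigma_{t_i},\ee,R)\setminus B_\sigma(\cS_0)$ is (eventually) contained in a slightly fattened version of $\mathbf{TN}(\Sigma_\infty,\ee',R)$. The subtlety is that $\mathbf{TN}$ is defined by a non-local, topological connectedness condition through the complement $B_R(0)\setminus(\mathbf{H}\cup\Sigma_t)$, so one has to track how connected components of the complement behave under Hausdorff convergence of the surfaces — in particular, a component of the complement could in principle pinch off or merge near the singular curves, which is exactly why the $\sigma$-neighborhood of $\cS_0$ must be excised first. On $B_R(0)\setminus B_\sigma(\cS_0)$ the surfaces converge smoothly with multiplicity, so the high-curvature sets $\mathbf{H}(\Sigma_{t_i},\ee,R)$ stay away from this region for large $i$ (curvature is bounded there), and the complement regions converge in the Hausdorff sense; then a standard argument shows any admissible connecting curve for $x$ in $\Sigma_{t_i}$ persists (slightly perturbed) as an admissible curve for $\Sigma_\infty$, with $\ee$ replaced by $\ee/2$. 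I would also note that multiplicity $>1$ of the limit does not cause trouble here: a multiplicity-$m$ sheet still locally separates its two sides for large $i$, so the thin part cannot hide inside the multiplicity. This reduces everything to Lemma~\ref{lem:TN} applied to $\Sigma_\infty$, together with the elementary measure bound on $B_\sigma(\cS_0)$.
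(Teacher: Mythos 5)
Your proposal is correct and follows essentially the same route as the paper: apply Lemma~\ref{lem:A001} to extract a subsequence converging to a self-shrinker $\Sigma_\infty\in\cC(D,N,\rho)$ away from the finite singular set $\cS_0$, then invoke the static vanishing result Lemma~\ref{lem:TN} on $\Sigma_\infty$, using that $\cS_0$ has measure zero. Your write-up is more careful than the paper's three-line argument — the explicit contradiction setup, the separate excision radius $\sigma\to 0$, and the discussion of why $\mathbf{TN}$ is semicontinuous under smooth-away-from-singularities plus Hausdorff convergence all fill in steps the paper leaves implicit — but the core mechanism (compactness $+$ Lemma~\ref{lem:TN}) is the same.
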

 \begin{proof} By Lemma \ref{lem:A001}, for any $t_i\ri \infty$ there exists a subsequence, still denoted by $\{t_i\}$, such that it converges locally smoothly to a limit self-shrinker $\Si_{\infty}\in \cC( N, \rho)$ away from the singular set $\cS_0\subset \RR^3$. For any $\ee>0$, by Definition \ref{def:GD18_1} we have \beq
\mathbf{TN}(\Sigma_{t_i}, \ee, R)\ri \mathbf{TN}(\Sigma_{\infty}, \ee, R)\b B_{\frac {\ee}2}(\cS_0),\nonumber
\eeq where $B_{\ee}(\cS_0)=\cup_{p\in \cS_0}B_{\ee}(p).$
Therefore, by Lemma \ref{lem:TN} we have
$$\lim_{t_i\ri +\infty}|\mathbf{TN}( \Sigma_{t_i}, \ee, R)|\leq \lim_{t_i\ri +\infty}|\mathbf{TN}(  \Sigma_{\infty}, \ee, R)|=0,
$$ where $\ee\in (0, \ee_0)$ and $\ee_0$ is the constant in Lemma \ref{lem:TN}.  The lemma is proved.

 \end{proof}

As in  Lemma 4.7 of \cite{[LW]}, we have
\begin{lem}\label{lma:GD18_3}Fix $ R>0$ and $\tau\in (0, 1).$ Let $\{t_i\}$ be any sequence as in Lemma \ref{lem:A001}.
 If the multiplicity of the convergence in Lemma \ref{lem:A001} is  more than one, then for any $\ee>0$, there exists $i_0>0$ such that for any $i\geq i_0$ we have $$\inf_{t\in [ t_i-\tau, t_i]}|\mathbf{TN}( \Si_{t}, \ee, R)|>0.$$
\end{lem}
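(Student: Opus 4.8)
\textbf{Proof proposal for Lemma \ref{lma:GD18_3}.}

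The plan is to argue by contradiction, combining Lemma \ref{lma:GD18_1} (which says $|\mathbf{TN}(\Si_t,\ee,R)|\to 0$ as $t\to\infty$) with the characterization of singular points in Lemma \ref{lem:singular}. Suppose the conclusion fails for some $\ee>0$: then along a subsequence of $\{t_i\}$ (still denoted $\{t_i\}$) we can pick times $s_i\in[t_i-\tau,t_i]$ with $|\mathbf{TN}(\Si_{s_i},\ee,R)|=0$. On the other hand, Lemma \ref{lma:GD18_1} already gives $|\mathbf{TN}(\Si_{t},\ee,R)|\to 0$, so the subtlety is not that $|\mathbf{TN}|$ is small on average but that we need a \emph{uniform positive lower bound} near each $t_i$ coming from the multiplicity-$\geq 2$ convergence. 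So instead I would directly estimate $|\mathbf{TN}(\Si_t,\ee,R)|$ from below for all $t\in[t_i-\tau,t_i]$ and large $i$, and show this lower bound is positive.

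First I would apply Lemma \ref{lem:A001} to the sequence $\{t_i\}$: passing to a subsequence, $\{\Si_{t_i+t},\,-T<t<T\}$ (with $T>1$ chosen so that $[-\tau,0]\subset(-T,T)$) converges locally smoothly, with multiplicity $m\geq 2$, to a self-shrinker flow $\Si_{\infty,t}$ away from a space-time singular set $\cS$ which consists of finitely many $\si$-Lipschitz curves inside $B_R(0)\times(-T,T)$. Pick any regular point $x_0\in\Si_{\infty,0}\b\cS_0$ with $|x_0|<R$; by Step 1 of Lemma \ref{lem:multiplicity} the multiplicity near $x_0$ is $m\geq 2$, so for large $i$ there are at least two connected components $\Om_{i,1},\Om_{i,2}$ of $B_{r_1}(x_0)\cap\Si_{i,0}$, each converging smoothly to $B_{r_1}(x_0)\cap\Si_{\infty,0}$, where $\Si_{i,t}:=\Si_{t_i+t}$. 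Arguing exactly as in Step 3 of Lemma \ref{lem:multiplicity} — using the two-sided pseudolocality Theorem \ref{theo:pseudoA} together with the mean curvature bound (\ref{eq:BB002}) to propagate the sheet structure in time, and the estimate $|\x(p,t)-\x(q,t)|\leq|\x(p,s)-\x(q,s)|+2\La|t-s|$ to keep the two sheets from merging — I would produce, for $t\in[-\tau,0]$ and large $i$, two disjoint sheets $C_{x_{i,1,t}}(B_{\rho}(x_{i,1,t})\cap\Si_{i,t})$ and $C_{x_{i,2,t}}(B_{\rho}(x_{i,2,t})\cap\Si_{i,t})$, with $|A|\leq (\ee'\rho)^{-1}$ on each, for a fixed small $\rho=\rho(x_0,\Si_\infty,\La,\tau)>0$. (Here I shrink $\tau$ inside the pseudolocality time window if necessary; since $\tau<1$ is fixed, I just note that pseudolocality controls a time interval of length $\gtrsim \eta\rho^2/(\La+\La^2)$ independent of $i$, and I cover $[-\tau,0]$ by finitely many such windows.)

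The key geometric point is then: between two parallel almost-flat sheets separated by a definite distance, the region strictly between them that lies outside $\Si_{i,t}$ and outside the high-curvature set $\mathbf{H}$ must contain a ball of radius $\ee$, hence belongs to $\mathbf{TK}$ only if it can be joined to the outside of $B_R(0)\b(\mathbf{H}\cup\Si_{i,t})$ — but the two sheets wall it off, so it is forced into $\mathbf{TN}$. Concretely, I would show that the slab between the two sheets near $x_{i,1,t}$ contains a set of measure bounded below by a positive constant $c(\ee,\rho)>0$ which lies in $B_R(0)\b(\mathbf{H}\cup\mathbf{TK})=\mathbf{TN}(\Si_{i,t},\ee,R)$, using that each sheet separates its $\frac{\ee}{2}$-neighborhood (this is where the almost-flatness and the bound on $|A|$ from pseudolocality are used, so $\mathbf{H}$ does not eat the slab) and that any curve escaping to the outside must cross one of the sheets. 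This gives $|\mathbf{TN}(\Si_{t_i+t},\ee,R)|\geq c(\ee,\rho,x_0)>0$ for all $t\in[-\tau,0]$ and all large $i$, which is exactly the assertion $\inf_{t\in[t_i-\tau,t_i]}|\mathbf{TN}(\Si_t,\ee,R)|>0$.

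I expect the main obstacle to be the second half: making rigorous the claim that the slab between the two sheets is genuinely ``thin'' in the sense of Definition \ref{def:GD18_1}, i.e., that no continuous curve in $B_R(0)\b(\mathbf{H}\cup\Si_t)$ can connect an interior point of the slab to a ball sitting outside the region. One must rule out the curve sneaking around the edge of the local sheets (where $B_{r_1}(x_0)\cap\Si_{i,t}$ ends) — this is handled by choosing $x_0$, and hence the sheets, away from $\cS_0$ and by a topological separation argument for a bi-sheeted almost-flat piece inside a ball, but the bookkeeping with $\mathbf{H}$ (whose $\frac{\ee}{2}$-neighborhood of the high-curvature set $\mathbf{S}$ could in principle be large) requires care; fortunately on the sheets we have chosen $|A|\leq(\ee'\rho)^{-1}$, so for $\ee$ small relative to $\ee'\rho$ the set $\mathbf{S}$ stays away from our slab and $\mathbf{H}$ causes no trouble. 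This is precisely the argument indicated by the reference ``As in Lemma 4.7 of \cite{[LW]}'', so I would follow that template, only replacing the flat-plane limit there with the general self-shrinker limit here and invoking Lemma \ref{lem:singular} and Lemma \ref{lem:multiplicity} in place of their flat analogues.
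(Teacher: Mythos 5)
Your proposal reaches the lemma's conclusion by the same route the paper uses: invoke the smooth, multi-sheeted (multiplicity $\geq 2$) convergence of $\{\Si_{t_i+t}\}$ on the compact window $[-\tau,0]$ near a regular point $x_0$ of $\Si_\infty$, and observe that the thin slab trapped between two sheets lies outside $\mathbf{H}$ (since $|A|$ is locally bounded away from $\cS$) and cannot be connected to a large empty ball without crossing a sheet, hence is forced into $\mathbf{TN}$ with a definite positive measure. Your intermediate steps re-deriving the two-sheeted structure over the whole interval via Theorem \ref{theo:pseudoA} and the Lipschitz displacement estimate are not needed here, because Lemma \ref{lem:A001} already gives the smooth, possibly-multiple-sheeted convergence of the entire flow $\{\Si_{t_i+t}, -T<t<T\}$, not just of a single time slice; the paper simply appeals to that directly.
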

\begin{proof}Since $\Si_t$ is embedded and $\{\Si_{t_i+t}, -\tau\leq t\leq \tau\}$ converges locally smoothly to the limit self-shrinker $\Si_{\infty}$, all components of $(\Si_{t}\cap B_{R}(0))\b \mathbf{H}(\ee, \Si_{t}, R)$ with $t\in [t_i-\tau, t_i]$ lie in the $\frac {\ee}2$-neighborhood of  $\Si_{\infty}$. By the definition of $\mathbf{TN}$, for any $t\in [t_i-\tau, t_i]$ the quantity $\mathbf{TN}(\epsilon, \Si_{t}, R) $ is nonempty and we have $|\mathbf{TN}(\epsilon, \Si_{t}, R)|>0.$

\end{proof}

Using Lemma \ref{lma:GD18_1} and Lemma \ref{lma:GD18_3}, we have the following result as in
Lemma 4.8 of \cite{[LW]}.

\begin{lem}\label{lem:A002}Let $R, \ee, \tau>0$ and $f(t, \ee)=\inf_{s\in [t-\tau, t]}|\mathbf{TN}(\Si_s, \ee, R)|$. For any $t_0>0$ and $l>0$, we can find  a sequence $\{t_i\}$ with $t_{i+1}> t_i+l$ such that for any $i\in \NN$,
\beq  \sup_{t\in [t_i, t_i+l]} f( t, \ee)\leq 2f(t_i, \ee).\label{eqn:206} \eeq

\end{lem}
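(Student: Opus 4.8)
The plan is to extract the sequence $\{t_i\}$ greedily, exploiting the key fact (Lemma~\ref{lma:GD18_1}) that $|\mathbf{TN}(\Si_t,\ee,R)|\to 0$ as $t\to\infty$. First I would record the elementary properties of the function $f(t,\ee)=\inf_{s\in[t-\tau,t]}|\mathbf{TN}(\Si_s,\ee,R)|$: it is nonnegative, and since each $|\mathbf{TN}(\Si_s,\ee,R)|\to 0$, one has $\lim_{t\to\infty}f(t,\ee)=0$ as well (the infimum over a sliding window of width $\tau$ of a function tending to $0$ still tends to $0$, because $\sup_{s\in[t-\tau,t]}|\mathbf{TN}(\Si_s,\ee,R)|\to 0$). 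Thus for any $a>0$ the set $\{t\ge t_0\mid f(t,\ee)\le a/2\}$ is a neighborhood of $+\infty$, i.e.\ contains $[T_a,\infty)$ for some $T_a$.

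Next I would construct $\{t_i\}$ inductively. Suppose $t_1,\dots,t_i$ have been chosen. On the compact interval $[t_i,t_i+l]$ the function $t\mapsto f(t,\ee)$ need not be continuous, but it is bounded, so $M_i:=\sup_{t\in[t_i,t_i+l]}f(t,\ee)<\infty$. I want to pick $t_{i+1}>t_i+l$ with $f(t_{i+1},\ee)\ge M_{i+1}':=\tfrac12\sup_{t\in[t_{i+1},t_{i+1}+l]}f(t,\ee)$; rewriting, I need $2f(t_{i+1},\ee)\ge \sup_{t\in[t_{i+1},t_{i+1}+l]}f(t,\ee)$, which is exactly \eqref{eqn:206}. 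The trick is to choose $t_{i+1}$ to be (essentially) a point where $f(\cdot,\ee)$ is nearly maximal over $[t_i+l,\infty)$ in a suitable sense — but since $f\to 0$, the supremum of $f$ over any tail is attained up to an $\ee$-slack at a finite point. Concretely: let $s_i:=\sup_{t\ge t_i+l} f(t,\ee)$. If $s_i=0$ then $f\equiv 0$ on the tail and any choice $t_{i+1}>t_i+l$ works trivially (both sides of \eqref{eqn:206} are $0$). If $s_i>0$, pick $t_{i+1}>t_i+l$ with $f(t_{i+1},\ee)> \tfrac12 s_i$; this is possible by definition of supremum. Then for every $t\in[t_{i+1},t_{i+1}+l]\subset[t_i+l,\infty)$ we have $f(t,\ee)\le s_i< 2f(t_{i+1},\ee)$, so $\sup_{t\in[t_{i+1},t_{i+1}+l]}f(t,\ee)\le 2 f(t_{i+1},\ee)$, which is \eqref{eqn:206} for index $i+1$. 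The base case $i=1$ is handled the same way starting from $t_0$: set $s_0:=\sup_{t\ge t_0}f(t,\ee)$ and choose $t_1>t_0$ with $f(t_1,\ee)>\tfrac12 s_0$ (or $t_1$ arbitrary if $s_0=0$).

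Finally I would note that by construction $t_{i+1}>t_i+l$ for all $i$, so $t_i\to\infty$, and \eqref{eqn:206} holds for every $i\in\NN$. The only subtlety — and the step I would treat most carefully — is the passage from ``$|\mathbf{TN}(\Si_s,\ee,R)|\to 0$'' (Lemma~\ref{lma:GD18_1}) to ``$\sup_{t\ge \text{tail}} f(t,\ee)$ is finite and the supremum over each window $[t_{i+1},t_{i+1}+l]$ is controlled by a single value $f(t_{i+1},\ee)$''; here one uses that $f\to 0$ to guarantee the tail supremum $s_i$ is finite (indeed $s_i\to 0$) and that picking $t_{i+1}$ with $f(t_{i+1},\ee)$ close to $s_i$ dominates the whole subsequent window. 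I expect no genuine obstacle beyond bookkeeping; the content is entirely in Lemma~\ref{lma:GD18_1}, and this lemma is a clean greedy-selection consequence of it, exactly paralleling Lemma~4.8 of~\cite{[LW]}.
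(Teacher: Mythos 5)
Your argument is correct in substance but takes a genuinely different route from the paper. The paper runs an iterative doubling search: starting from $s_1$ it looks for a time in $[s_1,s_1+l]$ where $f$ has more than doubled, jumps there, and repeats; the process must terminate because the values would grow like $2^k f(s_1,\ee)\to\infty$ while Lemma~\ref{lma:GD18_1} forces $f(t,\ee)\to 0$, and the terminal point $t_1$ automatically satisfies \eqref{eqn:206}. You instead select $t_{i+1}$ near the supremum of $f$ over the tail beyond $t_i+l$: since every point of $[t_{i+1},t_{i+1}+l]$ lies in that tail, its $f$-value is at most $s_i$, which is less than $2f(t_{i+1},\ee)$ by the choice of $t_{i+1}$. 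Your argument is actually more economical than the paper's: it does not need $f\to 0$ at all (only that $f$ is bounded, which holds a priori since $|\mathbf{TN}(\Si_s,\ee,R)|\leq \tfrac{4}{3}\pi R^3$), and it bypasses Lemma~\ref{lma:GD18_3}; so the appeal to Lemma~\ref{lma:GD18_1} in your preamble and final paragraph, while true, is doing no real work.

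One small bookkeeping correction: you should define $s_i:=\sup_{t> t_i+l} f(t,\ee)$ with a strict inequality. As written with $\sup_{t\ge t_i+l}$, the supremum could in principle be realized only at the endpoint $t_i+l$ (with $f$ dropping sharply just beyond it), in which case no $t_{i+1}>t_i+l$ with $f(t_{i+1},\ee)>\tfrac12 s_i$ exists. Taking the open tail fixes this: for any $\rho>0$ there is $t^{*}>t_i+l$ with $f(t^{*},\ee)>s_i-\rho$, and for every $t\in[t^{*},t^{*}+l]$ one still has $t>t_i+l$, hence $f(t,\ee)\le s_i$. The rest of your inequality chain is then exactly as stated.
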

\begin{proof} By Lemma \ref{lma:GD18_3}, we can find $s_1>t_0+l$ with $f(s_1, \ee)>0$. We search for time $t \in [s_1, s_1+l]$ satisfying $f(t, \ee)> 2f(t_i, \ee)$. If no such time exists, then we set
  $t_1=s_1$. Otherwise, we choose such a time and denote it by $s_1^{(1)}$. Then search the time interval $[s_1^{(1)}, s_1^{(1)}+l]$. Inductively, we search
  $[s_1^{(k)}, s_1^{(k)}+l]$. If we have
  $$ \sup_{t \in [s_1^{(k)}, s_1^{(k)}+l]} f(t, \ee)\leq 2 f(s_1^{(k)}, \ee),$$
  then we denote $t_1=s_1^{(k)}$ and stop the searching process. Otherwise, choose a time $s_1^{(k+1)} \in [s_1^{(k)}, s_1^{(k)}+l]$ with more than doubled
   value and continue the process. Note that
   $$ f(s_1^{(k)}, \ee)\geq 2^k f(s_1, \ee)\ri \infty \quad \textrm{as} \quad k \to \infty.$$ Since $\lim_{t\ri +\infty}f(t, \ee)=0$ by Lemma \ref{lma:GD18_1},
   this process must stop in finite steps, and we can find $k_1$ such that
$$ \sup_{t \in [s_1^{(k_1)}, s_1^{(k_1)}+l]} f(t, \ee)\leq 2 f(s_1^{(k_1)}, \ee).$$
We denote by $t_1=s_1^{(k_1)}$. After we find $t_1$, set $s_2^{(0)}=t_1 +l+1$ and continue the previous process to find time in $[s_2^{(0)}, s_2^{(0)}+l]$ such that $f(t, \ee)> 2f(s_2^{(0)}, \ee)$. Similarly, for some $k$ we have
$$\sup_{[s_2^{(k)}, s_2^{(k)}+l]} f(t, \ee)\leq  2f(s_2^{(k)}, \ee).$$
 Then we define $t_2=s_2^{(k)}$. Inductively, after we find $t_l$, we set $s_{l+1}^{(0)}=t_{l}+l+1$. Then we start the process to search time in $[s_{l+1}^{(0)}, s_{l+1}^{(0)}+l]$ with $f(t, \ee)>2f(s_{l+1}^{(0)}, \ee)$.   This process is well defined.
 Repeating this process and we can find a sequence of times $\{t_i\}$ such that for any $t_i$ the inequality (\ref{eqn:206}) holds. The lemma is proved.

\end{proof}

\subsection{Construction of auxiliary functions}\label{sec:construction}

In this subsection, we  construct  functions which will be used to show the $L$-stability of the limit self-shrinker. We fix $R, T>1$ in this section. For any sequence $t_i\ri +\infty$, by Lemma \ref{lem:A001} a subsequence of $\{\Si_{i, t}, -T<t<T\}$ converges in smooth topology to a self-shrinker $\Si_{\infty}$ away from a locally finite, $\si$-Lipschitz singular set $\cS\subset \RR^3\times (-T, T).$ We denote by $\cS_t=\{x\in \RR^3\,|\,(x, t)\in \cS\}$ the singular set in $\RR^3$ at time $t$. By Lemma \ref{lem:multiplicity}, we assume that the multiplicity of the convergence  is a constant $N_0\geq 2$.  As in \cite{[LW]},  we construct some functions as follows:
\begin{enumerate}
  \item[(1).] Let $\ee>0$ and large $R>0$. We define
\beq
\Om_{\ee, R}(t)=(\Si_{\infty}\cap B_{R}(0))\backslash B_{\ee}(\cS_t) \label{eqn:Omega2}
\eeq and for any time interval $I\subset (-T, T)$ we define
\beq
\Om_{\ee, R}(I)=\cap_{t\in I}\Om_{\ee, R}(t),\quad \cS_I=\cup_{t\in I}\cS_t. \label{eqn:defi}\eeq
For any $\ee>0$,  the surface $\Si_{i, t}\cap B_R(0)$ is  a union of graphs over the set $\Om_{ \ee, R}(t)$ for large $t_i$ and  any  $t\in (-T, T)$.
  \item[(2).] Let $u_i^+(x, t)$ and $u_i^-(x, t)$ be the graph functions representing the top and bottom sheets ( which we denote by $\Si_{i, t}^+$ and $\Si_{i, t}^-$ respectively) over $\Si_{\infty}\cap B_R(0)$. The readers are referred to \cite{[LW]} for the details on the construction of $u_i^+(x, t)$ and $u_i^-(x, t)$.
 By the convergence property of the flow $\{(\Si_{i, t}, \x_i(t)), -T< t< T\}$,  for any $\ee>0$ and large $R$  there exists $i_0>0$ such that for any $i\geq i_0$ and any $t\in (-T, T)$ the functions $u_i^+(x, t)$ and $u_i^-(x, t)$ are well-defined on $\Om_{\ee, R}(t)$.
By the calculation  in Appendix \ref{appendixC}, the function
\beq
u_i(x, t)=u_i^+(x, t)-u_i^-(x, t), \label{eq:u}
\eeq which we call the \emph{height difference function} of $\Si_{i, t}$ over $\Si_{\infty}$,  satisfies the equation
\beq
\pd {u_i}t=\Delta_0u_i-\frac 12\langle x, \Na u_i\rangle+|A|^2 u_i+\frac {u_i}2+a_i^{pq}u_{i, pq}+b_i^pu_{i, p}+c_iu_i \label{eq:F008}
\eeq for any $(x, t)\in \Om_{\ee, R}(I)\times I$. Here $\Delta_0$ denotes the Laplacian operator on $\Si_{\infty}$.
 The coefficients $a_i^{pq}, b_i^p$ and $c_i$ are small on $\Om_{\ee, R}(I)\times I$ as $t_i$ large and tend to zero as $t_i\ri +\infty$.
  \item[(3).]
Fix a point $x_0\in (\Si_{\infty}\cap B_R(0))\backslash  \cS_1$. We choose a sequence of points $\{x_i\}_{i=1}^{\infty}\subset (\Si_{\infty}\backslash  \cS_1)\cap B_R(0) $ with $x_i\ri x_0$.
  Then for sufficiently small $\ee>0$  we have $x_0\in\Om_{\ee, R}(1)$ and $\{x_i\}_{i=1}^{\infty}\subset\Om_{\ee, R}(1).$
  For any $t\in (-T, T)$ and $x\in \Om_{\ee, R}(t)$ we define the normalized height difference function
   \beq w_i(x, t)=\frac {u_i(x, t)}{u_i(x_i, 1)},  \label{eq:F001}\eeq
Then $w_i(x, t)$ is a positive function with $w_i(x_i, 1 )=1$ and by (\ref{eq:F008}) $w_i(x, t)$ satisfies the equation on $ \Om_{\ee, R}(I)\times (I)$ for any $I\subset (-T, T)$
\beq
\pd {w_i}t=\Delta_0w_i-\frac 12\langle x, \Na w_i\rangle+|A|^2 w_i+\frac {w_i}2+a_i^{pq}w_{i, pq}+b_i^pw_{i, p}+c_iw_i.  \label{eq:F007}
\eeq
Note that the construction of the function  $w_i $ is slightly different from that of \cite{[LW]}. In (\ref{eq:F001}) we choose a sequence of points $\{x_i\}\subset \Si_{\infty}\b \cS_1$ to normalize the function $u_i$, while in \cite{[LW]} we choose a fixed point $x_0$. The reason why we choose such a  normalization is that we need the inequality (\ref{eq:w7})  in Lemma \ref{claim1} below.

\end{enumerate}

As in \cite{[LW]}, we have the following result which implies that for large $t_i$ the integral of $u_i$ is comparable to the volume $|\mathbf{TN}|$.

\begin{lem}\label{lem:equi} (cf. \cite{[LW]}) Fix $\ee, R$ and $T$ as above.  For any sequence $\{t_i\}$ chosen in Lemma \ref{lem:A001}, there exists $t_T>0$ such that  for any $t\in (-T, T)$ and $t_i>t_T$ we have
\beq
  \frac 12\int_{\Om_{\ee, R}(t)}\, u_i(x, t) \,d\mu_{\infty}\leq  |\mathbf{TN}( \Sigma_{i, t}, \ee, R)|\leq  2 \int_{\Om_{\frac {\ee}5, R}(t)}\, u_i(x, t) \,d\mu_{\infty},\label{eq:J001}
\eeq where $d\mu_{\infty}$ denotes the volume form of $\Si_{\infty}$.

\end{lem}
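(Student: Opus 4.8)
The plan is to prove the two inequalities by sandwiching the thin part $\mathbf{TN}(\Si_{i,t},\epsilon,R)$ between two explicit solid regions, namely the regions lying between the bottom sheet $\Si_{i,t}^-$ and the top sheet $\Si_{i,t}^+$ of $\Si_{i,t}$ over $\Om_{\epsilon,R}(t)$ and over $\Om_{\epsilon/5,R}(t)$. First I would record the set-up: by Lemma \ref{lem:A001} and the uniform smooth convergence of $\Si_{i,t}$ to $\Si_\infty$ on compact subsets of space-time away from $\cS$ (together with the $\si$-Lipschitz dependence of $\cS_t$ on $t$), for $t_i$ large the surface $\Si_{i,t}\cap B_R(0)$ is, over $\Om_{\epsilon/2,R}(t)$, a union of $N_0$ graphs that are $C^1$-close to $\Si_\infty$, uniformly in $t\in(-T,T)$; in particular $u_i$ and $\nabla u_i$ tend to $0$ there uniformly. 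Writing $V_i^{s}(t)$ for the closed region between $\Si_{i,t}^-$ and $\Si_{i,t}^+$ over $\Om_{s,R}(t)$, its Lebesgue measure equals $\int_{\Om_{s,R}(t)}u_i(\cdot,t)\,d\mu_\infty$ up to a factor $1+o(1)$ as $t_i\to\infty$ (the factor coming from the Jacobian of the normal exponential map of $\Si_\infty$, and equal to $1$ when $\Si_\infty$ is flat, as in \cite{[LW]}); this harmless discrepancy will be absorbed using the slack $\Om_{\epsilon,R}(t)\subset\Om_{\epsilon/5,R}(t)$.

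The second step is to locate $\mathbf{H}$ relative to the singular set: for $t_i$ large one has $B_{2\epsilon/5}(\cS_t)\cap B_R(0)\subseteq\mathbf{H}(\Si_{i,t},\epsilon,R)\subseteq B_{3\epsilon/5}(\cS_t)\cap B_R(0)$, uniformly in $t$. For the upper inclusion, the high-curvature set $\{y\in\Si_{i,t}\cap B_R(0):|A|(y)>\epsilon^{-1}\}$ lies in $B_{\epsilon/10}(\cS_t)$ once $t_i$ is large, because away from $\cS_t$ the convergence is smooth with uniformly bounded second fundamental form (Corollary \ref{cor:energy} and Proposition \ref{prop:weakcompactness}); taking its $\epsilon/2$-neighborhood gives $\mathbf{H}\subseteq B_{3\epsilon/5}(\cS_t)$. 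For the lower inclusion, near any $\xi_k(t)\in\cS_t$ the multiplicity-$N_0$ convergence forces $\sup_{\Si_{i,t}\cap B_{\epsilon/10}(\xi_k(t))}|A|\to\infty$ --- otherwise bounded curvature there would make $\xi_k(t)$ a regular point, contradicting $\xi_k(t)\in\cS_t$; see also Lemma \ref{lem:singular}(2) --- so there is $y_i\in\Si_{i,t}$ with $|A|(y_i)>\epsilon^{-1}$ and $|y_i-\xi_k(t)|<\epsilon/10$, whence $B_{\epsilon/2}(y_i)\supseteq B_{2\epsilon/5}(\xi_k(t))$.

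The third step is the sandwiching. For the left inequality I would show $V_i^{\epsilon}(t)\subseteq\mathbf{TN}(\Si_{i,t},\epsilon,R)$: every point of $V_i^\epsilon(t)$ is within $o(1)$ of $\Om_{\epsilon,R}(t)$, hence more than $3\epsilon/5$ from $\cS_t$ for $t_i$ large, hence not in $\mathbf{H}$; and it is not in $\mathbf{TK}$, because any continuous curve starting there and avoiding $\mathbf{H}\cup\Si_{i,t}$ must remain inside a single thin slab between two consecutive sheets of $\Si_{i,t}$ (it cannot cross $\Si_{i,t}$, cannot exit through $\partial B_R(0)$, and cannot approach $\cS_t$ without entering $\mathbf{H}\supseteq B_{2\epsilon/5}(\cS_t)$), and such a slab has thickness $o(1)\ll\epsilon$, so the curve can never reach a point admitting an $\epsilon$-ball disjoint from $\mathbf{H}\cup\Si_{i,t}$; thus $|\mathbf{TN}|\ge|V_i^\epsilon(t)|\ge\int_{\Om_{\epsilon,R}(t)}u_i\,d\mu_\infty$. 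For the right inequality I would show $\mathbf{TN}(\Si_{i,t},\epsilon,R)\subseteq V_i^{\epsilon/5}(t)$ up to a null set: $B_R(0)\setminus\Si_{i,t}$ decomposes into the ``outer'' components (below the lowest and above the highest sheet), each of which contains an $\epsilon$-ball disjoint from $\mathbf{H}\cup\Si_{i,t}$ and is therefore contained in $\mathbf{H}\cup\mathbf{TK}$, together with the thin slabs between consecutive sheets; hence $\mathbf{TN}\subseteq\Si_{i,t}\cup(\text{thin slabs})$, and since a point of $\mathbf{TN}$ is more than $2\epsilon/5$ from $\cS_t$ by the previous step it lies over $\Om_{\epsilon/5,R}(t)$ and between $\Si_{i,t}^-$ and $\Si_{i,t}^+$, i.e.\ in $V_i^{\epsilon/5}(t)$; thus $|\mathbf{TN}|\le|V_i^{\epsilon/5}(t)|\le\int_{\Om_{\epsilon/5,R}(t)}u_i\,d\mu_\infty$. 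Choosing $t_T$ so that all of the above holds for $t_i>t_T$ and all $t\in(-T,T)$ completes the proof.

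The hard part is the topological analysis of the thick/thin decomposition of $B_R(0)\setminus\Si_{i,t}$ used in the third step: verifying that $\Si_{i,t}$ genuinely separates the thin inter-sheet slabs from the thick outer region inside $B_R(0)$, and that no admissible curve can leak from a slab to the outer region either through $\partial B_R(0)$ or through the complicated region near $\cS_t$. This is precisely the point that mirrors the corresponding step of \cite{[LW]} and must be carried out with care; by contrast, the localization of $\mathbf{H}$ near $\cS_t$ and the comparison between the slab volume and $\int u_i\,d\mu_\infty$ are routine (the latter being exact in \cite{[LW]} since there $\Si_\infty$ is a plane).
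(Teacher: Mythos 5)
Your sandwiching strategy is the natural one and is clearly what the cited proof in \cite{[LW]} does: localize $\mathbf{H}$ between $B_{2\ee/5}(\cS_t)$ and $B_{3\ee/5}(\cS_t)$, trap any admissible $\mathbf{TK}$-curve starting inside a thin inter-sheet slab so that $V_i^{\ee}\subseteq\mathbf{TN}\subseteq V_i^{\ee/5}$ (up to $\Si_{i,t}$ and boundary null sets), and then compare slab volume with $\int u_i\,d\mu_\infty$. The topological part is carried out carefully (no leakage across $\Si_{i,t}$, through $\partial B_R(0)$, or past $\mathbf{H}$), and the identification of $\mathbf{S}$ with an $o(1)$-neighborhood of $\cS_t$ and the blow-up of $|A|$ near $\cS_t$ are justified correctly. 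Since the paper simply writes ``c.f.\ \cite{[LW]}'' and supplies no proof, there is no internal proof to compare against, but the method agrees with what the citation suggests.

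The one genuine gap is in your handling of the Jacobian. You correctly note that $|V_i^s(t)| = \int_{\Om_{s,R}(t)}\int_{u_i^-}^{u_i^+}\det(\mathrm{Id}-\sigma A)\,d\sigma\,d\mu_\infty$, so the relation between $|V_i^s|$ and $\int_{\Om_{s,R}} u_i\,d\mu_\infty$ carries a multiplicative factor $1+\eta_i$ with $\eta_i\to 0$ but of \emph{no definite sign} when $\Si_\infty$ is not flat. You claim this is ``absorbed using the slack $\Om_{\ee,R}\subset\Om_{\ee/5,R}$,'' but that slack sits between the two sides of \eqref{eq:J001}, not inside either one. Concretely, for the left inequality the sandwich gives $|V_i^\ee|\le|\mathbf{TN}|$ and $\int_{\Om_{\ee,R}}u_i\,d\mu_\infty = (1+\eta_i)^{-1}|V_i^\ee|$; if $\eta_i<0$ this is $> |V_i^\ee|$ and nothing forces it $\le |\mathbf{TN}|$. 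Shrinking from $V_i^\ee$ to $V_i^{\ee'}$ for $\ee'<\ee$ does not rescue it either, because the extra integrand on $\Om_{\ee',R}\setminus\Om_{\ee,R}$ is itself $o(1)$ with unknown rate relative to $\eta_i$. The same sign ambiguity affects the right inequality and the $o(1)$ slippage of the projection near $\partial B_R(0)$. As a result your argument proves a $(1+o(1))$-version of \eqref{eq:J001}, not the exact inequalities as stated; this is harmless for every downstream use in the paper (Lemmas~\ref{lem:A004a}, \ref{claim2}, \ref{lem:A003c} all tolerate a multiplicative constant), but if you want the clean statement you should either prove it with an explicit factor $1\pm\ee_i$ or rebase the inclusion $V_i^\ee\subseteq\mathbf{TN}$ to a radius strictly between $\ee/5$ and $\ee$ and quantify the gap so that it dominates the Jacobian error.
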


The proof of Lemma \ref{lem:equi} is similar to that of Lemma 4.13 of \cite{[LW]}. Note that the  coefficients $2$ and $\frac 12$ are chosen to absorb the error term caused by the second fundamental of $\Sigma_{\infty}$.

Since $w_i$ satisfies the parabolic equation (\ref{eq:F007}), we have the following parabolic Harnack inequality by using Theorem \ref{theo:B5} in Appendix \ref{appendixA}.

\begin{lem}\label{lem:harnack}For any $-T<a<s<t<b<T$, any $\ee>0$, $x\in \Om_{\ee, R}(s)$ and $y\in \Om_{\ee, R}(t)$, there exists a constant $C=C(\ee, R,  s-a, t-s, \Si_{\infty}, \cS_{[a, b]})$ such that
\beq
w_i(x, s)\leq Cw_i(y, t).
\eeq
\end{lem}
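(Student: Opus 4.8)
\textbf{Proof plan for Lemma \ref{lem:harnack}.}

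The plan is to deduce the statement from the parabolic Harnack inequality in Appendix \ref{appendixA} (Theorem \ref{theo:B5}), applied to the equation (\ref{eq:F007}) satisfied by $w_i$. The first point to settle is that the equation (\ref{eq:F007}), viewed as a linear parabolic equation for $w_i$ on the fixed background $\Si_\infty$, has coefficients that are uniformly controlled on the region in question. On $\Om_{\ee, R}(I)\times I$ the principal part is $\Delta_0 + a_i^{pq}\partial_{pq}$; since the $a_i^{pq}$ are small for $t_i$ large (as recorded after (\ref{eq:F008})), the operator is uniformly elliptic with ellipticity constants depending only on $\Si_\infty$ and $\cS_{[a,b]}$ once $i$ is large, and the lower-order coefficients $-\frac12\langle x,\Na\cdot\rangle$, $b_i^p\partial_p$, $|A|^2 + \frac12 + c_i$ are bounded on $\Om_{\ee,R}([a,b])$ in terms of $\ee, R, \Si_\infty, \cS_{[a,b]}$. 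Here one uses that $\Om_{\ee,R}(t) = (\Si_\infty\cap B_R(0))\setminus B_\ee(\cS_t)$ stays a definite distance from the singular set, so $|A|^2$ and the ambient position $|x|$ are bounded there.

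Next I would choose a chain of parabolic cylinders connecting $(x,s)$ to $(y,t)$ inside $\Om_{\ee,R}([a,b])\times (a,b)$. Since $\Si_\infty\cap B_R(0)$ minus the $\ee$-neighborhood of the Lipschitz singular curves is, for each fixed time, a compact manifold-with-boundary whose geometry is controlled, and the singular curves are $\si$-Lipschitz in time, one can cover the compact space-time region $\Om_{\ee,R}([s,t])\times[s,t]$ by finitely many standard parabolic cylinders on which Theorem \ref{theo:B5} applies, with a uniform bound on the number of cylinders depending only on $\ee, R, t-s, \Si_\infty, \cS_{[a,b]}$. Applying the Harnack inequality on each cylinder and iterating along the chain gives $w_i(x,s)\le C\, w_i(y,t)$ with $C$ a product of finitely many Harnack constants; note the time-ordering $s<t$ is essential so that each application moves forward in time, and the role of $a$ (resp.\ the room between $a$ and $s$) is to guarantee each backward-in-time cylinder needed by the local Harnack inequality lies inside the interval where the flow and its limit are defined, which is why the constant is allowed to depend on $s-a$ (and $t-s$). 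One also needs $w_i>0$, which holds by construction of $w_i$ from the height-difference function, and the uniformity in $i$ follows because all the coefficient bounds and the geometry of $\Om_{\ee,R}$ are $i$-independent for $i$ large.

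The main obstacle is verifying that the chain of cylinders can be chosen with a uniform count and uniformly good geometry near the boundary $\partial\Om_{\ee,R}(t) = \partial B_\ee(\cS_t)\cap\Si_\infty$, because the singular set $\cS_t$ may contain several curves that approach or collide as $t$ varies, and the connected components of $\Om_{\ee,R}(t)$ can change topology. To handle this one works on the slightly larger region $\Om_{\ee/2, 2R}([a,b])$ (still a definite distance from $\cS$ and still with controlled geometry, by the same reasoning), so that every point of $\Om_{\ee,R}([s,t])$ sits at the center of an interior parabolic cylinder of a fixed size contained in $\Om_{\ee/2,2R}([a,b])\times(a,b)$, making the interior Harnack inequality of Theorem \ref{theo:B5} applicable without boundary issues; a standard compactness/covering argument on the compact set $\overline{\Om_{\ee,R}([s,t])}\times[s,t]$ then produces the finite chain with a uniform bound on its length. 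The remaining estimates on the coefficients are routine given the distance-to-singular-set control, so I would not belabor them.
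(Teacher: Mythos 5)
Your overall plan — apply the Krylov--Safonov Harnack inequality of Theorem~\ref{theo:B5} to the equation~(\ref{eq:F007}) and chain forward in time from $(x,s)$ to $(y,t)$ — is the same as the paper's. You have correctly identified that the coefficients of~(\ref{eq:F007}) are uniformly controlled on $\Om_{\ee,R}$ (ellipticity from smallness of $a_i^{pq}$, boundedness of $|A|^2$, $|x|$, $b_i^p$, $c_i$ away from $\cS_t$) and that positivity of $w_i$ is needed.

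Where the proposal is too loose is exactly in the handling of the time-varying domain, which is the delicate point. You propose to work on the enlarged set $\Om_{\ee/2,2R}([a,b])$ and to place interior cylinders around every point of $\Om_{\ee,R}([s,t])$ inside $\Om_{\ee/2,2R}([a,b])\times(a,b)$. But $\Om_{\ee,R}([s,t])\not\subset\Om_{\ee/2,2R}([a,b])$ in general: a point $z$ with $d(z,\cS_{t'})>\ee$ for $t'\in[s,t]$ satisfies only $d(z,\cS_{t'})\geq\ee-\si(s-t')$ for earlier times $t'\in[a,s]$, which can be $\leq\ee/2$ once $\si(s-a)\geq\ee/2$. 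So a single enlarged space-time region is not enough; the Lipschitz motion of the singular curves forces a subdivision of $[a,b]$ into subintervals of length on the order of $\ee/\si$. This is exactly what the paper does: it first establishes the nesting property~(\ref{eqF:001}) on short time intervals of length $\dd_0$, then discretizes $[a,b]$ into $N$ steps $\{\tau_k\}$ with $N$ chosen by~(\ref{eq:N}), applies Theorem~\ref{theo:B5} on each slab $[\tau_{k-1},\tau_{k+1}]$ to get~(\ref{eqG:001}), and chains through explicit intermediate points $z_k\in\Om_{2\ee,R}([\tau_{k-1},\tau_k])\cap\Om_{2\ee,R}([\tau_k,\tau_{k+1}])$ as in~(\ref{eq:zk}). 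Note also that $x\in\Om_{\ee,R}(s)$ and $y\in\Om_{\ee,R}(t)$ need not lie in $\Om_{\ee,R}([s,t])$; the paper's Steps~4 and~5 handle connecting $x$ and $y$ to the chain at a single nearby discretization time, again via the nesting~(\ref{eqF:001}). Your ``standard compactness/covering argument'' would in effect have to reproduce this; as written it does not show how a time-ordered chain of admissible cylinders exists when the singular set and hence $\Om_{\ee,R}(t)$ moves. So the approach is right, but the key quantitative lemma (the $\dd_0$-nesting) and the explicit chain construction are missing.
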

\begin{proof}We divides the proof into several steps:

\emph{Step 1.} Since $\cS_t\cap B_R(0)$ consists of finitely many points, we can choose sufficiently small $\dd_0(\Si_{\infty}, \cS_{[a, b]})>0$  such that for any $s\in [a, b]$,
\beq
\Om_{2\ee, R}(s)\subset \Om_{\ee, R}(t),\quad \Om_{\frac 12\ee, R+2}(s)\subset \Om_{\frac {1}5\ee, R+2}(t),\quad \forall \; t\in [s-\dd_0, s+\dd_0]\cap [a, b].\label{eqF:001}
\eeq Let $N$ be a positive integer satisfying
\beq
N>\max\Big\{\frac {5(b-a)}{\dd_0}, \;\frac {b-a}{s-a},\; \frac {5(b-a)}{t-s}\Big\}. \label{eq:N}
\eeq Set
\beq
 \tau_k=a+\frac {b-a}{N}k,\quad \forall\; k\in \{0, 1, \cdots, N\}. \label{eqF:002}
\eeq
Then $\tau_0=a$ and $\tau_N=b$. By (\ref{eq:N}) we have $s\geq \tau_1.$
 Note that (\ref{eqF:001}) and (\ref{eqF:002}) imply that for any $k= 1, 2,  \cdots, N-1$ we have
\beq
\Om_{\frac 12\ee, R+2}(\tau_k)\subset \Om_{\frac {\ee}5, R+2}(t),\quad \forall\; t\in [\tau_{k-5}, \tau_{k+5}]\cap [a, b].
\eeq\\

\emph{Step 2.} Let
\beq
 \Om':=\Om_{\ee, R}(\tau_k),\quad \Om'':=\Om_{\frac 23\ee, R+1}(\tau_k),\quad \Om:=\Om_{\frac 12\ee, R+2}(\tau_k).
\eeq Then we have $\Om'\subset \Om''\subset \Om.$ Clearly, $\Om''$ has a positive distance $\dd=\dd(\ee)$ away from the boundary of $\Om.$ Sine $\bar \Om'$ is compact, we can cover $\Om'$ by finite many balls contained in $\Om''$ with radius $r=\frac {\ee}{100}$ and the number of these balls is bounded by a constant depending only on $\ee, R$ and $\Si_{\infty}$. Since $w_i$ satisfies the parabolic equation (\ref{eq:F007}), applying Theorem \ref{theo:B5} in Appendix \ref{appendixA} for the function $w_i$, the domains $\Om', \Om'', \Om$ and the interval $[\tau_{k-1}, \tau_{k+1}]$, we have
\beq
w_i(x, \tau_k)\leq C w_i(y, \tau_{k+1}),\quad \forall\; x, y\in \Om_{\ee, R}(\tau_k),\label{eqG:001}
\eeq where $C=C(\ee, R, b-a, N, \Si_{\infty}, \cS_{[a, b]})$ is a constant independent of $i$. Moreover, since $S_{[a, b]}\cap B_R(0)$ consists of finitely many Lipschitz curves,  there exists a sequence of points $\{z_k\}$ such that
\beq
z_k\in \Om_{2\ee, R}([\tau_{k-1}, \tau_k])\cap \Om_{2\ee, R}([\tau_k, \tau_{k+1}])\neq \emptyset.\label{eq:zk}
\eeq\\

\emph{Step 3.}
For $s, t\in (a, b)$ with $s<t$, there exist integers $k_s$ and $k_t$ such that $s\in [\tau_{k_s}, \tau_{k_s+1})$ and $t\in (\tau_{k_t}, \tau_{k_t+1}]$. Note that (\ref{eq:N}) implies
\beq
t-s\geq \frac {5(b-a)}{N}.\label{eq:L004}
\eeq On the other hand, (\ref{eqF:002}) implies that
\beq
t-s\leq\tau_{k_t+1}-\tau_{k_s}=\frac {b-a}{N}(k_t+1-k_s).\label{eq:L005}
\eeq Combining (\ref{eq:L005}) with (\ref{eq:L004}), we have
\beq
k_t-k_s\geq 4.\label{eq:P004}
\eeq
Thus,  (\ref{eqG:001}), (\ref{eq:P004})and (\ref{eq:zk}) implies that
\beq
w_i(z_{k_s+2}, \tau_{k_s+2})\leq Cw_i(z_{k_s+3}, \tau_{k_s+3})\leq \cdots\leq C^N w_i(z_{k_t-1}, \tau_{k_t-1}),\label{eq:P002y}
\eeq where $C=C(\ee, R, b-a, N, \Si_{\infty}, \cS_{[a, b]})$. \\

\emph{Step 4.}
Set
\beq
\Om'=\Om_{\ee, R}(s),\quad \Om''=\Om_{\frac 23\ee, R+1}(s),\quad \Om=\Om_{\frac 12\ee, R+2}(s).
\eeq
Then by (\ref{eqF:001}) we have
\beq
\Om'\subset \Om''\subset \Om=\Om_{\frac 12\ee, R+2}(s)\subset \Om_{\frac 15\ee, R+2}(s'),\quad \forall\; s'\in [\tau_{k_s-2}, \tau_{k_s+2}],
\eeq where we used the fact that
$[\tau_{k_s-2}, \tau_{k_s+2}]\subset [s-\dd_0, s+\dd_0]. $
Note that by (\ref{eq:zk}) and (\ref{eqF:001}), we have
\beq z_{k_s+2}\in \Om_{2\ee, R}(\tau_{k_s+2})\subset \Om_{\ee, R}(s).\label{eq:zk1}\eeq
As in \emph{Step 2}, $\Om''$ has a positive distance $\dd=\dd(\ee)$ from the boundary of $\Om$, and  we can cover $\Om'$ by finite many balls contained in $\Om''$ with radius $r=\frac {\ee}{100}$ and the number of these balls is bounded by a constant depending only on $\ee, R$ and $\Si_{\infty}$.
Applying Theorem \ref{theo:B5}  for such $\Om', \Om'', \Om$ and the interval $[\tau_{k_s-2}, \tau_{k_s+2}]$ and using (\ref{eq:zk1}),
 we have
\beq
w_i(x, s)\leq Cw_i(z_{k_s+2}, \tau_{k_s+2}),\quad \forall\; x\in \Om_{\ee, R}(s),\label{eq:P001}
\eeq where $C=C(\ee, R, b-a, N, \Si_{\infty}, \cS_{[a, b]})$. \\

\emph{Step 5.} Set
\beq
\Om'=\Om_{\ee, R}(t),\quad \Om''=\Om_{\frac 23\ee, R+1}(t),\quad \Om=\Om_{\frac 12\ee, R+2}(t).
\eeq
Then by (\ref{eqF:001}) we have
\beq
\Om'\subset \Om''\subset \Om=\Om_{\frac 12\ee, R+2}(t)\subset \Om_{\frac 15\ee, R+2}(t' ),\quad \forall\; t'\in [\tau_{k_t-2}, \tau_{k_t+2}],
\eeq where we used the fact that
$[\tau_{k_t-2}, \tau_{k_t+2}]\subset [t-\dd_0, t+\dd_0]. $
Note that by (\ref{eq:zk}) and (\ref{eqF:001}), we have
\beq z_{k_t-1}\in \Om_{2\ee, R}(\tau_{k_t-1})\subset \Om_{\ee, R}(t).\label{eq:zk1y}\eeq
Applying Theorem \ref{theo:B5} as in \emph{Step 4}  for such $\Om', \Om'', \Om$ and the interval $[\tau_{k_t-2}, \tau_{k_t+2}]$ and using (\ref{eq:zk1y}),
 we have
\beq
w_i(z_{k_t-1}, \tau_{k_t-1})\leq C w_i(y, t),\quad \forall\; y\in \Om_{\ee, R}(t), \label{eq:P003}
\eeq
where $C=C(\ee, R, b-a, N, \Si_{\infty}, \cS_{[a, b]})$.
Combining this with (\ref{eq:P001}) and (\ref{eq:P002y}), we have
\beq
w_i(x, s)\leq C w_i(y, t),\quad \forall\; x\in \Om_{\ee, R}(s),\quad  y\in \Om_{\ee, R}(t),
\eeq where $C=C(\ee, R, b-a, N, \Si_{\infty}, \cS_{[a, b]})$.
The lemma is proved.

\end{proof}

For any fixed $\ee, R$ and $ T$, the following result shows that we can find a sequence $\{t_i\}$ such that the  functions $w_i$ are uniformly bounded on a compact set away from singularities. Note that we have   no estimates of $w_i$ near the singularities.

\begin{lem}\label{lem:A004a}Fix $\ee, \tau\in (0, \frac 12)$ and $R, T$ large. Let $\{t_i\}$ be the sequence chosen in Lemma \ref{lem:A002} for such $\ee, \tau, R$ and $l=T$. For any time interval $I=[a, b]\subset [-1, T-2]$ and a compact set $K\subset\subset (\Si_{\infty}\cap B_R(0))\b \cS_I$, there exists a constants $C=C( K, \Si_{\infty},  \cS_{[-2, b+2]})>0$ such that the function $w_i$ defined by (\ref{eq:F001}) satisfies
\beqn
0<w_i(x, t)<C, \quad \forall\; (x, t)\in K\times I.
\eeqn
Moreover, if $a\in [2, T-2])$ there exists $C'=C'(K, \Si_{\infty}, \cS_{[0, a+1]})>0$ independent of $b$ such that \beq
w_i(x, a)\geq C'.
\eeq

\end{lem}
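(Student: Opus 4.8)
The plan is to combine the local smooth convergence of $\Si_{i,t}$ to $\Si_\infty$ away from $\cS$ (Lemma \ref{lem:A001}) with the parabolic Harnack inequality of Lemma \ref{lem:harnack} and the special choice of the time sequence $\{t_i\}$ from Lemma \ref{lem:A002}. The key mechanism is that $w_i$ is normalized so that $w_i(x_i,1)=1$, and that $\int_{\Om_{\ee,R}(t)}u_i\,d\mu_\infty$ is comparable to $|\mathbf{TN}(\Si_{i,t},\ee,R)|$ by Lemma \ref{lem:equi}. Dividing that relation by $u_i(x_i,1)$ converts it into a bound on $\int_{\Om_{\ee,R}(t)}w_i\,d\mu_\infty$ in terms of $|\mathbf{TN}(\Si_{i,t},\ee,R)|/u_i(x_i,1)$; but again by Lemma \ref{lem:equi} at time $1$, the denominator $u_i(x_i,1)$ is itself comparable to $|\mathbf{TN}(\Si_{i,1},\ee,R)|$ up to bounded factors (using the Harnack inequality to compare $u_i(x_i,1)$ to a fixed fraction of the integral). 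So the normalized $L^1$ norm $\int_{\Om_{\ee,R}(t)}w_i\,d\mu_\infty$ is controlled by $f(t,\ee)/f(1,\ee)$-type ratios, and the doubling property (\ref{eqn:206}) selected in Lemma \ref{lem:A002} makes these ratios bounded uniformly in $i$ for $t$ in the window $[t_i,t_i+l]$, hence for $t\in I$ after the time shift.

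First I would fix the interval $I=[a,b]\subset[-1,T-2]$ and a compact $K\subset\subset(\Si_\infty\cap B_R(0))\setminus\cS_I$. Choose $\ee'>0$ small enough that $K\subset\Om_{\ee',R}(I)$ and that $x_0,\,x_i\in\Om_{\ee',R}(1)$; shrinking $\ee'$ also keeps the difference functions $u_i^\pm$ well defined on the relevant sets for large $i$. Next I would apply Lemma \ref{lem:harnack} on the interval $[a-1,b+1]\subset[-2,T-1]$: for any $x\in K$ at time $t\in I$ and the fixed point $x_i$ at time $1$ (note $1\in[a-1,b+1]$ since $a\le 2$ is not assumed — here one needs $1$ to lie to the left of $[a,b]$; if $a<1$ one instead compares through an intermediate time slice, or observes that the Harnack chain works in both time directions only in the forward direction, so one must be a little careful about whether $1\le a$). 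Assuming the normalization time $1$ precedes $I$ (which is the intended setup, $I\subset[-1,T-2]$ with the normalization at $t=1$ meaning we really work with $I\subset[1,T-2]$ up to relabeling, or one runs Harnack from $t=1$ forward), Lemma \ref{lem:harnack} gives $w_i(x,t)\le C\,w_i(y,s)$ for controlled $s$; taking $y$ ranging over $\Om_{\ee',R}(s)$ and integrating, $w_i(x,t)\le \frac{C}{|\Om_{\ee',R}(s)|}\int_{\Om_{\ee',R}(s)}w_i\,d\mu_\infty$. Then I would bound the right side using Lemma \ref{lem:equi} (to replace the integral of $w_i$ by $|\mathbf{TN}(\Si_{i,s},\ee',R)|/u_i(x_i,1)$) and another application of the Harnack inequality at time $1$ together with Lemma \ref{lem:equi} at time $1$ (to show $u_i(x_i,1)\ge c\,|\mathbf{TN}(\Si_{i,1},\ee',R)|$, since $w_i(x_i,1)=1$ controls $w_i$ from below on $\Om_{\ee',R}(1)$ via Harnack, hence $u_i\ge c\,u_i(x_i,1)$ there). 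This reduces the whole bound to $\frac{|\mathbf{TN}(\Si_{i,s},\ee',R)|}{|\mathbf{TN}(\Si_{i,1},\ee',R)|}\le C\frac{f(s,\ee')}{f(1,\ee')}$, and the doubling inequality (\ref{eqn:206}) from Lemma \ref{lem:A002}, after shifting so that $\{t_i\}$-slices land near the window, bounds this ratio by a constant $C=C(K,\Si_\infty,\cS_{[-2,b+2]})$ independent of $i$. Positivity $w_i>0$ is immediate from the construction (the flow is embedded, so $u_i^+>u_i^-$).

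For the lower bound at $t=a$ when $a\in[2,T-2]$: here I would again use the Harnack inequality of Lemma \ref{lem:harnack}, now in the direction from time $1$ to time $a$ with the roles arranged so that $w_i(x_i,1)\le C'\,w_i(x,a)$ for $x\in K$. Since $w_i(x_i,1)=1$, this gives directly $w_i(x,a)\ge 1/C'$ with $C'=C'(K,\Si_\infty,\cS_{[0,a+1]})$, and this constant depends only on $K$, $\Si_\infty$ and the singular curves over $[0,a+1]$, hence is independent of $b$.

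The main obstacle I anticipate is not the Harnack machinery itself but the bookkeeping that converts the $\mathbf{TN}$-ratio control into an $i$-uniform bound: one must be careful that the points $x_i$ (which move with $i$) stay in a fixed compact subset of $\Om_{\ee',R}(1)$ so that the Harnack constant in Lemma \ref{lem:harnack} does not degenerate, and one must check that the time-sequence $\{t_i\}$ selected by the doubling argument in Lemma \ref{lem:A002} is compatible with the interval $I\subset[-1,T-2]$ after the shift $t\mapsto t+t_i$ used implicitly in passing between the flow $\Si_t$ and the rescaled windows $\{\Si_{i,t}\}$. In other words, the real content is matching the abstract "good sequence" of Lemma \ref{lem:A002} with the concrete estimate of Lemma \ref{lem:equi}, exactly as in Lemma 4.8 and its use in \cite{[LW]}; the new point compared with \cite{[LW]} is that $\Si_\infty$ is a genuine self-shrinker rather than a plane, but that only affects the geometric constants (which is why they are allowed to depend on $\Si_\infty$ and $\cS$), not the structure of the argument.
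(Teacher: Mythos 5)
You have the right ingredients (Lemma \ref{lem:harnack}, the $\mathbf{TN}$--$L^1$ comparison of Lemma \ref{lem:equi}, and the doubling of Lemma \ref{lem:A002}), but the way you close the upper bound has a genuine gap. The doubling property (\ref{eqn:206}) is anchored at the \emph{left endpoint} of the window: it gives $f(t_i+s,\ee)\le 2f(t_i,\ee)$ for $s\in[0,l]$, i.e.\ control by $f$ at shifted time $0$, where $f(t_i,\ee)=\inf_{s\in[-\tau,0]}|\mathbf{TN}(\Si_{t_i+s},\ee,R)|$. Your reduction aims at the ratio $f(s,\ee')/f(1,\ee')$, pivoting at the normalization time $1$; but the doubling gives no lower bound on $f(1,\ee')$ in terms of $f(0,\ee')$, so that ratio is not controlled. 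There is a second, related error: the intermediate claim $u_i(x_i,1)\ge c\,|\mathbf{TN}(\Si_{i,1},\ee',R)|$, justified by ``Harnack gives $u_i\ge c\,u_i(x_i,1)$ on $\Om_{\ee',R}(1)$'', actually proves the opposite inequality. A pointwise lower bound $u_i(\cdot,1)\ge c\,u_i(x_i,1)$ integrates, using the lower half of (\ref{eq:J001}), to $|\mathbf{TN}(\Si_{i,1})|\ge\int_{\Om_{\ee',R}(1)}u_i\ge c\,u_i(x_i,1)\cdot\Area$, i.e.\ $u_i(x_i,1)\le C\,|\mathbf{TN}(\Si_{i,1})|$, not $\ge$. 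For the direction you need you would want a pointwise \emph{upper} bound on $u_i(\cdot,1)$ in terms of $u_i(x_i,1)$ on $\Om_{\ee'/5,R}(1)$, which Lemma \ref{lem:harnack} (which compares earlier to later times) does not give at equal times.

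The paper avoids both problems by pivoting at shifted time $0$ rather than $1$. Step 1 runs Lemma \ref{lem:harnack} forward from time $t\in[-1,\tfrac12]$ to time $1$ to get the pointwise bound $w_i(\cdot,0)\le C\,w_i(x_i,1)=C$ on $\Om_{\ee/5,R}(0)$; integrating gives $\int_{\Om_{\ee/5,R}(0)}w_i(\cdot,0)\le C$, which combined with (\ref{eq:J001}) and the doubling yields $\inf_{s\in[t'-\tau,t']}\int_{\Om_{\ee,R}(s)}w_i\le C$ for all later $t'$, and one more Harnack step then upgrades this $L^1$ bound to a pointwise bound on $K\times I$. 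Your Step~3 argument for the lower bound when $a\ge2$, and the case split between the initial interval $[-1,\tfrac12]$ and the later interval $(0,T-2]$, are correct and match the paper; the fix for the upper bound is to normalize the $\mathbf{TN}$-ratio by $f$ at shifted time $0$, obtaining the bound on $\int w_i(\cdot,0)$ directly from the Step~1 pointwise estimate rather than trying to compare $u_i(x_i,1)$ with $|\mathbf{TN}(\Si_{i,1})|$.
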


\begin{proof}By the assumption, we can assume that $K\subset \Om_{\ee', R}(I)$ and $\{x_i\}\subset \Om_{\ee', R}(1)$ for some $\ee'\in (0, \ee)$,  where $\{x_i\}$ is the sequence in (\ref{eq:F001}). Note that $w_i(x_i, 1)=1.$ We divide the rest of the proof into several steps.\\

{\it Step 1.
 $w_i$ is bounded on $K\times I$ for the time interval  $I=[-1, \frac 12]$ and any $K$ above. }
Applying Lemma \ref{lem:harnack} for $a=-2$ and $b=2$ we have
\beq
w_i(x, t)\leq C(\ee', R, \Si_{\infty}, \cS_{[-2, 2]})w_i(x_i, 1)=C(\ee', R, \Si_{\infty}, \cS_{[-2, 2]}),\quad \forall\; (x, t)\in K\times I.\label{eq:A200}
\eeq

{\it Step 2.  $w_i$ is  bounded from above on $K\times I$ for any  $I=[a, b]\subset (0, T-2)$ and $K$ above.}
For any $t\in [a, b]\subset (0, T-2)$,  we have $t':=t+1\in (1, T-1)$. By Lemma \ref{lem:equi} and Lemma \ref{lem:A002}  for large $i$ we have
\beqn
\inf_{s\in [t'-\tau, t']}\int_{\Om_{\ee, R}(s)}\, w_i(x, s)\,d\mu&\leq& \frac {2}{u_i(x_i, 1)}\inf_{s\in [t'-\tau, t']}|\mathbf{TN}(\Si_{t_i+s}, \ee, R)|\nonumber\\
&\leq &\frac {4}{u_i(x_i, 1)}\inf_{s\in [-\tau, 0]}|\mathbf{TN}(\Si_{t_i+s}, \ee, R)|\nonumber\\
&\leq &4\inf_{s\in [-\tau, 0]} \int_{ \Om_{\frac {\ee}5, R}(s)}\,w_i(x, s)\,d\mu.\label{eqF:006}
\eeqn
Moreover,  by (\ref{eq:A200}) we have
\beq
w_i(x, 0)\leq C(\ee, R, \Si_{\infty}, \cS_{[-2, 2]}), \quad \forall\; x\in  \Om_{\frac {\ee}5, R}(0),\nonumber
\eeq which implies that
\beq
\int_{\Om_{\frac {\ee}5, R}(0) }\, w_i(x, 0)\leq C(\ee, R, \Si_{\infty}, \cS_{[-2, 2]}). \label{eqF:005}
\eeq
Combining (\ref{eqF:005}) with (\ref{eqF:006}), we have
\beq
\inf_{s\in [t'-\tau, t']}\int_{\Om_{\ee, R}(s)}\, w_i(x, s)\,d\mu\leq C(\ee, R, \Si_{\infty}, \cS_{[-2, 2]}). \label{eqF:007}
\eeq This implies that for any $t\in (1, T-1)$ there exists $s(t)\in [t-\tau, t]$ such that
\beq
\int_{\Om_{\ee, R}(s(t))}\, w_i(x, s(t))\,d\mu\leq C(\ee, R, \Si_{\infty}, \cS_{[-2, 2]}).\label{eqG:003}
\eeq
On the other hand, Lemma \ref{lem:harnack} implies that for any $ x\in K\subset \Om_{\ee', R}([a, b]), t\in [a, b]$, and $y\in \Om_{\ee, R}(s(t+1))\subset \Om_{\ee', R}(s(t+1))$ we have
\beq
w_i(x, t)\leq C(\ee', R, \Si_{\infty}, \cS_{[a-1, b+2]}) w_i(y, s(t+1)),\label{eqF:008}
\eeq where we used the fact that $\tau\in (0, \frac 12)$ and
$$s(t+1)\geq t+1-\tau\geq t+\frac 12. $$
Integrating the right-hand side of (\ref{eqF:008}) and using (\ref{eqG:003}), we have
\beqs
w_i(x, t)&\leq& C(\ee', R, \Si_{\infty}, \cS_{[a-1, b+2]}) \int_{\Om_{\ee, R}(s(t+1))}\,w_i(y, s(t+1))\\&\leq& C(\ee', R, \Si_{\infty}, \cS_{[-2, b+2]}),\quad \forall\; t\in [a, b].
\eeqs

{\it Step 3. $w_i(x, t)$ is bounded from below on $K\times I$ for any  $I=[a, b]\subset [2, T-2]$ and $K$ above.}
 By Lemma \ref{lem:harnack}, for any $(x, t)\in K\times I$ we have
\beq
w_i(x, t)\geq C(\ee', R, \Si_{\infty}, \cS_{[0, t+1]}) w_i(x_i, 1).\label{eq:X001}
\eeq In particular, for $t=a$ the constant in (\ref{eq:X001}) depends only on $\ee', R, \Si_{\infty} $ and $\cS_{[0, a+1]}$.
Thus, the lemma is proved.

\end{proof}

\begin{lem}\label{lem:A004b}The same assumption as in Lemma \ref{lem:A004a}.  As $t_i\ri +\infty$, we can take a subsequence of the functions  $w_i(x, t)$ such that it converges in $C^2$ topology on any compact subset $K\subset\subset (\Si_\infty\cap B_{R}(0))\b \cS_{I}$, where $I=[a, b]\subset [-1, T-2]$,  to a positive function $w(x, t)$ with $w(x_0, 1)=1$ and satisfying
\beq \pd wt=\Delta_0 w+|A|^2 w-\frac 12\langle x, \Na w\rangle+\frac 12 w,\quad \forall\;(x, t)\in K \times I. \label{eq:D004a}\eeq

\end{lem}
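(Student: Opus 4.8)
The plan is to extract a convergent subsequence of $\{w_i\}$ by a diagonal argument over an exhaustion of $(\Si_\infty\cap B_R(0))\b\cS_I$ by compact sets, using the uniform estimates from Lemma \ref{lem:A004a} together with interior parabolic Schauder theory. First I would fix an exhaustion: since $\cS_I$ is a closed subset of $\Si_\infty\times I$ and, for each $t$, $\cS_t\cap B_R(0)$ consists of finitely many points (Proposition \ref{prop:weakcompactness}, Lemma \ref{prop:weakcompactness2}), one can choose an increasing sequence of compact sets $K_1\subset\subset K_2\subset\subset\cdots$ with $\bigcup_j K_j=(\Si_\infty\cap B_R(0))\b\cS_I$, each $K_j$ having a positive distance to $\cS_I$. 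On each parabolic cylinder $K_j\times I$, Lemma \ref{lem:A004a} gives $0<w_i<C_j$ uniformly in $i$ for all large $i$, so the zeroth-order term is controlled. Next I would observe that $w_i$ solves the uniformly parabolic linear equation (\ref{eq:F007}) on $\Om_{\ee,R}(I)\times I$, where by the construction in Section \ref{sec:construction} the coefficients $a_i^{pq}, b_i^p, c_i$ are small on $\Om_{\ee,R}(I)\times I$ and tend to zero as $t_i\to\infty$; in particular their $C^\alpha$ (indeed $C^\infty$) norms are uniformly bounded on each $K_j\times I$ (they come from the smooth convergence $\Si_{i,t}\to\Si_\infty$ away from $\cS$). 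Hence on a slightly smaller cylinder the principal part is a fixed smooth elliptic operator $\Delta_0-\frac12\langle x,\Na\cdot\rangle$ plus lower-order terms with uniformly bounded coefficients, and interior $L^p$/Schauder estimates for linear parabolic equations give uniform bounds on $\|w_i\|_{C^{2,\alpha}}$ (in space) and $\|\partial_t w_i\|_{C^\alpha}$ on $K_j'\times I'$ for slightly shrunk $K_j', I'$, once the $C^0$ bound is in hand.

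With these uniform interior estimates, Arzelà–Ascoli yields, for each $j$, a subsequence along which $w_i\to w$ in $C^2_{\loc}$ on $K_j\times I$; a diagonal extraction over $j$ produces a single subsequence converging in $C^2$ on every compact $K\subset\subset(\Si_\infty\cap B_R(0))\b\cS_I$ to a limit function $w$. Passing to the limit in (\ref{eq:F007}) and using that $a_i^{pq},b_i^p,c_i\to0$, the limit $w$ satisfies (\ref{eq:D004a}). The normalization $w_i(x_i,1)=1$ with $x_i\to x_0$ and the $C^2_{\loc}$ convergence near the regular point $x_0$ (choosing $x_0\notin\cS_1$, which is why $x_0$ lies in some $\Om_{\ee',R}(1)$) give $w(x_0,1)=1$. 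Positivity: each $w_i>0$, so $w\ge0$; strict positivity follows either from the strong maximum principle applied to (\ref{eq:D004a}) (since $w\ge0$ solves a linear parabolic equation and is not identically zero, being $1$ at $(x_0,1)$), or more directly from the Harnack inequality Lemma \ref{lem:harnack}: for any $(x,t)$ in the domain with $t>-T$ one can compare $w(x,t)$ to $w(x_0,1)=1$ along a chain as in that lemma, giving $w(x,t)\ge c>0$, and for the remaining times one uses backward Harnack or simply that $w$ is a nonnegative nontrivial solution.

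The main obstacle I anticipate is not the compactness argument itself but making sure the interior parabolic estimates are applied on the correct domains: the functions $w_i$ are only defined on the shrinking-in-$\ee$ domains $\Om_{\ee,R}(I)$, and the coefficients of (\ref{eq:F007}) degenerate in control as one approaches $\cS_I$, so one must always work on a compact $K$ with a fixed positive distance to $\cS_I$ and choose $\ee'$ small enough (depending on $K$) that $K\subset\Om_{\ee',R}(I)$, exactly as set up in the statement and in Lemma \ref{lem:A004a}. Once that bookkeeping is done, everything else is the standard linear parabolic compactness package, and the convergence of the coefficients to zero is precisely what makes the limit equation (\ref{eq:D004a}) the clean linearized self-shrinker equation with no error terms. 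I would also note that $w$ a priori depends on the choice of the sequence $\{t_i\}$ (fixed in Lemma \ref{lem:A004a}) and on $R,T$; this dependence is harmless here and is addressed later when the growth estimates near $\cS$ are combined with Theorem \ref{theoA:main}.
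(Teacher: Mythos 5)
Your argument matches the paper's proof: uniform $C^0$ bounds from Lemma \ref{lem:A004a}, interior parabolic Schauder estimates (the paper cites Theorem 4.9 of \cite{[Lieb]}) to get uniform $C^{2,\alpha}$ control on compact sets away from $\cS_I$, a diagonal extraction to obtain $C^2$ convergence, passage to the limit using $a_i^{pq},b_i^p,c_i\to 0$ to arrive at (\ref{eq:D004a}), and positivity via the strong maximum principle. You supply more detail on the exhaustion/diagonalization and give a Harnack-based alternative for positivity, but the route is essentially identical to the paper's.
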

\begin{proof} Since $w_i$ is positive by definition and $w_i$ is uniformly bounded from above by Lemma \ref{lem:A004a}, by the interior estimates of the parabolic equation we have the space-time $C^{2, \al}$ estimates of $w_i$(cf. Theorem 4.9 of \cite{[Lieb]}), and the estimates are independent of $i$.
 Therefore, as $i\ri +\infty,$ the function $w_i$ converges to a limit function $ w$ in $C^2$ topology on $K\times [a, b]$ with $w(x_0, 1)=1$ and $w$ is  positive by the strong maximal principle.
   The lemma is proved.

\end{proof}

\subsection{The  auxiliary functions near the singular set  }
In this subsection, we show that there exists a refined sequence such that the limit auxiliary function has uniform  estimates across the singular set. Recall that by Lemma \ref{lem:A004b} the function $w$ is uniformly bounded on any compact set away from the singular set and $w$ has no estimates near the singularities. In this section, we will use Lemma \ref{lem:A002} repeatedly for a sequence  $\{\ee_i\}$ decreasing to zero, and  after taking a diagonal subsequence we can construct a auxiliary function   which has uniform estimates across the singular set.

\begin{lem}\label{lem:item}Let $R>1, \tau\in (0, \frac 12)$, and $\{\ee_i\}$ be a sequence of positive numbers with $\ee_i\ri 0.$ For any $i\in \NN$, there exists a sequence $\{t_{i, k}\}_{k=1}^{\infty}$ with $t_{i, k+1}>t_{i, k}+i$ satisfying the following properties.
\begin{enumerate}
\item[(1).]For any $k\in\NN$ \beq \sup_{s\in [0, \,i]}f(t_{i, k}+s, \ee_i)\leq 2f(t_{i, k}, \ee_i), \label{eq:w5}\eeq
where $f(t, \ee)=\inf_{s\in [t-\tau, t]}|\mathbf{TN}(\Si_s, \ee, R)|$.

\item[(2).]For any $T>0$,   $\{\Si_{t_{i, k}+s}, -T<s<T\}$ converges locally smoothly to a self-shrinker $\Si_{i, \infty}\in \cC( N, \rho)$ away from the space-time singular set $\cS_{i}$ as $k\ri +\infty$.

\item[(3).]For large $k$ the surface $\Si_{t_{i, k}+s}$ can be written as a union of graphs over $\Si_{i, \infty}$ away from the singular set $\cS_{i, s}$.  We denote by $\td u_{i, k}^+(x, s), \td u_{i, k}^-(x, s)$ the graph functions of the top and bottom sheets of $\Si_{t_{i, k}+s}$ over $\td \Om_{i, \ee, R}(s)$, where
\beq
\td \Om_{i, \ee, R}(s)=(\Si_{i, \infty}\cap B_R(0))\b B_{\ee}(\cS_{i, s}). \label{eqn:omega}
\eeq
  Let $\td u_{i, k}(x, s)= \td u_{i, k}^+(x, s)-\td u_{i, k}^-(x, s)$ be the height difference function of $\Si_{t_{i, k}+s}$ over $\td \Om_{i, \ee, R}(s). $ These functions are constructed as in Section \ref{sec:construction}. By Lemma \ref{lem:equi},
  we can choose $k_i$ large such that for any $k\geq k_i$ and $s \in (-T, T)$,
 \beq
  \frac 12 \int_{\td \Om_{i, \ee_i, R}(s)}\,\td u_{i, k}(x, s)\leq\Big|\mathbf{TN}( \Sigma_{t_{i, k}+ s}, \ee_i, R)\Big|\leq 2\int_{\td \Om_{i, \frac {\ee_i}5, R}(s)}\,\td u_{i, k}(x, s).  \label{eq:w001}
 \eeq

\item[(4).] By the smooth compactness of $\cC( N, \rho)$ in  \cite{[CM1]}, we assume that $\Si_{i, \infty}$ in item (2) converges smoothly to $\Si_{ \infty}\in \cC( N, \rho)$.

\item[(5).] For any $i\in \NN$, there exists $k_i>0$ satisfying the following property. For any $\{s_i\}_{i=1}^{\infty}$ with $s_i>k_i$,  $\{\Si_{t_{i, s_i}+s}, -T<s<T\}$ converges locally smoothly to the same self-shrinker $\Si_{\infty}$ as in item (4) away from the space-time singular set $\cS_{\infty}$. Moreover, the singular set $\cS_{i}$ in item (2) converges to $\cS_{\infty}$ in Hausdorff distance.
\end{enumerate}

\end{lem}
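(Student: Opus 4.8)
\quad The plan is to build the families $\{t_{i,k}\}$ one value of $i$ at a time using Lemma~\ref{lem:A002}, to read off the limit objects in items (2)--(4) from the compactness results already established, and to close with a diagonal argument in $i$ powered by the smooth compactness of $\cC(D,N,\rho)$.

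First, fix $i\in\NN$. Applying Lemma~\ref{lem:A002} with the data $R$, $\tau$, $\ee=\ee_i$, $t_0=1$ and $l=i$, and relabelling the output as $\{t_{i,k}\}_{k=1}^{\infty}$, we obtain $t_{i,k+1}>t_{i,k}+i$ together with the doubling inequality (\ref{eq:w5}); this is item~(1). (We are in the setting of Section~\ref{sec:construction}, so the multiplicity is $N_0\ge2$ and Lemma~\ref{lma:GD18_3}, hence Lemma~\ref{lem:A002}, applies.) Next, feed the sequence $\{t_{i,k}\}_k$, which tends to $+\infty$, to Lemma~\ref{lem:A001}: after passing to a subsequence, and running a further diagonal argument in $T\to+\infty$ (legitimate because part~(4) of that lemma makes the limit independent of $T$), we get a self-shrinker $\Si_{i,\infty}\in\cC(D,N,\rho)$ and a $\si$-Lipschitz space-time singular set $\cS_i$ with $\{\Si_{t_{i,k}+s},-T<s<T\}\to\Si_{i,\infty}$ locally smoothly away from $\cS_i$ and also in Hausdorff distance. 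This is item~(2); since (\ref{eq:w5}) holds for every $k$, it survives the passage to a subsequence. Item~(3) follows at once: the graph decomposition of $\Si_{t_{i,k}+s}$ over $\td\Om_{i,\ee,R}(s)$ and the height difference functions $\td u_{i,k}^{\pm}$ are built exactly as in Section~\ref{sec:construction}, while the two-sided estimate (\ref{eq:w001}) is Lemma~\ref{lem:equi} applied along $\{t_{i,k}\}_k$; we just take $k_i$ large enough that Lemma~\ref{lem:equi} holds for all $k\ge k_i$.

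For item~(4), every $\Si_{i,\infty}$ belongs to the fixed space $\cC(D,N,\rho)$, which is compact in the smooth topology by Colding--Minicozzi (see Definition~\ref{defi:soliton}); passing to a subsequence in $i$ (equivalently, thinning the sequence $\ee_i$, which still tends to $0$) we may assume $\Si_{i,\infty}\to\Si_{\infty}\in\cC(D,N,\rho)$ smoothly. For item~(5), fix a sequence $\eta_i\downarrow0$. Because $\{\Si_{t_{i,k}+s}\}_k$ converges to $\Si_{i,\infty}$ locally smoothly away from $\cS_i$ and in Hausdorff distance, for all $k$ large the flow $\{\Si_{t_{i,k}+s},-T<s<T\}$ is $\eta_i$-close to $\Si_{i,\infty}$ in $C^i$ on $B_R(0)\b B_{\eta_i}(\cS_i)$ and globally $\eta_i$-close in Hausdorff distance. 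Enlarging the thresholds $k_i$ so that $\eta_i$ also dominates the error at stage $i$ in both $\Si_{i,\infty}\to\Si_{\infty}$ and $\cS_i\to\cS_{\infty}$, the triangle inequality shows that for every selection $\{s_i\}$ with $s_i>k_i$ the flows $\{\Si_{t_{i,s_i}+s},-T<s<T\}$ converge locally smoothly, away from a fixed space-time singular set $\cS_{\infty}$, to the single self-shrinker $\Si_{\infty}$ of item~(4), and $\cS_i\to\cS_{\infty}$ in Hausdorff distance.

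The main obstacle is item~(5): that the ``double limit'' $\Si_{\infty}$ is genuinely well defined (the same for every admissible selection $\{s_i\}$) and that $\cS_i\to\cS_{\infty}$ in the Hausdorff sense. In \cite{[LW]} every limit is a flat plane through the origin and this bookkeeping is vacuous, but here the limits are honest self-shrinkers that a priori could depend on the subsequence. What makes the argument work is that the number of points of $\cS_{i,t}\cap B_R(0)$ is bounded uniformly in $i$ and $t$ (by the local energy bound together with the $\ee$-regularity, Corollary~\ref{cor:energy}), so the slices $\cS_{i,t}$ form a precompact family of finite point sets of uniformly bounded cardinality and the curves $\cS_i$ are uniformly Lipschitz; combined with the smooth compactness of $\cC(D,N,\rho)$, this forces a further subsequence along which $\cS_i$ converges, with limit necessarily equal to the singular set of the limit flow of $\Si_{\infty}$. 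Pinning down that two distinct selections $\{s_i\}$ and $\{s_i'\}$ cannot produce different limits is precisely the delicate point, and it is the same compactness mechanism that is later used to upgrade a single-subsequence multiplicity-one statement to all subsequences.
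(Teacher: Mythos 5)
Your handling of items (1)--(4) is exactly the paper's: Lemma~\ref{lem:A002} with $\ee=\ee_i$, $l=i$ gives (1); Lemma~\ref{lem:A001} gives (2); Lemma~\ref{lem:equi} along $\{t_{i,k}\}_k$ gives (3); and Colding--Minicozzi smooth compactness of $\cC(D,N,\rho)$ gives (4). For item~(5) you have identified the right mechanism --- the Hausdorff triangle inequality between the diagonal flow, $\Si_{i,\infty}$, and $\Si_\infty$ --- but the write-up leaves a hedge that should be closed, not merely flagged. You present the inequality as if it hinged on knowing in advance that $\cS_i\to\cS_\infty$, and you end by saying that ``pinning down that two distinct selections cannot produce different limits is precisely the delicate point.'' It is not delicate once the pieces you already have are assembled: after passing to a subsequence in $i$ so that $\Si_{i,\infty}\to\Si_\infty$ smoothly and the thresholds $k_i$ guarantee $d_H(\Si_{t_{i,k}+s},\Si_{i,\infty})\leq 1/i$ for all $k\geq k_i$, the chain $d_H(\Si_{t_{i,s_i}+s},\Si_\infty)\leq d_H(\Si_{t_{i,s_i}+s},\Si_{i,\infty})+d_H(\Si_{i,\infty},\Si_\infty)\to 0$ holds for \emph{every} choice $s_i>k_i$, so the Hausdorff limit is the same $\Si_\infty$ for all selections; combining this with Lemma~\ref{lem:A001} applied to any diagonal subsequence (which returns a locally smooth subsequential limit that, being a Hausdorff limit, must equal $\Si_\infty$) upgrades the conclusion to locally smooth convergence for the full diagonal.

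One secondary difference is worth noting: you obtain $\cS_\infty$ by invoking precompactness of the singular sets (uniformly bounded number of singular points per slice, uniform Lipschitz constants), whereas the paper obtains the convergence $\cS_{i,s}\to\cS_\infty$ more directly, by sandwiching both $\cS_{i,s}$ and the singular set of the diagonal limit against the high-curvature set $\mathbf{S}(\Si_{t_{i,s_i}+s},\ee_i,R)$ via $d_H(\mathbf{S}(\Si_{t_{i,k}+s},\ee_i,R),\cS_{i,s})\leq 1/i$ built into the choice of $k_i$. Your compactness route works but requires an extra identification step that the paper's sandwich avoids; the phrase ``the singular set of the limit flow of $\Si_\infty$'' in your last paragraph is also imprecise, since $\Si_\infty$ is a static self-shrinker and the singular set is a feature of a particular convergent sequence, not of $\Si_\infty$ itself.
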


\begin{proof}  Applying  Lemma \ref{lem:A002} for $\ee_i$ and $l=i$,  we have (\ref{eq:w5}). Item (2) follows from Lemma \ref{lem:A001}, and item (3) follows from Lemma \ref{lem:equi}. It is clear that item (4) follows from Colding-Minicozzi's compactness theorem \cite{[CM1]}.

To prove item (5), we first note that the convergence in item (2) is also in  Hausdorff distance by Lemma \ref{lem:A001}, for any $i$ there exists   $k_i>0$ such that for any $k\geq k_i$ and $s\in (-2, 2)$ we have
 \beq d_H\Big(\Si_{t_{i, k}+s}\cap B_R(0), \Si_{i, \infty}\cap B_R(0)\Big)\leq \frac 1i, \quad
d_H\Big(\mathbf{S}(\Si_{t_{i, k}+s}, \ee_i, R), \cS_{i, s}\cap B_R(0)\Big)\leq \frac 1{i}, \label{eq:C001}
\eeq where $d_H$ denotes the  Hausdorff distance.  By item (4), we assume that $\Si_{i, \infty}$ converges smoothly to $\Si_{ \infty}\in \cC( N, \rho)$. By Lemma \ref{lem:A001}
 for any sequence of times $\{s_i\}_{i=1}^{\infty}$ with $s_i>k_i$ the surfaces $\{\Si_{t_{i, s_i}+s}, -T<s<T\}$ converge locally smoothly to a self-shrinker, which is denoted by $\hat \Si_{\infty}$,  away from a singular set $ \cS_s\subset \Si_{\infty}$ as $i\ri +\infty$. Moreover,  as $i\ri +\infty$,
\beqs &&d_H\Big(\hat \Si_{\infty}\cap B_R(0), \Si_{\infty}\cap B_R(0)\Big)\\&\leq& d_H\Big(\hat \Si_{\infty}\cap B_R(0), \Si_{t_{i, s_i}+s}\cap B_R(0) \Big)+d_H\Big(\Si_{t_{i, s_i}+s}\cap B_R(0), \Si_{i, \infty}\cap B_R(0)\Big)\\&&+d_H\Big(\Si_{i, \infty}\cap B_R(0), \Si_{\infty}\cap B_R(0)\Big)\\&\leq& d_H\Big(\hat \Si_{\infty}\cap B_R(0), \Si_{t_{i, s_i}+s}\cap B_R(0)\Big)+\frac 1i+ d_H\Big(\Si_{i, \infty}\cap B_R(0), \Si_{\infty}\cap B_R(0)\Big)\\&\ri& 0,
\eeqs where we used (\ref{eq:C001}). Thus, $\hat \Si_{\infty}$ coincides with $\Si_{\infty}$. Moreover,
since $\mathbf{S}(\Si_{t_{i, s_i}+s}, \ee_i, R)$ converges to $\cS_s\cap B_R(0)$ as $i\ri +\infty$,  we have
\beqs &&d_H\Big(\cS_{i, s}\cap B_R(0), \cS_s\cap B_R(0)\Big)\\&\leq& d_H\Big(\mathbf{S}(\Si_{t_{i, s_i}+s}, \ee_i, R), \cS_s\cap B_R(0)\Big)+d_H\Big(\mathbf{S}(\Si_{t_{i, s_i}+s}, \ee_i, R), \cS_{i, s}\cap B_R(0)\Big)\\&\leq& d_H\Big(\mathbf{S}(\Si_{t_{i, s_i}+s}, \ee_i, R), \cS_s\cap B_R(0)\Big)+\frac 1i\\&\ri&  0,
\eeqs where we used (\ref{eq:C001}). Thus, $\cS_{i, s}\cap B_R(0)$ converges to $\cS_{\infty}\cap B_R(0)$ as $i\ri +\infty.$ The lemma is proved.\\

\end{proof}

\begin{lem}\label{claim1}Under the same assumptions as in Lemma \ref{lem:item}, we can choose $x_0\in (\Si_{\infty}\b \cS_1)\cap B_R(0)$ and $\{x_{i, k}\}\subset (\Si_{i, \infty}\b \cS_{i, 1})\cap B_R(0)$ satisfying the following properties.
\begin{enumerate}
\item[(1). ] $x_{i, k}\ri x_0$ as $i\ri +\infty$ and $k\ri +\infty$.

\item[(2). ]  For each $i$,  there exists $k_i>0$   such that for any $k\geq k_i$,
\beq
\td u_{i, k}(x_{i, k}, 1)\leq 2 u_{i, k}(x_0, 1),\quad \forall\;  k\geq k_i. \label{eq:w7}
\eeq
 Here   $ u_{i, k}$ denotes the height difference function of $\Si_{t_{i, k}+s}$ over $\Si_{\infty}$.

\end{enumerate}
\end{lem}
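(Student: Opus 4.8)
The plan is to construct $x_0$ and the family $\{x_{i,k}\}$ simultaneously, using the two-parameter convergence set up in Lemma~\ref{lem:item}: first $\Si_{t_{i,k}+s}\to\Si_{i,\infty}$ smoothly as $k\to+\infty$ for fixed $i$, and then $\Si_{i,\infty}\to\Si_{\infty}$ smoothly as $i\to+\infty$. I would begin by fixing any point $x_0\in(\Si_{\infty}\backslash\cS_1)\cap B_R(0)$; since $\cS_1\cap B_R(0)$ is a finite set and $\Si_{\infty}$ is a smooth embedded self-shrinker, such a point exists, and we may choose a small radius $r_0>0$ so that $B_{r_0}(x_0)\cap\Si_\infty$ is a single almost-flat graph disjoint from $B_{2\ee}(\cS_1)$ for some $\ee>0$. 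Because $\Si_{i,\infty}\to\Si_\infty$ smoothly and $\cS_{i,1}\to\cS_1$ in Hausdorff distance (item (5) of Lemma~\ref{lem:item}), for $i$ large the surface $\Si_{i,\infty}$ is a graph over $\Si_\infty$ near $x_0$ avoiding $B_{\ee}(\cS_{i,1})$; let $y_i\in\Si_{i,\infty}$ be the point with $y_i\to x_0$ obtained by following this graph, so $x_0\in\Om_{\ee',R}(1)$ and $y_i\in\td\Om_{i,\ee',R}(1)$ for a suitable $\ee'>0$. Then, for each fixed $i$, because $\Si_{t_{i,k}+s}\to\Si_{i,\infty}$ smoothly as $k\to+\infty$, there is a further choice $x_{i,k}\in(\Si_{i,\infty}\backslash\cS_{i,1})\cap B_R(0)$ with $x_{i,k}\to y_i$ as $k\to+\infty$; a standard diagonal argument over $i$ then guarantees $x_{i,k}\to x_0$ in the iterated limit, giving property (1).

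For property (2), the key point is that both $\td u_{i,k}$ and $u_{i,k}$ are graph-difference functions of the \emph{same} flow $\Si_{t_{i,k}+s}$, but taken over \emph{different} base surfaces ($\Si_{i,\infty}$ and $\Si_{\infty}$ respectively). Near $x_0$ the two base surfaces $\Si_{i,\infty}$ and $\Si_\infty$ are $C^\infty$-close (distance $O(1/i)$ in every derivative on $B_{r_0}(x_0)$), and the top/bottom sheets $\Si_{t_{i,k}+s}^\pm$ of the flow are, for $k$ large, fixed smooth graphs over either base. Writing both height differences at the appropriate points and comparing, one gets
\[
\td u_{i,k}(x_{i,k},1)=u_{i,k}(x_0,1)+E_{i,k},
\]
where the error $E_{i,k}$ is controlled by the $C^1$-distance between the two base surfaces times the local size of the graphs, plus the displacement $|x_{i,k}-x_0|$ times the gradient bound for $u_{i,k}^\pm$. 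Since $u_{i,k}>0$ and both error terms can be made as small as we like (first send $k\to+\infty$ to make $x_{i,k}$ close to $y_i$ and the sheets converge, then use the $i$-dependent smallness of $\|\Si_{i,\infty}-\Si_\infty\|_{C^1}$ near $x_0$), one can choose $k_i$ large enough that $|E_{i,k}|\le u_{i,k}(x_0,1)$ for all $k\ge k_i$, hence $\td u_{i,k}(x_{i,k},1)\le 2u_{i,k}(x_0,1)$.

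The main obstacle I anticipate is making the comparison of the two height-difference functions rigorous and uniform: one must verify that $u_{i,k}(x_0,1)$ does not degenerate to zero faster than the error $E_{i,k}$, i.e.\ that the error is genuinely \emph{relatively} small. This requires a lower bound on $u_{i,k}(x_0,1)$ (equivalently on $u_{i,k}^+(x_0,1)-u_{i,k}^-(x_0,1)$) of the same order as the difference between the two base surfaces near $x_0$ — but this is automatic because the base-surface discrepancy $\Si_{i,\infty}$ versus $\Si_\infty$ is of order $1/i$, which can be made arbitrarily small \emph{after} $i$ is fixed and $k$ is sent to infinity, at which stage $u_{i,k}(x_0,1)$ is a fixed positive number (the limiting sheets over $\Si_\infty$ are distinct since the multiplicity is $N_0\ge 2$). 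So the quantifiers must be ordered carefully: fix $i$, let $k\to+\infty$ so the sheets and the points stabilize, and only then absorb the $O(1/i)$ base-change error. Once this ordering is respected, the estimate (\ref{eq:w7}) follows, and the proof is complete.
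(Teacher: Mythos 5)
Your high-level plan is close to the paper's, but the mechanism you invoke to make the comparison quantitative contains a genuine error, and the construction of $x_{i,k}$ misses the feature that makes the paper's argument work.

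The critical error is the claim that after fixing $i$ and sending $k\to+\infty$, "$u_{i,k}(x_0,1)$ is a fixed positive number (the limiting sheets over $\Si_\infty$ are distinct since the multiplicity is $N_0\geq 2$)." This is false. Multiplicity $\geq 2$ means exactly that the top and bottom sheets of $\Si_{t_{i,k}+s}$ both converge to the \emph{same} limit surface $\Si_{i,\infty}$; they do not stay a fixed distance apart. Consequently $u_{i,k}(x_0,1)\ri 0$ as $k\ri+\infty$ for each fixed $i$, as does $\td u_{i,k}(x_{i,k},1)$. So the required inequality $|E_{i,k}|\leq u_{i,k}(x_0,1)$ cannot be obtained by bounding $E_{i,k}$ by any $k$-independent quantity of order $1/i$: for $k$ large, the right-hand side is smaller than any fixed positive number. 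The bound you need is a \emph{ratio} bound that holds uniformly as the gap shrinks, and your proposal does not produce one. (Your own sketch of $E_{i,k}$ has the same problem: the displacement term $|x_{i,k}-x_0|\cdot\|\Na u_{i,k}^{\pm}\|$ does not obviously vanish relative to $u_{i,k}(x_0,1)$ as $k\ri\infty$, and your choice of $x_{i,k}$ — any sequence converging to a point $y_i$ "above" $x_0$ — gives no control on this ratio.)

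The paper resolves this by an adapted choice of $x_{i,k}$ together with the geometric comparison Lemma \ref{lem:Z001}. Let $y_{i,k}$ be the point where the normal line $l_{x_0}$ of $\Si_\infty$ at $x_0$ meets the bottom sheet of $\Si_{t_{i,k}+s}$, and let $x_{i,k}$ be the foot of the normal projection of $y_{i,k}$ onto $\Si_{i,\infty}$. Then the two normals $l_{x_0}$ and $l_{x_{i,k}}$ both pass through the \emph{same} bottom-sheet point $y_{i,k}$, and $\td u_{i,k}(x_{i,k},1)$ and $u_{i,k}(x_0,1)$ are the distances from $y_{i,k}$ to the top sheet along these two lines, which make an angle $\te$ that is small for $i$ large. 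Lemma \ref{lem:Z001} is precisely a scale-free statement about distances from a common point to a nearby graph along two lines at small angle: once $\te<\te_0$ and the sheets have sufficiently small $C^1$ norm as graphs over $\Si_{i,\infty}$ (achievable for $k\geq k_i$), it gives $\td u_{i,k}(x_{i,k},1)\leq 2 u_{i,k}(x_0,1)$ with no absolute lower bound on either side required. Your proposal contains neither the adapted choice of $x_{i,k}$ (which aligns the two measurements at a common bottom-sheet point) nor a substitute for the scale-free ratio estimate of Lemma \ref{lem:Z001}, so as written the argument does not close.
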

\begin{proof}
Choose $x_0\in (\Si_{\infty}\b \cS_1)\cap B_R(0)$ and we denote by $ l_{x_0}$ the normal line of $\Si_{\infty}$  passing through the point $x_0$. Then the set $\Si_{i, k}\cap l_{x_0}$ is nonempty for large $i$ and $k$. Since $\Si_{i, k}$ can be viewed as a union of multiple graphs over $\Si_{i, \infty}$ away from singularities, we assume that   $l_{x_0}$ intersects with the bottom sheet of $\Si_{i, k}$ at the point $y_{i, k} $, and the projection of $y_{i, k}$ on $\Si_{i, \infty}$ is $x_{i, k}\in \Si_{i, \infty}$.
 We denote by $l_{x_{i, k}}$ the normal line of $\Si_{i, \infty}$ passing through the point $x_{i, k}$.   Since $x_0\not\in \cS_1$, we have $x_{i, k}\not\in \cS_{i, 1}$ for large $i$ and $k$.
 By the construction of $x_{i, k}$, it is clear that $x_{i, k}$ converges to $x_0$ as $i\ri +\infty$ and $k\ri +\infty.$

 Fix $\te_0\in (0, \frac {\pi}2)$.
Since $\Si_{i, \infty}$ converges smoothly to $\Si_{\infty}$,  the angle between the two lines $l_{x_0}$ and $l_{x_{i, k}}$ will lie in $[0, \te_0)$ for  large $i$ and there is a uniform $r_0>0$ independent of $i$ such that
$$|A|(x)\leq \frac 1{r_0}, \quad \forall\; x\in \Si_{i, \infty}\cap B_{r_0}(x_0).  $$
 We assume that $\Si'$ is $\Si_{i, \infty}$, $\Si'_{u_1}$ is the top sheet of $\Si_{i, k}$, $\Si'_{u_2}$ is the bottom sheet of $\Si_{i, k}$ and   $P$ is the point $x_{i, k}$ as above. Then we apply Lemma \ref{lem:Z001} below for such $\Si', \Si'_{u_1}, \Si'_{u_2}$ and the point $P$ and we can get that    the functions $\td u_{i, k}$ and $u_{i, k}$ satisfy (\ref{eq:w7}) for large $k$. The lemma is proved.

\end{proof}

By Lemma \ref{lem:A004b} for each $i$ the function $\td w_{i, k}$ converges in $C^2$ to the limit function $\td w_{i, \infty}$ on any $K\times I$ with $I\subset [-1, T-2]$ and $K\subset\subset (\Si_{i, \infty}\cap B_R(0))\b \cS_{i, I}$,   and $x_{i, k}\ri x_{i, 0}$ as $k\ri +\infty$. Moreover, $\td w_{i, \infty}(x_{i, 0}, 1)=1$. Note that $\td w_{i, \infty}$ satisfies the equation (\ref{eq:D004a}), and the function
$\hat w_i=\td w_{i, \infty}e^{-\frac {|x|^2}{8}}$ satisfies the equation
\beq \pd {\hat w_i}t=\Delta \hat w_i+\Big(|A|^2+\frac 34-\frac 1{16}|x|^2\Big)\hat w_i,\quad \forall\; (x, t)\in K\times I. \label{eq:fi}\eeq
We would like to show that $\hat w_i$ satisfies the parabolic  Harnack inequality with uniform constants independent of $i$. Note that here we need to use Theorem \ref{theo:LYB3} in Appendix \ref{appendixB} instead of Theorem \ref{theo:B5} in Appendix \ref{appendixA}. The reason is that $\hat w_i$ are functions defined on subdomains in $\Si_{i, \infty}$, which varies when $i$ is different. The constants in the Harnack inequality of  Theorem \ref{theo:B5} depend on the manifold $\Si_{i, \infty}$ and it is difficult to show that the constants are independent of $i$. However, we can use Theorem \ref{theo:LYB3} to avoid this difficulty since the constants can be explicitly written down by Theorem \ref{theo:LY}. We note that Theorem \ref{theo:LYB3} cannot be used for the equation (\ref{eq:F007}) of $w_i$ and we have to use Theorem \ref{theo:B5} in the proof of Lemma \ref{lem:harnack}.

\begin{lem}\label{lem:harnack2}Let $\hat w_i=\td w_{i, \infty}e^{-\frac {|x|^2}{8}}$. For any $-T<a<s<t<b<T$, any $\ee>0$, $x\in \Om_{i, \ee, R}(s)$ and $y\in \Om_{i, \ee, R}(t)$, there exists a constant $C=C(\ee, R,  s-a, t-s, \Si_{\infty}, \cS_{[a, b]})$ independent of $i$ such that
\beq
\hat w_i(x, s)\leq C \hat w_i(y, t).
\eeq
\end{lem}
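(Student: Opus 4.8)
The plan is to reduce the Harnack inequality for $\hat w_i$ to a uniform application of the Li--Yau type parabolic Harnack inequality (Theorem~\ref{theo:LYB3}), following the same five-step chaining structure as in the proof of Lemma~\ref{lem:harnack}, but taking care that all constants are controlled independently of $i$. The key new point compared to Lemma~\ref{lem:harnack} is that $\hat w_i$ satisfies the clean equation \eqref{eq:fi}, namely $\partial_t \hat w_i=\Delta\hat w_i+V\hat w_i$ with potential $V=|A|^2+\frac34-\frac1{16}|x|^2$, and this equation is posed on subdomains of $\Sigma_{i,\infty}$; by Lemma~\ref{lem:item}(4)--(5) the manifolds $\Sigma_{i,\infty}$ converge smoothly to $\Sigma_\infty$ in $\cC(D,N,\rho)$ and the singular sets $\cS_{i,s}$ converge in Hausdorff distance to $\cS_{\infty,s}$. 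Hence on a fixed scale away from the singular set, the metrics, their Ricci curvatures, and the potential $V$ (which is controlled by $|A|^2$ plus a bounded term, and $|A|$ is uniformly bounded on compact sets away from the singular set by the $\ee$-regularity/pseudolocality as in Lemma~\ref{lem:A001}) are all uniformly bounded for large $i$. This is exactly the input Theorem~\ref{theo:LYB3} needs to produce a Harnack constant depending only on the geometric bounds (via Theorem~\ref{theo:LY}) and not on $i$.

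First I would fix $\ee>0$ and, exactly as in Step~1 of Lemma~\ref{lem:harnack}, choose $\dd_0=\dd_0(\Sigma_\infty,\cS_{[a,b]})$ and a partition $\tau_0=a<\tau_1<\cdots<\tau_N=b$ with $N$ chosen so that $s\geq\tau_1$, $k_t-k_s\geq 4$, and the nesting $\Om_{2\ee,R}(s)\subset\Om_{\ee,R}(t)$ holds on each subinterval of length $\dd_0$; because the $\cS_{i,s}$ converge to $\cS_{\infty,s}$, for large $i$ the same nesting holds with $\Om$ replaced by $\td\Om_{i,\cdot,R}$, so the partition can be chosen once and for all. Second, on each time step $[\tau_{k-1},\tau_{k+1}]$ I would cover $\Om'_i:=\td\Om_{i,\ee,R}(\tau_k)$ by a bounded number of small balls (radius $\sim\ee/100$) contained in an intermediate domain $\Om''_i$ which keeps a definite distance $\dd(\ee)$ from $\partial\Om_i$, and apply Theorem~\ref{theo:LYB3} to $\hat w_i$ on these parabolic cylinders; since the geometry of $\Sigma_{i,\infty}$ restricted to these balls is uniformly controlled (smooth convergence to $\Sigma_\infty$) and $V$ is uniformly bounded there, the resulting constant $C=C(\ee,R,b-a,N,\Sigma_\infty,\cS_{[a,b]})$ is independent of $i$. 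Chaining these single-step estimates through the intermediate points $z_k\in\td\Om_{i,2\ee,R}([\tau_{k-1},\tau_k])\cap\td\Om_{i,2\ee,R}([\tau_k,\tau_{k+1}])$ — which are nonempty for large $i$ by Hausdorff convergence of the finitely many Lipschitz singular curves — gives, as in Steps~3--5 of Lemma~\ref{lem:harnack}, the estimate $\hat w_i(x,s)\leq C\,\hat w_i(y,t)$ for all $x\in\td\Om_{i,\ee,R}(s)$, $y\in\td\Om_{i,\ee,R}(t)$, with $C$ depending only on $\ee,R,s-a,t-s,\Sigma_\infty,\cS_{[a,b]}$.

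The main obstacle, and the reason the lemma is phrased with Theorem~\ref{theo:LYB3} rather than Theorem~\ref{theo:B5}, is the uniformity of the Harnack constant in $i$: the domains $\td\Om_{i,\ee,R}$ and the ambient manifolds $\Sigma_{i,\infty}$ genuinely move with $i$, so one cannot simply quote a Harnack inequality whose constant implicitly depends on the underlying manifold. The resolution is that Theorem~\ref{theo:LY} gives an \emph{explicit} Harnack constant in terms of a lower Ricci bound, an upper bound on the potential and its gradient/Laplacian, the dimension, and the size of the parabolic cylinder — all of which converge (hence are uniformly bounded for large $i$) under the smooth convergence $\Sigma_{i,\infty}\to\Sigma_\infty$ away from $\cS_\infty$, using the uniform second-fundamental-form bounds on compact sets away from the singular set that come from Lemma~\ref{lem:A001} (pseudolocality plus $\ee$-regularity). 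Once this explicit-constant version is in hand, the combinatorial chaining is essentially identical to Lemma~\ref{lem:harnack}, so I would present Steps~1--5 in compressed form, emphasizing only where the $i$-independence of each constant is used, and noting at the end that discarding finitely many $i$ is harmless. I expect the verification that $V=|A|^2+\frac34-\frac1{16}|x|^2$ and its derivatives are uniformly bounded on the relevant cylinders — which follows from the uniform $C^k$ bounds on $\Sigma_{i,\infty}$ on compact sets away from $\cS_\infty$ — to be a routine but necessary checkpoint rather than a genuine difficulty.
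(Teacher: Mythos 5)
Your proposal matches the paper's proof essentially point for point: reuse the five-step chaining structure of Lemma~\ref{lem:harnack}, replace Theorem~\ref{theo:B5} by Theorem~\ref{theo:LYB3} because the Li--Yau constants from Theorem~\ref{theo:LY} depend only on explicit geometric bounds (Ricci lower bound, potential bounds, domain geometry) that are uniform in $i$ by the smooth convergence $\Si_{i,\infty}\to\Si_\infty$ and the Hausdorff convergence of the singular sets from Lemma~\ref{lem:item}. Your remark that one must check $V=|A|^2+\tfrac34-\tfrac1{16}|x|^2$ and its derivatives are uniformly bounded on the relevant cylinders is exactly the hypothesis on $q_i$ required by Theorem~\ref{theo:LYB3}(6), which the paper leaves implicit, so you have, if anything, made the argument slightly more explicit than the source.
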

\begin{proof}The lemma follows from the combination of the proof of Lemma \ref{lem:harnack} and Theorem \ref{theo:LYB3}. For the readers' convenience, we give the detailed proof here.

By Lemma \ref{lem:item}, $\cS_i$ converges to $\cS$ in the Hausdorff topology. Since $\cS_t\cap B_R(0)$ consists of finitely many points, we can choose $\dd_0(\Si_{\infty}, \cS_{[a, b]})>0$ small such that for any $s\in [a, b]$,
\beq
\Om_{\frac 32\ee, R}(s)\subset \Om_{\ee, R}(t),\quad \Om_{\frac 13\ee, R+2}(s)\subset \Om_{\frac {1}5\ee, R+2}(t),\quad \forall \; t\in [s-\dd_0, s+\dd_0]\cap [a, b].\label{eqF:001x}
\eeq
Thus, for large $i$ we have
\beq
\Om_{i, 2\ee, R}(s)\subset \Om_{i, \ee, R}(t),\quad \Om_{i, \frac 12\ee, R+2}(s)\subset \Om_{i, \frac {1}5\ee, R+2}(t),\quad \forall \; t\in [s-\dd_0, s+\dd_0]\cap [a, b].\label{eqF:001y}
\eeq
Let $N$ be a positive integer satisfying
\beq
N>\max\Big\{\frac {5(b-a)}{\dd_0}, \;\frac {b-a}{s-a},\; \frac {5(b-a)}{t-s}\Big\}. \label{eq:Nx}
\eeq Set
\beq
 \tau_k=a+\frac {b-a}{N}k,\quad \forall\; k\in \{0, 1, \cdots, N\}. \label{eqF:002x}
\eeq
Then $\tau_0=a$ and $\tau_N=b$. By (\ref{eq:Nx}) we have $s\geq \tau_1.$
 Note that (\ref{eqF:001x}) and (\ref{eqF:002x}) imply that for any $k= 1, 2,  \cdots, N-1$ we have
\beq
\Om_{i, \frac 12\ee, R+2}(\tau_k)\subset \Om_{i, \frac {\ee}5, R+2}(t),\quad \forall\; t\in [\tau_{k-5}, \tau_{k+5}]\cap [a, b].
\eeq
Let
\beq
 \Om'_i:=\Om_{i, \ee, R}(\tau_k),\quad \Om''_i:=\Om_{i, \frac 23\ee, R+1}(\tau_k),\quad \Om_i:=\Om_{i, \frac 12\ee, R+2}(\tau_k).
\eeq Then we have $\Om'_i\subset \Om''_i\subset \Om_i.$ By Lemma \ref{lem:item}, $\Si_{i, \infty}$ converges smoothly to $\Si_{\infty}$, $\cS_i$ converges to $\cS$, the domains $\Om'_i, \Om''_i, \Om_i$ converge to $\Om', \Om'', \Om$ respectively, where
\beq
\Om':=\Om_{ \ee, R}(\tau_k),\quad \Om'':=\Om_{ \frac 23\ee, R+1}(\tau_k),\quad \Om:=\Om_{ \frac 12\ee, R+2}(\tau_k).
\eeq
 Note that $\hat w_i$ satisfies the equation (\ref{eq:fi}), which is exactly the same as the equation (\ref{eq:fib}) in the appendix \ref{appendixB}. Thus, we can apply Theorem \ref{theo:LYB3} in appendix \ref{appendixB} for the function $w_i$, the domains $\Om'_i, \Om''_i, \Om_i$ and the interval $[\tau_{k-1}, \tau_{k+1}]$ to obtain
\beq
\hat w_i(x, \tau_k)\leq C \hat w_i(y, \tau_{k+1}),\quad \forall\; x, y\in \Om_{i, \ee, R}(\tau_k),\label{eqG:001x}
\eeq where $ C=C(\ee, R, b-a, N, \Si_{\infty}, \cS_{[a, b]}) $ is a constant independent of $i$. Moreover, there exists a sequence of points $\{z_k\}$ such that
\beq
z_k\in \Om_{i, 2\ee, R}([\tau_{k-1}, \tau_k])\cap \Om_{i, 2\ee, R}([\tau_k, \tau_{k+1}])\neq \emptyset.\label{eq:zkx}
\eeq
For $s, t\in (a, b)$ with $s<t$, there exist integers $k_s$ and $k_t$ such that $s\in [\tau_{k_s}, \tau_{k_s+1})$ and $t\in (\tau_{k_t}, \tau_{k_t+1}]$. Then we have
\beq
k_t-k_s\geq 4.\label{eq:P004x}
\eeq as in Lemma \ref{lem:harnack}.
Set
\beq
\Om'_i=\Om_{i, \ee, R}(s),\quad \Om''_i=\Om_{i, \frac 23\ee, R+1}(s),\quad \Om=\Om_{i, \frac 12\ee, R+2}(s).
\eeq
Applying Theorem \ref{theo:LYB3} for such $\Om', \Om'', \Om$ and the interval $[\tau_{k_s-2}, \tau_{k_s+2}]$ as in Lemma \ref{lem:harnack},
 we have
\beq
\hat w_i(x, s)\leq C \hat w_i(z_{k_s+2}, \tau_{k_s+2}),\quad \forall\; x\in \Om_{i, \ee, R}(s).\label{eq:P001x}
\eeq
Moreover, (\ref{eqG:001x}) and (\ref{eq:zkx}) implies that
\beq
\hat w_i(z_{k_s+2}, \tau_{k_s+2})\leq C\hat w_i(z_{k_s+3}, \tau_{k_s+3})\leq \cdots\leq C^N \hat w_i(z_{k_t-1}, \tau_{k_t-1}),\label{eq:P002}
\eeq where we used (\ref{eq:P004x}). Similar to the proof of (\ref{eq:P001x}), we have
\beq
\hat w_i(z_{k_t-1}, \tau_{k_t-1})\leq C \hat w_i(y, t),\quad \forall\; y\in \Om_{i, \ee, R}(t). \label{eq:P003x}
\eeq Combining this with (\ref{eq:P001x})-(\ref{eq:P003x}), we have
\beq
\hat w_i(x, s)\leq C \hat w_i(y, t),\quad \forall\; x\in \Om_{i, \ee, R}(s),\quad  y\in \Om_{i, \ee, R}(t).\label{eq:lem320:002}
\eeq The constants $C$ in (\ref{eq:P001x})-(\ref{eq:lem320:002}) depend on $\ee, R, b-a, N, \Si_{\infty}$ and $\cS_{[a, b]}$.
The lemma is proved.

\end{proof}

The next result shows that the normalized height difference function $\td w_{i, k}$ has   uniformly $L^1$ estimate away from the singular set near $t=0$, and the estimate doesn't depend on $i$. The proof of this result relies on the growth estimates of $\td w_{i, \infty}$ near the singular set, which is given in Theorem \ref{theoA:main} in the next section.

\begin{lem}\label{claim2}Fix $\tau\in (0, \frac 12)$.  Under the same assumptions as in Lemma \ref{lem:item}, for each $i$ we can choose $k_i$ sufficiently large  such that for any $k\geq k_i$ the normalized height difference function
\beq
\td w_{i, k}(x, s)=\frac {\td u_{i, k}(x, s)}{\td u_{i, k}(x_{i, k}, 1)}, \label{eq:w}
\eeq where the points $\{x_{i, k}\}$ are chosen as in Lemma \ref{claim1},
  satisfies the inequality
\beq
\inf_{s\in [-\tau, 0]}\int_{\Si_{i, \infty}\cap \td \Om_{i, \frac {\ee_i}5, R}}\,\td w_{i, k}(x, s)\leq 2W_0. \label{eq:w8}
\eeq Here $W_0$ is a constant independent of $i$.
\end{lem}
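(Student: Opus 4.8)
The plan is to bound the $L^1$ norm of $\td w_{i,k}$ on the regular part at time slices near $t=0$ by exploiting the normalization $\td w_{i,k}(x_{i,k},1)=1$ together with the Harnack inequality (Lemma~\ref{lem:harnack2}) and the growth estimate of $\td w_{i,\infty}$ near the singular set (Theorem~\ref{theoA:main}), in the spirit of Lemma~4.9 (or the analogous step) of \cite{[LW]}. Since $\td u_{i,k}$ and $u_{i,k}$ are height-difference functions of the \emph{same} flow $\Si_{t_{i,k}+s}$ over the two nearby self-shrinkers $\Si_{i,\infty}$ and $\Si_{\infty}$ respectively, Lemma~\ref{claim1} gives $\td u_{i,k}(x_{i,k},1)\le 2u_{i,k}(x_0,1)$, so that passing to $\td w_{i,k}=\td u_{i,k}/\td u_{i,k}(x_{i,k},1)$ and then to the limit $k\to\infty$ produces (for each fixed $i$) a limit function $\td w_{i,\infty}$ with $\td w_{i,\infty}(x_{i,0},1)=1$ satisfying (\ref{eq:D004a}). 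The quantity $W_0$ will be obtained as a bound that is uniform in $i$ for this limit object, and then transferred back to $\td w_{i,k}$ for $k\ge k_i$ large by $C^2_{loc}$ convergence away from the singular set plus the fact that the integrals in question are taken over $\td\Om_{i,\frac{\ee_i}{5},R}$, a domain at definite distance from $\cS_{i,s}$.

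The key steps, in order: \textbf{(i)} Fix $i$. Apply Lemma~\ref{lem:harnack2} on a time interval $[-2,2]$ (so $a=-2$, $b=2$) to compare $\hat w_i(x,s)=\td w_{i,\infty}(x,s)e^{-|x|^2/8}$ at $(x,s)\in\td\Om_{i,\frac{\ee_i}{5},R}(s)$ with $s\in[-\tau,0]$ to its value at the normalization point $(x_{i,0},1)$, obtaining $\hat w_i(x,s)\le C\,\hat w_i(x_{i,0},1)$ with $C$ \emph{independent of $i$} (this is exactly what Lemma~\ref{lem:harnack2} buys over Lemma~\ref{lem:harnack}). Since $x_{i,0}\to x_0$ and the weights $e^{-|x|^2/8}$ are comparable to fixed constants on the bounded set $B_R(0)$, this yields a uniform pointwise bound $\td w_{i,\infty}(x,s)\le C_1$ on $\td\Om_{i,\ee_i,R}(s)$ for $s\in[-\tau,0]$. \textbf{(ii)} This controls $\td w_{i,\infty}$ away from the $\ee_i$-neighborhood of $\cS_{i,s}$; to get the integral over $\td\Om_{i,\frac{\ee_i}{5},R}(s)$ one must control the thin collar $\td\Om_{i,\frac{\ee_i}{5},R}(s)\setminus\td\Om_{i,\ee_i,R}(s)$. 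Here invoke Theorem~\ref{theoA:main}: near each singular curve $\xi_k(s)$ the function $\td w_{i,\infty}$ behaves like $\sum_k c_k(s)\log\frac{1}{\r_k(x,s)}$, which is in $L^1$ across the singularity with a bound depending only on the coefficients $c_k(s)$ and the local geometry of $\Si_{\infty}$; the coefficients $c_k(s)$ are in turn controlled by the already-established uniform bound $C_1$ on $\td\Om_{i,\ee_i,R}(s)$ (e.g.\ by a mean-value / elliptic estimate at distance $\sim\ee_i$ from $\xi_k$, which is what fixes the size of $c_k$). Thus $\int_{\td\Om_{i,\frac{\ee_i}{5},R}(s)}\td w_{i,\infty}(x,s)\,d\mu_{\infty}\le W_0$ for some $W_0$ independent of $i$, for all $s\in[-\tau,0]$; take the infimum over $s$. \textbf{(iii)} Transfer back to finite $k$: for each $i$, by $C^2_{loc}$ convergence $\td w_{i,k}\to\td w_{i,\infty}$ on compact subsets of $(\Si_{i,\infty}\cap B_R(0))\setminus\cS_{i,I}$ (Lemma~\ref{lem:A004b}) and by choosing $k_i$ large enough, the integral of $\td w_{i,k}$ over $\td\Om_{i,\frac{\ee_i}{5},R}(s)$ is within a factor $2$ of that of $\td w_{i,\infty}$ at a well-chosen slice $s\in[-\tau,0]$, giving (\ref{eq:w8}).

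The main obstacle is step \textbf{(ii)}: turning the interior bound on the regular part into an $L^1$ bound on the slightly larger domain that dips to distance $\ee_i/5$ of the singular curves, \emph{uniformly in $i$ as $\ee_i\to 0$}. This requires the precise asymptotic expansion of Theorem~\ref{theoA:main} — in particular that the logarithmic coefficients $c_k(s)$ are bounded in terms of a one-sided bound of $\td w_{i,\infty}$ at a fixed small scale away from the singularity, and that the expansion error is integrable with a uniform constant. A secondary subtlety is that $\cS_{i,s}$ varies with $i$ and its points may collide in the limit $\Si_\infty$; here one uses the Hausdorff convergence $\cS_i\to\cS_\infty$ (Lemma~\ref{lem:item}(5)) and the fact that Theorem~\ref{theoA:main} is stated allowing multiple (possibly coinciding) singularities, so the $L^1$ bound survives the limit. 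Everything else — the Harnack comparison, the passage to limits, the factor-2 slack — is routine given the results already established.
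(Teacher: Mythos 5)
Your overall architecture (Harnack $+$ Theorem~\ref{theoA:main} $+$ $C^2_{\rm loc}$ transfer to finite $k$) is the right toolbox, but the order in which you deploy the two main tools is reversed in a way that breaks the argument.

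In your step \textbf{(i)} you apply Lemma~\ref{lem:harnack2} to obtain a pointwise bound $\td w_{i,\infty}(x,s)\leq C_1$, with $C_1$ ``independent of $i$'', on the domain $\td\Om_{i,\ee_i,R}(s)$ (or $\td\Om_{i,\ee_i/5,R}(s)$). This misreads what Lemma~\ref{lem:harnack2} gives. The constant there is $C(\ee,R,s-a,t-s,\Si_\infty,\cS_{[a,b]})$: it is independent of $i$ for \emph{fixed} $\ee$, but it still depends on the parameter $\ee$ measuring the distance to the singular set, and it blows up as $\ee\to0$. Since by construction $\ee_i\to0$, applying the Harnack inequality on $\td\Om_{i,\ee_i,R}$ produces a constant that \emph{does} depend on $i$ and goes to infinity, not a uniform $C_1$. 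This poisons the rest of the proof: integrating that (non-uniform) pointwise bound over a domain of fixed area, or using it to calibrate the logarithmic coefficients $c_k$ in step \textbf{(ii)}, cannot produce a $W_0$ independent of $i$.

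The paper's proof inverts the order precisely to avoid this. It first invokes Theorem~\ref{theoA:main}(1) (the $L^1$ estimate (\ref{eq:L002x}), not the pointwise asymptotics) to bound the $L^1$ norm of $\td w_{i,\infty}$ over all of $\Si_{i,\infty}\cap B_R(0)\times[-\tfrac12,0]$ by the $L^1$ norm over a compact set $K_i$ lying at distance $\geq r_1$ from $\cS_i$, where $r_1=r_1(\rho_0,\Xi_0,R)$ is a \emph{fixed} constant independent of $i$ and of $\ee_i$ (this is exactly what the geometry hypothesis $B_R(0)\cap\Si_{i,\infty}\in\cM_{k_0,2}(\rho_0,\Xi_0)$ buys). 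Only then is Lemma~\ref{lem:harnack2} applied, with $\ee$ equal to a fixed fraction of $r_1$, so its constant really is uniform in $i$. The moral: the Harnack step can only be allowed to see a domain at fixed positive distance from the singular curves; the job of pushing the $L^1$ bound inward to the $\ee_i$-collar must be done entirely by Theorem~\ref{theoA:main}, not by the Harnack inequality. If you restructure your steps \textbf{(i)}--\textbf{(ii)} to respect this, the remainder of your outline (step \textbf{(iii)}, the role of Lemma~\ref{claim1} in keeping the normalization comparable, and the use of Hausdorff convergence $\cS_i\to\cS_\infty$) matches the paper's argument.
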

\begin{proof}
Fix large $R>0.$  Since $\Si_{i, \infty}$ converges to $\Si_{\infty}$ smoothly,  there exist uniform constants $\rho_0, \Xi_0>0$ such that for any large $i$ we have $B_R(0)\cap \Si_{i, \infty}\in \cM_{k_0, 2}(\rho_0, \Xi_0)$. Here the set $\cM_{k_0, 2}(\rho_0, \Xi_0)$ is defined in Definition \ref{defi:003}. Note that by Lemma \ref{lem:A004b} for each $i$ the function $\td w_{i, k}$ converges in $C^2$ to the limit function $\td w_{i, \infty}$ away from $\cS_i$  and $x_{i, k}\ri x_{i, 0}$ as $k\ri +\infty$.
Applying Theorem \ref{theoA:main} to the function
$\hat w_i=\td w_{i, \infty}e^{-\frac {|x|^2}{8}}$, we obtain that
there exist  uniform constants $C=C(\rho_0, \Xi_0, R)$  and $r_1(\rho_0, \Xi_0, R)>0$ such that
\beq
\|\td w_{i, \infty}\|_{L^1((\Si_{i, \infty}\cap B_{R}(0))\times [-\frac 12, 0] )}\leq C(R, \rho_0, \Xi_0)\|\td w_{i, \infty}\|_{L^1(K_i)}, \label{eq:H100}
\eeq where $K_i$ is a compact set defined by
\beq
K_i:=\Big\{(x, t)\in (\Si_{i, \infty}\cap B_{R+1}(0))\times \Big[-1, \frac 12\Big]\;\Big|\;\min_{p\in \cS_{i, t}\cap B_{R+1}(0)}d_{g_i}(x, p)\geq r_1\Big\},
\eeq where $d_{g_i}$ denotes the intrinsic distance function of $(\Si_{i, \infty}, g_i)$. For any $t\in (-T, T)$, we define
$$
K_{i, r}(t)=\Big\{x\in \Si_{i, \infty}\,\Big|\;\min_{p\in \cS_{i, t}\cap B_{R+1}(0)}d_{g_i}(x, p)\geq r\Big\}.
$$
Since $(\Si_{i, \infty}, g_i)$ converges smoothly to $(\Si_{\infty}, g_{\infty})$ and $\cS_i$ converges to $\cS_{\infty}$ by Lemma \ref{lem:item}, for any $t\in (-T, T)$ $K_{i, r}(t)$ converges smoothly to a limit set, which we denote by $K_{\infty, r}(t)\subset \Si_{\infty}$.  By part (5) of Lemma \ref{lem:item}, $K_{\infty, r}(t)\cap \cS_t=\emptyset$. Note that $K_{\infty, r}(t)$ is defined with respect to the metric $g_\infty$ while $\Om_{r, R}(t)$ is with respect to the Euclidean metric in $\RR^3.$  Let
\beq
r_1':=\frac 12\min\Big\{d(x, p)\;\Big|\;x\in K_{\infty, r_1}(t), p\in \cS_{t}, t\in [-2, 2]\Big\}>0,
\eeq where $d(x, p)$ denotes the Euclidean distance in $\RR^3$.
Thus, we have
\beq
K_{\infty, r_1}(t)\subset \Om_{r_1', R+1}(t)\subset \Si_{\infty},\quad \forall\,t\in [-2, 2].
\eeq Since $K_{i, r_1}(t)$ and $\td \Om_{i,  r_1', R+1}(t)$ converge to $K_{\infty, r_1}(t)$ and $ \Om_{  r_1', R+1}(t)$ respectively for each $t$, for large $i$ we have
\beq
K_{i, r_1}(t)\subset \td \Om_{i, \frac 12 r_1', R+2}(t),\quad \forall\; t\in [-2, 2]. \label{eq:Kit}
\eeq
Applying Lemma \ref{lem:harnack2} for $\td \Om_{i, \frac 12 r_1', R+2}(t)$ and $[-2, 2]$, we have
\beq
\td w_{i, \infty}(x, t)\leq  C(r_1', R, \Si_{\infty}, \cS_{[-2, 2]}) \td  w_{i, \infty}(x_{i, 0}, 1)=C(r_1', R, \Si_{\infty}, \cS_{[-2, 2]}),\quad \forall\;(x, t)\in K_{i},\label{eq:H101}
\eeq where we used the fact that $\td w_{i, \infty}(x_{i, 0}, 1)=1$.
 Integrating both sides of (\ref{eq:H101}) on $K_i$, we have
\beqn
\|\td w_{i, \infty}\|_{L^1(K_i)}&\leq& C(r_1', R, \Si_{\infty}, \cS_{[-2, 2]})\Area_{g_i}(\Si_{i, \infty}\cap B_{R+1}(0))\nonumber\\&\leq& C(r_1', R, \Si_{\infty}, \cS_{[-2, 2]}, N),\label{eq:H103y}
\eeqn where we used the upper bound of area ratio in Lemma \ref{lem:A001} in the last inequality.
Combining (\ref{eq:H100}) with (\ref{eq:H103y}), we have
\beq
\|\td w_{i, \infty}\|_{L^1((\Si_{i, \infty}\cap B_{R}(0))\times [-\frac 12, 0] )}\leq C( R, \Si_{\infty}, \cS_{[-2, 2]}, N, \rho_0, \Xi_0).  \label{eq:H102}
\eeq
Thus, the $L^1$ norm of $\td w_{i, \infty}$   is uniformly bounded.
 Since $\td w_{i, k}$ converges to $\td w_{i, \infty}$ on any compact set away from singularities as $k\ri +\infty$, we can choose $k_i$ large such that for any $k\geq k_i$,
\beqs \inf_{s\in [-\tau, 0]}\int_{\Si_{i, \infty}\cap \td \Om_{i, \frac {\ee_i}5, R}}\,\td w_{i, k}(x, s)&\leq&2\inf_{s\in [-\tau, 0]}\int_{\Si_{i, \infty}\cap \td \Om_{i, \frac {\ee_i}5, R}}\,\td w_{i, \infty}(x, s)\\&\leq& \frac 2{\tau} \int_{-\tau}^0dt\int_{\Si_{i, \infty}\cap B_R(0)}\,\td w_{i, \infty}(x, t)\\&\leq&C( R, \Si_{\infty}, \cS_{[-2, 2]}, N, \rho_0, \Xi_0, \tau). \eeqs
where we used the inequality (\ref{eq:H102}). Thus, the inequality (\ref{eq:w8}) is proved.

\end{proof}

Combining Lemma \ref{lem:item}, Lemma \ref{claim1} with Lemma \ref{claim2}, we have the following result.

\begin{lem}\label{lem:A003c}Let $R>0$ and $\tau\in (0, \frac 12)$. There is a sequence of times  $t_i \to \infty$, a self-shrinker  $\Si_{ \infty}\in \cC( N, \rho)$,  a locally finite singular set $\cS$, and a constant $W$ satisfying the following properties.
\begin{enumerate}
  \item[(1).] For any $T>1$, there exists a subsequence $\{t_{i_k}\}_{k=1}^{\infty}$ of $\{t_i\}$ such that $\{\Si_{t_{i_k}+s}, -T<s<T\}$ converges locally smoothly to  $\Si_{ \infty}\in \cC( N, \rho)$ away from $\cS$;
  \item[(2).] Let $x_0\in \Si_{\infty}\b\cS_1$. We define the    functions $u_i$  as in (\ref{eq:u}) and $w_i$ by
\beq
w_i(x, t)=\frac {u_i(x, t)}{u_i(x_0, 1)}.
\eeq
For any $\ee>0$ and large $t_i$,  we have the inequality
\beq
\inf_{s\in [t-\tau, t]}\int_{\Omega_{\ee, R}(s)}\,w_i(x, s)\leq W,\quad \forall\; t\in [2, T),\label{eq:w1}
\eeq where $W$ is a constant independent of $\epsilon$, $i$ and $T$.

\item[(3).]For any  $I=[a, b]\subset [-1, T-2]$ and $K\subset \subset (\Si_{\infty}\cap B_R(0))\b \cS_I$, there exists a constant $C=C(\ee, K, \cS_I, a, b)$  such that
\beqn
0<w_i(x, t)<C, \quad \forall\; (x, t)\in K\times I.\label{eq:X002}
\eeqn
Moreover, if $a\in [2, T-2]$ there exists $C'=C'(K, \Si_{\infty},  \cS_{[0, a+1]})$ independent of $b$ such that \beq
w_i(x, a)\geq C',\quad \forall\; x\in K.\label{eq:X003}
\eeq

\end{enumerate}

\end{lem}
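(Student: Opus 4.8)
The plan is to run a diagonal argument over a sequence $\ee_j\ri 0$ to produce the time sequence, and then to read off the three conclusions from inputs already established. First I would fix $R>1$, $\tau\in(0,\tfrac12)$ and a sequence $\ee_j\ri 0$, and apply Lemma~\ref{lem:item}, Lemma~\ref{claim1} and Lemma~\ref{claim2} for each $j$. This yields the sequences $\{t_{j,k}\}_{k}$ with the ``almost monotone'' property \eqref{eq:w5} for the parameter $\ee_j$, the limiting self-shrinkers $\Si_{j,\infty}$ and $\Si_{\infty}\in\cC(D,N,\rho)$, the singular sets $\cS_j$ and $\cS$, the fixed base point $x_0\in(\Si_{\infty}\b\cS_1)\cap B_R(0)$ together with the base points $x_{j,k}$, and --- for each $j$ --- a threshold $k_j$ so large that for every $k\ge k_j$ one has simultaneously the comparison $\td u_{j,k}(x_{j,k},1)\le 2\,u_{j,k}(x_0,1)$ of \eqref{eq:w7}, the $L^1$-bound $\inf_{s\in[-\tau,0]}\int_{\td\Om_{j,\ee_j/5,R}(s)}\td w_{j,k}(\cdot,s)\le 2W_0$ of \eqref{eq:w8}, the Hausdorff-closeness \eqref{eq:C001} of $\Si_{t_{j,k}+s}$ and $\cS_j$ to $\Si_{\infty}$ and $\cS$, and $t_{j,k}>j$. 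I then set $t_i:=t_{i,k_i}$ (relabelling by $i$), let $u_i:=u_{i,k_i}$ be the height difference of $\Si_{t_i+s}$ over $\Si_{\infty}$, and $w_i:=u_i/u_i(x_0,1)$, so $w_i(x_0,1)=1$; by part (5) of Lemma~\ref{lem:item} this diagonal sequence (along a subsequence) converges locally smoothly to $\Si_{\infty}$ away from $\cS$, and is therefore of the type to which Lemmas~\ref{lem:A001}, \ref{lem:equi} and \ref{lem:harnack} apply.

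Conclusion (1) is then immediate from part (5) of Lemma~\ref{lem:item} and the $T$-independence of the limit in part (4) of Lemma~\ref{lem:A001}.

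For conclusion (2) I would chain the estimates as follows. Given $\ee>0$, take $i$ so large that $\ee_i\le\ee$; then $\Om_{\ee,R}(s)\subset\Om_{\ee_i,R}(s)$, so, since $w_i>0$ and by the first inequality of Lemma~\ref{lem:equi},
\[
\int_{\Om_{\ee,R}(s)}w_i(\cdot,s)\,d\mu_\infty\le\frac1{u_i(x_0,1)}\int_{\Om_{\ee_i,R}(s)}u_i(\cdot,s)\,d\mu_\infty\le\frac{|\mathbf{TN}(\Si_{t_i+s},\ee_i,R)|}{u_i(x_0,1)}.
\]
Taking $\inf_{s\in[t-\tau,t]}$ for $t\in[2,T)$ (and $i>T$, so that $[t-\tau,t]\subset[0,i]$) and using the almost monotone property \eqref{eq:w5} for $\ee_i$ --- legitimate since $t_i$ is one of the $t_{i,k}$ --- the right side is at most $\tfrac2{u_i(x_0,1)}\inf_{s\in[-\tau,0]}|\mathbf{TN}(\Si_{t_i+s},\ee_i,R)|$. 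The second inequality of \eqref{eq:w001} bounds $|\mathbf{TN}(\Si_{t_i+s},\ee_i,R)|$ by $\int_{\td\Om_{i,\ee_i/5,R}(s)}\td u_i(\cdot,s)=\td u_i(x_{i,k_i},1)\int_{\td\Om_{i,\ee_i/5,R}(s)}\td w_{i,k_i}(\cdot,s)$, whose infimum over $s\in[-\tau,0]$ is $\le 2W_0\,\td u_i(x_{i,k_i},1)$ by \eqref{eq:w8}; finally \eqref{eq:w7} gives $\td u_i(x_{i,k_i},1)\le 2\,u_i(x_0,1)$. Combining, $\inf_{s\in[t-\tau,t]}\int_{\Om_{\ee,R}(s)}w_i\le 8W_0$, so \eqref{eq:w1} holds with $W:=8W_0$, a constant depending on none of $i$, $T$ or $\ee$.

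Conclusion (3) follows from the argument of Lemma~\ref{lem:A004a} applied to the present sequence. The upper bound on a compact $K\subset\subset(\Si_{\infty}\cap B_R(0))\b\cS_I$ is obtained by using the parabolic Harnack inequality of Lemma~\ref{lem:harnack} on a slightly larger interval $[a-1,b+2]$ to pass from a point $(x,t)\in K\times I$ to some slice $s(t+1)\in[t+1-\tau,t+1]$ on which $\int_{\Om_{\ee,R}}w_i\le W$ by conclusion (2), and then integrating; this produces a bound $C$ of the asserted form. The lower bound at $t=a\in[2,T-2]$ comes from the forward Harnack inequality of Lemma~\ref{lem:harnack} between the normalization slice $s=1$ and $s=a$ (here $a>1$), together with $w_i(x_0,1)=1$, which yields a constant depending only on $\ee$, $R$, $\Si_{\infty}$ and $\cS_{[0,a+1]}$, hence not on $b$. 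The main obstacle is the bookkeeping in the diagonal step: the almost monotone estimate \eqref{eq:w5} is available only for the parameter $\ee_i$ at stage $i$, so the entire chain in conclusion (2) must be routed through $\ee_i$ (using $\Om_{\ee,R}\subset\Om_{\ee_i,R}$) and through the common bridge $|\mathbf{TN}|$, never directly comparing the two height-difference functions $u_i$ (a graph over $\Si_{\infty}$) and $\td u_i$ (a graph over $\Si_{i,\infty}$); one must likewise keep the two normalizations ($x_0$ fixed versus $x_{i,k}$ moving) separate, which is precisely what Lemma~\ref{claim1} reconciles.
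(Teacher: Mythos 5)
Your proof is correct and follows essentially the same route as the paper: diagonalize the $\{t_{j,k}\}$ over $\ee_j\ri 0$ using Lemmas~\ref{lem:item}, \ref{claim1}, \ref{claim2}, then chain $\int_{\Om_{\ee,R}}w_i\le\int_{\Om_{\ee_i,R}}w_i$, Lemma~\ref{lem:equi}, \eqref{eq:w5}, \eqref{eq:w001}, \eqref{eq:w8} and \eqref{eq:w7} through the common bridge $|\mathbf{TN}(\cdot,\ee_i,R)|$ to get $W=8W_0$, and invoke the Lemma~\ref{lem:A004a} argument for (3). Your explicit remark that the containment $\Om_{\ee,R}\subset\Om_{\ee_i,R}$ is what lets the estimate be routed through $\ee_i$, and that one must never compare $u_i$ and $\td u_{i,k_i}$ directly since they graph over different limit surfaces, makes the bookkeeping clearer than the paper's terse presentation; otherwise the two arguments coincide.
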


\begin{proof}
 Fix a sequence of $\ee_i\ri 0$. We choose $t_i=t_{i, k_i}$ with $k_i$ large such that Lemma \ref{claim1} and Lemma \ref{claim2} hold. Note that $u_i(x, s)=u_{i, k_i}(x, s)$ is the height difference function of $\Si_{t_{i, k_i}+s}$ over $\Si_{\infty}$.
 Then for any  $T>1$ the sequence $\{\Si_{t_i+s}, -T<s<T\}$ converges locally smoothly to   $\Si_{ \infty}\in \cC( N, \rho)$ away from $\cS$. Note that the limit self-shrinker $\Si_{\infty}$ is independent of the choice of $T$ by Lemma \ref{lem:A001}. For any $\ee>0$, we have $\ee_i\in (0, \ee)$ for large $i$.   Moreover,     for large $t_i$ we have
\beqn
 \inf_{s\in [t-\tau, t]}\int_{\Omega_{\ee, R}(s)}\,w_i(x, s)
&\leq & \frac {2}{u_i(x_0, 1)} \inf_{s\in [t-\tau, t]}\Big|\mathbf{TN}( \Sigma_{t_i+ s}, \ee_i, R)\Big|\nonumber\\
&\leq &\frac {4}{u_i(x_0, 1)} \inf_{s\in [-\tau, 0]}\Big|\mathbf{TN}( \Sigma_{t_i+ s}, \ee_i, R)\Big|,\label{eq:w6}
\eeqn
where we used Lemma \ref{lem:equi} in the first inequality and (\ref{eq:w5}) in the second inequality.
Note that (\ref{eq:w001}) implies that
$$\Big|\mathbf{TN}( \Sigma_{t_i+ s}, \ee_i, R)\Big|\leq  2 \int_{ \td \Om_{i, \frac {\ee_i}5, R}}\,\td u_{i, k_i}(x, s).$$
Thus, we have
\beqn \inf_{s\in [t-\tau, t]}\int_{\Omega_{\ee, R}(s)}\,w_i(x, s)
&\leq&\frac {4\td u_{i, k_i}(x_{i, k_i}, 1) }{u_i(x_0, 1)}\cdot \frac {1}{\td u_{i, k_i}(x_{i, k_i}, 1)} \inf_{s\in [-\tau, 0]}\Big|\mathbf{TN}( \Sigma_{t_i+ s}, \ee_i, R)\Big|\nonumber\\
&\leq &\frac {8\td u_{i, k_i}(x_{i, k_i}, 1) }{u_i(x_0, 1)}\cdot\frac {1}{\td u_{i, k_i}(x_{i, k_i}, 1)}\inf_{t\in [-\tau, 0]}\int_{ \td \Om_{i, \frac {\ee_i}5, R}}\,\td u_{i, k_i}(x, s)\nonumber\\
&\leq & \frac {8\td u_{i, k_i}(x_{i, k_i}, 1) }{u_i(x_0, 1)}\cdot\inf_{s\in [-\tau, 0]}\int_{ \td \Om_{i, \frac {\ee_i}5, R}}\,\td w_{i, k_i}(x, s)\nonumber\\
&\leq& 16W_0\cdot \frac {\td u_{i, k_i}(x_{i, k_i}, 1) }{u_i(x_0, 1)}\leq 32W_0,\label{eq:w002}
\eeqn where we used (\ref{eq:w8}) in the fourth inequality and  (\ref{eq:w7}) in the last inequality. As in the proof of Lemma \ref{lem:A004a}, (\ref{eq:w002}) implies a uniform upper bound of $w_i$ on $K$, and we also have the lower bounds  (\ref{eq:X002})-(\ref{eq:X003}) of $w_i$. The lemma is proved.

\end{proof}

\begin{prop}\label{lem:A003d}Under the same assumption as in Lemma \ref{lem:A003c}, $w_i$ converges in $C^2$ to a positive function $w(x, t)$ satisfying the equation (\ref{eq:D004a}) on $(\Si_{\infty}\times (0, \infty))\b \cS$ with $w(x_0, 1)=1$ and
\beq
\inf_{s\in [t-\tau, t]}\int_{\Si_{\infty}\cap B_R(0)}\, w(x, s)\leq \,W \label{eq:w2},\quad \forall\; t\in [1, \infty).
\eeq Moreover, for any $a\in  [2, \infty) $  there exists a constant $C=C(a,  \Si_{\infty}, \cS_{[1, a+1]}, K)>0$ such that the function $w(x, t)$  satisfies
\beq
\int_{\Si_{\infty}\cap B_R(0)}\,w(x, a)\geq C. \label{eq:w3}
\eeq
\end{prop}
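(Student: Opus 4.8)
\textbf{Proof strategy for Proposition~\ref{lem:A003d}.} The plan is to pass to the limit $i\to\infty$ in the family of normalized height difference functions $w_i$ constructed in Lemma~\ref{lem:A003c}, and to transfer the three key properties (positivity plus the equation, the uniform $L^1$ bound, and the lower bound near a fixed regular point) to the limit $w$. First I would fix an exhaustion of $(\Si_\infty\times(0,\infty))\backslash\cS$ by compact sets: for each $j$, on a time interval $I_j=[a_j,b_j]\subset[-1,T_j-2]$ and a compact $K_j\subset\subset(\Si_\infty\cap B_R(0))\backslash\cS_{I_j}$, Lemma~\ref{lem:A003c}(3) gives $0<w_i<C(K_j,\cS_{I_j},a_j,b_j)$ on $K_j\times I_j$, with bounds independent of $i$. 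Then exactly as in the proof of Lemma~\ref{lem:A004b}, the interior parabolic Schauder estimates (Theorem~4.9 of~\cite{[Lieb]}) for the uniformly parabolic equation~(\ref{eq:F007}) give uniform $C^{2,\al}$ estimates for $w_i$ on each $K_j\times I_j$; since the coefficients $a_i^{pq},b_i^p,c_i$ tend to zero as $t_i\to\infty$, a diagonal argument over the exhaustion extracts a subsequence converging in $C^2_{\mathrm{loc}}$ on $(\Si_\infty\times(0,\infty))\backslash\cS$ to a function $w$ that solves the limit equation~(\ref{eq:D004a}). The limit is nonnegative, satisfies $w(x_0,1)=1$ (since $w_i(x_0,1)=1$ for all $i$ when the normalization point is the fixed $x_0$), and is strictly positive by the strong maximum principle applied to~(\ref{eq:D004a}).

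Next I would establish~(\ref{eq:w2}). Fix $t\in[1,\infty)$ and $\ee>0$. By Lemma~\ref{lem:A003c}(2), for all large $t_i$ we have $\inf_{s\in[t-\tau,t]}\int_{\Om_{\ee,R}(s)}w_i(x,s)\,d\mu_\infty\le W$, a bound independent of $i$ and of the parameter $\ee$. Pick for each $i$ a time $s_i\in[t-\tau,t]$ realizing (or nearly realizing) this infimum; by compactness of $[t-\tau,t]$ pass to a subsequence with $s_i\to s_\infty\in[t-\tau,t]$. On any compact subset of $\Om_{\ee,R}(s_\infty)$, the $C^2_{\mathrm{loc}}$ convergence $w_i\to w$ together with the convergence of the domains $\Om_{\ee,R}(s_i)\to\Om_{\ee,R}(s_\infty)$ (the singular curves $\xi_k$ are $\si$-Lipschitz, hence move continuously) and Fatou's lemma give $\int_{\Om_{\ee,R}(s_\infty)}w(x,s_\infty)\,d\mu_\infty\le W$. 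Hence $\inf_{s\in[t-\tau,t]}\int_{\Om_{\ee,R}(s)}w\,d\mu_\infty\le W$ for every $\ee>0$. Letting $\ee\to 0$ and using the monotone convergence theorem (the sets $\Om_{\ee,R}(s)$ increase to $(\Si_\infty\cap B_R(0))\backslash\cS_s$, which has full measure in $\Si_\infty\cap B_R(0)$ since $\cS_s$ is finite) yields~(\ref{eq:w2}).

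The lower bound~(\ref{eq:w3}) is the easy half: by Lemma~\ref{lem:A003c}(3), for $a\in[2,\infty)$ and any fixed compact $K\subset\subset(\Si_\infty\cap B_R(0))\backslash\cS_{\{a\}}$ we have $w_i(x,a)\ge C'(K,\Si_\infty,\cS_{[0,a+1]})>0$ uniformly in $i$; passing to the $C^2$ limit gives $w(x,a)\ge C'$ on $K$, and integrating over $K$ (of positive measure — e.g. take $K$ a fixed geodesic ball in $\Si_\infty\cap B_R(0)$ disjoint from the finitely many points of $\cS_a$) gives $\int_{\Si_\infty\cap B_R(0)}w(x,a)\,d\mu_\infty\ge C'\cdot\mathrm{Area}(K)=:C(a,\Si_\infty,\cS_{[1,a+1]},K)>0$, which is~(\ref{eq:w3}).

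\textbf{Main obstacle.} The delicate point is not the extraction of the limit but keeping the constant $W$ in~(\ref{eq:w2}) genuinely independent of $\ee$ (and of $i$ and $T$) while passing $\ee\to 0$: the functions $w_i$ are \emph{not} defined across the singular curves, so a naive limit could lose mass concentrating near $\cS$. This is precisely why Lemma~\ref{claim2} and the growth estimate of Theorem~\ref{theoA:main} were proved — they guarantee that $\td w_{i,\infty}$ (equivalently $w$) is in $L^1$ \emph{across} the singularities, looking like $\sum_k c_k(t)\log\frac1{\r_k(x,t)}$, so that no mass escapes into $\cS$ as $\ee\to0$. Thus the proof of~(\ref{eq:w2}) must invoke the uniform $L^1$ bound~(\ref{eq:H102})/(\ref{eq:w8}) from Lemma~\ref{claim2}, carried through the normalization comparison~(\ref{eq:w7}) as in the chain~(\ref{eq:w002}), rather than merely Fatou on the truncated domains; I would phrase the $\ee\to0$ step as an application of the monotone convergence theorem justified by this a priori integrability of the limit.
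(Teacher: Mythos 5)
Your proposal is correct and follows essentially the same route as the paper's (much terser) proof: extract the $C^2_{\mathrm{loc}}$ limit from the uniform interior parabolic estimates, pass the bounds (\ref{eq:w1}) and (\ref{eq:X003}) of Lemma~\ref{lem:A003c} to the limit $w$, remove the truncation $\ee>0$, and glue over $T\to\infty$. One small clarification on your final paragraph: the monotone convergence theorem does not itself require a priori integrability of $w$ across $\cS$ (it holds with values in $[0,\infty]$); what is genuinely essential — and what you correctly trace back to Lemma~\ref{claim2}, the comparison~(\ref{eq:w7}), and the chain~(\ref{eq:w002}) — is that the constant $W$ in~(\ref{eq:w1}) is \emph{uniform in} $\ee$, so that after also extracting a convergent subsequence of near-minimizing times $s_\ee\to s_*$ (as you already did for $i\to\infty$) the increasing limit of the truncated integrals stays $\le W$; the $L^1$-integrability of $w$ across $\cS$ is then a consequence of this uniform bound rather than a prerequisite for applying MCT.
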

\begin{proof}  For any $I\subset [1, T-2]$ and $K\subset\subset (\Si_{\infty}\cap B_R(0))\b \cS_I$, by Lemma \ref{lem:A003c} and the interior estimates of the parabolic equations (cf. Theorem 4.9 of \cite{[Lieb]}), we have the space-time $C^{2, \al}$ estimates of $w_i$ on $K\times I$.
Taking the limit $i\ri +\infty$, $w_i$ converges in $C^2$ to a limit function $w(x, t)$ on $K\times I$ with the estimate (\ref{eq:X002})-(\ref{eq:X003}). Moreover,  (\ref{eq:w2}) holds on $I$  by (\ref{eq:w1}) and (\ref{eq:w3}) holds on $ I\cap [2, \infty)$ by  (\ref{eq:X003}). Since $\Si_{\infty}$ is independent of the choice of $T$ and the estimates of $w$ are independent of $T$, by taking $T\ri +\infty$ we obtain a function, still denoted by $w$, on $(\Si_{\infty}\times (0, \infty))\b\cS$ with the estimates (\ref{eq:w2})-(\ref{eq:w3}). The proposition is proved.

\end{proof}

The following result was used in the proof  of Lemma \ref{claim1}.

\begin{lem}\label{lem:Z001}Let $\Si\subset\RR^3
$ be a  surface properly embedded in $B_{r_0}(x_0)$ with
\beq |A|(x)\leq \frac 1{r_0}\,\quad x\in B_{r_0}(x_0)\cap \Si. \eeq
  Assume that $\Si_{u_i}$ is the graph of a functions $u_i$ over $\Si$ for $i=1, 2$ and $\Si_{u_1}\cap \Si_{u_2}=\emptyset$.   Let $P\in \Si$, $l_P$ the  normal of $\Si$ at the point $P$, $G=l_P\cap \Si_{u_1}$ and $Q=l_P\cap \Si_{u_2}$.
  For any $\te\in [0, \frac {\pi}2)$, we denote by $l_{\te}$ the line which  passes through $Q$ and has angle $\te$ with the line $l_P$. Let $B=\Si_{u_1}\cap l_{\te}$. Then there are two  constants $\ee\in (0, 1)$ and $\te_0>0$ both depending only on $r_0$  such that if $\te\in (0, \te_0)$ and  \beq \|u_1\|_{C^1(\Si\cap B_{r_0}(x_0))}+\|u_2\|_{C^1(\Si\cap B_{r_0}(x_0))}\leq \ee, \label{eq:R009}\eeq then we have
\beq
 |GQ|\leq 2|BQ|.
\eeq

\end{lem}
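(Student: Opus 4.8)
The plan is to reduce everything to a two-dimensional picture in the plane spanned by the normal line $l_P$ and the direction of $l_\theta$, and then estimate lengths of vertical segments over the nearly-flat surface $\Sigma$. First I would choose coordinates so that $P$ is the origin and the tangent plane $T_P\Sigma$ is the horizontal plane $\{x^3=0\}$, so that $l_P$ is the $x^3$-axis. By Lemma \ref{lem:graph1}, after shrinking $r_0$ we may assume $\Sigma\cap B_{r_0}(x_0)$ (in particular a neighborhood of $P$) is a graph $x^3=\phi(x')$ over the horizontal plane with $\phi(0)=0$, $|\nabla\phi|(0)=0$ and $|\nabla\phi|(x')\le \frac{36}{r_0}|x'|$; hence near $P$ the surface $\Sigma$, and therefore also the graphs $\Sigma_{u_1}$ and $\Sigma_{u_2}$ (whose graph functions $u_i$ are small in $C^1$ by \eqref{eq:R009}), are all $C^1$-close to the horizontal plane. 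In these coordinates $G=l_P\cap\Sigma_{u_1}=(0,0,g)$ and $Q=l_P\cap\Sigma_{u_2}=(0,0,q)$ with $|g|,|q|\le C\ee$; thus $|GQ|=|g-q|$, and $|g-q|$ is, up to the small tilt of the normal $\n(P)$ versus the vertical, comparable to $u_1(P)-u_2(P)$ — more precisely $|GQ|\le (1+C\ee)\,(u_1(P)-u_2(P))$ since $\n(P)$ makes a small angle with the $x^3$-axis.

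Next I would locate $B=\Sigma_{u_1}\cap l_\theta$, where $l_\theta$ passes through $Q=(0,0,q)$ making angle $\theta$ with the vertical. Parametrize $l_\theta$ by $s\mapsto Q+s\,v$, where $v=(\sin\theta\,e,\cos\theta)$ for some unit horizontal vector $e$; so along $l_\theta$ the horizontal displacement from $Q$ is $s\sin\theta$ and the vertical coordinate is $q+s\cos\theta$. The point $B$ corresponds to the value $s=s_0$ at which this line meets the graph $\Sigma_{u_1}$, i.e. $q+s_0\cos\theta=F_1(s_0\sin\theta\, e)$, where $F_1$ is the graph function of $\Sigma_{u_1}$ over the horizontal plane (a small $C^1$ perturbation of $\phi$). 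Then $|BQ|=|s_0|$, and $|GQ|=|g-q|=|F_1(0)-q|$. Comparing the two defining relations and using the Lipschitz bound $|F_1(x')-F_1(0)|\le (\,\frac{36}{r_0}+C\ee\,)|x'| =: L|x'|$ with $L$ small (after shrinking $r_0$ and $\ee$), I get
\[
|GQ| = |F_1(0)-q| \le |F_1(s_0\sin\theta\,e)-q| + |F_1(s_0\sin\theta\,e)-F_1(0)|
\le |s_0|\cos\theta + L|s_0|\sin\theta,
\]
so $|GQ|\le (\cos\theta + L\sin\theta)\,|BQ| \le (1+L)\,|BQ|$. Choosing $\theta_0$ and $\ee$ small enough (depending only on $r_0$, via $L$), the factor $\cos\theta+L\sin\theta$ is at most $2$; in fact it can be made close to $1$, which gives the claimed inequality $|GQ|\le 2|BQ|$ with room to spare. (One must also check the transversality: since $l_\theta$ has small angle with the vertical and $\Sigma_{u_1}$ is nearly horizontal, the intersection point $B$ exists and is unique in $B_{r_0}(x_0)$, and $s_0$ is comparable to the vertical gap, in particular $|s_0|>0$ unless $G=Q$, in which case the inequality is trivial.)

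The main obstacle, and the only point requiring care, is bookkeeping the various small tilts simultaneously: the tilt of $\Sigma$ away from $T_P\Sigma$ (controlled by $\frac{36}{r_0}|x'|$ from Lemma \ref{lem:graph1}), the tilt of the true normal $\n(P)$ away from the coordinate vertical, the $C^1$-smallness of $u_1,u_2$ from \eqref{eq:R009}, and the angle $\theta$ itself. Each contributes a multiplicative factor of the form $1+(\text{small})$, and the content of the lemma is simply that the product of these factors stays below $2$ once $r_0$ is small (forcing the Lipschitz constant $L$ small) and $\theta_0,\ee$ are chosen accordingly — note the constants $\ee$ and $\theta_0$ are allowed to depend on $r_0$, exactly matching the statement. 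I would organize the estimate so that all of these are absorbed into a single constant $L=L(r_0,\ee)$ with $L\to \frac{36}{r_0}\cdot(\text{something bounded})$ as $\ee\to0$, and then shrink as needed; no genuinely delicate analysis is involved beyond this triangle-inequality comparison on the graphs.
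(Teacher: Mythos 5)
Your plan works and reaches the same conclusion as the paper, but the middle step is organized differently and is in fact a genuine simplification. Both you and the paper fix coordinates so that $P$ is the origin, $T_P\Si=\{x^3=0\}$, $l_P$ is the $x^3$-axis, and (by Lemma~\ref{lem:graph1}) $\Si$ near $P$ is the graph $x^3=f(x')$ with $f(0)=0$, $Df(0)=0$, $|\Na f|(x')\le C_0|x'|$; both then read off $G=(0,0,u_1(0))$, $Q=(0,0,u_2(0))$, hence $|GQ|=|u_1(0)-u_2(0)|$. The paper locates $B$ by introducing its \emph{normal} projection $E\in\Si$, so that $\overrightarrow{OE}+u_1(E)\n(E)=\overrightarrow{OB}$, writes out all three coordinates of $B$ explicitly in terms of $(y,f(y),u_1(y),\Na f(y))$ where $E=(y,f(y))$, and then combines the identities $B_1^2+B_2^2=|QB|^2\sin^2\te$ and $|QB|=|\langle\overrightarrow{QB},\vec v\rangle|$. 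You instead re-express $\Si_{u_1}$ as a \emph{vertical} graph $F_1$ over the horizontal plane, reduce $B$ to the root $s_0$ of the scalar equation $q+s_0\cos\te=F_1(s_0\sin\te\,e)$, and apply a one-line triangle inequality using only the Lipschitz bound on $F_1$. This sidesteps the explicit coordinate computation for $\n(E)$ and $B$ and yields $|GQ|\le(\cos\te+L\sin\te)|BQ|$ in place of the paper's $|GQ|/|QB|\le(1+C_1\sin\te)/\cos\te$; either bound closes for small $\te$, and yours is the shorter route.

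Two bookkeeping points should be fixed before this becomes a proof. First, in your chosen coordinates $\n(P)$ \emph{is} the vertical $x^3$-direction (that is exactly the normalization $Df(0)=0$), so the ``tilt of $\n(P)$ versus the coordinate vertical'' you list among the small factors is identically zero and should be dropped. Second, and more importantly, the lemma fixes $r_0$ and asks for $\ee,\te_0$ depending on it, so the phrases ``after shrinking $r_0$'' and ``once $r_0$ is small (forcing the Lipschitz constant $L$ small)'' overreach. Fortunately you do not need $L$ small: your inequality $|GQ|\le(\cos\te+L\sin\te)|BQ|$ only needs $\sin\te\le 1/L$, and $L$ is controlled by a constant $L(r_0,\ee)$ once you use, as you already flag in your transversality remark, that $|s_0|=O(\ee/\cos\te)$ so the Lipschitz bound on $F_1$ is only needed on a ball of radius $O(\ee\tan\te)$ where $|\Na f|\le C_0\cdot O(\ee\tan\te)$. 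Fixing $\ee=\ee(r_0)$ first and then choosing $\te_0$ with $\sin\te_0\le 1/L(r_0,\ee)$ closes the argument with $r_0$ untouched.
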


\begin{proof}[Proof of Lemma \ref{lem:Z001}] Without loss of generality, we assume that the tangent plane of $\Si$ at $P$ is the plane $\pi:=\{(x_1, x_1, 0)\,|\, x_1, x_2\in \RR\}$ and the point $P$ is the origin $O$ of $\RR^3.$ Let $\hat B_{\dd_0}(0)=\{(x_1, x_2, x_3)\;|\; x_1^2+x_2^2<\dd_0^2\,\}$.  By Lemma \ref{lem:graph1}, there exists $\dd_0=\dd_0(r_0)>0$ such that $\Si\cap \hat B_{\dd_0}(0)$ can be written as a graph of a function $f$ over the plane $\pi,$
\beq \Si\cap \hat B_{\dd_0}(0)=\{(x_1, x_2, f(x_1, x_2))\;|\;|x|<\dd_0\}, \label{eq:S001}\eeq
where $x=(x_1, x_2) $, and the graph function $f$ satisfies \beq
f(0)=0, \quad Df(0)=0,\quad |\Na f|(y)\leq C_0|y|.\label{eq:R003}\eeq
Here $C_0$ depends only on $r_0$.  Note that the coordinates of $G$ and $Q$ are give by
$G=(0, 0, u_1(0))$ and $Q=(0, 0, u_2(0))$
respectively. For the point $B\in\Si_{u_1}\cap l_{\te}$, we define the point $E\in \Si$ the projection of $B$ onto $\Si$, which means
 \beq \overrightarrow{OE}+u_1(E)\n(E)=\overrightarrow{OB}, \label{eq:R004}\eeq
 where $\n(E)$ is the unit normal vector of $\Si$ at $E$.

We claim that there exist  $\ee_0=\ee_0(\dd_0)\in (0, 1)$ and  $\te_0=\arctan 3>0$ such that if $\te\in (0, \te_0)$ and (\ref{eq:R009}) holds for some  $\ee\in (0, \ee_0)$, then  $E\in \Si\cap \hat B_{\dd_0}(0).$ In fact, we assume that $\te_0=\arctan \frac {\dd_0}{4\ee}$. Then for any $\te\in (0, \te_0)$, we have
$|\overrightarrow{OB}|\leq 2\ee+\frac {\dd_0}2$. Combining this with (\ref{eq:R004}) we have
$$ |\overrightarrow{OE}|\leq |\overrightarrow{OB}|+|u_1(E)|\leq 3\ee+\frac {\dd_0}2\leq \frac 34\dd_0, $$
where we choose $\ee\in (0, \frac 1{12}\dd_0).$ Therefore, by (\ref{eq:S001}) we have $E\in \Si\cap \hat B_{\dd_0}(0).$ The claim is proved.

 Assume that $E=(y, f(y))\in \Si\cap \hat B_{\dd}(0)$ with $y=(y_1, y_2)$ and $\dd\in (0, \dd_0)$.   Note that the normal vector at $E$ is give by
$$\n(E)=\frac {(-\partial_{y_1}f(y), -\partial_{y_2}f(y), 1)}{\sqrt{1+|\Na f(y)|^2}},$$ and
by (\ref{eq:R004}) the coordinates of $B=(B_1, B_2, B_3)$ are given by
\beqn
B_1&=&y_1-\frac {u_1(y)\partial_{y_1}f(y)}{\sqrt{1+|\Na f(y)|^2}},\label{eq:R001}\\
B_2&=&y_2-\frac {u_1(y)\partial_{y_2}f(y)}{\sqrt{1+|\Na f(y)|^2}},\label{eq:R002}\\
B_3&=&f(y)+\frac {u_1(y)}{\sqrt{1+|\Na f(y)|^2}},
\eeqn where we write $u_1(y)=u_1(y_1, y_2, f(y_1, y_2))$ for simplicity.
Since $B_1^2+B_2^2=|QB|^2\sin^2 \te$, using (\ref{eq:R001})-(\ref{eq:R002}) we have
$$y_1^2+y_2^2+\frac {u_1(y)^2|\Na f(y)|^2}{1+|\Na f(y)|^2}-2\frac {u_1(y)\langle y, \Na f(y)\rangle}{\sqrt{1+|\Na f(y)|^2}} = |QB|^2\sin^2 \te,$$
where $\langle y, \Na f(y)\rangle=y_1\partial_{y_1}f(y)+ y_2\partial_{y_2}f(y)$. Combining this with (\ref{eq:R003}), we have
\beqs
|QB|^2\sin^2 \te&\geq& y_1^2+y_2^2-2\frac {u_1(y)\langle y, \Na f(y)\rangle}{\sqrt{1+|\Na f(y)|^2}}\\
&\geq&(1-2C_0|u_1(y)|)(y_1^2+y_2^2)\geq (1-2C_0\ee)(y_1^2+y_2^2).
\eeqs Thus, if $\ee\in (0, \frac 1{2C_0})$, we have
\beq |y|^2=  y_1^2+y_2^2\leq \frac {|QB|^2\sin^2\te}{1-2C_0\ee}.\label{eq:R008}
\eeq

Since $l_{\te}$ has the angle $\te$ with the line $l_P$, we assume that the unit direction vector of $l_{\te}$ is $\vec{v}=(v_1, v_2, \cos\te)$. Thus,
we have
\beqn
|QB|&=&|\langle\overrightarrow{QB}, \vec{v}\rangle|=|B_1 v_1+B_2 v_2+(B_3-u_2(0))\cos\te|\nonumber\\
&\geq&|(B_3-u_2(0))\cos\te|-|B_1 v_1|-|B_2 v_2|. \label{eq:R005}
\eeqn
Note that by (\ref{eq:R003})
\beqn
|B_3-u_2(0)|&=&\Big|f(y)+\frac {u_1(y)}{\sqrt{1+|\Na f(y)|^2}}-u_2(0)\Big| \nonumber\\
&\geq&|u_1(0)-u_2(0)|-|u_1(0)-u_1(y)|-|u_1(y)|\cdot \Big|\frac 1{\sqrt{1+|\Na f(y)|^2}}-1\Big|-|f(y)|\nonumber\\
&\geq& |u_1(0)-u_2(0)|-\max_{B_{\dd}(0)}|\Na u_1|\cdot |y|-C_0(1+\max_{B_{\dd}(0)}|u_1|)|y|^2\nonumber\\
&\geq&|u_1(0)-u_2(0)|-C_1|y|, \label{eq:R006}
\eeqn where $C_1=\ee+C_0(1+\ee)\dd_0, $
 and by (\ref{eq:R003}), (\ref{eq:R001})-(\ref{eq:R002}) we have
\beq
|B_1|\leq (1+C_0\ee)|y|,\quad |B_2|\leq (1+C_0\ee)|y|.\label{eq:R007}
\eeq
Combining (\ref{eq:R008})-(\ref{eq:R007}) we have
\beqs
|QB|&\geq& |u_1(0)-u_2(0)|\cos\te-C_1 |y|\\
&\geq&|u_1(0)-u_2(0)|\cos\te-C_1 \sin\te |QB|.
\eeqs  This implies that
\beq \frac {|GQ|}{|QB|}=\frac {|u_1(0)-u_2(0)|}{|QB|}\leq \frac {1+C_1\sin\te}{\cos\te}\leq 2,\nonumber\eeq
if we choose $\te$ sufficiently small.  Thus, the lemma is proved.
\end{proof}

\subsection{The $L$-stability of the limit self-shrinker}

In this subsection, we show that the limit self-shrinker is $L$-stable. The rough idea is similar to that of \cite{[LW]}, but the details  are much more complicated. Compared with \cite{[LW]}, the singularities here no longer move along straight lines, we  cannot choose time large enough such that a given compact set doesn't contain the singularities(cf. Lemma 4.13 of \cite{[LW]}). Therefore, we have to choose a cutoff function near the singularities and analyze the asymptotical behavior of the positive solution near the singular set.
The analysis of the asymptotical behavior is very difficult and we delay the arguments in the next section.

The main result in this subsection is the following lemma.
\begin{lem}   \label{lem:stable1} Fix $ R>1$.  Let $\{t_i\}$ be the sequence of times and $\Si_{ \infty}\in \cC( N, \rho)$ the self-shrinker   in Lemma \ref{lem:A003c}.
  Then we have
\begin{align}
 -\int_{\Si_{\infty}}\,(\psi L\psi)e^{-\frac {|x|^2}{4}} \geq 0,
  \label{eqn:GB24_6}
\end{align} for any smooth  function $\psi\in C_0^{\infty}(\Si_{\infty, R})$.
\end{lem}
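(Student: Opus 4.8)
The plan is to show that the positive function $w$ constructed in Proposition~\ref{lem:A003d} gives rise, after suitable cutoff near the singular curves, to a family of test functions $\psi$ witnessing $L$-stability of $\Si_\infty$. The guiding principle is the classical fact that the existence of a positive (super)solution of the linearized equation $\pd wt = Lw$ forces nonnegativity of the associated quadratic form $-\int_{\Si_\infty}(\psi L\psi)e^{-|x|^2/4}$. Concretely, if $w>0$ solves $\pd wt = Lw = \Delta_0 w - \tfrac12\langle x,\Na w\rangle + |A|^2 w + \tfrac12 w$ on $(\Si_\infty\times(0,\infty))\setminus\cS$, then writing $\psi = \varphi$ with $\varphi\in C_0^\infty(\Si_{\infty,R})$ and using the substitution $v = \varphi/\sqrt{w}$ (or directly the Jacobi-field-type computation), one obtains at a fixed time $t$
\begin{equation}
-\int_{\Si_\infty}(\varphi L\varphi)e^{-\frac{|x|^2}{4}} = \int_{\Si_\infty}\Big(|\Na\varphi|^2 - |A|^2\varphi^2 - \tfrac12\varphi^2\Big)e^{-\frac{|x|^2}{4}} \ge -\int_{\Si_\infty}\frac{\varphi^2}{w}\,\pd wt\,e^{-\frac{|x|^2}{4}} + (\text{good terms}),
\end{equation}
so that the sign is controlled once we can handle $\pd wt/w$. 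I would first establish this pointwise/integral identity carefully on the region $\Om_{\ee,R}(t)$ where $w$ is smooth, integrating by parts and tracking the boundary terms on $\p B_\ee(\cS_t)$.

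The second step is to integrate in time and exploit the decay built into Proposition~\ref{lem:A003d}. Averaging over $t\in[t-\tau,t]$ and using the uniform bound $\inf_{s\in[t-\tau,t]}\int_{\Si_\infty\cap B_R(0)} w(x,s)\,d\mu_\infty \le W$ together with the lower bound $\int w(\cdot,a)\ge C$, I would select a sequence of times $a_j\to\infty$ along which $\int_{\Si_\infty} \varphi^2 (\pd wt/w) e^{-|x|^2/4}$, suitably normalized, tends to zero — the point being that $\frac{d}{dt}\int \log w$-type quantities cannot have a persistent sign given the two-sided integral control of $w$. This is the same mechanism as in \cite{[LW]}, where the uniform $L^1$ bound and the positivity of $w$ forced $L$-stability; here the role of the flat plane is played by a general self-shrinker $\Si_\infty\in\cC(D,N,\rho)$, and the role of the explicit exponential decay of the mean curvature is played by the refined choice of time slices made in Lemma~\ref{lem:A003c}.

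The main obstacle is the singular set $\cS$: unlike in \cite{[LW]}, one cannot push the singularities out of a fixed compact set, so the test function $\psi$ cannot simply be taken supported away from $\cS_t$, and the cutoff errors near the singular curves must be controlled uniformly. This is exactly where Theorem~\ref{theoA:main} enters. I would introduce cutoff functions $\eta_\delta$ that vanish in a $\delta$-neighborhood of $\cS_t$ and equal $1$ outside a $2\delta$-neighborhood, set $\varphi_\delta = \eta_\delta\,\varphi$, and estimate the extra terms $\int |\Na\eta_\delta|^2\varphi^2 w\,e^{-|x|^2/4}$ and the cross terms. By Theorem~\ref{theoA:main} the function $w$ behaves near a singular curve $\xi_k(t)$ like $\sum_k c_k(t)\log\tfrac1{\r_k(x,t)}$, so $\int_{B_{2\delta}(\cS_t)} |\Na\eta_\delta|^2 w \sim \int_\delta^{2\delta} \frac{1}{\delta^2}\log\tfrac1r \cdot r\,dr \to 0$ as $\delta\to0$ (in dimension $2$ the measure of an annulus of width $\delta$ is $O(\delta^2)$ while $|\Na\eta_\delta|^2 = O(\delta^{-2})$ and $\log\tfrac1r$ is only a logarithmic loss, so the product is $O(\delta^2\log\tfrac1\delta)\to0$); the $L^1$-integrability of $w$ across $\cS$ from Theorem~\ref{theoA:main} ensures the remaining terms pass to the limit. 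Combining the cutoff estimate with the time-averaging argument, letting first $\delta\to0$ and then choosing the slice $a_j\to\infty$, yields $-\int_{\Si_\infty}(\varphi L\varphi)e^{-|x|^2/4}\ge 0$ for every $\varphi\in C_0^\infty(\Si_{\infty,R})$, which is the assertion. The delicate points to check are the uniform-in-$\delta$ nature of the Harnack constants near $\cS$ (supplied by Lemma~\ref{lem:harnack} and Lemma~\ref{lem:harnack2}) and the legitimacy of interchanging the $\delta\to0$ limit with the time integral, both of which rest on the quantitative growth rate in Theorem~\ref{theoA:main}.
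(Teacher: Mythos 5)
Your overall strategy — test-function substitution $v=\log w$ (equivalently, treating $w$ as a positive supersolution of $\partial_t=L$), integration in time, cutoff near the singular curves, and then letting the time-boundary terms wash out via the two-sided $L^1$ estimates of Proposition~\ref{lem:A003d} — is the same as the paper's. However, there is a genuine gap in your cutoff estimate, and fixing it requires the specific logarithmic cutoff that the paper constructs; a standard radial bump $\eta_\delta$ does not work in dimension two.

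Concretely, with $\eta_\delta$ vanishing on $B_\delta(\cS_t)$, equal to $1$ outside $B_{2\delta}(\cS_t)$, and $|\Na\eta_\delta|\sim\delta^{-1}$, your estimate
\begin{equation*}
\int_{B_{2\delta}(\cS_t)}|\Na\eta_\delta|^2\,w
\;\sim\;\int_\delta^{2\delta}\frac{1}{\delta^2}\,\log\frac1r\cdot r\,dr
\;\stackrel{?}{=}\;O\!\left(\delta^2\log\tfrac1\delta\right)
\end{equation*}
is arithmetically wrong: the annulus has area $\sim\delta^2$ and $|\Na\eta_\delta|^2\sim\delta^{-2}$, so the $\delta$-powers cancel and the result is $O\!\left(\log\tfrac1\delta\right)\to\infty$, not $O(\delta^2\log\tfrac1\delta)$. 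Even worse, the unweighted cutoff error $\int|\Na\eta_\delta|^2\,\psi^2\,e^{-|x|^2/4}$ (which is what actually shows up when comparing $-\int\phi L\phi$ with $-\int\psi L\psi$, since $L$ is $w$-independent) is $O(1)$, reflecting the well-known fact that a point in $\RR^2$ does not have zero $H^1$-capacity for the one-parameter bump cutoff. So neither error term goes to zero under $\delta\to 0$ with your $\eta_\delta$. The paper avoids this by using the two-parameter logarithmic cutoff $\eta(r)=\log\rho/\log r$ on $\delta\le r\le\rho$ (see the construction of $f_{\delta,\rho}$ before Lemma~\ref{lem:B001}), for which $|\eta'(r)|\sim|\log\rho|/(r(\log r)^2)$ and
\begin{equation*}
\int_\delta^\rho|\eta'(r)|^2 r\,dr
\;\lesssim\;\frac{1}{|\log\rho|}+\frac{(\log\rho)^2}{|\log\delta|^3},
\end{equation*}
which vanishes upon $\delta\to 0$ and then $\rho\to 0$. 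This is the key mechanism your proposal is missing. Additionally, because the cutoff is time-dependent (the curves $\xi_k(t)$ move), the inequality $-\int\phi L\phi\ge -\frac{d}{dt}\int v\phi^2 e^{-|x|^2/4}+\int 2v\phi\,\partial_t\phi\,e^{-|x|^2/4}$ picks up a $\partial_t\phi$ term; controlling it uses the Lipschitz bound $|\partial_t\rho_k|\le\sigma$ together with the quantitative growth estimates (\ref{eqB:007})--(\ref{eqB:008}) from Theorem~\ref{theoA:main}, and this is where the logarithmic structure of the cutoff is again essential (compare Lemma~\ref{lem:B003} and Lemma~\ref{lem:B002}). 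Your proposal gestures at this via "cross terms," but the specific form of the cutoff is what makes those terms estimable.
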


Let $w$ be the function obtained in Proposition \ref{lem:A003d} and $v=\log w$. Then $v$ is a function satisfying
$$\pd vt=\Delta_0 v+|A|^2+\frac 12-\frac 12\langle x, \Na v\rangle+|\Na v|^2,\quad \forall\; (x, t)\in (\Si_{\infty}\times (0, \infty))\b \cS.$$
 Let $I=[a, b]\subset (0, \infty)$. We assume that $\phi(x, t)$ is a function satisfying the properties that for any $t\in I$ we have  \beq \phi(\cdot, t)\in W^{1, 2}_0(\Si_{\infty, R}),\quad \overline{\Supp(\phi(\cdot, t))}\cap \cS_t=\emptyset. \label{eq:T001}\eeq Then for any $t\in I$, we have
\beqs
0&=&\int_{\Si_{\infty}}\,\div\Big(\phi^2e^{-\frac {|x|^2}{4}}\Na v\Big)\\
&=&\int_{\Si_{\infty}}\,\Big(2\phi\langle\Na \phi, \Na v\rangle+\Big(\pd vt-\frac 12-|A|^2-|\Na v|^2\Big)\phi^2\Big)e^{-\frac {|x|^2}{4}}\\
&\leq&\int_{\Si_{\infty}}\,\Big(|\Na \phi|^2-\frac 12\phi^2-|A|^2\phi^2+\pd vt\phi^2\Big)
e^{-\frac {|x|^2}{4}}.
\eeqs  This implies that for any $t\in I,$
\beqs
-\int_{\Si_{\infty}}\,(\phi L\phi)e^{-\frac {|x|^2}{4}}&\geq& -\int_{\Si_{\infty}}\,\pd vt\phi^2 e^{-\frac {|x|^2}{4}}\\
&=&-\frac d{dt}\int_{\Si_{\infty}}\,v\phi^2 e^{-\frac {|x|^2}{4}}+\int_{\Si_{\infty}}\,2v\phi\pd {\phi}t e^{-\frac {|x|^2}{4}}.
\eeqs
Integrating both sides with respect to $t\in I$, we have
\beqn &&
-\int_a^b\,\int_{\Si_{\infty}}\,(\phi L\phi)e^{-\frac {|x|^2}{4}}\nonumber\\&\geq&\int_{\Si_{\infty}}\,v\phi^2 e^{-\frac {|x|^2}{4}}\Big|_{t=a}-\int_{\Si_{\infty}}\,v\phi^2 e^{-\frac {|x|^2}{4}}\Big|_{t=b}+\int_a^b\,\int_{\Si_{\infty}}\,2v\phi\pd {\phi}t e^{-\frac {|x|^2}{4}}. \label{eq:B004}
\eeqn

To get the inequality (\ref{eqn:GB24_6}), the main difficulty is to estimate the last term of (\ref{eq:B004}). Using a cutoff function inspired by \cite{[LW]}, we will see that the last term of (\ref{eq:B004}) depends on the asymptotical behavior of $w$ near the singular set.

We now construct the cutoff function near the singular set.
Let $\{\xi_1(t), \xi_2(t), \cdots, \xi_l(t)\}(t\in I)$ be $\si$-Lipschitz curves on $\Si_{\infty}$. We denote by
$$\Ga_k=\{(\xi_k(t), t)\;|\;t\in I\}\subset \Si_{\infty}\times I,\quad \Ga=\cup_{k=1}^l\Ga_k.$$
Choose $0<\dd<\rho<1$. We define  the function on $\RR$
$$ \eta(s)=\left\{
     \begin{array}{ll}
       \frac {\log \rho}{\log |s|}, & \quad 0<|s|<\rho, \\
       1, &  \quad  |s|\geq \rho
     \end{array}
   \right.
$$
and the function $\bb(s)\in C^{\infty}(\RR)$ such that $\bb(s)=0$ for $|s|<\frac {\dd}2$, $\bb(s)=1$ for $|s|\geq \dd$, $0\leq \bb(s)\leq 1$ and $|\Na \bb|\leq \frac 3{\dd}$.
We define the function on $\Si_{\infty}\times I$,
$$f_{\dd, \rho}(x, t)=\prod_{k=1}^l\Big(\eta(\r_k(x, t))\bb(\r_k(x, t))\Big)\in W^{1, 2}((\Si_{\infty}\times I)\b \Ga),$$
where \beq  \r_k(x, t)=d_g(x, \xi_k(t)). \label{eq:rk2} \eeq
For any $\psi(x)\in C_0^{\infty}(\Si_{\infty, R})$, we define
\beq \phi(x,  t)=\psi(x)f_{\dd, \rho}(x, t).\label{eq:D003}\eeq
Then $\phi(x, t)$ satisfies the properties (\ref{eq:T001}).
With loss of generality, we assume that $\sup_{\Si_{\infty}}|\psi|\leq 1.$ Then we have
\begin{lem}\label{lem:B001}For  any small $\ee>0$ we have \beqn -\int_{\Si_{\infty}}\,(\phi L\phi)e^{-\frac {|x|^2}{4}}&\leq& - \int_{\Si_{\infty}} \psi L(\psi) e^{-\frac {|x|^2}{4}}+\Psi(\ee, \rho, \dd\,|\,\Si_{\infty, R}),\label{eq:D001}\eeqn
where  $\Psi$ depends on $\rho, \dd, \ee$ and the geometry of $\Si_{\infty, R}$ and satisfies
\beq \lim_{\ee\ri 0} \lim_{\rho\ri 0}\lim_{\dd\ri0}\Psi(\ee, \rho, \dd \,|\,\Si_{\infty, R})=0.\label{eq:D002}\eeq
\end{lem}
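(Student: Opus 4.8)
The plan is to expand the quadratic form $-\int_{\Si_\infty}(\phi L\phi)e^{-|x|^2/4}$ with $\phi=\psi f_{\dd,\rho}$ and isolate the difference from $-\int_{\Si_\infty}\psi L(\psi)e^{-|x|^2/4}$ as a finite sum of error integrals supported near the singular curves $\xi_k(t)$, each of which can be shown to vanish in the iterated limit $\dd\to0$, then $\rho\to0$, then $\ee\to0$. Writing $-\phi L\phi = |\Na\phi|^2-|A|^2\phi^2-\tfrac12\phi^2$ (after integrating by parts the drift term against $e^{-|x|^2/4}$), and using $\Na\phi = f_{\dd,\rho}\Na\psi + \psi\Na f_{\dd,\rho}$, we get
\beqs
|\Na\phi|^2 &=& f_{\dd,\rho}^2|\Na\psi|^2 + 2\psi f_{\dd,\rho}\langle\Na\psi,\Na f_{\dd,\rho}\rangle + \psi^2|\Na f_{\dd,\rho}|^2.
\eeqs
Since $0\le f_{\dd,\rho}\le1$ and $f_{\dd,\rho}\to1$ pointwise off $\Ga$ as $\dd,\rho\to0$, the term $\int f_{\dd,\rho}^2|\Na\psi|^2 e^{-|x|^2/4}$ converges to $\int|\Na\psi|^2 e^{-|x|^2/4}$ by dominated convergence, and likewise $-\int|A|^2\phi^2 e^{-|x|^2/4}\to-\int|A|^2\psi^2 e^{-|x|^2/4}$ and $-\tfrac12\int\phi^2 e^{-|x|^2/4}\to-\tfrac12\int\psi^2 e^{-|x|^2/4}$, since $|A|^2$ and $\psi$ are bounded on the compact set $\Si_{\infty,R}$. (This is where the role of $\ee$ enters: one first restricts to $\Om_{\ee,R}$, bounds the contribution of $B_\ee(\cS_t)\setminus(\text{thin part near }\Ga)$ using the $L^2$-bound on $|A|$ from Lemma \ref{lem:ilmanen} together with the area ratio bound, and lets $\ee\to0$ at the end.)

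The essential point is therefore to control the two genuinely new terms: the gradient cross term $\int 2\psi f_{\dd,\rho}\langle\Na\psi,\Na f_{\dd,\rho}\rangle e^{-|x|^2/4}$ and the self term $\int\psi^2|\Na f_{\dd,\rho}|^2 e^{-|x|^2/4}$. Both are supported in $\bigcup_k\{\dd/2<\r_k<\rho\}$ (the $\bb$-factor kills the region $\r_k<\dd/2$ and the $\eta$-factor is locally constant $=1$ where all $\r_k\ge\rho$). On this region, $\Na f_{\dd,\rho}$ is a sum over $k$ of $\Na(\eta(\r_k)\bb(\r_k))$ times the product of the other factors (each bounded by $1$), and $|\Na\eta(\r_k)| \le \frac{\log\rho}{(\log\r_k)^2}\cdot\frac1{\r_k}$, $|\Na\bb(\r_k)|\le 3/\dd$ on $\{\dd/2\le\r_k\le\dd\}$, with $|\Na\r_k|\le1$ a.e. The key computation is that in dimension $2$
\beqs
\int_{\{\dd\le\r_k\le\rho\}}|\Na\eta(\r_k)|^2\, d\mu_\infty &\lesssim& (\log\rho)^2\int_\dd^\rho\frac{1}{r(\log r)^4}\,dr = (\log\rho)^2\Big[\frac{-1}{3(\log r)^3}\Big]_\dd^\rho \;\xrightarrow[\dd\to0]{}\; \frac{(\log\rho)^2}{3(\log\rho)^3}\cdot(-1)
\eeqs
whose absolute value is $\asymp 1/|\log\rho| \to 0$ as $\rho\to0$; and the $\bb$-part contributes $\int_{\{\dd/2\le\r_k\le\dd\}}(3/\dd)^2\eta(\r_k)^2\,d\mu_\infty \lesssim \dd^{-2}\cdot\dd^2\cdot(\log\rho/\log\dd)^2 \to0$ as $\dd\to0$ for fixed $\rho$. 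So the self term is $\le\Psi_1(\rho,\dd)$ with $\lim_{\rho\to0}\lim_{\dd\to0}\Psi_1=0$. The cross term is bounded by Cauchy--Schwarz by $(\int|\Na\psi|^2)^{1/2}\cdot(\int_{\text{supp}\Na f_{\dd,\rho}}|\Na f_{\dd,\rho}|^2)^{1/2}$, hence controlled by the same quantity times $\|\Na\psi\|_{L^2(\Si_{\infty,R})}$, a geometric constant. Collecting these bounds gives (\ref{eq:D001}) with $\Psi$ satisfying (\ref{eq:D002}); the Lipschitz (not smooth) nature of the curves $\xi_k$ is harmless because $\r_k(\cdot,t)$ is $1$-Lipschitz in $x$ and the estimates only use $|\Na_x\r_k|\le1$ and the coarea formula on a fixed time slice.

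The main obstacle I expect is bookkeeping rather than conceptual: when several singular curves collide, the sets $\{\r_k<\rho\}$ overlap, so $\Na f_{\dd,\rho}$ has cross terms between the different $k$'s, and one must check that the number $l$ of curves stays bounded (which it does, by local finiteness of $\cS_t$ from Proposition \ref{prop:weakcompactness} and the Lipschitz bound) and that each pairwise product is still supported in a thin tube and obeys the same one-dimensional integral estimate — i.e. the bound degrades only by a factor depending on $l$ and not on $\dd,\rho$. A secondary subtlety is that $d_g(\cdot,\xi_k(t))$ is only Lipschitz and its squared distance is not smooth at the cut locus, but since we only need $f_{\dd,\rho}\in W^{1,2}$ and a.e. gradient bounds, this causes no trouble; one should, however, be slightly careful that $\psi f_{\dd,\rho}$ is a legitimate test function in the integration-by-parts identity $0=\int\div(\phi^2 e^{-|x|^2/4}\Na v)$ used to derive (\ref{eq:B004}), which requires $\phi(\cdot,t)\in W^{1,2}_0(\Si_{\infty,R})$ with support disjoint from $\cS_t$ — both guaranteed by the $\bb$-cutoff and $\psi\in C_0^\infty(\Si_{\infty,R})$.
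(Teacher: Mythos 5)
Your proof is correct and follows the same overall skeleton as the paper's: expand $|\Na\phi|^2$ with $\phi=\psi f_{\dd,\rho}$, use dominated convergence on the terms proportional to $f_{\dd,\rho}^2$, and reduce the rest to a one-dimensional integral in $\r_k$ of $|\eta'|^2$ and $|\bb'|^2$; the resulting rates $\asymp |\log\rho|^{-1}$ and $(\log\rho)^2/|\log\dd|$ are exactly the ones in the paper's estimates (\ref{eqAA:002})--(\ref{eqAA:003}). The one genuine difference is your treatment of the cross term $2\psi f_{\dd,\rho}\langle\Na\psi,\Na f_{\dd,\rho}\rangle$: you bound it by Cauchy--Schwarz against $\|\Na\psi\|_{L^2}$ and the vanishing self-energy $\|\Na f_{\dd,\rho}\|_{L^2}$, whereas the paper absorbs it into the self term via Young's inequality, $|\Na\phi|^2\le(1+\ee)f_{\dd,\rho}^2|\Na\psi|^2+(1+\ee^{-1})\psi^2|\Na f_{\dd,\rho}|^2$. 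That is precisely where the parameter $\ee$ comes from in the paper's $\Psi(\ee,\rho,\dd)$ --- your parenthetical guess that $\ee$ enters through restricting to $\Om_{\ee,R}$ or through the $L^2$-bound on $|A|$ is incorrect (those integrals are over a compact set since $\psi$ has compact support, so dominated convergence alone handles them) --- but since your route never needs Young's inequality, $\Psi$ simply has trivial $\ee$-dependence under your argument and the misidentification is not load-bearing. Your version is if anything a touch cleaner; both give the same limit (\ref{eq:D002}), with the same bookkeeping over the finitely many curves $\xi_k$.
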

\begin{proof}Since the function $\phi(x, t)=\psi(x)f_{\dd, \rho}(x, t)$ satisfies
$$|\Na \phi|^2\leq (1+\ee)f_{\dd, \rho}^2|\Na \psi|^2+\Big(1+\frac 1{\ee}\Big)\psi^2|\Na f_{\dd, \rho}|^2,$$
we have
\beqn &&
-\int_{\Si_{\infty}}\,\phi L(\phi) e^{-\frac {|x|^2}{4}}=\int_{\Si_{\infty}}\,\Big(|\Na \phi|^2-(\frac 12+|A|^2)\phi^2\Big)e^{-\frac {|x|^2}{4}}\nonumber\\
&\leq&\int_{\Si_{\infty}}\,\Big(|\Na \psi|^2-(\frac 12+|A|^2)\psi^2\Big)e^{-\frac {|x|^2}{4}}+\int_{\Si_{\infty}}\,\Big( (1+\ee)f_{\dd, \rho}^2-1\Big)|\Na \psi|^2e^{-\frac {|x|^2}{4}}\nonumber\\
&&+\int_{\Si_{\infty}}\,\Big(\frac 12+|A|^2\Big)(1-f_{\dd, \rho}^2)\psi^2 \,e^{-\frac {|x|^2}{4}}+\Big(1+\frac 1{\ee}\Big)\int_{\Si_{\infty}}\,\psi^2|\Na f_{\dd, \rho}|^2\,e^{-\frac {|x|^2}{4}}\nonumber\\
&:=&I_0+I_1+I_2+I_3. \label{eq:G002}
\eeqn
Note that $|f_{\dd, \rho}|\leq 1$ and $\lim_{\rho\ri 0}\lim_{\dd\ri 0}f_{\dd, \rho}(x, t)=1$ for any $(x, t)\in (\Si\times I)\b \Ga$.  The Lebesgue dominated convergence theorem implies that
\beq
\lim_{\ee\ri 0}\lim_{\rho\ri 0}\lim_{\dd\ri 0}I_1=0,\quad \lim_{\rho\ri 0}\lim_{\dd\ri 0}I_2=0.\label{eq:G003}
\eeq
We next estimate $I_3$. Let $f_k(x, t)=\eta(\r_k(x, t))\bb(\r_k(x, t))$.
We define
$$\Xi_R:=\inf\Big\{\Xi>0\,\Big|\, \frac 1{\Xi} \dd_{ij}\leq g_{ij}(x)\leq \Xi \dd_{ij}, \forall\, x\in B_{R}(0)\cap \Si_{\infty} \Big\},$$
where $g_{ij}$ is the induced metric on $\Si_{\infty}$.
Note that
\beq
\int_{\Si_{\infty}}\, |\Na f_k|^2\,e^{-\frac {|x|^2}{4}}\leq
2\int_{\Si_{\infty}}\, \Big(\bb^2|\Na \eta|^2+\eta^2|\Na \bb|^2\Big)\,e^{-\frac {|x|^2}{4}}. \label{eqAA:001}
\eeq
We estimate
\beqn
\int_{\Si_{\infty}}\,\bb(\r_k)^2|\Na (\eta(\r_k))|^2\,e^{-\frac {|x|^2}{4}}&\leq&\int_{\frac {\dd}2\leq \r_k\leq \rho}\,(\eta'(\r_k))^2\leq C\int_{\frac {\dd}2}^{\rho}\,\frac {(\log \rho)^2}{s(\log s)^4}\,ds\nonumber\\
&\leq&C\Big(\frac 1{|\log \rho|}+\frac {(\log \rho)^2}{|\log \frac {\dd}2|^3}\Big), \label{eqAA:002}
\eeqn where $C$ is a constant depending on the metric $g$. Moreover,
\beqn
\int_{\Si_{\infty}}\, \eta^2|\Na \bb|^2\,e^{-\frac {|x|^2}{4}}&\leq&
\int_{\frac {\dd}2\leq \r_k\leq \dd}\,\eta(\r_k)^2(\bb'(\r_k))^2|\Na \r_k|^2\nonumber \\
&\leq&C\int_{\frac {\dd}2}^{\dd}\,\frac {(\log \rho)^2}{(\log s)^2}\cdot \frac 4{\dd^2}\cdot s\,ds\nonumber \\
&\leq &C\int_{\frac {\dd}2}^{\dd}\,\frac {(\log \rho)^2}{s(\log s)^2}\,ds\leq C\frac {(\log \rho)^2}{|\log \dd|}, \label{eqAA:003}
\eeqn where $C$ is a constant depending on the metric $g$.
Combining (\ref{eqAA:002})(\ref{eqAA:002}) with (\ref{eqAA:001}), we have
\beqs
\int_{\Si_{\infty}}\, |\Na f_k|^2\,e^{-\frac {|x|^2}{4}}&\leq&
2\int_{\Si_{\infty}}\, \Big(\bb^2|\Na \eta|^2+\eta^2|\Na \bb|^2\Big)\,e^{-\frac {|x|^2}{4}}\\&\leq&  C \Big(\frac {1}{|\log \rho|}+ \frac {|\log \rho|^2}{|\log \dd|}\Big).\nonumber
\eeqs
Since $|\psi|\leq 1$ and $|f_k|\leq 1$,  we have
\beqs
\int_{\Si_{\infty}}\,\psi^2|\Na f_{\dd, \rho}|^2\,e^{-\frac {|x|^2}{4}}&\leq &l\int_{\Si_{\infty}}\,\sum_{k=1}^l\,|\Na f_k|^2\,e^{-\frac {|x|^2}{4}}\\
&\leq &C(l, g)\Big(\frac 1{|\log \rho|}+ \frac {|\log \rho|^2}{|\log \dd|}\Big).
\eeqs
Therefore, we have
\beq  \lim_{\rho\ri 0}\lim_{\dd\ri 0}I_3=0. \label{eq:G004}\eeq
Combining (\ref{eq:G003}) (\ref{eq:G004}) with (\ref{eq:G002}), we have (\ref{eq:D001}) and (\ref{eq:D002}).

\end{proof}

\begin{lem}\label{lem:B003} For the function $\phi$ defined by (\ref{eq:D003}), we have
\beqn
\int_{\Si_{\infty}}\,2v\phi\pd {\phi}t e^{-\frac {|x|^2}{4}}&\geq& -2\si (\log \rho)^2\sum_{k=1}^l\,\Big(\int_{A^{(k)}_t(\frac {\dd}2, \dd)\cap F^{(k)}_t(\frac {\dd}2)}\,\frac {3|v|}{\dd |\log \r_k|^2}\nonumber\\&&
+\int_{A^{(k)}_t(\frac {\dd}2, \rho)\cap F^{(k)}_t(\frac {\dd}2)}\,\frac {|v|}{\r_k|\log \r_k|^3}
\Big), \label{eq:D013}
\eeqn where $F^{(k)}_t(\dd)$ and $A^{(k)}_t(\dd, \rho)$ are defined by
\beqn
F^{(k)}_t(\dd)&=&\bigcap_{i\neq k}\{x\in \Si_{\infty}\,|\, \r_i(x, t)\geq \dd\},\label{eq:G008}\\
A^{(k)}_t(\dd, \rho)&=&\{x\in \Si_{\infty}\,|\, \dd<\r_k(x, t)<\rho\}.\label{eq:G009}
\eeqn

\end{lem}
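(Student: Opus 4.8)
The plan is to compute $\phi\,\partial_t\phi$ explicitly and then estimate the resulting integrand pointwise, the only analytic input being the $\si$-Lipschitz bound on the singular curves.

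First I would note that $\psi$ does not depend on $t$, so $\phi\,\partial_t\phi=\psi^2 f_{\dd,\rho}\,\partial_t f_{\dd,\rho}$, and differentiating the product $f_{\dd,\rho}=\prod_{k=1}^l\big(\eta(\r_k)\bb(\r_k)\big)$ in $t$ gives a sum over $k$ of terms $\eta(\r_k)\bb(\r_k)\big(\eta'(\r_k)\bb(\r_k)+\eta(\r_k)\bb'(\r_k)\big)(\partial_t\r_k)\prod_{j\ne k}\big(\eta(\r_j)\bb(\r_j)\big)^2$. Since $|\psi|\le 1$ and $0\le\eta,\bb\le 1$, the factor $\psi^2\prod_{j\ne k}(\eta(\r_j)\bb(\r_j))^2$ is bounded by $1$, and it vanishes wherever some $\r_i$ with $i\ne k$ drops below $\dd/2$ because $\bb$ is $0$ there; this is precisely what restricts the domains of integration to $F^{(k)}_t(\dd/2)$.

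Next I would record the elementary estimates for the building blocks. Because $\r_k(\cdot,t)=d_g(\cdot,\xi_k(t))$ with $\xi_k$ being $\si$-Lipschitz, $t\mapsto\r_k(x,t)$ is Lipschitz with $|\partial_t\r_k|\le\si$ a.e.; on $0<s<\rho$ one has $|\eta(s)|=|\log\rho|/|\log s|$ and $|\eta'(s)|=|\log\rho|/(s|\log s|^2)$, while $\eta'\equiv 0$ for $s\ge\rho$; and $|\bb|\le 1$, $|\bb'|\le 3/\dd$ with $\bb'$ supported in $\dd/2<|s|<\dd$. Feeding these into the two pieces of the sum, the $\eta'(\r_k)$-contribution is pointwise $\le \si(\log\rho)^2/(\r_k|\log\r_k|^3)$ and supported in $A^{(k)}_t(\dd/2,\rho)$, whereas the $\bb'(\r_k)$-contribution is $\le 3\si(\log\rho)^2/(\dd|\log\r_k|^2)$ and supported in $A^{(k)}_t(\dd/2,\dd)$ (intersected in both cases with $F^{(k)}_t(\dd/2)$ by the previous paragraph).

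Finally I would combine these pointwise bounds with $e^{-|x|^2/4}\le 1$ and $\int_{\Si_\infty}2v\phi\,\partial_t\phi\,e^{-|x|^2/4}\ge -\int_{\Si_\infty}2|v|\,|\phi\,\partial_t\phi|$ to obtain exactly \eqref{eq:D013}. I do not expect a genuine obstacle here: the whole content is bookkeeping of the cutoff $f_{\dd,\rho}$, and the two points that need a word of care are the a.e.\ differentiability of $t\mapsto d_g(x,\xi_k(t))$ (handled by Lipschitzness, with the corner of $\eta$ at $s=\rho$ and the cut locus of $\xi_k(t)$ being negligible) and the correct matching of the powers of $|\log\rho|$, $|\log\r_k|$ and $\r_k$ produced respectively by $\eta$, $\eta'$ and $\bb'$.
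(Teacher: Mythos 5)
Your proposal is correct and follows essentially the same route as the paper's proof: expand $\phi\,\partial_t\phi=\psi^2\sum_k f_k(\partial_t f_k)\prod_{j\ne k}f_j^2$, use $|\partial_t\r_k|\le\si$ a.e.\ together with $\psi^2\prod_{j\ne k}f_j^2\le 1$ (whose support restricts to $F^{(k)}_t(\dd/2)$), and then apply the pointwise bounds on $\eta,\eta',\bb,\bb'$ to produce the two integrals in \eqref{eq:D013}. Your bookkeeping of the supports is, if anything, slightly cleaner than the paper's displayed intermediate step, where the $\eta'$-piece is written over $A^{(k)}_t(\dd,\rho)$ although its support (from $\bb^2$) is really $A^{(k)}_t(\dd/2,\rho)$ — a harmless slip since the final inequality \eqref{eq:D013} correctly uses the larger domain.
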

\begin{proof}We use the same notations as in the proof of Lemma \ref{lem:B001}.
Direct calculation shows that
\beqs &&
\Big|\int_{\Si_{\infty}}\,2v\phi\pd {\phi}t e^{-\frac {|x|^2}{4}}\Big|\\&\leq& \sum_{k=1}^l\,\int_{\Si_{\infty}\cap F^{(k)}_t(\frac {\dd}2)}\,2|v| f_k\Big|\pd {f_k}t\Big| e^{-\frac {|x|^2}{4}} \\
&\leq&
\sum_{k=1}^l\,\int_{\Si_{\infty}\cap F^{(k)}_t(\frac {\dd}2)}\,2|v|\eta(\r_k)\bb(\r_k)\Big(|\eta'(\r_k)|\bb(\r_k)
+|\bb'(\r_k)|\eta(\r_k)\Big)\Big|\pd {\r_k}t\Big|\,e^{-\frac {|x|^2}{4}}. \label{eq:D010}
\eeqs
Note that for {a.e.} $t\in I$
$\Big|\pd {\r_k}t\Big|\leq |\xi_k'(t)|\leq \si $, and  we assumed that $\sup_{\Si_{\infty}}|\psi|\leq 1.$ Therefore, using the definition of $\eta$ and $\bb$  we have
\beqn  &&\int_{\Si_{\infty}\cap F^{(k)}_t(\frac {\dd}2)}\,2|v|\eta(\r_k)\bb^2(\r_k)|\eta'(\r_k)|\cdot \Big|\pd {\r_k}t\Big|\,e^{-\frac {|x|^2}{4}} \nonumber\\&\leq &2\si  \int_{\Si_{\infty}\cap F^{(k)}_t(\frac {\dd}2)}\,
\eta(\r_k)\bb(\r_k)^2|v|\cdot |\eta'(\r_k)|\nonumber\\&\leq&2\si \int_{A^{(k)}_t (\dd, \rho)\cap F^{(k)}_t(\frac {\dd}2)}\,|v|\frac {(\log \rho)^2}{|\log \r_k|^3\r_k} \label{eq:D011}
\eeqn and
\beqn &&
 \int_{\Si_{\infty}\cap F^{(k)}_t(\frac {\dd}2)}\,2|v|\eta(\r_k)^2\bb(\r_k)  |\bb'|\cdot \Big| \pd {\r_k}t\Big|\,e^{-\frac {|x|^2}{4}}\nonumber \\&\leq& 2\si\int_{\Si_{\infty}\cap F^{(k)}_t(\frac {\dd}2)}\,|v|\eta^2(\r_k)\bb(\r_k)|\bb'(\r_k)|\nonumber\\&\leq& \frac {6\si}{\dd}\,\int_{A^{(k)}_t (\frac {\dd}2, \dd)\cap F^{(k)}_t(\frac {\dd}2)}\,|v|\Big(\frac {\log \rho}{\log \r_k}\Big)^2. \label{eq:D012}
\eeqn Combining (\ref{eq:D010})-(\ref{eq:D012}), we have (\ref{eq:D013}).

\end{proof}

\begin{lem}\label{lem:B002}For any $t>0$,  we have
\beqn \lim_{\rho\ri 0}\lim_{\dd\ri 0}\,(\log \rho)^2\int_a^b\,dt\int_{ A^{(k)}_t(\dd, \rho)\cap F^{(k)}_t(\frac {\dd}2)}\,\frac {|v|}{\r_k|\log \r_k|^3}&=&0,\label{eq:E003}\\ \lim_{\rho\ri 0}\lim_{\dd\ri 0}\frac {(\log \rho)^2}{\dd}\int_a^b\,dt\int_{A^{(k)}_t (\frac {\dd}2, \dd)\cap F^{(k)}_t(\frac {\dd}2)}\,\frac {|v|}{|\log \r_k|^2}&=&0. \label{eq:E004}\eeqn

\end{lem}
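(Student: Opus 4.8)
The statement to be proved, Lemma~\ref{lem:B002}, asserts that the two ``error'' integrals coming from the cutoff function $f_{\dd,\rho}$ (produced in Lemma~\ref{lem:B003}) vanish in the iterated limit $\dd\to 0$ then $\rho\to 0$. The key input I would rely on is the asymptotic description of the positive solution $w$ near the singular curves: by Proposition~\ref{lem:A003d} together with Theorem~\ref{theoA:main} (which is quoted from the next section), $w$ is in $L^1$ across the singular set and near $\xi_k(t)$ behaves roughly like $\sum_k c_k(t)\log\frac{1}{\r_k}$. Since $v=\log w$, this gives the crucial pointwise control $|v(x,t)|\le C\big(1+\big|\log \r_k(x,t)\big|\big)$ on a neighbourhood of $\Ga_k$, uniformly for $t$ in a compact subinterval of $(0,\infty)$, with $C$ depending only on $R$ and the geometry of $\Si_{\infty,R}$. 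This bound on $|v|$ is the engine that drives both estimates; I would state it as a preliminary claim, citing Theorem~\ref{theoA:main} and Proposition~\ref{lem:A003d}, and then reduce everything to one-variable calculus.

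\emph{First} I would treat \eqref{eq:E003}. On $A^{(k)}_t(\dd,\rho)$ one has $\dd<\r_k<\rho<1$, so $|\log\r_k|\ge |\log\rho|$ and, inserting $|v|\le C(1+|\log\r_k|)\le C|\log\r_k|$, the integrand is bounded by $C\,\r_k^{-1}|\log\r_k|^{-2}$. Using the coarea/Fubini formula on $\Si_\infty$ with respect to the distance $\r_k(\cdot,t)$ and the fact that (by Lemma~\ref{lem:rx}/Lemma~\ref{lem:A001}) the surface has bounded area ratios, so the ``length'' of the level set $\{\r_k=s\}$ is $\le C s$, the spatial integral is dominated by $C\int_{\dd}^{\rho}\frac{ds}{s|\log s|^{2}}=C\big(|\log\rho|^{-1}-|\log\dd|^{-1}\big)\le C|\log\rho|^{-1}$. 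Multiplying by $(\log\rho)^2$ and by the length $b-a$ of the time interval gives a bound $\le C(b-a)|\log\rho|$. This does \emph{not} go to zero as $\dd\to 0$ — but the point is subtler: the dominating term has to be computed more carefully. Looking again, the correct estimate keeps the $|\log\r_k|^{-3}$ from the displayed integrand and only one power of $|\log\r_k|$ from $|v|$, giving integrand $\le C\,\r_k^{-1}|\log\r_k|^{-2}$; then $(\log\rho)^2\int_\dd^\rho\frac{ds}{s|\log s|^2}\le (\log\rho)^2\cdot\frac{1}{|\log\rho|}=|\log\rho|$, which still diverges. So the genuinely necessary refinement is to use the \emph{integrated} bound on $v$: $\int_\dd^\rho |v(\r_k=s)|\,ds\lesssim$ (something bounded uniformly in $\dd,\rho$) coming from $w\in L^1$ near the singularity, rather than the crude pointwise bound on each slice. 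Concretely, I would show $\int_{A^{(k)}_t(\dd,\rho)}\frac{|v|}{\r_k|\log\r_k|^3}\le C|\log\rho|^{-2}$ by integrating by parts / using that $|\log\r_k|^{-3}\r_k^{-1}$ is the derivative (up to constants) of $|\log\r_k|^{-2}$ and that $|v|$ contributes only a logarithm, so $(\log\rho)^2$ times this is $O(1)$, and then take $\dd\to0$, $\rho\to 0$; the inner $\dd$-limit converges by dominated convergence (the integrand on $A^{(k)}_t(0,\rho)$ is integrable since $\int_0^\rho s\cdot s^{-1}|\log s|^{-3}|\log s|\,ds<\infty$), and the resulting $\rho$-dependent bound tends to $0$.

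\emph{Second}, \eqref{eq:E004} is genuinely easier. The domain $A^{(k)}_t(\tfrac{\dd}{2},\dd)$ is a thin annulus of radii comparable to $\dd$, so its area is $O(\dd^2)$ by the area-ratio bound, and on it $|\log\r_k|\asymp|\log\dd|$, while $|v|\le C|\log\dd|$. Hence the spatial integral is $\le C\dd^2\cdot\frac{|\log\dd|}{|\log\dd|^{2}}=C\dd^2/|\log\dd|$; dividing by $\dd$ and multiplying by $(\log\rho)^2(b-a)$ gives $\le C(\log\rho)^2(b-a)\,\dd/|\log\dd|\to 0$ as $\dd\to 0$ for each fixed $\rho$, and a fortiori after taking $\rho\to0$. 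Again the iterated limit in the stated order makes this immediate.

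\emph{Main obstacle.} The hard part is \eqref{eq:E003}: the crude slicewise bound $|v|\lesssim|\log\r_k|$ is not quite strong enough, and one must genuinely invoke the $L^1$-integrability of $w$ (equivalently, that $\int_{A^{(k)}_t(0,\rho)}|v|\,\r_k^{-1}|\log\r_k|^{-3}\,d\mu<\infty$ with a bound that decays in $\rho$), which is exactly the content that Theorem~\ref{theoA:main} supplies. I would organize the argument so that the only nontrivial analytic fact used is: \emph{there is a function $\ee_k(\rho)\to 0$ as $\rho\to 0$, uniform in $t\in[a,b]$, with $(\log\rho)^2\int_{A^{(k)}_t(0,\rho)\cap F^{(k)}_t(0)}\r_k^{-1}|\log\r_k|^{-3}|v|\,d\mu\le \ee_k(\rho)$}; granting this, dominated convergence handles the $\dd\to 0$ limit (the sets $F^{(k)}_t(\tfrac{\dd}{2})$ increase to $F^{(k)}_t(0)$ and the integrands are dominated by the fixed integrable function above), and then $\rho\to0$ finishes. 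The time-integral over $[a,b]$ is harmless throughout because all bounds are uniform in $t$ on compact subintervals, and one applies Fubini plus dominated convergence in $t$ as well.
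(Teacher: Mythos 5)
Your proposal identifies the right external input (Theorem \ref{theoA:main}) but there is a genuine gap in the main step, and the route is considerably more tangled than the paper's.

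The central problem is that you rest the whole argument on a claimed pointwise bound $|v|\le C\bigl(1+|\log\r_k|\bigr)$ near the singular curves, attributed to Proposition~\ref{lem:A003d} plus Theorem~\ref{theoA:main}. No such pointwise bound is established anywhere: Theorem~\ref{theoA:main} delivers only an $L^1$-estimate (part (1)) and the two integral limits \eqref{eqB:007}--\eqref{eqB:008} (part (2)). The ``$w\sim\sum_k c_k(t)\log(1/\r_k)$'' description in the introduction is a heuristic; the measures $\mu_k$ constructed in Lemma~\ref{lemA:003} are only in $(C_0)'$, and the upper bound $u\le\sum_k U_k+f$ from Step~2 of the proof of Theorem~\ref{theoA:main} does not convert into a clean pointwise logarithmic bound on $w$ without further work. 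So the preliminary claim that drives your whole computation is not available from the sources you cite.

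The paper's own proof avoids this entirely by one cheap inequality: near the singular curve $w$ is large, so $v=\log w\ge 0$ and $|v|=v\le w$; writing $f=we^{-|x|^2/8}$ (which is the nonnegative solution of the heat-type equation to which Theorem~\ref{theoA:main} applies) gives $|v|\le Cf$ on bounded domains. Then on $A^{(k)}_t(\dd,\rho)$ one has $|\log\r_k|\ge|\log\rho|$, hence $(\log\rho)^2/|\log\r_k|^3\le 1/|\log\r_k|$, so $(\log\rho)^2\,|v|/(\r_k|\log\r_k|^3)\le Cf/(\r_k|\log\r_k|)$ and \eqref{eq:E003} drops out of \eqref{eqB:007}. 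On $A^{(k)}_t(\tfrac\dd2,\dd)$ one has $|\log\r_k|\ge|\log\dd|$, hence $(\log\rho)^2/|\log\r_k|^2\le(\log\rho)^2/|\log\dd|^2\to 0$ as $\dd\to 0$, so \eqref{eq:E004} drops out of \eqref{eqB:008}. There is no coarea slicing and no pointwise asymptotic needed.

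Two additional weaknesses in your write-up. In the \eqref{eq:E003} computation you apply the coarea formula but drop the Jacobian $L(s)\approx s$ from the length of the level set $\{\r_k=s\}$, producing a spurious $\int_\dd^\rho\frac{ds}{s|\log s|^2}$ that makes the bound blow up; with the $L(s)\approx s$ factor kept, the $1/s$ cancels and one gets $\int_\dd^\rho\frac{ds}{|\log s|^2}\le\rho/|\log\rho|^2$, which (modulo the unavailable pointwise bound) would in fact close. You then abandon this calculation and pivot to asserting that Theorem~\ref{theoA:main} ``exactly'' supplies a weighted $L^1$-bound of the form $(\log\rho)^2\int |v|\,\r_k^{-1}|\log\r_k|^{-3}\le\ee_k(\rho)$ — but \eqref{eqB:007} has the weight $\r_k^{-1}|\log\r_k|^{-1}$ and the function $u=f$, not $|v|$, so the required comparison of weights and the substitution $|v|\le Cf$ must be spelled out; that is precisely the step the paper makes explicit and your proposal leaves implicit.
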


\begin{proof}Since $w(x, t)$ satisfies (\ref{eq:D004a}) away from the singular set,  the function $f(x, t)=w(x, t)e^{-\frac {|x|^2}{8}}$ satisfies the equation
$$\pd ft=\Delta f+\Big(|A|^2+\frac 34-\frac 1{16}|x|^2\Big)f.$$
 By Theorem \ref{theoA:main}, we have
 \beqn
  \lim_{\rho\ri 0}\lim_{\dd\ri 0}\int_{a}^{b}\,dt\int_{ A^{(k)}_t (\dd, \rho)\cap F^{(k)}_t(\frac {\dd}2)}\,\frac {f}{\r_k|\log \r_k|}\,&=&0,\label{eq:E001}\\
\lim_{\dd\ri 0}\frac 1{\dd}\int_{a}^{b}\,dt\int_{A^{(k)}_t (\frac {\dd}2, \dd)\cap F^{(k)}_t(\frac {\dd}2)}\,f\,&<&+\infty. \label{eq:E002}
  \eeqn
Since near the singular curve $\xi(t)$,  the function $w$ is large and we have $v=\log w\leq w. $ Thus, (\ref{eq:E001})-(\ref{eq:E002}) also hold for $v$, and this directly implies (\ref{eq:E003})-(\ref{eq:E004}). The lemma is proved.

\end{proof}

Combining the above results, we can show Lemma \ref{lem:stable1}.

\begin{proof}[Proof of Lemma \ref{lem:stable1}]

Combining Lemma \ref{lem:B001}, Lemma \ref{lem:B003} with the inequality (\ref{eq:B004}), we have
\beqn &&
- (b-a)\int_{\Si_{\infty}} \psi L(\psi) e^{-\frac {|x|^2}{4}} +\Psi(\ee, \rho, \dd \,| \, \Si_{\infty, R})  (b-a)\nonumber\\&\geq& \int_{\Si_{\infty}}\,v\phi^2 \,e^{-\frac {|x|^2}{4}}\Big|_{t=a}-\int_{\Si_{\infty}}\,v\phi^2 \,e^{-\frac {|x|^2}{4}}\Big|_{t=b}\nonumber\\
&&-2\si (\log \rho)^2\sum_{k=1}^l\,\Big(\int_a^b\,\int_{A^{(k)}_t (\frac {\dd}2, \dd)\cap F^{(k)}_t(\frac {\dd}2)}\,\frac {3|v|}{\dd |\log \r_k|^2}\nonumber\\&&
+\int_a^b\,\int_{A^{(k)}_t (\dd, \rho)\cap F^{(k)}_t(\frac {\dd}2)}\,\frac {|v|}{\r_k|\log \r_k|^3}
\Big).\nonumber\\\label{eq:B005}
\eeqn
Taking  $\dd\ri 0$ and next $\rho\ri 0$, and then $\ee\ri 0$ in (\ref{eq:B005}), we get
\beq -\int_{\Si_{\infty}} \psi L(\psi) e^{-\frac {|x|^2}{4}}\geq \frac 1{b-a}\Big(\int_{\Si_{\infty}}\,v\psi^2 \,e^{-\frac {|x|^2}{4}}\Big|_{t=a}-\int_{\Si_{\infty}}\,v\psi^2 \,e^{-\frac {|x|^2}{4}}\Big|_{t=b}\Big). \label{eq:E005}\eeq
Note that by Propostion \ref{lem:A003d} $w(x, t)$ is a function on $(\Si_{\infty}\times (0, \infty))\b \cS$ with uniform estimates (\ref{eq:w2}) and (\ref{eq:w3}). Thus, there is a sequence $b_i\ri +\infty$ such that
$$\int_{\Si_{\infty}}\,v\psi^2 \,e^{-\frac {|x|^2}{4}}\,d\mu_{\infty}\Big|_{t=b_i}\leq \int_{\Si_{\infty}}\,w\,d\mu_{\infty} \Big|_{t=b_i}\leq W$$
for a constant $W$.
Therefore, by taking $b_i\ri +\infty$ and $a=2$ in (\ref{eq:E005}) we get (\ref{eqn:GB24_6}). The lemma is proved.

\end{proof}

\subsection{Proof of Theorem \ref{theo:removable}}

In this subsection, we show Theorem \ref{theo:removable}. First, using Lemma \ref{lem:rx} and the compactness result of Colding-Minicozzi \cite{[CM1]}   we have the following result.

\begin{lem}\label{lem:G1}Let $R,  N>0$ and $\rho $ an increasing positive function.
For any $\dd>0$, there exists a constant $\xi=\xi(R,  N, \rho, \dd)>0$  such that for any $\Si\in \cC( N, \rho)$ and any $r\in (0, \xi]$ we have
\beq
1-\dd\leq \frac {\Area(\Si\cap B_{r}(x))}{\pi r^2}\leq 1+\dd,\quad \forall \; x\in B_R(0)\cap \Si.\label{lem:G1:001}
\eeq
\end{lem}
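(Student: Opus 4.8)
The plan is to reduce the statement to the single-sheet area estimate of Lemma \ref{lem:ratio} by establishing two uniform facts on the whole family $\cC(D, N, \rho)$: a uniform curvature bound, and a uniform ``injectivity radius'' below which the intersection of $\Si$ with a small Euclidean ball is a single graph. Both will be extracted from the uniform two-sided ball condition of Lemma \ref{lem:rx}. First I would apply Lemma \ref{lem:rx} with $R$ replaced by $R+1$ to get a constant $\ee_1 = \ee_1(R, D, N, \rho) \in (0, 1)$ with $r_{\Si}(x) \geq \ee_1$ for every $\Si \in \cC(D, N, \rho)$ and every $x \in \Si \cap B_{R+1}(0)$; since the existence of empty interior balls $B_{\ee_1}(x \pm \ee_1 \n(x))$ tangent at $x$ forces each principal curvature of $\Si$ at $x$ to lie in $[-\ee_1^{-1}, \ee_1^{-1}]$, we obtain $|A|(x) \leq \ee_1^{-1}$ for all such $x$. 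In particular, setting $r_0 := \ee_1$, for any $x \in \Si \cap B_R(0)$ we have $B_{r_0}(x) \subset B_{R+1}(0)$ and $\sup_{B_{r_0}(x) \cap \Si} |A| \leq 1/r_0$, so by Lemma \ref{lem:graph1} the component $C_x(B_{r_0}(x) \cap \Si)$ is a graph over $T_x \Si$ with controlled gradient.

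The main step, which I expect to be the only real obstacle, is to produce $\xi_0 = \xi_0(R, D, N, \rho) > 0$ such that $\Si \cap B_r(x)$ is \emph{connected}, hence equal to $C_x(B_r(x) \cap \Si)$, for every $\Si \in \cC(D, N, \rho)$, $x \in \Si \cap B_R(0)$ and $r \in (0, \xi_0]$. Here the two empty balls $B_{\ee_1}(x \pm \ee_1 \n(x))$ again do the work: because $\Si$ avoids them, $\Si \cap B_r(x)$ is confined to the slab $\{\,y : |\langle y - x, \n(x)\rangle| \leq r^2/\ee_1\,\}$ about $T_x \Si$, and for $r$ small each connected component of $\Si \cap B_r(x)$ is, by Lemma \ref{lem:graph1} applied at one of its points (using $|A| \leq \ee_1^{-1}$), a nearly flat graph over $T_x\Si$ defined over essentially the full disk of radius $r$. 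If a second such sheet existed, it would contain a point $y$ with $0 < |\langle y - x, \n(x)\rangle| \leq r^2/\ee_1$ lying, say, on the $+\n(x)$ side of the sheet through $x$; but for $r \ll \ee_1$ this point would then lie inside the empty ball $B_{\ee_1}(x + \ee_1 \n(x))$, a contradiction. (Equivalently one can argue by contradiction and blow up: if connectedness failed along $\Si_i \in \cC$, $x_i \in \Si_i \cap B_R(0)$, $r_i \to 0$, the rescalings $r_i^{-1}(\Si_i - x_i)$ would have curvature $\to 0$ and ball-condition radius $\to \infty$ on $B_1(0)$, forcing a single flat multiplicity-one limiting disk; or one can instead combine the smooth compactness of $\cC(D, N, \rho)$ from \cite{[CM1]} with embeddedness to get a uniform positive connectedness radius by a standard semicontinuity argument.)

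Finally I would conclude using Lemma \ref{lem:ratio}. Given $\dd > 0$, apply that lemma to the boundaryless piece $C_x(B_{r_0}(x) \cap \Si)$ (which has boundary contained in $\partial B_{r_0}(x)$, since $\Si$ has no boundary, and satisfies $\sup|A| \leq 1/r_0$): this yields $\rho_0 = \rho_0(r_0, \dd) = \rho_0(R, D, N, \rho, \dd)$ so that, for all $r \in (0, \rho_0)$,
\[
1 - \dd \;\leq\; \frac{\vol_{\Si}\!\big(C_x(B_r(x) \cap \Si)\big)}{\pi r^2} \;\leq\; 1 + \dd .
\]
Setting $\xi := \min\{\xi_0,\ \rho_0/2\}$, for every $r \in (0, \xi]$ we have $\Si \cap B_r(x) = C_x(B_r(x) \cap \Si)$ by the main step (as $r \leq \xi_0$) and $r < \rho_0$, so the displayed bound reads exactly as (\ref{lem:G1:001}). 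Since $\xi$ depends only on $R, D, N, \rho$ and $\dd$, this proves the lemma.
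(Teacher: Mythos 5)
Your proposal is correct, and in fact more careful than the paper's own argument, which obtains only the uniform curvature bound (via the Colding--Minicozzi compactness of $\cC(D,N,\rho)$) and then asserts that (\ref{lem:G1:001}) ``follows directly from Lemma \ref{lem:ratio}.'' That assertion has a genuine gap: Lemma \ref{lem:ratio} controls $\vol_\Sigma(C_x(B_r(x)\cap\Sigma))$, the area of a single connected component, whereas (\ref{lem:G1:001}) bounds $\Area(\Sigma\cap B_r(x))$. The lower bound transfers trivially, but the upper bound does not unless one first shows that $\Sigma\cap B_r(x)$ has no component other than $C_x(B_r(x)\cap\Sigma)$ when $r$ is small, uniformly over the family. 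You correctly identify this as the real content of the lemma and supply it, using the uniform two-sided ball condition from Lemma \ref{lem:rx}: the slab confinement plus the empty balls tangent at nearby points rules out a second sheet, and your fallback (blow-up, or compactness of $\cC(D,N,\rho)$ combined with embeddedness) is equally sound and probably what the authors had implicitly in mind. The only minor inaccuracy is the curvature bound derived from the two-sided ball condition: bounding each principal curvature by $\ee_1^{-1}$ gives $|A| = (\kappa_1^2+\kappa_2^2)^{1/2} \leq \sqrt{2}\,\ee_1^{-1}$ rather than $\ee_1^{-1}$, and your slab estimate should read $r^2/(2\ee_1)$ rather than $r^2/\ee_1$; neither changes the argument, since one simply replaces $r_0$ by a fixed multiple of $\ee_1$. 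So your route differs from the paper's in that you extract the curvature bound from Lemma \ref{lem:rx} rather than invoking compactness a second time, and you buy the uniform single-sheet radius for free from the same source, whereas the paper leaves that step implicit.
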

\begin{proof}We show that there exists a constant $C_R=C(R,  N, \rho)>0$ such that for all $\Si\in \cC( N, \rho)$ we have
$ \sup_{\Si\cap B_{R+1}(0)}|A|\leq C_R$.
For otherwise, we can find a sequence $\Si_i\in \cC( N, \rho)$ such that \beq \sup_{\Si_i\cap B_{R+1}(0)}|A|\ri +\infty. \label{lem:G1:002}\eeq  On the other hand, by the compactness theorem of Colding-Minicozzi \cite{[CM1]}, $\Si_i$ converges smoothly to $\Si_{\infty}\in \cC( N, \rho)$, which has bounded $|A|$ on any compact set. This contradicts (\ref{lem:G1:002}).

Since $\sup_{\Si\cap B_{R+1}(0)}|A|$ is uniformly bounded by $C_R$, (\ref{lem:G1:001}) follows directly from  Lemma \ref{lem:ratio}. The lemma is proved.

\end{proof}

Using the uniform upper bound of the area ratio and Lemma \ref{lem:A001}, we have the following result.

\begin{lem}\label{lem:area}Under the same assumption as in Lemma \ref{lem:A001}, if $\{\Si_{t_i}\}$ converges locally smoothly to $\Si_{\infty}$ with multiplicity $m\in \NN$ away from a locally finite singular set $\cS_0$, then for any $x_i\in \Si_{t_i}\cap B_R(0)$ with $x_i\ri x_{\infty}\in \Si_{\infty}\cap B_{R+1}(0)$ and $r>0$ we have
\beq
\lim_{i\ri+\infty}\Area(\Si_{t_i}\cap B_r(x_i))=m\cdot\Area(\Si_{\infty}\cap B_r(x_\infty)).\label{eq:F006}
\eeq

\end{lem}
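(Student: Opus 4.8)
\textbf{Proof proposal for Lemma \ref{lem:area}.}
The plan is to reduce the statement to the smooth convergence away from the singular set plus a uniform area-ratio bound near the (finitely many) singular points, exactly in the spirit of Steps 1--2 in the proof of Lemma \ref{lem:multiplicity}. First I would fix $r>0$ and a small $\ee\in(0,r)$, and decompose
$$\Si_{t_i}\cap B_r(x_i)=\Big(\Si_{t_i}\cap\big(B_r(x_i)\b B_\ee(\cS_0)\big)\Big)\cup\Big(\Si_{t_i}\cap B_r(x_i)\cap B_\ee(\cS_0)\Big),$$
where $B_\ee(\cS_0)=\cup_{p\in\cS_0}B_\ee(p)$ and $\cS_0\cap B_{R+2}(0)$ is a finite set by Lemma \ref{lem:A001}(2) applied at the time slice in question. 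On the first piece, the flow $\{\Si_{t_i}\}$ converges to $\Si_\infty$ in $C^\infty$ with multiplicity $m$ by Lemma \ref{lem:A001}(1) and Lemma \ref{lem:multiplicity}, so
$$\lim_{i\ri+\infty}\Area\big(\Si_{t_i}\cap(B_r(x_i)\b B_\ee(\cS_0))\big)=m\cdot\Area\big(\Si_\infty\cap(B_r(x_\infty)\b B_\ee(\cS_0))\big).$$
Here one also uses that $x_i\ri x_\infty$ so that $B_r(x_i)$ exhausts $B_r(x_\infty)$; the boundary $\p B_r(x_\infty)$ has $\Si_\infty$-measure zero for all but countably many $r$, and in any case one can pass to the limit in $r$ at the very end, so no sensitivity to the choice of $r$ arises.

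Next I would control the second piece uniformly in $i$. By the upper area-ratio bound along the rescaled flow (Property $(c)$ in Step 3 of the proof of Lemma \ref{lem:A001}, equivalently the refined-sequence bound \eqref{eqn:202} with constant $N$), for each singular point $p\in\cS_0$ and each point $q_i\in\Si_{t_i}$ near $p$ we have $\Area(\Si_{t_i}\cap B_\ee(q_i))\le N\pi\ee^2$; covering $\Si_{t_i}\cap B_r(x_i)\cap B_{2\ee}(\cS_0)$ by a bounded number (depending only on $|\cS_0\cap B_{R+1}(0)|$) of such balls gives
$$\Area\big(\Si_{t_i}\cap B_r(x_i)\cap B_\ee(\cS_0)\big)\le C(\cS_0)\,N\pi\ee^2$$
for all large $i$. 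The analogous bound holds for $\Si_\infty$. Combining the two pieces and letting $i\ri+\infty$ yields
$$\Big|\lim_{i}\Area(\Si_{t_i}\cap B_r(x_i))-m\Area(\Si_\infty\cap B_r(x_\infty))\Big|\le C(\cS_0)(N+mN_0)\pi\ee^2,$$
and since $\ee>0$ is arbitrary the left-hand side is zero, which is \eqref{eq:F006}. (If $x_\infty\notin\cS_0$ one can skip the cutoff entirely and argue directly from the smooth convergence and Lemma \ref{lem:ratio}.)

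The main obstacle — really the only nontrivial point — is making the "covering by a bounded number of small balls" step quantitative and uniform in $i$: one needs that near each singular point the surfaces $\Si_{t_i}$ do not spread out over a region much larger than $B_\ee(\cS_0)$, which is guaranteed by the uniform upper area-ratio bound $N$ from Lemma \ref{lem:A001}, together with the fact that $\cS_0\cap B_{R+1}(0)$ is a fixed finite set so the number of balls in the cover does not blow up with $i$. Everything else is the standard "smooth convergence away from a small neighborhood of a measure-zero set, plus negligible area inside that neighborhood" argument already used twice above, and Lemma \ref{lem:G1} gives the needed control of $\Area(\Si_\infty\cap B_\ee(p))$ for the limit self-shrinker $\Si_\infty\in\cC(D,N,\rho)$.
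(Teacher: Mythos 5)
Your proposal is correct and follows essentially the same route as the paper's proof: decompose $\Si_{t_i}\cap B_r(x_i)$ into the part away from an $\ee$-neighborhood of $\cS_0$ (where the smooth multiplicity-$m$ convergence applies) and the part inside that neighborhood (controlled by the uniform area-ratio bound, which gives $\Area(\Si_{t_i}\cap B_\ee(p))\le N\pi\ee^2$), then send $\ee\ri 0$. One small simplification available to you: the area-ratio bound \eqref{eqn:202} holds for balls $B_r(p)$ centered at \emph{any} $p\in\RR^3$, not just points on $\Si_{t_i}$, so the covering argument you flag as "the only nontrivial point" is actually unnecessary — you can simply apply the bound once around each singular point and sum over the finitely many $p\in\cS_0\cap B_{R+1}(0)$, which is precisely what the paper does (after reducing WLOG to a single singular point in $B_r(x_\infty)$).
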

\begin{proof} Since $\cS_0$ is locally finite, without loss of generality we assume that $B_r(x_{\infty})\cap \Si_{\infty}$ consists of only one singular point $y_{\infty}$. For any $\ee>0$ by the smooth convergence of $\Si_{t_i}\cap (B_r(x_i)\b B_{\ee}(y_{\infty}))$ we have
\beq
\lim_{i\ri+\infty}\Area\Big(\Si_{t_i}\cap (B_r(x_i)\b B_{\ee}(y_{\infty}))\Big)=m\cdot\Area\Big(\Si_{\infty}\cap (B_r(x_\infty)\b B_{\ee}(y_{\infty}))\Big).\label{eq:F005}
\eeq Since the area ratio is uniformly bounded from above along the rescaled mean curvature flow, we have
$$\Area(\Si_{t_i}\cap B_{\ee}(y_{\infty}))\leq N \pi \ee^2\ri 0$$
as $\ee\ri 0.$ Taking $\ee\ri 0$ in both sides of (\ref{eq:F005}), we have (\ref{eq:F006}). The lemma is proved.

\end{proof}

Combining Lemma \ref{lem:G1}, Lemma \ref{lem:area} with Lemma \ref{lem:multiplicity}, we show that the area ratio is always close to an integer after a fixed time.

\begin{lem}\label{lem:G2}Fix large $R$ and small $\dd_0\in (0, \frac 12)$.  Under the same assumption as in Lemma \ref{lem:A001}, there exists $t_0>0$ such that for any $t>t_0$ we have
\beq
m(1-2\dd_0)< \frac {\Area(\Si_t\cap B_{\xi}(x))}{\pi \xi^2}< m(1+2\dd_0),\quad \forall \; x\in B_R(0)\cap \Si_t, \label{lem:G2:005}
\eeq where $m$ is a positive integer independent of $x$ and $t$. Here $\xi=\xi(R+1,  N, \rho, \dd_0)$ is the constants in Lemma \ref{lem:G1} with  $ N$ and $ \rho$ determined as in the assumption (\ref{eq:BB002}) and  Lemma \ref{lem:A001}.
\end{lem}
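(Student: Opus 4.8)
The plan is a contradiction argument resting on Lemma~\ref{lem:A001}, Lemma~\ref{lem:multiplicity}, Lemma~\ref{lem:area} and Lemma~\ref{lem:G1}, whose crux is that the ``multiplicity'' must be one and the same integer for every divergent sequence of times. Introduce
\[
\ov\te(t):=\sup_{x\in B_R(0)\cap\Si_t}\frac{\Area(\Si_t\cap B_\xi(x))}{\pi\xi^2},\qquad
\un\te(t):=\inf_{x\in B_R(0)\cap\Si_t}\frac{\Area(\Si_t\cap B_\xi(x))}{\pi\xi^2}.
\]
Since $\x(\cdot,t)$ is a smooth family of embeddings of a fixed closed surface with locally bounded area, $\ov\te$ and $\un\te$ are continuous in $t$ (at the finitely many times where $\p B_R(0)$ fails to meet $\Si_t$ transversally one uses a routine approximation). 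By Step~1 of the proof of Lemma~\ref{lem:A001} the area ratio along the flow is bounded by the uniform constant $N$, so $\un\te\le\ov\te\le N$; I will use that $\dd_0$ is small, precisely small enough that the closed intervals $J_k:=[\,k(1-\dd_0),\,k(1+\dd_0)\,]$, $1\le k\le 2N$, are pairwise disjoint and separated by open gaps.

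First I would record the following consequence of the compactness statements: if $s_j\to+\infty$ and, after passing to a subsequence via Lemma~\ref{lem:A001} and Lemma~\ref{lem:multiplicity}, $\{\Si_{s_j}\}$ converges locally smoothly and in Hausdorff distance to some $\Si'\in\cC(D,N,\rho)$ with constant multiplicity $m'$, then $\ov\te(s_j)$ and $\un\te(s_j)$ both tend to values lying in $J_{m'}$, and $1\le m'\le 2N$. To see it, choose $x_j\in B_R(0)\cap\Si_{s_j}$ realizing $\ov\te(s_j)$ and pass to a further subsequence with $x_j\to x'\in\ov{B_R(0)}\cap\Si'\subset B_{R+1}(0)\cap\Si'$; then Lemma~\ref{lem:area} gives $\Area(\Si_{s_j}\cap B_\xi(x_j))\to m'\,\Area(\Si'\cap B_\xi(x'))$, and because $\xi\le\xi(R+1,D,N,\rho,\dd_0)$, Lemma~\ref{lem:G1} forces $\frac{\Area(\Si'\cap B_\xi(x'))}{\pi\xi^2}\in[1-\dd_0,1+\dd_0]$; running the same computation at points converging to an arbitrary regular point of $\Si'$ in $B_R(0)$ supplies the bound from below, and symmetrically for $\un\te$. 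The inequality $m'(1-\dd_0)\le N$ gives $m'\le 2N$.

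The main step is to show this $m'$ is independent of the sequence. Fix $\tau_i\to+\infty$ for which $\{\Si_{\tau_i}\}$ converges with constant multiplicity $m$, and suppose toward a contradiction that some $s_j\to+\infty$ yields limit multiplicity $m'\ne m$; say $m'>m$. By the previous paragraph, $\ov\te(\tau_i)<m(1+\dd_0)+\dd_0$ for large $i$ and $\ov\te(s_j)>(m+1)(1-\dd_0)-\dd_0$ for large $j$, and for $\dd_0$ small these two bounds straddle a fixed number $v_0$ chosen inside the gap $\big(m(1+\dd_0),\,(m+1)(1-\dd_0)\big)$, which is disjoint from every $J_k$. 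By continuity of $\ov\te$, the intermediate value theorem produces, for each large $i$, a time $u_i$ lying between $\tau_i$ and a later $s_j$ with $\ov\te(u_i)=v_0$; since $u_i\to+\infty$, applying Lemma~\ref{lem:A001}, Lemma~\ref{lem:multiplicity} and the previous paragraph to $\{u_i\}$ forces $\ov\te(u_i)$ to converge into some $J_{m''}$ with $1\le m''\le 2N$ --- impossible because $v_0\notin\bigcup_k J_k$. Hence $m'=m$ for every sequence.

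Given this, the conclusion is immediate: were $\ov\te(t)\ge m(1+2\dd_0)$ along some $t\to+\infty$, a convergent subsequence as above would have limit multiplicity $m$, hence $\ov\te(t)$ would tend into $J_m\subset\big(m(1-2\dd_0),m(1+2\dd_0)\big)$, a contradiction; likewise $\un\te(t)\le m(1-2\dd_0)$ cannot persist. So there is $t_0>0$ with $m(1-2\dd_0)<\un\te(t)\le\ov\te(t)<m(1+2\dd_0)$ for all $t>t_0$, which is exactly the asserted two-sided bound for every $x\in B_R(0)\cap\Si_t$, with $m\ge1$ the common limit multiplicity, manifestly independent of $x$ and $t$. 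The hard part is the third paragraph, i.e.\ the independence of the limit multiplicity from the choice of time slices --- the new phenomenon, absent from \cite{[LW]}; the continuity in $t$ of the area-ratio functional together with the Colding--Minicozzi compactness of $\cC(D,N,\rho)$ (entering through Lemma~\ref{lem:G1}) is exactly what makes the intermediate-value argument go through, and the only other technical point, the continuity of $\ov\te$ and $\un\te$, is routine.
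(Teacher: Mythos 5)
Your overall skeleton matches the paper's: first show that for $t$ large the area ratio at every $x\in B_R(0)\cap\Si_t$ is pinned near some integer $m(x,t)$ via Lemma~\ref{lem:A001}, Lemma~\ref{lem:area} and Lemma~\ref{lem:G1}, and then show that this integer cannot depend on $x$ or $t$. But your mechanism for the time-independence --- the part the paper treats as the real content of the lemma --- is genuinely different. The paper's Step~3 takes a hypothetical bad sequence $t_i\to\infty$, $s_i\in(-\tfrac12,\tfrac12)$ with $m(t_i)\ne m(t_i+s_i)$, passes to a convergent space-time sub-flow $\{\Si_{t_i+s},-1<s<1\}$, and invokes Lemma~\ref{lem:multiplicity}: the multiplicity of this flow convergence is a constant integer in $s$, so the integers attached to $\Si_{t_i}$ and $\Si_{t_i+s_i}$ must agree. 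You instead compare any two sequences $\tau_i,s_j\to\infty$ with alleged multiplicities $m<m'$ by applying the intermediate value theorem to $\ov\te$, producing intermediate times $u_i$ where $\ov\te(u_i)$ sits in a forbidden gap between $J_m$ and $J_{m+1}$, and then derive a contradiction from the fact that any divergent sequence of time slices has its area ratios accumulating only near integers. This is a legitimate alternative idea, and your existence of a fixed gap value $v_0$ and the accumulation statement in your second paragraph are both essentially right.

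The weak link is the continuity of $\ov\te$ and $\un\te$, which you assert as ``routine'' and on which the whole intermediate-value step rests. The function $\ov\te(t)=\sup_{x\in B_R(0)\cap\Si_t}\Area(\Si_t\cap B_\xi(x))/(\pi\xi^2)$ is a supremum over a domain $B_R(0)\cap\Si_t$ that itself moves with $t$, and it can a~priori jump (failing upper or lower semicontinuity) when a piece of $\Si_t$ becomes tangent to $\partial B_R(0)$ and then enters or leaves the ball, or when the maximizing $x$ sits arbitrarily close to $\partial B_R(0)$. Your parenthetical about non-transversal intersection times flags a related but not identical issue and does not cover this case; in particular, the set of ``bad'' times need not be finite. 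The concern is fixable --- for instance, take the supremum over a slightly smaller closed ball $\ov{B_{R-1}(0)}$, use the uniform speed bound coming from $|H|\le\La$ together with the soliton term on compact sets to see that $\ov{B_{R-1}(0)}\cap\Si_t$ varies Hausdorff-continuously, and run your argument there --- but it is not a one-line aside, and the paper's choice to route the time comparison through the already-proved time-invariance of the flow multiplicity (Lemma~\ref{lem:multiplicity}) avoids needing any regularity for $\ov\te$ at all. If you want to keep the intermediate-value approach, you should spell out a precise continuity (or at least Darboux) statement for the area-ratio supremum and prove it; as written this step is a gap.
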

\begin{proof} We divide the proof into several steps.

\emph{Step 1.} We show that  there exists $t_0>0$ such that for any $t>t_0$ (\ref{lem:G2:005}) holds for some integer $m(x, t)$, which may depend on $x$ and $t$. For otherwise, there exist a sequence $t_i\ri+\infty$ and $x_i\in B_R(0)\cap \Si_{t_i}$ such that
\beq
\Big|\frac {\Area(\Si_{t_i}\cap B_{\xi}(x_i))}{\pi m \xi^2}-1\Big|\geq 2\dd_0,\quad \forall \;m\in \NN\cap [1, N_0]. \label{lem:G2:003}
\eeq By Proposition \ref{prop:weakcompactness}, by taking a subsequence if necessary  we assume that $\Si_{t_i}$ converges locally smoothly to a self-shrinker $\Si_{\infty}\in \cC( N, \rho)$ with multiplicity $m_0\in \NN$ and $x_i\ri x_{\infty}\in \Si_{\infty}\cap B_{R+1}(0). $ By the convergence of $\{\Si_{t_i}\}$ and Lemma \ref{lem:area}, we have
\beq
\lim_{i\ri +\infty}\frac {\Area(\Si_{t_i}\cap B_{\xi}(x_i))}{\pi \xi^2}=m_0 \frac {\Area(\Si_{\infty}\cap B_{\xi}(x_\infty))}{\pi  \xi^2}. \label{lem:G2:001}
\eeq Lemma \ref{lem:G1} implies that
\beq
1-\dd_0\leq \frac {\Area(\Si_{\infty}\cap B_{\xi}(x_\infty))}{\pi  \xi^2}\leq 1+\dd_0.\label{lem:G2:002}
\eeq
Combining (\ref{lem:G2:001}) with  (\ref{lem:G2:002}), for large $t_i$ we have
\beq
\Big|\frac {\Area(\Si_{t_i}\cap B_{\xi}(x_i))}{\pi m_0 \xi^2}-1\Big|\leq \frac 32\dd_0, \label{lem:G2:004}
\eeq which contradicts (\ref{lem:G2:003}).\\

\emph{Step 2.} We show that $m(x, t)$ is independent of $x$ and we can write $m(t)$ for short.  For otherwise, we can find a sequence $t_i\ri +\infty$ and $x_i, y_i\in \Si_{t_i}$ with $m(x_i, t_i)\neq m(y_i, t).$ Since $m(x, t)\in [1, N_0]$, by taking a subsequence if necessary we assume that $m(x_i, t_i)=m_1$ for all $i$. Thus, for any $i$ we have
\beq m(y_i, t_i)\neq m_1. \label{lem:G2:006}\eeq By Proposition \ref{prop:weakcompactness}, by taking a subsequence if necessary  we assume that $\Si_{t_i}$ converges locally smoothly to a self-shrinker $\Si_{\infty}\in \cC( N, \rho)$ with multiplicity $m_0\in \NN$, and
$$x_i\ri x_{\infty}, \quad y_i\ri y_{\infty},\quad x_{\infty}, y_{\infty}\in \Si_{\infty}\cap B_{R+1}(0). $$
By (\ref{lem:G2:004}), we have $m(x_i, t_i)=m_0=m(y_i, t_i)$, which contradicts (\ref{lem:G2:006}). \\

\emph{Step 3.} We show that $m(t)$ is independent of $t$. It suffices to show that for any $s\in (-\frac 12, \frac 12)$, we have $m(t)=m(t+s)$. For otherwise, we can find a sequence $t_i\ri +\infty$ and $s_i\in (-\frac 12, \frac 12)$ such that for all $i$,
\beq
m(t_i)\neq m(t_i+s_i). \label{lem:G2:007}
\eeq We follow the same argument as in Step 2.  Since $m(t_i)$ is uniformly bounded, by taking a subsequence if necessary we can assume that $m(t_i)=m_1$ for all $i$. By (\ref{lem:G2:007}), for all $i$ we have
\beq
m(t_i+s_i)\neq m_1. \label{lem:G2:008}
\eeq
Note that $m(t_i+s_i)$ is also bounded, we can assume that a subsequence of $\{m(t_i+s_i)\}$ converges to an integer $m_2$ with \beq m_2\neq m_1 \label{lem:G2:010}\eeq by (\ref{lem:G2:008}).
By Proposition \ref{prop:weakcompactness}, by taking a subsequence if necessary  we assume that $\{\Si_{t_i+s}, -1<s<1\}$ converges locally smoothly to a self-shrinker $\Si_{\infty}\in \cC( N, \rho)$ with multiplicity $m_0\in \NN$. The inequality (\ref{lem:G2:004}) implies that $m_0=m_1$. Since  the multiplicity $m_0$ of the  convergence is independent of time by Lemma \ref{lem:multiplicity},  we have $m_0=m_2$. Thus, we have $m_1=m_2$,  which contradicts (\ref{lem:G2:010}).

\end{proof}

Using Lemma \ref{lem:G2} and the results in previous sections, we show Theorem \ref{theo:removable}.

\begin{proof}[Proof of Theorem \ref{theo:removable}]Fix large $R>R_0$, where $R_0$ is the constant chosen in Lemma \ref{lem:A002b}.  We  choose a sequence $t_i\ri +\infty$ as in Lemma \ref{lem:A003c}.  Then there is a self-shrinker $\Si_{\infty}\in \cC( N, \rho)$    such that for any $T>1$ we can find a subsequence, still denoted by $\{t_i\}$, such that   $\{\Si_{t_i+t}, -T<t<T\}$ converges in smooth topology, possibly with multiplicities at most $N_0$, to $\Si_{\infty}$ away from a  singular set
$\cS$.  If the multiplicity of the convergence is greater than one,  Lemma \ref{lem:stable1} shows that the limit self-shrinker $\Si_{\infty}$ is $L$-stable in  the ball $B_{R}(0)$. This contradicts Lemma \ref{lem:A002b}. Therefore, the multiplicity is one and the convergence is smooth.

We next show that for any sequence of $s_i\ri +\infty$ there exists a subsequence such that  the multiplicity of the convergence is also one. For otherwise, there exists a sequence $s_i\ri +\infty$ such that $\Si_{s_i}$ converges locally smoothly to a self-shrinker $\Si_{\infty}'\in \cC( N, \rho)$ with multiplicity  $m'>1$. By Lemma \ref{lem:G2}, there exists $t_0>0$ such that for any $t>t_0$ we have
\beq
m(1-2\dd_0)< \frac {\Area(\Si_t\cap B_{\xi}(x))}{\pi \xi^2}< m(1+2\dd_0),\quad \forall \; x\in B_R(0)\cap \Si_t, \label{lem:G3:001}
\eeq where $m$ is a positive integer independent of $x$ and $t$. By taking $t=t_i\ri +\infty$ in (\ref{lem:G3:001}), we have $m=1$. On the other hand, taking $t=s_i\ri +\infty$ in (\ref{lem:G3:001}), we have $m=m'>1$, which is a contradiction. Thus, the theorem is proved.

\end{proof}

\section{Estimates  near the singular set}\label{sec:analysis}
In this section, we will study the asymptotical behavior of the function $w$ near the singular set. These estimates are used in the proof of Lemma \ref{claim2}  and Lemma \ref{lem:B002}. In \cite{[KT]}, Kan-Takahashi studied time-dependent singularities in semilinear parabolic equations along  one singular curve. Here we develop Kan-Takahashi's techniques to estimate the solution when the singular sets consists of  multiple singular curves.

First, we introduce the following notations. Throughout this section, we denote by $\cB_r(p)$ the (intrinsic) geodesic ball centered at $p$ in $(M, g)$ and $d_g(x, y)$ the distance from $x$ to $y$ with respect to the metric $g$.

\begin{defi}\label{defi:003} Let $(M, g)$ be a complete Riemannian manifold of dimension $m$. For any $k\in \NN,  \rho,  \Xi>0$, we define $\cM_{k, m}(\rho, \Xi) $ the set of  all subsets $A\subset (M, g)$ such that
\begin{enumerate}
  \item[(1).] for any $p\in A$, the harmonic radius at $p$ satisfies $r_h(p)\geq \rho$;
  \item[(2).] For any $p\in A$, the ball $\cB_\rho(p)$ has  harmonic coordinates $\{x_1, x_2, \cdots, x_m\}$ such that the metric tensor $g_{ij}$ in these coordinates satisfies
\beq
\Xi^{-1}\dd_{ij}\leq g_{ij}\leq \Xi \dd_{ij}, \quad \Big|\frac {\partial^{\al}g_{ij}}{\partial x^{\al}}\Big|\leq \Xi,\quad \hbox{on}\quad \cB_{\rho}(p),
\eeq for any multi-index $\al$  with $1\leq|\al|\leq k.$

\end{enumerate}

\end{defi}

The following theorem is the main result in this section, which gives the asymptotical behavior of a positive solution of a parabolic equation near a time-dependent  singular set.

\begin{theo}\label{theoA:main}
Let $(\Si^2, g)$ be a two-dimensional complete surface and $\{\xi_1, \xi_2, \cdots, \xi_l\}$ with $\xi_k: [T_1, T_2]\ri \Si$ be  $\si$-Lipschitz curves in $\Si$. Assume that $u(x, t)\in L^1_{\loc}((\Si\times (T_1, T_2))\b \cup_{k=1}^l\Ga_k)$ is a nonnegative solution of the equation:
\beq \pd ut=\Delta_g u+c(x, t) u, \label{eq:D004}\eeq
where  $c(x, t)\in L_{\loc}^{\infty}(\Si\times [T_1, T_2])$ and  $\Ga_k=\{(\xi_k(t), t)\}\subset \Si\times [T_1, T_2]$.
Assume that for any $k\in \{1, 2, \cdots, l\}$ and any $t\in [T_1, T_2]$ the ball $\cB_1(\xi_k(t))$ is in $\cM_{k_0, 2}(\rho_0, \Xi_0)$, where $k_0$ is an integer chosen as in Corollary \ref{cor:heat}. Then we have
\begin{enumerate}
  \item[(1)]  $u\in L^1_{\loc}(\Si\times(T_1, T_2)).$ More precisely, for any   $(t_1, t_2)\subset (T_1, T_2)$, there exists a constant $r_1=r_1(\rho_0, \Xi_0, l, t_1, t_2, T_1, T_2)>0$ such that
  \beq
  \|u\|_{L^1(Q_{r_1, t_1, t_2})}\leq C\|u\|_{L^1(K)},\label{eq:L002x}
  \eeq where $C$ is a constant depending on $\|c\|_{L^{\infty}(Q_{1, T_1, T_2})}, \rho_0, \Xi_0, \si,  t_1, t_2, T_1, T_2 $ and  $K$ is defined by $K=\overline{Q_{2r_1, T_1, T_2}}\b Q_{r_1, T_1, T_2}.$ Here $  Q_{r, t_1, t_2}$ is defined by
  \beq
  Q_{r, t_1, t_2}=\bigcup_{k=1}^l\{(x, t)\in \Si\times \RR, x\in \cB_r(\xi_k(t))\subset \Si, t\in (t_1, t_2)\}. \nonumber
  \eeq \item[(2)] For any $(t_1, t_2)\subset (T_1, T_2)$,  we have
  \beqn
  \lim_{R\ri 0}\lim_{\dd\ri 0}\int_{t_1}^{t_2}\,dt\int_{A^{(k)}_t (\frac {\dd}2, R)\cap F^{(k)}_t(\frac {\dd}2)}\,\frac {u}{\r_k|\log \r_k|}\,d\vol&=&0,\label{eqB:007}\\
\lim_{\dd\ri 0}\frac 1{\dd}\int_{t_1}^{t_2}\,dt\int_{A^{(k)}_t (\frac {\dd}2, \dd)\cap F^{(k)}_t(\frac {\dd}2)}\,u\,d\vol&<&+\infty,\label{eqB:008}
  \eeqn where $A_t^{(k)}$ and
 $F^{(k)}_t$ are defined by (\ref{eq:G008})-(\ref{eq:G009}) and $\r_k(x, t)=d(x, \xi_k(t))$.
\end{enumerate}

\end{theo}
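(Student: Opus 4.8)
The plan is to develop the method of Kan--Takahashi~\cite{[KT]} from the case of a single singular curve in Euclidean space to a finite family of Lipschitz curves in a surface whose metric is controlled only through the class $\cM_{k_0, 2}(\rho_0, \Xi_0)$, while keeping every constant depending solely on $\rho_0, \Xi_0, \si, l, \|c\|_{L^\infty}$ and the chosen time interval; this uniformity is exactly what is needed in Section~\ref{sec:construction}, where the estimate is applied along the sequence $\Si_{i, \infty}\to\Si_\infty$. \textbf{Step~1 (localization).} Since each $\cS_t$ is locally finite and the $\xi_k$ are $\si$-Lipschitz, for $\dd$ and $R$ below a threshold depending on $t_1, t_2, \si$ and the configuration of the curves, the set $A^{(k)}_t(\tfrac\dd2, R)\cap F^{(k)}_t(\tfrac\dd2)$ lies in a single ball $\cB_{\rho_0}(\xi_k(t))$ and meets no curve other than $\xi_k$; there the harmonic coordinates of Definition~\ref{defi:003} turn $\Delta_g$ into a uniformly parabolic divergence-form operator $\cL$ with coefficients bounded in $C^{k_0}$ by $\Xi_0$, so one reduces to a model problem near a single moving Lipschitz pole, together with a covering argument to reassemble the finitely many curves.

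\textbf{Step~2 (logarithmic barriers and the upper bound).} Let $p(x, y, \tau)$ be the heat kernel of $\cL$, which by Corollary~\ref{cor:heat} obeys two-sided Gaussian bounds, and set $\Phi_k(x, t):=\int_0^{T_0} p\big(x, \xi_k(t-s), s\big)\,ds$, suitably truncated at scale $R^2$ and at $t=T_1$. Combining the Gaussian bounds with $|\xi_k(t)-\xi_k(s)|\le\si|t-s|$ shows that $\Phi_k$ is comparable to $1+|\log\r_k|$ near $\Ga_k$, uniformly for $t$ in compact subintervals, and that $(\partial_t-\cL)\Phi_k\le C(\si)\Phi_k$ away from $\Ga_k$ in the distributional sense, the Lipschitz motion of the pole producing the term on the right; hence $\sum_k\Phi_k+1$ is a global supersolution of $\partial_t(\cdot)=\cL(\cdot)+\big(C(\si)+\|c\|_{L^\infty}\big)(\cdot)$ behaving like $1+\sum_k|\log\r_k|$. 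I would then first apply the parabolic Harnack inequality---Theorem~\ref{theo:B5}, or Theorem~\ref{theo:LYB3} and Theorem~\ref{theo:LY} when uniformity in the ambient surface is needed---on the fixed annular region $K$ to bound $u$ on $\{\min_k\r_k=2r_1\}$ by $C\|u\|_{L^1(K)}$, and bootstrap from the a priori hypothesis $u\in L^1_{\loc}$ off the curves, using the parabolic mean-value inequality on balls $\cB_{\r/4}(y)$ with $\r=\min_k\r_k(y, t)$ and testing against the logarithmic cutoff of Lemma~\ref{lem:B001} (whose gradient energy is $O(1/|\log\dd|)$), to conclude that the distributional defect $\mu:=(\partial_t-\cL-c)u\ge0$ is a finite measure carried by $\cup_k\Ga_k$ with $\mu\le C\|u\|_{L^1(K)}$. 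Writing $u$ near $\cup_k\Ga_k$ as the $\cL$-heat potential of $\mu$ plus a bounded term governed by the values of $u$ on $\{\min_k\r_k=r_1\}$ and comparing the potential with a multiple of the supersolution above gives
\[
u(x, t)\le C\,\|u\|_{L^1(K)}\Big(1+\sum_{k=1}^l|\log\r_k(x, t)|\Big),\qquad (x, t)\in Q_{r_1, t_1, t_2},
\]
with $C=C(\|c\|_{L^\infty}, \rho_0, \Xi_0, \si, t_1, t_2)$; since $\int_0^{r_1}|\log\r|\,\r\,d\r<\infty$, integrating in $x$ and over $[t_1, t_2]$ yields $u\in L^1_{\loc}(\Si\times(T_1, T_2))$ and the quantitative bound \eqref{eq:L002x}.

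\textbf{Step~3 (the refined integrals), and the main obstacle.} With the pointwise bound just obtained, on $A^{(k)}_t(\tfrac\dd2, R)$ only $|\log\r_k|$ is unbounded, so $\int_{\dd/2}^{R}\frac{|\log\r|}{\r|\log\r|}\,\r\,d\r\le R$ and \eqref{eqB:007} follows on letting $R\to0$; similarly $\tfrac1\dd\int_{\dd/2}^{\dd}|\log\r|\,\r\,d\r=O(\dd|\log\dd|)$ remains bounded as $\dd\to0$, which gives \eqref{eqB:008}, and intersecting with $F^{(k)}_t(\tfrac\dd2)$ only shrinks the domains. \emph{The hard part} is the bootstrap inside Step~2: showing that a merely $L^1_{\loc}$ nonnegative solution cannot grow faster than $\log\tfrac1\r$ near a \emph{moving}, merely Lipschitz singularity, that its defect measure is finite, and that it is controlled quantitatively by the $L^1$ mass of $u$ on a fixed annulus---all with constants depending only on $\rho_0,\Xi_0,\si,l,t_1,t_2$. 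Beyond~\cite{[KT]} the genuinely new difficulties are the non-explicit heat kernel (only Gaussian bounds are available rather than the Gauss--Weierstrass formula), the possibility that several of the $\xi_k$ collide (which I avoid by comparing against the single global supersolution $\sum_k\Phi_k+1$ rather than treating each curve separately), and obtaining uniformity of all the constants; the localization of Step~1 and the elementary one-variable estimates of Step~3 are routine once this is in place.
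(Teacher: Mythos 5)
Your Step 2 contains a genuine gap: the pointwise bound
$u(x, t)\le C\,\|u\|_{L^1(K)}\bigl(1+\sum_k|\log\r_k(x, t)|\bigr)$
on $Q_{r_1, t_1, t_2}$ is in general false, and the argument you give for it does not close. What your Harnack-plus-testing step can plausibly produce is that the distributional defect $\mu=(\partial_t-\cL-c)u$ is a finite positive measure carried by $\cup_k\Ga_k$; but you then want to dominate the $\cL$-heat potential of $\mu$ by a multiple of $\sum_k\Phi_k+1$, and that step requires $\mu$ to be comparable to Lebesgue measure along the curve, which nothing in your argument establishes. If $\mu_k$ has an atom at some $t_0\in(t_1,t_2)$, the potential $U_k(x,t)=\int p(x,\xi_k(s),t-s)\,d\mu_k(s)$ behaves like $p(x,\xi_k(t_0),t-t_0)\sim (t-t_0)^{-1}$ as $(x,t)\to(\xi_k(t_0),t_0^+)$, which is not $O(|\log\r_k|)$. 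The two-sided comparison $U\sim\Phi(x,\xi(t))$ in Lemma~\ref{lemA:001b}(1) is only proved for $\nu$ equal to Lebesgue measure, and Lemma~\ref{lem:estimate_U} makes no such pointwise claim; it proves exactly the \emph{weighted integral} limits (\ref{eq:C003})--(\ref{eq:C004}) for a general $\nu\in (C_0)'$ by the decomposition $\nu((s,t])=D\nu(t)(t-s)-G(s)$ and separate estimation of the three resulting terms. So your Step 3 derivation of (\ref{eqB:007})--(\ref{eqB:008}) rests on a hypothesis you have not, and cannot in general, prove.

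The paper's route therefore differs from yours at the crucial point. After establishing $u\in L^1$ across the curves (Lemmas~\ref{lem:A003b} and \ref{lemA:003b}, via the backward supercaloric function $V=e^{-2v}$, the squeeze function $\zeta$ and the integration-by-parts identity, not via Harnack) and constructing positive measures $\mu_k$ supported on $\Ga_k$ (Lemma~\ref{lemA:003}), the paper writes $u\le \sum_k U_k+f$ with $U_k$ the heat potential of $\mu_k$ and $f$ bounded, and then applies Lemma~\ref{lem:estimate_U} \emph{directly to the weighted integrals of $U_k$} without ever claiming a pointwise bound; the possible concentration of $\mu_k$ in time is averaged out by the $dt$-integration in (\ref{eqB:007})--(\ref{eqB:008}). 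To repair your proof you would need to replace the pointwise comparison in Step~2 by this weighted-integral estimate for heat potentials of arbitrary finite measures on a Lipschitz space-time curve. A secondary, more technical issue: your suggestion to handle colliding curves ``by comparing against the single global supersolution $\sum_k\Phi_k+1$'' does not by itself control the individual $\mu_k$ near a collision time, which is why the paper introduces the master measure $\mu$ in Lemma~\ref{lem:measure} built from $\tilde v=\sum_k v_k$ and proves $0\le\ga^4\mu_k\le\mu$; that uniform bound is what feeds into the representation formula and keeps all constants independent of the particular collision pattern.
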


We sketch the proof of Theorem \ref{theoA:main}. First, we show an asymptotical formula for the heat kernel on a Riemannian manifold in Theorem \ref{theo:heat}. Using this formula, we construct a special function $U_k(x, t)$ for each singular curve $\xi_k$ and a measure $\nu$,  and show that $U_k(x, t)$ behaves like $\log \frac 1{\r_k(x, t)}$ when $x$ is near $\xi_k$ and $\nu$ is the Lebesgue measure in Lemma \ref{lemA:001b}. Moreover, $U_k(x, t)$ satisfies the growth estimates (\ref{eqB:007})-(\ref{eqB:008}) by Lemma \ref{lem:estimate_U}, and  we use $U_k(x, t)$ to construct a function $ v_k$ in Lemma \ref{lemA:001b}, which satisfies the backward heat equation. The function $v_k$ is important to construct some cutoff functions(cf. Definition \ref{defi:002}).
When the singular curves are disjoint, using these cutoff functions we can  show (\ref{eq:L002x}) directly in Lemma \ref{lem:A003b}. When the singular curves are not disjoint, we   show the finiteness of a functional $I$ and use the functional $I$ to show the $L^1$ norm of $u$   (\ref{eq:L002x}) in Lemma \ref{lemA:003b}. By using the functional $I$, we get a positive linear functional $\mu_k$ for each singular curve $\xi_k$ in Lemma \ref{lemA:003}, and by Lemma \ref{lem:measure} $\mu_k$ is uniformly bounded even if the singular curves are not disjoint. Finally, we use $\mu_k$ to construct $U_k$ and show that $u$ is controlled by $U_k$. By the properties of $U_k$, we have that $u$ satisfies  the growth estimates (\ref{eqB:007})-(\ref{eqB:008}).

\subsection{Properties of the heat kernel}
In this subsection, we will  give the expansion of the heat kernel on  Riemannian manifolds. Let $(M, g)$ be a  complete Riemannian manifold (without boundary) of dimension $m$. Suppose that $p(x, y, t)$ is the heat kernel. Then $p(x, y, t)$ has the following asymptotical formula (cf. Theorem 11.1 of \cite{[Li]})
\beq p(x, y, t)\sim (4\pi t)^{-\frac m2}e^{-\frac {d_g^2(x, y)}{4t}} \label{eq:heat}\eeq
as $t\ri 0$ and $d_g(x, y)\ri 0$.   The next result gives more estimates on the asymptotical formula.

\begin{theo}\label{theo:heat}(cf. Theorem 11.1 of \cite{[Li]}, or Theorem 2.30 of \cite{[BGV]}) Let $\rho_0,  \Xi_0>0$ and integers $m\geq 2, k\geq 0$. There exists an integer $k_0=k_0(k)$ depending only on $k$ satisfying the following property.  Let $(M, g)$ be a complete Riemannian manifold of dimension $m$ and $x_0\in M$ with  $\cB_{\rho_0}(x_0)\in \cM_{k_0, m}(\rho_0, \Xi_0)$.  There exists a sequence of smooth functions $\{u_i(x, y)\}$ with $u_0(x, x)=1$ such that for any $x, y\in \cB_{\frac {\rho_0}2}(x_0)$ and $t\in (0, 1]$ we have
\beqn
\Big|p(x, y, t)-(4\pi t)^{-\frac m2}e^{-\frac {d_g^2(x, y)}{4t}}\sum_{i=0}^ku_i(x, y)t^i\Big|&\leq& C(\rho_0, \Xi_0, m)t^{k+1-\frac m2}, \label{eq:heat1}\\
\Big|\Na_xp(x, y, t)-\Na_x\Big((4\pi t)^{-\frac m2}e^{-\frac {d_g^2(x, y)}{4t}}\sum_{i=0}^ku_i(x, y)t^i\Big)\Big|&\leq& C(\rho_0, \Xi_0, m) t^{k-\frac m2}. \label{eq:heat2}
\eeqn

\end{theo}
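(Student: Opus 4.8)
The plan is to reduce the statement to a classical parametrix construction for the heat kernel and then to control all error terms uniformly in terms of the bounds that define the class $\cM_{k_0, m}(\rho_0, \Xi_0)$. First I would fix a cutoff function supported in $\cB_{\rho_0}(x_0)$ and equal to $1$ on $\cB_{3\rho_0/4}(x_0)$, and work entirely inside this ball, where by hypothesis we have harmonic coordinates with two-sided bounds $\Xi_0^{-1}\dd_{ij}\le g_{ij}\le \Xi_0\dd_{ij}$ and $C^{k_0}$ bounds on $g_{ij}$. The classical Hadamard--Minakshisundaram--Pleijel construction produces the formal parametrix
\beq
H_N(x, y, t)=(4\pi t)^{-\frac m2}e^{-\frac {d_g^2(x, y)}{4t}}\sum_{i=0}^N u_i(x, y)t^i,
\eeq
where the coefficients $u_i(x, y)$ are determined recursively by the transport equations along geodesics emanating from $y$; in particular $u_0(x, y)=\Theta(x, y)^{-1/2}$ with $\Theta$ the Jacobian of the exponential map, so $u_0(x, x)=1$, and each $u_i$ is smooth on $\cB_{\rho_0/2}(x_0)\times \cB_{\rho_0/2}(x_0)$ with $C^{1}$ norm bounded in terms of $\rho_0, \Xi_0, m$ and $i$ (this is where $k_0=k_0(k)$ enters: to get $C^1$-control of $u_i$ for $0\le i\le k$ one needs a fixed finite number of derivatives of the metric, and $k_0$ can be taken to be that number). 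Applying the heat operator $\partial_t-\Delta_x$ to $H_N$ gives, by construction, a remainder of the form $(\partial_t-\Delta_x)H_N = -(4\pi t)^{-m/2}e^{-d_g^2/4t}\,(\Delta_x u_N)\, t^N$, which is $O(t^{N-m/2})$ together with its first spatial derivative being $O(t^{N-1-m/2})$, uniformly on $\cB_{\rho_0/2}(x_0)$.

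Next I would invoke Duhamel's principle to pass from the parametrix to the true heat kernel. Writing $p = H_N + (H_N * r_N)$ where $r_N=-(\partial_t-\Delta_x)H_N$ and $*$ denotes the space-time convolution against the heat kernel, and iterating, one obtains the Levi/Volterra series $p = \sum_{j\ge 0} H_N *^{(j)} r_N$. The standard estimate is that $*$-convolving a term that is $O(t^{\al})$ against $r_N=O(t^{N-m/2})$ improves the power of $t$ by $1$ (the Gaussian factors reproduce themselves up to harmless constants controlled by $\Xi_0$), and that the resulting series converges for $t\in(0,1]$ with a remainder $p-H_N$ that is $O(t^{N+1-m/2})$ in $C^0$ and whose gradient is $O(t^{N-m/2})$. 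Choosing $N=k$ then yields exactly \eqref{eq:heat1} and \eqref{eq:heat2}, with constants depending only on $\rho_0, \Xi_0, m$ (and implicitly on $k$). For the gradient estimate \eqref{eq:heat2} I would either differentiate the Duhamel series directly — using that $\Na_x$ of the Gaussian kernel against an $O(t^{\al})$ integrand costs at most one factor of $t^{-1/2}$, which is compensated — or, more cleanly, use interior parabolic Schauder estimates on $p - H_k$ viewed as a solution of $(\partial_t-\Delta_x)(p-H_k)=r_k$ with the already-established $C^0$ bound, on slightly shrunk parabolic cylinders.

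The main obstacle is making every constant genuinely \emph{uniform} over the class $\cM_{k_0, m}(\rho_0, \Xi_0)$ rather than depending on the particular manifold $M$. Two points need care. First, the transport equations for the $u_i$ are solved by integrating along minimizing geodesics inside $\cB_{\rho_0}(x_0)$; one must check that $\cB_{\rho_0/2}(x_0)$ is geodesically star-shaped about each of its points with uniformly non-degenerate exponential map, which follows from the harmonic-radius lower bound and the $C^{k_0}$ metric bounds (these control the Christoffel symbols and hence the geodesic flow), but the bookkeeping is delicate and is the step I expect to spend the most effort on. Second, the convergence of the Duhamel/Levi series must be quantified uniformly: the iterated convolutions pick up combinatorial factors $1/j!$ from the time integrations $\int_0^t\int_0^{t_1}\cdots$, so the series converges, but one must verify that the per-step constant (which absorbs the Gaussian-against-Gaussian bound and the volume comparison $\vol(\cB_r)\le C(\Xi_0, m)r^m$) depends only on $\rho_0, \Xi_0, m$. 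Granting these uniformities — both of which are standard consequences of working in $\cM_{k_0, m}(\rho_0, \Xi_0)$ — the estimates \eqref{eq:heat1} and \eqref{eq:heat2} follow. I would remark that for the application in this paper only $m=2$ and a small fixed $k$ are needed, so one may alternatively cite Theorem~11.1 of \cite{[Li]} or Theorem~2.30 of \cite{[BGV]} and simply note that the proofs there are local and the constants manifestly depend only on the stated geometric bounds.
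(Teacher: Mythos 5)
Your proposal follows the same core strategy as the paper: construct the Hadamard--Minakshisundaram--Pleijel parametrix $G=(4\pi t)^{-m/2}e^{-d_g^2/4t}\sum_{i\le k}u_i t^i$ by solving the transport equations recursively, multiply by a cutoff supported in $\cB_{\rho_0}(x_0)$, and compare with $p$ via Duhamel, controlling everything through the harmonic-coordinate bounds in $\cM_{k_0,m}(\rho_0,\Xi_0)$. The one genuine difference is in how Duhamel is deployed: since $p$ already exists as the true heat kernel, the paper uses a single Duhamel identity
$F(x,y,t)-p(x,y,t)=\int_0^t\!ds\int_M p(z,y,t-s)\big(\Delta_z-\partial_s\big)F(x,z,s)\,dz$
and estimates the right-hand side directly, using $\int_M p\,d\vol\le 1$ and the pointwise bound on $(\Delta-\partial_s)F$; there is no need for the full Levi/Volterra iteration $p=\sum_j H_N*^{(j)}r_N$, which is the \emph{construction} of $p$ from the parametrix and requires the extra convergence bookkeeping you flag. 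For the gradient estimate, the paper avoids Schauder theory entirely: because $\Na_x$ acts in the source variable it commutes with $\Delta_z-\partial_s$, so one differentiates the Duhamel identity directly and gains the expected power of $t$ from the bound $|(\Delta_z-\partial_s)\Na_x F|\le Ct^{k-1-m/2}e^{-d^2/4t}$. Your alternative route via interior parabolic Schauder estimates would also work, but the direct differentiation is cleaner and keeps the dependence of constants on $(\rho_0,\Xi_0,m)$ transparent. The point you raise about uniformity of the $u_i$ over the class $\cM_{k_0,m}$ (via $C^{k_0}$ control of the metric, hence of the transport-equation data) is indeed the content of the requirement $\cB_{\rho_0}(x_0)\in\cM_{k_0,m}(\rho_0,\Xi_0)$, and it is the reason the theorem is stated with a class of metrics rather than a fixed one; the paper records the needed bound $|u_i|+|\Na_y u_i|+|\Na_x\Na_y\Na_y u_i|\le C(\rho_0,\Xi_0,m)$ explicitly.
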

\begin{proof} We follow the argument in  Theorem 11.1 of \cite{[Li]} to prove (\ref{eq:heat1})-(\ref{eq:heat2}). Define the function
$$G(x, y, t)=(4\pi t)^{-\frac m2}e^{-\frac {d_g^2(x, y)}{4t}}\sum_{i=0}^ku_i(x, y)t^i.$$
Direct calculation shows that
\beqs
\Big(\Delta_y-\pd {}t\Big)G&=&(4\pi t)^{-\frac m2}e^{-\frac {d_g^2(x, y)}{4t}}\times \Big(\Big(-\frac {r\Delta_y r}2+\frac {m-1}{2}\Big)\sum_{i=-1}^{k-1}\,u_{i+1}t^i\\
&& -r\sum_{i=-1}^{k-1}\langle \Na r, \Na u_{i+1} \rangle t^i+\sum_{i=0}^k(\Delta_y u_i)t^i-\sum_{i=0}^{k-1}(i+1) u_{i+1} t^i\Big).
\eeqs
For fixed $x$ and $y\in \cB_{\frac {\rho_0}2}(x)$, there exists a sequence of function $\{u_i(x, y)\}$ satisfying
\beqs
\Big(\frac {r\Delta_y r}2-\frac {m-1}{2}\Big)u_0+r\langle \Na r, \Na u_0 \rangle &=&0,\\
\Big(\frac {r\Delta_y r}2-\frac {m-1}{2}\Big)u_{i+1}+r \langle \Na r, \Na u_{i+1} \rangle +(i+1) u_{i+1}&=&\Delta_y u_i, \quad 0\leq i\leq k-1.
\eeqs This implies that
\beq
\Big(\Delta_y-\pd {}t\Big)G=(4\pi t)^{-\frac m2}e^{-\frac {d_g^2(x, y)}{4t}}\Delta_y u_k \,t^k. \label{eq:J007}
\eeq
As in the proof of Theorem 11.1 of \cite{[Li]}, we have that
\beqs
u_0(x, y)&=&C d_g(x, y)^{\frac {m-1}{2}} J^{-\frac 12},\\
u_{i+1}&=& d_g(x, y)^{\frac {m-3-2i}{2}} J^{-\frac 12}\int_0^r\, s^{\frac {2i+1-m}{2}}J^{\frac 12}\Delta u_i\, ds,
\eeqs where $C$ is a constant such that $u_0(x, x)=1$ and  $J(y)$ is the area element of the sphere of radius $d_g(x, y)$ at the point $y$.  There exists integer $k_0$ depending only on $k$  such that under the assumption  $\cB_{\rho_0}(x_0)\in \cM_{k_0, m}(\rho_0, \Xi_0)$,
 for any integer $i\in [0, k]$ we have \beq |u_i(x, y)|+ |\Na_y u_i(x, y)|+ |\Na_x\Na_y\Na_y u_i(x, y)|\leq C(\rho_0, \Xi_0, m), \quad \forall\;x, y\in \cB_{\frac {\rho_0}2}(x_0). \label{theo:heat:001}
 \eeq

Let $\rho=\frac {\rho_0}4$.  Now we choose a cutoff function $\eta(r)$ with $0\leq \eta\leq 1$  such that  $\eta(r)=1$ when $r\leq \rho$ and  $\eta(r)=0$ when $r\geq 2\rho$.  Define $\chi(x, y)=\eta(d_g(x, y))$ and  $F(x, y, t)=\chi(x, y)G(x, y, t)$.  If $d_g(x, y)\leq \rho$ and $t\leq 1$, the identity (\ref{eq:J007}) gives that
\beqs
\Big|\Big(\Delta_y-\pd {}t\Big) F\Big|&=&\Big|\Big(\Delta_y-\pd {}t\Big)G\Big|\leq C(\rho_0, \Xi_0) t^{k-\frac m2}e^{-\frac {d_g^2(x, y)}{4t}}\eeqs
and
\beqs
\Big|\Big(\Delta_y-\pd {}t\Big) \Na_x F\Big|&=&\Big|\Big(\Delta_y-\pd {}t\Big)\Na_x G\Big|\\
&=&\Big| (4\pi t)^{-\frac m2} t^k e^{-\frac {d_g^2(x, y)}{4t}} \Big(-\frac 1{2t}d\Na_xd\Delta_y u_k+\Na_x\Delta_y u_k\Big)\Big|\\
&\leq& C(\rho_0, \Xi_0) t^{k-1-\frac m2}e^{-\frac {d_g^2(x, y)}{4t}},
\eeqs where we used (\ref{theo:heat:001}) in the last inequality.
Similarly, for $\rho\leq d_g(x, y)\leq 2\rho$ we can also check that
 \beqs
\Big|\Big(\Delta_y-\pd {}t\Big) F\Big|&\leq&  C(\rho_0, \Xi_0) t^{-\frac m2-1}e^{-\frac {d_g^2(x, y)}{4t}},\\
\Big|\Big(\Delta_y-\pd {}t\Big) \Na_x F\Big|&\leq&  C(\rho_0, \Xi_0) t^{-2-\frac m2}e^{-\frac {d_g^2(x, y)}{4t}}.
\eeqs
Combining the above estimates, we have
\beqn &&
\Big|F(x, y, t)-p(x, y, t)\Big|\nonumber\\&=&\Big|\int_0^t\,ds\,\int_M\, p(z, y, t-s)\Big(\Delta_z-\pd {}s\Big) F(x, z, s)dz\Big|\nonumber\\
&\leq& C(\rho_0, \Xi_0)\int_0^t\, s^{k-\frac m2}ds \int_{\cB_{\rho}(x)}\,p(z, y, s)\,dz\nonumber\\
&&+C(\rho_0, \Xi_0)\int_0^t\,s^{-\frac m2-1}e^{-\frac {\rho^2}{4s}}ds\int_{\cB_{2\rho}(x)\b \cB_{\rho}(x)}\,p(z, y, s)\,dz\nonumber\\
&\leq&C(\rho_0, \Xi_0, m) t^{k+1-\frac m2}, \label{eq:A003}
\eeqn
where we used the fact that $\int_M\;p(x, y, t)d\vol_y\leq 1.$ Thus,  (\ref{eq:A003}) gives (\ref{eq:heat1}).  Similarly, we can show that
\beqn &&
\Big|\Na_xF(x, y, t)-\Na_x p(x, y, t)\Big|\nonumber\\&=&\Big|\int_0^t\,ds\,\int_M\, p(z, y, t-s)\Big(\Delta_z-\pd {}s)\Na_x F(x, z, s)dz\Big|\nonumber\\
&\leq& C(\rho_0, \Xi_0)\int_0^t\, s^{k-\frac m2-1}ds \int_{\cB_{\rho}(x)}\,p(z, y, s)\,dz\nonumber\\
&&+C(\rho_0, \Xi_0)\int_0^t\,s^{-2-\frac m2}e^{-\frac {\rho^2}{4t}}ds\int_{\cB_{2\rho}(x)\b \cB_{\rho}(x)}\,p(z, y, s)\,dz\nonumber\\
&\leq&C(\rho_0, \Xi_0, m) t^{k-\frac m2}. \label{eq:A0030a}
\eeqn
Thus, (\ref{eq:A0030a})  implies  (\ref{eq:heat2}).
 The theorem is proved.

\end{proof}

As a corollary, we have the following result in dimension two.

\begin{cor}\label{cor:heat}
 Fix $\rho_0,  \Xi_0>0$ and an integer $k_0=k_0(0)$ chosen as in Theorem \ref{theo:heat} for $k=0$.  Let $(\Si^2, g)$ be a complete surface and $x_0\in \Si$ with $\cB_{1}(x_0)\in \cM_{k_0, 2}(\rho_0, \Xi_0)$, there exists a constant $C(\rho_0, \Xi_0)>0$ such that for any $x, y\in \cB_{\frac {\rho_0}2}(x_0)$ and $t\in (0, 1]$ we have
\beqn
 p(x, y, t)&\leq& \Big(1+C(\rho_0, \Xi_0)d_g(x, y)\Big) p_0(x, y, t)+C(\rho_0, \Xi_0), \label{cor:heat:001}\\
p(x, y, t)&\geq&\Big(1-C(\rho_0, \Xi_0)d_g(x, y)\Big) p_0(x, y, t)-C(\rho_0, \Xi_0),\label{cor:heat:002}\\
|\Na_xp(x, y, t)|&\leq& \Big(1+C(\rho_0, \Xi_0)d_g(x, y)\Big) |\Na_x p_0(x, y, t)|+\frac {C(\rho_0, \Xi_0)}{t},\label{cor:heat:003}\\
|\Na_xp(x, y, t)|&\geq&\Big(1-C(\rho_0, \Xi_0)d_g(x, y)\Big) |\Na_x p_0(x, y, t)|-\frac {C(\rho_0, \Xi_0)}{t},\label{cor:heat:004}
\eeqn where $p_0(x, y, t)=\frac 1{4\pi t}e^{-\frac {d_g(x, y)^2}{4t}}. $

\end{cor}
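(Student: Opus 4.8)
The plan is to read both pairs of bounds straight off Theorem~\ref{theo:heat}, specialized to $m=2$ and $k=0$. Note first that the integer $k_0=k_0(0)$ appearing in the hypothesis $\cB_1(x_0)\in\cM_{k_0,2}(\rho_0,\Xi_0)$ is exactly the one produced by Theorem~\ref{theo:heat} for $k=0$, so that theorem applies. With these choices the error exponents become $k+1-\tfrac{m}{2}=0$ and $k-\tfrac{m}{2}=-1$, so for all $x,y\in\cB_{\rho_0/2}(x_0)$ and $t\in(0,1]$ one obtains
\beq
\big|p(x,y,t)-p_0(x,y,t)\,u_0(x,y)\big|\leq C(\rho_0,\Xi_0),\qquad
\big|\Na_xp(x,y,t)-\Na_x\big(p_0(x,y,t)\,u_0(x,y)\big)\big|\leq \frac{C(\rho_0,\Xi_0)}{t},
\eeq
where $u_0(x,y)$ is the leading heat coefficient with $u_0(x,x)=1$ and $p_0(x,y,t)=\frac{1}{4\pi t}e^{-d_g(x,y)^2/4t}$. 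Since $p_0$ is an explicit Gaussian, the corollary reduces to controlling the factor $u_0$ and its first derivative.

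The one auxiliary estimate I would prove is $|u_0(x,y)-1|\leq C(\rho_0,\Xi_0)\,d_g(x,y)$ together with $|\Na_xu_0(x,y)|\leq C(\rho_0,\Xi_0)$. The first follows from $u_0(x,x)=1$ and the gradient bound $|\Na u_0|\leq C(\rho_0,\Xi_0)$ recorded in (\ref{theo:heat:001}): integrating $\Na u_0$ along the minimizing geodesic joining $x$ to $y$ (of length $d_g(x,y)$, and contained in the harmonic ball after shrinking the working radius slightly if needed) gives the claimed Lipschitz bound. Alternatively one uses the explicit formula $u_0(x,y)=C\,d_g(x,y)^{(m-1)/2}J^{-1/2}$ with $m=2$, together with the metric bounds built into membership in $\cM_{k_0,2}(\rho_0,\Xi_0)$, which also yields $|\Na_xu_0|\leq C(\rho_0,\Xi_0)$.

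Granting this, the four inequalities are a short computation. From $p_0\geq 0$ and $|u_0-1|\le C d_g(x,y)$ one gets $(1-Cd_g)p_0\le p_0u_0\le(1+Cd_g)p_0$, and adding the first displayed error bound yields (\ref{cor:heat:001})--(\ref{cor:heat:002}). For the gradient I would use the product rule $\Na_x(p_0u_0)=u_0\,\Na_xp_0+p_0\,\Na_xu_0$, bound the first term by $(1\pm Cd_g)\,|\Na_xp_0|$ via $|u_0|\le 1+Cd_g$, and bound the second by $\tfrac{1}{4\pi t}\,|\Na_xu_0|\le C/t$ via $p_0\le\tfrac{1}{4\pi t}$; adding the second displayed error bound (another $C/t$) gives (\ref{cor:heat:003})--(\ref{cor:heat:004}). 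The only genuinely delicate point is the passage from the hypothesis on the metric — control of $g$ and finitely many of its derivatives in harmonic coordinates — to the uniform bounds on $u_0$ and $\Na u_0$, but this is precisely the content of (\ref{theo:heat:001}), already established in the proof of Theorem~\ref{theo:heat}, so here it can simply be quoted; everything else is routine manipulation of the Gaussian $p_0$.
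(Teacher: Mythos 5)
Your proposal is correct and takes essentially the same route as the paper: both apply Theorem~\ref{theo:heat} with $k=0$ (and $m=2$), and combine the resulting $O(1)$ and $O(1/t)$ error bounds with the Lipschitz estimate $|u_0(x,y)-1|\le C(\rho_0,\Xi_0)\,d_g(x,y)$, which follows from $u_0(x,x)=1$ and the gradient bound on $u_0$ in (\ref{theo:heat:001}). Your write-up is in fact slightly more explicit than the paper's two-line proof, since it also records the bound $|\Na_x u_0|\le C(\rho_0,\Xi_0)$ needed in the product-rule step for the gradient inequalities (\ref{cor:heat:003})--(\ref{cor:heat:004}).
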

\begin{proof}By (\ref{theo:heat:001}), for any $x, y\in \cB_{\frac {\rho_0}2}(x_0)$ we have
\beq
|u_0(x, y)-1|\leq \sup_{\cB_{\frac {\rho_0}2}(x_0)}|\Na_y u_0|\cdot\,d_g(x, y)\leq C(\rho_0, \Xi_0)\,d_g(x, y).\label{eq:K004}
\eeq Applying  Theorem \ref{theo:heat} for $k=0$ and using (\ref{eq:K004}), we have (\ref{cor:heat:001})-(\ref{cor:heat:004}).
The corollary is proved.

\end{proof}

\subsection{Properties of a  solution with time-dependent singularities}
In this subsection, we follow the arguments in Section 3 of \cite{[KT]} to discuss a solution of the linear equation on $(\Si, g)$
\beq
\pd {f(x, t)}t=\Delta f(x, t)+\dd_{\xi(t)}\otimes \nu,  \label{eq:Ut}
\eeq where $\Si$ is a complete two-dimensional surface.
Here we assume that $\xi: (\un T, \bar T)\ri \Si$ is a $\si$-Lipschitz curve with $-\infty< \un T<\bar T< +\infty
$, $\dd_{\xi(t)}$ denotes the Dirac function with the pole $\xi(t)$ and $\nu\in (C_0((\un T, \bar T)))'$.
For $0<r< +\infty$ and $\un T\leq \un t<\bar t\leq \bar T$, we set
\beqn
\Ga_{\un t, \bar t}&=&\{(\xi(t),t)\in \Si\times \RR, t\in (\un t, \bar t) \},\label{eqE:001}\\Q_{r, \un t, \bar t}&=&\{(x, t)\in \Si\times \RR, x\in \cB_r(\xi(t))\subset \Si, t\in (\un t, \bar t)\}. \label{eqE:002}
\eeqn
We say that $f(x, t)$ is a solution of (\ref{eq:Ut}) if for any $\varphi\in C_0^{\infty}(Q_{\infty, \un T, \bar T}),$
\beqn
\int_{Q_{\infty, \un T, \bar T}}\,f(x, t)\Big(-\pd {\varphi(x, t)}t-\Delta \varphi(x, t)\Big)\,d\vol\, dt=\int_{\un T}^{\bar T}\,\varphi(\xi(s), s)d\nu(s). \label{eq:E1}
\eeqn
 We define the function $U(x, t)$ by
\beq
U(x, t)=\int_{\un T}^t\,p(x, \xi(s), t-s)\,d\nu(s), \label{Sec4.2:003}
\eeq where $p(x, y, t)$ is the heat kernel of $(\Si, g)$. Then $U(x, t)$ satisfies (\ref{eq:E1})(cf. \cite{[KT]}).
Moreover, we define
\beq \Phi(x, y)=\frac 1{2\pi}\log \frac 1{d_{g}(x, y)},\quad \r(x, t)=d_g(x, \xi(t)). \label{eq:rk}\eeq

Following the  argument  in \cite{[KT]} and using Theorem \ref{theo:heat}, we have
\begin{lem}\label{lemA:001b}(cf. \cite{[KT]})  Let $\xi: (\un T, \bar T)\ri \Si$ be a $\si$-Lipschitz curve and $\un T<\un t<\bar t<\bar T$. Assume that for any $t\in (\un T, \bar T)$ the ball  $\cB_{1}(\xi(t))$ is in $ \cM_{k_0, 2}(\rho_0, \Xi_0) $ as in Corollary \ref{cor:heat}.
Then we have \begin{enumerate}
  \item[(1).] Assume that $\nu$ is the Lebesgue measure.  For any
  $\ee>0$, there exists $r_0=r_0(\ee, \si, \Xi_0, \rho_0, \un T, \bar T)$ such that if $\r(x, t)\leq r_0$ and $t\in (\un t, \bar t)$, then we have
\beqn
(1-\ee)\Phi(x, \xi(t)) &\leq& U(x, t)\leq (1+\ee)\Phi(x, \xi(t)),\label{eq:J003}\\
(1-\ee)|\Na \Phi(x, \xi(t))|&\leq& |\Na U(x, t)|\leq (1+\ee)|\Na \Phi(x, \xi(t))|.\label{eq:J004}
\eeqn

\item[(2).]   For any $\ga\in (\frac 12, 1)$, there exist  constants $r_0=r_0(\rho_0, \Xi_0, \si, \un T, \bar T, \ga)\in (0, 1) $ and a function $v\in C^{\infty}(\overline{Q_{1, \un t, \bar t}}\b \overline{\Ga_{\un t, \bar t}})$ satisfying
\beq
\pd vt+\Delta v=0,\quad \hbox{in}\quad Q_{1, \un t, \bar t}\b \Ga_{\un t, \bar t} \label{eqA:005}
\eeq
such that for all $(x, t)\in Q_{r_0, \un t, \bar t}\b \Ga_{\un t, \bar t}$ the following inequalities hold:
\beqn
\ga \log\frac 1{\r(x, t)}&\leq& v(x, t)\leq \log \frac 1{\r(x, t)},\label{eqA:003}\\
 \ga \;\r(x, t)^{-1}&\leq& |\Na v(x, t)|\leq  \r(x, t)^{-1}. \label{eqA:004}
\eeqn

\end{enumerate}

\end{lem}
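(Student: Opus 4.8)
The statement to prove is Lemma \ref{lemA:001b}, which asserts that the function $U(x,t)$ built from the heat kernel behaves asymptotically like the logarithmic Green's function $\Phi(x,\xi(t))$ near the moving singularity, and that one can construct a solution $v$ of the backward heat equation with comparable logarithmic growth. This is modeled on Kan--Takahashi, so the plan is to follow their argument while replacing the Euclidean heat kernel by the Riemannian one, using Corollary \ref{cor:heat} as the substitute for explicit Gaussian estimates.

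\textbf{Proof plan for part (1).} The plan is to split the defining integral $U(x,t)=\int_{\un T}^t p(x,\xi(s),t-s)\,ds$ at a small time scale. First I would fix $\ee>0$ and choose $r_0$ small; for $\r(x,t)\le r_0$ the relevant contribution comes from $s$ close to $t$, where $d_g(x,\xi(s))$ is small (using that $\xi$ is $\si$-Lipschitz, so $d_g(x,\xi(s))\le \r(x,t)+\si|t-s|$). Substituting Corollary \ref{cor:heat}, $p(x,\xi(s),t-s)=(1+O(d_g))p_0(x,\xi(s),t-s)+O(1)$, and then one compares $\int_{\un T}^t p_0(x,\xi(s),t-s)\,ds$ with the Euclidean computation: $\int_0^\infty \frac{1}{4\pi\tau}e^{-\frac{r^2}{4\tau}}\,d\tau$ diverges logarithmically at $\tau\to\infty$, but the truncation at $\tau=t-\un T$ together with the lower cutoff induced by $r=\r$ produces exactly $\frac{1}{2\pi}\log\frac{1}{\r}+O(1)$. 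The Lipschitz wobble of $\xi(s)$ only perturbs the exponent by a lower-order amount that can be absorbed into the $\ee$ for $r_0$ small. The gradient estimate \eqref{eq:J004} follows the same way using \eqref{cor:heat:003}--\eqref{cor:heat:004}, noting $|\Na_x p_0|=\frac{d_g}{2t}\,p_0$ and that the $O(1/t)$ error terms are negligible against $|\Na\Phi|\sim \r^{-1}$ when $\r$ is small.

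\textbf{Proof plan for part (2).} Here I would mimic Kan--Takahashi's trick of taking $v$ to be a time-reversed analogue: set $v(x,t):=\int_t^{\bar t} p(x,\xi(s),s-t)\,ds$ (or a suitable variant), which solves the backward heat equation $\p_t v+\Delta v=0$ away from $\Ga$ because it is built from the adjoint heat flow, hence smooth on $\overline{Q_{1,\un t,\bar t}}\setminus\overline{\Ga_{\un t,\bar t}}$. Near the singular curve the same splitting as in part (1) gives $v(x,t)=\frac{1}{2\pi}\log\frac{1}{\r}\cdot 2\pi+O(1)=\log\frac{1}{\r}+O(1)$, and since the lower bound $\log\frac{1}{\r}\to+\infty$ we can absorb the $O(1)$ into the factor $\ga$ by shrinking $r_0$: for $\r$ small enough $\ga\log\frac{1}{\r}\le \log\frac1\r+O(1)\le \log\frac1\r$, after possibly rescaling $v$ by a constant slightly less than one. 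The gradient bounds \eqref{eqA:004} follow from the gradient version of Corollary \ref{cor:heat} exactly as in part (1).

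\textbf{Main obstacle.} The delicate point is tracking the constants uniformly in $t\in(\un t,\bar t)$ and controlling the error from the moving base point $\xi(s)$ inside the heat kernel asymptotics: Corollary \ref{cor:heat} is only valid for $x,y$ in a fixed small ball $\cB_{\rho_0/2}(x_0)$ and for $t\in(0,1]$, so I must first restrict to $\r(x,t)<r_0\le\rho_0/2$ and split off the part of the $s$-integral with $t-s>1$, showing that tail is bounded (using $\int_M p(x,y,t)\,d\vol_y\le 1$ and parabolic smoothing to get it bounded and even continuous in $x$). The Lipschitz-in-time nature of $\xi$ means $d_g(x,\xi(s))^2$ differs from $\r(x,t)^2$ by terms of size $\si|t-s|(\r+\si|t-s|)$; showing this perturbation of the Gaussian exponent contributes only an $\ee$-error to the logarithmic principal term — uniformly down to $\r\to0$ — is the part requiring the most care, and is precisely where the choice $r_0=r_0(\ee,\si,\Xi_0,\rho_0,\un T,\bar T)$ gets pinned down.
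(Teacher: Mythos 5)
Your proposal is correct and takes essentially the same approach as the paper. In particular, your formula $v(x,t)=\int_t^{\bar t}p(x,\xi(s),s-t)\,ds$ (modulo a multiplicative constant $k$ and the upper endpoint of the time interval) is exactly the paper's time-reversal trick $v(x,t)=kU(x,\underline T+\bar T-t;\tilde\xi,\nu)$ with $\tilde\xi(t)=\xi(\underline T+\bar T-t)$, and your part (1) plan — splitting the $s$-integral at a small time scale $\delta$, invoking Corollary \ref{cor:heat} to reduce to $p_0$, and using the $\sigma$-Lipschitz bound to control the perturbation of $\rho(x,s)^2$ relative to $\rho(x,t)^2$ — matches the paper's argument via the quadratic inequality and the asymptotics $\lim_{r\to0}(\log\frac1r)^{-1}S_2(r)=\frac{1}{2\pi}$.
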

\begin{proof}The proof is almost the same as  that of Proposition 3.1, Proposition 3.3 and Lemma 4.1 in \cite{[KT]}, and we sketch some details here.
For $r>0, \bb>0$ and $\dd>0$, we define
$$S_{\bb}(r)=\pi^{-1}\int_0^{\dd}\,(4s)^{-\frac {\bb}2} e^{-\frac {r^2}{4s}}\,ds.$$
Since $\xi$ is $\si$- Lipschitz continuous, we have
\beq
|\r(x, t)-\r(x, s)|\leq \si |t-s|. \label{eq:J005}
\eeq Thus, for any $c>0$ we have
\beq
\r(x, s)^2\leq (1+c)\r(x, t)^2+\Big(1+\frac 1c\Big)\si^2|t-s|^2.
\eeq
This implies that
\beq
\r(x, s)^2\geq \frac 1{1+c}\r(x, t)^2-\frac 1{c}\si^2|t-s|^2.\label{eqn:001}
\eeq
Combining this with Corollary  \ref{cor:heat}, we have
\beqs &&
\int_{t-\dd}^t\, p(x, \xi(s), t-s)\,ds\\&\leq& \Big(1+C(\rho_0, \Xi_0)(\r(x, t)+\si \dd)\Big)e^{\frac {\si^2\dd}{4c}}\int_{t-\dd}^t\,
\frac 1{4\pi (t-s)}\,e^{-\frac {\r(x, t)^2}{4(1+c)(t-s)}}\,ds+C(\rho_0, \Xi_0, \dd)\\
&=&\Big(1+C(\rho_0, \Xi_0)(\r(x, t)+\si \dd)\Big)e^{\frac {\si^2\dd}{4c}} S_2\Big(\frac {\r(x, t)}{\sqrt{1+c}}\Big)+C(\rho_0, \Xi_0, \dd).
\eeqs
Choosing the constant $c=\sqrt{\dd}$, we have
\beqs
U(x, t)&=&\Big(\int_{t-\dd}^t+\int_{\un  T}^{t-\dd}\Big)\, p(x, \xi(s), t-s)ds\\
&\leq &\Big(1+C(\rho_0, \Xi_0)(\r(x, t)+\si \dd)\Big)e^{\frac {\si^2 \sqrt{\dd}}4} S_2\Big(\frac {\r(x, t)}{\sqrt{1+\sqrt{\dd}}}\Big)+C(\rho_0, \Xi_0, \dd, \un T, \bar T).
\eeqs
Note that $\lim_{r\ri 0}(\log \frac 1r)^{-1}S_2(r)=\frac {1}{2\pi}$. Therefore, for any $\ee>0$ there exists $r_0=r_0(\ee, \si, \Xi_0, \rho_0, \un T, \bar T)$ such that for any  $x$ with $\r(x, t)\leq r_0$ we have
$$\frac {U(x, t)}{\Phi(x, \xi(t))}\leq 1+\ee. $$
Similarly, we can show that
$\frac {U(x, t)}{\Phi(x, \xi(t))}\geq 1-\ee $
when $\r(x, t)$ is small. Thus, (\ref{eq:J003}) is proved.  Similarly, we can use (\ref{cor:heat:003}) and (\ref{cor:heat:004}) of Corollary \ref{cor:heat} to estimate $|\Na U|$.

To prove part (2), we denote by $U(x, t; \xi, \nu)$ the function (\ref{Sec4.2:003}) constructed by $\xi(t)$ and the measure $\nu.$ We define $\td \xi(t)=\xi(\un T+\bar T-t)$ and let $\nu$ be the Lebesgue measure. Then the function $v(x, t)=k U(x, \un T+\bar T-t; \td \xi, \nu)$ satisfies the properties in part (2) by choosing some $k>0$.
See Lemma 4.1 of \cite{[KT]} for  details.
\end{proof}

Using Corollary \ref{cor:heat}, we have the following result.

\begin{lem}\label{lem:estimate_U} Let $\xi: (\un T, \bar T)\ri \Si$ be a $\si$-Lipschitz curve and $\un T<\un t<\bar t<\bar T$. Assume that for any $t\in (\un T, \bar T)$ the ball  $\cB_{1}(\xi(t))$ is in $ \cM_{k_0, 2}(\rho_0, \Xi_0) $ as in Corollary \ref{cor:heat}.  Let $\nu\in (C_0((\un T, \bar T)))'$ and $U(x, t)$ be the function defined by (\ref{Sec4.2:003}). Then for $\un T<t_1<t_2<\bar T$ we have
 \beqn
  \lim_{R\ri 0}\lim_{\dd\ri 0}\int_{t_1}^{t_2}\,dt\int_{A_t(\dd, R)}\,\frac {U(x, t)}{\r(x, t)|\log \r(x, t)|}&=&0,\label{eq:C003}\\
 \lim_{\dd\ri 0}\frac 1{\dd}\int_{t_1}^{t_2}\,dt\int_{A_t(\frac {\dd}2, \dd)}\,U(x, t)&=&0, \label{eq:C004}
  \eeqn where $A_t(\dd, R)=\{x\in \Si\, |\, \dd<\r(x, t)<R\}.$
\end{lem}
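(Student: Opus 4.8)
The plan is to reduce the two integral estimates \eqref{eq:C003}--\eqref{eq:C004} to explicit computations involving the model function $\Phi(x,\xi(t))=\frac{1}{2\pi}\log\frac{1}{\r(x,t)}$, using the comparison in part (1) of Lemma \ref{lemA:001b} together with the heat-kernel expansion of Corollary \ref{cor:heat}. First I would observe that it suffices to bound $U(x,t)$ from above near the singular curve. By Corollary \ref{cor:heat}, for $\r(x,t)$ small, $p(x,\xi(s),t-s)\le \bigl(1+C\,d_g\bigr)p_0(x,\xi(s),t-s)+C$, where $p_0(x,y,\tau)=\frac{1}{4\pi\tau}e^{-d_g(x,y)^2/(4\tau)}$. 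Splitting the defining integral \eqref{Sec4.2:003} into $\int_{t-\dd_0}^t$ (the near part, controlled by the Gaussian contribution) and $\int_{\un T}^{t-\dd_0}$ (the far part, where $p$ is bounded since $t-s$ is bounded below and $\nu$ has finite total mass on $[\un T,\bar T]$ as an element of $(C_0)'$), I get, using the Lipschitz estimate \eqref{eq:J005} and \eqref{eqn:001} to replace $\r(x,s)$ by $\r(x,t)$ up to controlled errors, the pointwise bound
\beq
U(x,t)\le C\Bigl(|\nu|\bigr)\,\Bigl(\log\frac{1}{\r(x,t)}+1\Bigr)\le C'\,\log\frac{1}{\r(x,t)}\nonumber
\eeq
for $(x,t)\in Q_{r_0,t_1,t_2}$ with $r_0=r_0(\rho_0,\Xi_0,\si,\un T,\bar T)$ small; here I use $S_2(r)\sim\frac{1}{2\pi}\log\frac1r$ as $r\to0$ exactly as in the proof of Lemma \ref{lemA:001b}.

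Given this bound, \eqref{eq:C003} reduces to showing
\beq
\lim_{R\to0}\lim_{\dd\to0}\int_{t_1}^{t_2}dt\int_{A_t(\dd,R)}\frac{\log\frac1{\r}}{\r\,|\log\r|}\,d\vol=\lim_{R\to0}\lim_{\dd\to0}\int_{t_1}^{t_2}dt\int_{A_t(\dd,R)}\frac{1}{\r}\,d\vol,\nonumber
\eeq
and since $A_t(\dd,R)$ is contained in the geodesic annulus of radii $\dd,R$ around $\xi(t)$, the coarea formula (or polar coordinates in the harmonic chart, using $\Xi_0^{-1}\dd_{ij}\le g_{ij}\le\Xi_0\dd_{ij}$) gives $\int_{A_t(\dd,R)}\r^{-1}\,d\vol\le C(\Xi_0)\int_\dd^R\frac1s\cdot s\,ds=C(\Xi_0)(R-\dd)$, which is independent of $\dd$ and tends to $0$ as $R\to0$; integrating in $t$ over $[t_1,t_2]$ only multiplies by $t_2-t_1$. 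Similarly \eqref{eq:C004} reduces to
\beq
\lim_{\dd\to0}\frac1{\dd}\int_{t_1}^{t_2}dt\int_{A_t(\frac\dd2,\dd)}\log\frac1{\r}\,d\vol,\nonumber
\eeq
and again by polar coordinates $\int_{A_t(\dd/2,\dd)}\log\frac1\r\,d\vol\le C(\Xi_0)\int_{\dd/2}^\dd s\log\frac1s\,ds\le C(\Xi_0)\,\dd^2\log\frac1\dd$, so $\frac1\dd$ times the $t$-integral is $\le C(\Xi_0)(t_2-t_1)\,\dd\log\frac1\dd\to0$. In both cases one must be slightly careful that the constant $r_0$ from the expansion is fixed first, then $R<r_0$, then $\dd<R$, so that the pointwise bound on $U$ is valid on the whole annulus of integration; this ordering matches the nested limits in the statement.

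The main obstacle is not the elementary volume integrals but rather making the reduction to the model function uniform and rigorous: one must check that the constant multiplying $\log\frac1{\r(x,t)}$ in the upper bound for $U$ is genuinely independent of $t\in[t_1,t_2]$ and of the point $x$, which requires that $\cB_1(\xi(t))\in\cM_{k_0,2}(\rho_0,\Xi_0)$ uniformly in $t$ (assumed), and that the Lipschitz constant $\si$ enters only through the factor $e^{\si^2\sqrt\dd/4}\to1$ as the splitting parameter $\dd_0\to0$. A secondary technical point is handling the far part $\int_{\un T}^{t-\dd_0}p(x,\xi(s),t-s)\,d\nu(s)$: since $\nu$ is only a distribution in $(C_0((\un T,\bar T)))'$ one should pair it against the smooth (in $s$, for $x$ fixed and $t-s\ge\dd_0$) function $s\mapsto p(x,\xi(s),t-s)$ and use that this function together with its derivatives is bounded by $C(\rho_0,\Xi_0,\dd_0)$ to conclude the far part contributes at most $C(\rho_0,\Xi_0,\dd_0,|\nu|)$, a bounded quantity absorbed into the ``$+1$'' above. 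Once these uniformity issues are dispatched, the remainder is the routine polar-coordinate estimates sketched above, and the lemma follows.
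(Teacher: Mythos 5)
The pointwise bound $U(x,t)\le C(|\nu|)\log\frac{1}{\r(x,t)}$ that your whole argument rests on is false for general $\nu\in(C_0((\un T,\bar T)))'$, and this is where the gap lies. The two-sided comparison $U\asymp\Phi$ in part (1) of Lemma~\ref{lemA:001b} is stated only for $\nu$ equal to Lebesgue measure; for a singular $\nu$ the near part of your splitting misbehaves. Take for instance $\nu=\delta_{t_0}$. Then $U(x,t)=p(x,\xi(t_0),t-t_0)$ for $t>t_0$, and choosing $x$ so that $\r(x,t)^2\approx t-t_0$ gives $U(x,t)\sim\r(x,t)^{-2}$, which is not $O(\log\frac{1}{\r})$. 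More generally, bounding $\int_{t-\dd_0}^t p_0(x,\xi(s),t-s)\,d\nu(s)$ by $C\log\frac1\r$ would require $\sup_{s<t}\nu((s,t))/(t-s)$ to be finite, i.e.\ $\nu$ to have a bounded upper derivative, which a general Radon measure does not. So the reduction to the model integrals of $\log\frac1\r$ and $\frac1\r$ is unjustified, even though those polar-coordinate computations themselves are fine.

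The paper's proof avoids any pointwise bound on $U$: after reducing to $U_0=\int p_0\,d\nu$ via Corollary~\ref{cor:heat}, it applies a Lebesgue-differentiation decomposition $\nu((s,t])=D\nu(t)(t-s)-G(s)$ near each $t$, splits $U_0=I_1+I_2+I_3$ with $I_2=D\nu(t)\int_{t-\la}^t p_0$ and $I_3=\int_{t-\la}^t p_0\,dG$, and estimates the \emph{space-time integrals} of each piece directly. The point is that the $\log\frac1\r$ factor appears in $I_2$ multiplied by the density $D\nu(t)$, so that $\int_{t_1}^{t_2}dt$ becomes $\int d\nu(t)$ and one recovers $\nu((t_1,t_2))(R-\dd)$ rather than a divergent expression; and $I_3$ carries the small coefficient $\sup_{(t-\la,t)}|G(s)|/(t-s)$ (which vanishes a.e.\ as $s\to t^-$) against the $\log$. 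Both pieces then give the $(R-\dd)$ decay after the polar-coordinate computation. If you want to salvage a cleaner argument, you would need to first restrict to the absolutely continuous part of $\nu$ (where your pointwise bound does hold) and handle the singular part by estimating the $t$-integral first, essentially reconstructing the $I_1,I_2,I_3$ splitting; as written, the blind appeal to Lemma~\ref{lemA:001b}(1) does not close the argument.
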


\begin{proof}We follow the arguments  in the proof of Proposition 3.3 in \cite{[KT]}. Without loss of generality, we can assume that the curve $\xi(s)(s\in (\un T, \bar T))$ is contained in $\cB_{\frac 12\rho_0}(x_0)$ for some $x_0\in \Si$ and $\cB_{\rho_0}(x_0)\in \cM_{k_0, 2}(\rho_0, \Xi_0)$.
Corollary \ref{cor:heat} gives that for any $x\in \cB_{\frac 12\rho_0}(x_0)$ and $t\in (\un T, \bar T)$,
\beqs U(x, t)&\leq &\int_{\un T}^t\,\Big(C(\rho_0, \Xi_0)p_0(x, \xi(s), t-s)+C(\rho_0, \Xi_0)\Big)\,d\nu\\
&= & C(\rho_0, \Xi_0)U_0(x, t)+C(\rho_0, \Xi_0)(\bar T-\un T),\eeqs
where $U_0$ is defined by
$$U_0(x, t)=\int_{\un T}^t\,p_0(x, \xi(s), t-s)\,d\nu.$$
Thus, it suffices to show (\ref{eq:C003})-(\ref{eq:C004}) for $U_0(x, t)$.

 For $t\in (\un T, \bar T)$ with $|D\nu|<+\infty$ we write
$$\nu((s, t])=D\nu(t)(t-s)-G(s),\quad T_1<s<t,$$
where $G(s)$ satisfies
$\lim_{s\ri t^-}\frac {G(s)}{t-s}=0$ for a.e. $t\in (\un T, \bar T)$.
Let $\la\in (0, t-\un t)$. Note that $U_0$ can be written as
\beqs
U_0(x, t)&=&\int_{\un T}^{t-\la}\, p_0(x, \xi(s), t-s)\,d\nu(s)+D\nu(t)\int_{t-\la}^t\,
p_0(x, \xi(s), t-s)\,ds\\&&+\int_{t-\la}^t\,p_0(x, \xi(s), t-s)dG(s)\\
&=:&I_1+I_2+I_3.
\eeqs
By Theorem \ref{theo:heat}  $I_1$ satisfies
$I_1\leq \frac 1{ 4\pi \la}\nu((\un T, t-\la))<+\infty$. Thus,
we have
\beqn &&
\int_{t_1}^{t_2}\,dt\int_{A_t(\dd, R)}\,\frac {I_1(x, t)}{\r(x, t)|\log \r(x, t)|}\,d\vol\nonumber\\&\leq& \frac 1{ 4\pi \la}\nu(( \un T, \bar T))\int_{t_1}^{t_2}\,dt\int_{A_t(\dd, R)}\,\frac 1{\r(x, t)|\log \r(x, t)|}\,d\vol\nonumber\\
&=&\frac 1{ 2 \la}(t_2-t_1)\nu((\un T, \bar T))\int_{\dd}^R\, \frac 1{|\log \r|}\,d\r\nonumber\\
&\leq&\frac 1{ 2\la}(t_2-t_1)\nu((\un T, \bar T))(R-\dd), \label{eqG:005}
\eeqn where we assumed that $R$ is  small such that $|\log \r|\geq 1$ for any $\r\in (0, R)$. Moreover, we have
\beq
\frac 1{\dd}\int_{t_1}^{t_2}\,dt\int_{A_t(\frac {\dd}2, \dd)}\,I_1(x, t)\,d\vol\leq\frac {C_1}{  \la}(t_2-t_1)\nu((\un T, \bar T))\dd,
\eeq where $C_1$ is a universal constant.
Next, we estimate $I_2$. Using Corollary  \ref{cor:heat}, (\ref{eqn:001}) and integration by parts we have
\beqn &&
\int_{t-\la}^t\,
p_0(x, \xi(s), t-s)\,ds\nonumber\\&\leq &\int_0^{\la}\,\frac 1{ 4\pi\tau}e^{-\frac {\r(x, s)^2}{4\tau}}\,d\tau\leq e^{\frac {\si^2\la}{4c}}\int_0^{\la}\,\frac 1{ 4\pi\tau}e^{- \frac {\r(x, t)^2}{4(1+c)\tau}}\,d\tau\nonumber\\&\leq&\frac 1{4\pi} e^{\frac {\si^2\la}{4c}}\Big(\log \frac {4(1+c)\la}{\r(x, t)^2}e^{-\frac {r^2}{4(1+c)\la}}+\int_{\frac {\r(x, t)^2}{4(1+c)\la}}^{\infty}\,e^{-z}\log z\,dz \,\Big)\nonumber\\
&\leq& C_2|\log \r(x, t)|+C_2, \label{eq:Z004}
\eeqn where we can choose $c=1$ and $C_2$ is a constant depending on $\si$ and $\la.$
Therefore, we have
\beqn &&
\int_{t_1}^{t_2}\,dt\int_{A_t(\dd, R)}\,\frac {I_2(x, t)}{\r(x, t)|\log \r(x, t)|}\,d\vol \nonumber\\&\leq&C_2 \int_{t_1}^{t_2}\,d\nu(t) \int_{\dd}^R\,\frac 1{\r|\log \r|}\Big(|\log \r|+1\Big)\,\r d\r\nonumber\\
&\leq &2C_2\cdot (R-\dd)\nu((t_1, t_2))
\eeqn and
\beq
\frac 1{\dd}\int_{t_1}^{t_2}\,dt\int_{\frac {\dd}2<\r(x, t)< \dd}\,I_2(x, t)\,d\vol\leq C_2\cdot (\dd+\dd|\log \dd| )\nu((t_1, t_2)).
\eeq
Finally, we estimate $I_3.$
Using the inequality (\ref{eqn:001}) for $c=1$ and  integration by parts,  we have
\beqs
&&\int_{t-\la}^t\, p_0(x, \xi(t), t-s)d|G(s)|
\\&\leq & \frac 1{4\pi}e^{\frac {\si^2\la}{4}}\int_{t-\la}^t\, \frac 1{t-s}e^{-\frac {\r(x, t)^2}{8(t-s)}}\\
&\leq&\frac 1{4\pi}e^{\frac {\si^2\la}{4}}\Big(-\frac 1{\la}e^{-\frac {\r(x, t)^2}{8\la}}+\int^t_{t-\la}\,|G(s)|\Big(\frac 1{(t-s)^2}+\frac {\r(x, t)^2}{8(t-s)^3}\Big)e^{-\frac {\r(x, t)^2}{8(t-s)}}\Big)\,ds\\
&\leq&\frac 1{4\pi}e^{\frac {\si^2\la}{4}}\sup_{(t-\la, t)}\frac {|G(s)|}{t-s}\cdot \int^t_{t-\la}\, \Big(\frac 1{(t-s)}+\frac {\r(x, t)^2}{8(t-s)^2}\Big)e^{-\frac {\r(x, t)^2}{8(t-s)}}\,ds\\
&\leq&C_3\sup_{(t-\la, t)}\frac {|G(s)|}{t-s}\cdot |\log \r(x, t)|,
\eeqs where $C_3$ depends on $\si$ and $\la$.
Thus,  we have
\beqn &&
\int_{t_1}^{t_2}\,dt\int_{A_t(\dd, R)}\,\frac {|I_3(x, t)|}{\r(x, t)|\log \r(x, t)|}\,d\vol \nonumber\\&\leq&
C(\si, \la)\sup_{(t-\la, t)}\frac {|G(s)|}{t-s}\int_{t_1}^{t_2}\,dt\int_{A_t(\dd, R)}\,\frac 1{\r(x, t)}\,d\vol\nonumber\\
&\leq &C(\si, \la)\sup_{(t-\la, t)}\frac {|G(s)|}{t-s}\cdot (R-\dd)(t_2-t_1)
\eeqn and
\beq \frac 1{\dd}\int_{t_1}^{t_2}\,dt\int_{\frac {\dd}2<\r(x, t)< \dd}\,|I_3(x, t)|\,d\vol
\leq C(\si, \la)\sup_{(t-\la, t)}\frac {|G(s)|}{t-s}\cdot (t_2-t_1)|\log \dd|\dd.\label{eqG:006}
\eeq
Combining (\ref{eqG:005})-(\ref{eqG:006}), we have (\ref{eq:C003})-(\ref{eq:C004}).

\end{proof}

\subsection{Estimates  of the solution with disjoint singularities}
In this subsection, we follow Section 4.1 of \cite{[KT]} to  construct some cutoff functions and show the
integrability of the solution across the singular set  when the singular curves are disjoint.
First, we construct some cutoff functions.

\begin{defi}\label{defi:001}(cf. Section 4.1 of \cite{[KT]})
\begin{enumerate}
  \item[(1).]  Let $t_3<t_1<t_2<t_4$ and $0<\dd<r_1$. Define $\zeta=\zeta(t; t_1, t_2, t_3, t_4, \dd, r_1)\in C^{\infty}(\RR)$ such  that
\beqn \zeta(t)&=&\dd, \;(t\in [t_1, t_2]),\quad \zeta(t)=r_1\, (t\in (-\infty, t_3]\cup [t_4, \infty)),\nonumber\\
 0&<&\Big|\pd {\zeta}t\Big|\leq 2r_1\Big(\frac 1{t_1-t_3}+\frac 1{t_4-t_2}\Big). \label{eq:zeta}
\eeqn

\item[(2)] Let $\eta$ be a smooth function on $\RR$ satisfying
\beq
\eta(z)=\left\{
          \begin{array}{ll}
            0, & (z\leq 0), \\
            1, & (z\geq 1),
          \end{array}
        \right.\quad 0<\eta'(z)\leq 2\;(0<z<1),
\eeq and define $H(z)=\int_0^z\,\eta(\tau)\,d\tau$. Then $H(z)$ satisfies the inequality
\beq  0\leq zH'(z)-H(z)\leq H'(z). \label{eqA:100}\eeq
We keep the same notation $H(z)$ as in \cite{[KT]}.
Throughout this section,  $H$ always denotes the function as above and it should not be confused with the mean curvature.

\item[(3).]Let $0<\un r<\bar r<1$, $T_1<\un T<\un t<\bar t<\bar T<T_2$ and $\xi:[T_1, T_2]\ri \Si$ be a continuous curve. We define $\phi_{\xi}=\phi_{\xi}(x, t; \un r, \bar r,  \un t, \bar t, \un T, \bar T, T_1, T_2)\in C^{\infty}(Q_{1, T_1, T_2})$ satisfying
\beq
0\leq \phi_{\xi}\leq 1, \quad
\phi_{\xi}=\left\{
       \begin{array}{ll}
         1, & \hbox{on} \;\overline {Q_{\un r, \un t, \bar t}},\\
         0, & \hbox{on } \;Q_{1, T_1, T_2}\b  {Q_{\bar  r, \un T, \bar T}},
       \end{array}
     \right.\quad \Na_x\phi_{\xi}=0\, \;\hbox{in}\; Q_{\un r, T_1, T_2}. \label{eqA:103}
\eeq

\end{enumerate}

\end{defi}

A direct corollary of Lemma \ref{lemA:001b} is the following result.

\begin{lem}\label{lem:V} Under the assumption of Lemma \ref{lemA:001b}, we define
$
V(x, t)=e^{-2  v(x, t)}\in C^{\infty}(\overline{Q_{1, \un t, \bar t}}\b \overline{\Ga_{\un t, \bar t}}).
$
Then $V(x, t)$ satisfies
\beq
\pd Vt+\Delta V=4e^{-2 v}|\Na  v|^2,\quad \hbox{in}\quad Q_{1, \un t, \bar t}\b \Ga_{\un t, \bar t}
\eeq
By using the inequalities  (\ref{eqA:003})-(\ref{eqA:004}), for all $(x, t)\in Q_{r_0, \un t, \bar t}\b \Ga_{\un t, \bar t}$ the following inequalities hold:
\beqn
\r(x, t)^{2}&\leq& V(x, t)\leq \r(x, t)^{2\ga},\label{eqA:006a}\\
1&\leq&V(x, t)^{-1}|\Na V(x, t)|^2\leq 4\r(x, t)^{2\ga-2},\label{eqA:006b}\\
1&\leq &\pd Vt+\Delta V\leq 4 \r(x, t)^{2\ga-2}, \label{eqA:006}
\eeqn where $\ga\in (\frac 12, 1)$.

\end{lem}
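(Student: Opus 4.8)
The plan is to derive everything from Lemma~\ref{lemA:001b} by a direct computation, since $V$ is an explicit algebraic function of $v$. First I would compute the parabolic operator applied to $V=e^{-2v}$. Writing $\partial_t V = -2 e^{-2v}\partial_t v$ and $\Delta V = -2 e^{-2v}\Delta v + 4 e^{-2v}|\Na v|^2$, and adding, the terms $-2e^{-2v}(\partial_t v + \Delta v)$ vanish on $Q_{1,\un t,\bar t}\b \Ga_{\un t,\bar t}$ by the equation $\partial_t v + \Delta v = 0$ in \eqref{eqA:005}. What remains is exactly
\[
\pd Vt + \Delta V = 4 e^{-2v}|\Na v|^2,
\]
which is the first claimed identity. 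This requires nothing beyond $v\in C^{\infty}(\overline{Q_{1,\un t,\bar t}}\b\overline{\Ga_{\un t,\bar t}})$, which is part of the conclusion of Lemma~\ref{lemA:001b}(2).

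Next I would feed the two-sided pointwise bounds \eqref{eqA:003}--\eqref{eqA:004} into the algebraic expressions for $V$, $V^{-1}|\Na V|^2$, and $\partial_t V + \Delta V$. Since $\ga\log\frac{1}{\r}\le v\le \log\frac1\r$, exponentiating gives $\r \le e^{-v}\le \r^{\ga}$ (using $\r(x,t)<r_0<1$), hence $\r^2\le V\le \r^{2\ga}$, which is \eqref{eqA:006a}. For the gradient term, $V^{-1}|\Na V|^2 = e^{2v}\cdot 4e^{-4v}|\Na v|^2 = 4 e^{-2v}|\Na v|^2$; combining $1\le \r^{2} |\Na v|^{2}\le$ (from the lower bound $|\Na v|\ge \r^{-1}$ together with $e^{-2v}\ge \r^{2}$, giving $4e^{-2v}|\Na v|^2\ge 4\r^2\r^{-2}=4$; here I'd actually want the lower bound $1$, so I'd use instead $e^{-2v}\ge\r^2$ only on the side that helps and note $e^{-2v}|\Na v|^2 \ge \tfrac14$ isn't quite it — I would recheck the exact constants) and the upper bound $4e^{-2v}|\Na v|^2 \le 4\r^{2\ga}\cdot\r^{-2} = 4\r^{2\ga-2}$. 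So the upper bound in \eqref{eqA:006b} is immediate and the lower bound $V^{-1}|\Na V|^2\ge 1$ follows from $e^{-2v}|\Na v|^2 \ge \r^{2}\cdot\text{(something)}$; more directly, $4e^{-2v}|\Na v|^2 = V^{-1}|\Na V|^2$ and since $|\Na V| = 2e^{-2v}|\Na v| \ge 2\r^2\cdot\ga\r^{-1} = 2\ga\r$ while $V\le \r^{2\ga}$, one gets $V^{-1}|\Na V|^2 \ge 4\ga^2\r^{2}/\r^{2\ga} = 4\ga^2\r^{2-2\ga}\to\infty$, so for $\r$ small this exceeds $1$; I would simply shrink $r_0$ so this holds, which is consistent with how $r_0$ is already chosen in the statement. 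Finally, $\partial_t V + \Delta V = 4e^{-2v}|\Na v|^2 = V^{-1}|\Na V|^2$, so \eqref{eqA:006} is literally the same chain of inequalities as \eqref{eqA:006b}, giving $1\le \partial_t V + \Delta V \le 4\r^{2\ga-2}$.

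The only genuinely delicate point is bookkeeping the constants and the admissible range of $\r$: the bounds \eqref{eqA:003}--\eqref{eqA:004} hold only on $Q_{r_0,\un t,\bar t}\b\Ga_{\un t,\bar t}$ with $r_0=r_0(\rho_0,\Xi_0,\si,\un T,\bar T,\ga)\in(0,1)$, and to push the lower bounds for $V^{-1}|\Na V|^2$ and $\partial_t V+\Delta V$ down to exactly the constant $1$ (rather than some $\ga$-dependent quantity that only beats $1$ once $\r$ is small) one must possibly further shrink $r_0$; since $r_0$ is already allowed to depend on all these parameters, this is harmless, and I would just remark that $r_0$ is chosen small enough. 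I expect this shrinking-of-$r_0$ argument together with keeping the exponents $2$ versus $2\ga$ straight to be the main (very mild) obstacle; everything else is the one-line computation of $\pd Vt+\Delta V$ and monotonicity of $z\mapsto e^{-2z}$. No genuinely new ideas are needed beyond Lemma~\ref{lemA:001b}.
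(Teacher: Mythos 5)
Your overall strategy --- compute $\pd Vt+\Delta V$ by the chain rule and then plug in the two-sided bounds \eqref{eqA:003}--\eqref{eqA:004} --- is exactly what the paper intends (the paper gives no proof; the lemma is presented as an immediate consequence of those inequalities). The PDE identity, the bound \eqref{eqA:006a}, and both upper bounds are all carried out correctly.

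However, your argument for the lower bound $V^{-1}|\Na V|^2\ge 1$ in \eqref{eqA:006b} contains a genuine error. In your ``more direct'' estimate you combine the \emph{upper} bound $V\le\r^{2\ga}$ with the \emph{lower} bound $|\Na V|\ge 2\ga\r$, arriving at $V^{-1}|\Na V|^2\ge 4\ga^2\r^{2-2\ga}$, which you then claim $\to\infty$ as $\r\to 0$. This is backwards: since $\ga\in(\tfrac12,1)$ we have $2-2\ga>0$, so $\r^{2-2\ga}\to 0$ as $\r\to 0$. Consequently shrinking $r_0$ makes that particular bound \emph{worse}, not better, so the proposed workaround does not close the gap. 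The correct argument is simpler and needs no shrinking of $r_0$: compute
\[
V^{-1}|\Na V|^2 \;=\; e^{2v}\cdot 4e^{-4v}|\Na v|^2 \;=\; 4e^{-2v}|\Na v|^2 \;\ge\; 4\cdot\r^2\cdot\big(\ga\r^{-1}\big)^2 \;=\; 4\ga^2,
\]
using the \emph{lower} bounds $e^{-2v}\ge\r^2$ (from the right half of \eqref{eqA:003}) and $|\Na v|\ge\ga\r^{-1}$ (from the left half of \eqref{eqA:004}), in which the powers of $\r$ cancel exactly. Since $\ga>\tfrac12$ we get $4\ga^2>1$, which is the claimed lower bound; the same chain gives the lower bound in \eqref{eqA:006}. (Your first attempt near the start of the paragraph was essentially this, except you wrote $|\Na v|^2\ge\r^{-2}$ rather than $\ge\ga^2\r^{-2}$, and the missing factor $\ga^2$ is precisely what makes the final constant $4\ga^2>1$ rather than $4$.)
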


\def\loc{{\mathrm{loc}}}

Consider the case that there is only one singular curve. We show that the solution of (\ref{eq:D004}) is in  $L^1$ across the singular set. The argument is the same as that of \cite{[KT]} and we give all the details for the readers' convenience.

\begin{lem} \label{lem:A003b}(cf. Lemma 4.2 of \cite{[KT]}) Fix $\ga\in (\frac 12, 1)$. Under the same assumption as in Theorem \ref{theoA:main}, if there is only one   singular curve $\xi:[T_1, T_2]\ri \Si$,   then  for any $(t_1, t_2)\subset (T_1, T_2)$ there exists $r_1=r_1(\rho_0, \Xi_0, \si,  t_1, t_2, T_1, T_2, \ga)>0$ such that
  \beq
  \|u\|_{L^1(Q_{r_1, t_1, t_2})}\leq C \|u\|_{L^1(K)},\label{eq:L003}
  \eeq where $C$ is a constant depending on $\|c\|_{L^{\infty}(Q_{1, T_1, T_2})}, \ga, \rho_0, \Xi_0, \si,  t_1, t_2, T_1, T_2 $ and  $K$ is defined by $K=\overline{Q_{2r_1, T_1, T_2}}\b Q_{r_1, T_1, T_2}.$
\end{lem}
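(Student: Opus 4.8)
\textbf{Proof proposal for Lemma \ref{lem:A003b}.}

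The plan is to follow Kan-Takahashi's argument for a single time-dependent singularity, adapted to the Riemannian surface $(\Si,g)$ using the heat-kernel expansion of Corollary \ref{cor:heat} and the auxiliary functions of Lemma \ref{lemA:001b} and Lemma \ref{lem:V}. First I would fix the time interval $(t_1,t_2)\subset(T_1,T_2)$, pick intermediate times $T_1<\un T<\un t<\bar t<\bar T<T_2$ sandwiching $[t_1,t_2]$, and choose $\ga\in(\frac12,1)$. By Lemma \ref{lemA:001b}(2) there is $r_0$ and a function $v$ with $\pd vt+\Delta v=0$ on $Q_{1,\un t,\bar t}\b\Ga_{\un t,\bar t}$ satisfying the two-sided bounds (\ref{eqA:003})--(\ref{eqA:004}); by Lemma \ref{lem:V} the function $V=e^{-2v}$ then obeys (\ref{eqA:006a})--(\ref{eqA:006}), in particular $V\asymp \r^{2}$ near $\xi$ and $\pd Vt+\Delta V\geq 1$ there. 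The point of $V$ is that it is a supersolution-type weight that degenerates exactly like $\r^2$ at the singular curve, so testing the equation (\ref{eq:D004}) for $u$ against $V$ (times appropriate cutoffs) will control $\int u$ near $\xi$ by boundary data.

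The core computation is to multiply (\ref{eq:D004}) by the product $\phi_\xi\, V$, where $\phi_\xi=\phi_\xi(x,t;\un r,\bar r,\un t,\bar t,\un T,\bar T,T_1,T_2)$ is the cutoff of Definition \ref{defi:001}(3) (equal to $1$ on $\overline{Q_{\un r,\un t,\bar t}}$, supported in $Q_{\bar r,\un T,\bar T}$, with $\Na_x\phi_\xi=0$ on $Q_{\un r,T_1,T_2}$), and integrate over $\Si\times(\un T,\bar T)$. Since $u\in L^1_{\loc}$ away from $\Ga$ and $V$ vanishes to order $\r^2$ at $\xi$ while $u$ is a priori only known to be $L^1$ away from the curve, one must first justify the integration by parts by working on the region $\{\r>\dd\}$, integrating by parts there, and then letting $\dd\to0$: the boundary terms over $\{\r=\dd\}$ carry factors $V\sim\dd^{2}$ and $|\Na V|\sim\dd^{2\ga-1}$ against $u$, and using the mean-value/Harnack-type control of $u$ on the shrinking tube (together with the area of $\{\r=\dd\}$ being $O(\dd)$) these boundary terms tend to $0$. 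This is the delicate step. After passing to the limit, the terms involving $\pd Vt+\Delta V\geq 1$ produce a good term $\int_{Q_{\un r,\un t,\bar t}} u$, the terms with $\Na_x\phi_\xi$ and $\pd t\phi_\xi$ are supported in $Q_{\bar r,\un T,\bar T}\b Q_{\un r,T_1,T_2}$ where $V$ and its derivatives are bounded, hence are controlled by $\|u\|_{L^1(K)}$ with $K=\overline{Q_{2r_1,T_1,T_2}}\b Q_{r_1,T_1,T_2}$, and the zeroth-order term $cuV$ is absorbed using $0\le V\le \r^{2\ga}\le 1$ and $\|c\|_{L^\infty(Q_{1,T_1,T_2})}$, possibly shrinking $\bar r$ so that $\|c\|_\infty$ times the relevant constant is less than $\tfrac12$. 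Choosing $r_1$ so that $Q_{2r_1,T_1,T_2}\subset Q_{\bar r,\un T,\bar T}$ and $\un r=2r_1$ then yields (\ref{eq:L003}).

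The main obstacle I anticipate is exactly the $\dd\to0$ limiting argument: a priori $u$ is only in $L^1_{\loc}$ off the curve and could, in principle, blow up faster than $\log\frac1\r$ near $\xi$, so one cannot simply discard the boundary integrals over $\{\r=\dd\}$. The remedy, as in \cite{[KT]}, is a bootstrap: one first proves a one-sided a priori bound by testing against the genuinely bounded weight $V$ (whose Laplacian-plus-time-derivative is $\ge 1$), which already forces $\int_{\{\dd<\r<\bar r\}} u\,\r^{2\ga-2}$ to be finite uniformly in $\dd$, hence $\liminf_{\dd\to0}\dd^{2\ga-1}\int_{\{\r=\dd\}}u=0$ along a subsequence, killing the boundary term; then the full estimate follows. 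Handling the possibility that $\xi$ leaves a single harmonic coordinate chart is routine — cover $\xi([T_1,T_2])$ by finitely many balls in $\cM_{k_0,2}(\rho_0,\Xi_0)$ and subdivide the time interval accordingly, the constants depending only on $\rho_0,\Xi_0,\si$ and the endpoints — so the substance of the proof is entirely the local weighted estimate near the curve.
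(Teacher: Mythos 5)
Your overall plan (use $V=e^{-2v}$ with $\pd Vt+\Delta V\geq 1$ as a degenerate weight, multiply by a spatial--temporal cutoff $\phi_\xi$, integrate by parts, and absorb the $cuV$ term by taking $r_0$ small) is aligned with the target, but the step you flag as ``delicate''---the $\dd\to 0$ boundary-term argument---is where the proposal breaks down, and the device you need to fix it is exactly what the paper (following Kan--Takahashi) introduces and you omit.

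The paper never performs an integration by parts over $\{\r>\dd\}$ followed by a limit in $\dd$, because that produces boundary integrals over $\{\r=\dd\}$ involving traces of both $u$ and $\partial_n u$, and a priori there is no control on either near $\Gamma$. Instead, it builds a test function that vanishes \emph{identically} in a tubular neighbourhood of $\Gamma$: with $\zeta(t)$ from Definition~\ref{defi:001}(1) (equal to $\dd$ on $[t_1,t_2]$, equal to $r_1$ outside $[t_3,t_4]$) and $H$ from Definition~\ref{defi:001}(2), one sets $w=\zeta^{-1}V-1$ and $\varphi=\phi\,\zeta\,(H\circ w)$. Since $V\leq\r^{2\ga}$, one has $w\leq 0$ hence $H\circ w\equiv 0$ whenever $\r\leq\zeta(t)^{1/(2\ga)}$, so $\varphi\in C_0^\infty(Q_{r_0,t_5,t_6}\setminus\Gamma_{t_5,t_6})$ and the identity $-\int u(\pd\varphi t+\Delta\varphi)=\int cu\varphi$ holds with no boundary terms at all. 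The inequality (\ref{eqA:100}), $0\leq zH'(z)-H(z)\leq H'(z)$, then controls the extra term $-\p_t\zeta\bigl((w+1)H'\circ w - H\circ w\bigr)$ by $2|\p_t\zeta|H'\circ w$, the convexity term $\zeta^{-1}|\Na V|^2 H''\circ w\geq 0$ is discarded, and $\pd Vt+\Delta V\geq 1$ gives the coercive piece. The $\dd\to 0$ limit is taken only at the very end, where $H'\circ w\to 1$ pointwise on $Q_{r_1,t_1,t_2}\setminus\Gamma$ and one applies monotone convergence---no boundary terms ever arise.

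Your proposed ``bootstrap'' does not close this gap. You claim that testing against $V$ forces $\int_{\{\dd<\r<\bar r\}}u\,\r^{2\ga-2}$ to be uniformly bounded, but the only a priori lower bound on the heat operator of $V$ is $\pd Vt+\Delta V\geq 1$ (the upper bound $4\r^{2\ga-2}$ in (\ref{eqA:006}) is useless here), so at best one would get a uniform bound on $\int u$, not on $\int u\,\r^{2\ga-2}$. Even granting $u\in L^1$ near $\Gamma$, that does not force $\liminf_{\dd\to 0}\dd^{2\ga-1}\int_{\{\r=\dd\}}u=0$: a density like $f(s):=\int_{\{\r=s\}}u\sim s^{-1}\log^{-2}(1/s)$ is integrable in $s$ yet $s^{2\ga-1}f(s)=s^{2\ga-2}\log^{-2}(1/s)\to\infty$ since $2\ga-2<0$. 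And you have not addressed the $\dd^{2\ga}\int_{\{\r=\dd\}}|\partial_n u|$ boundary term at all; no a priori bound on $\Na u$ near $\Gamma$ is available. The circularity---needing the very estimate you are trying to prove in order to kill the boundary terms---is exactly why Kan--Takahashi introduce the $\zeta$-dependent cutoff $H\circ w$ and test with a function that is compactly supported off the singular curve from the outset. That is the missing idea, and without it the argument does not close.
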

\begin{proof}
   Let $T_1<t_5<t_3<t_1<t_2<t_4<t_6<T_2$, $\ga\in (\frac 12, 1)$ and $r_0=r_0(\rho_0, \Xi_0, \si, t_5, t_6, \ga)>0$ as in   Lemma \ref{lemA:001b}.  Let $0<\dd<r_1<\frac {r_0}{2}$.
   We  construct the function $$\phi(x, t)=\phi(x, t; r_1, 2r_1, t_3, t_4, t_5, t_6, T_1, T_2)$$ satisfying (\ref{eqA:103}), and
 the function $v\in C^{\infty}(\overline{Q_{\rho_0,  t_5, t_6}}\b \overline{\Ga_{t_5,  t_6}})$ satisfying (\ref{eqA:005}) with the properties (\ref{eqA:003})-(\ref{eqA:004}). Moreover, we define \beq V(x, t)=e^{-2 v(x, t)}, \quad w(x, t)=\zeta(t)^{-1}V(x, t)-1  \label{eq:K005}\eeq and \beq \varphi(x, t)=\phi(x, t)\zeta(t) (H\circ w)(x, t),\eeq where $ \zeta=\zeta(t; t_1, t_2, t_3, t_4, \dd, r_1)$ and $ H$ are  given in Definition \ref{defi:001}. Note that $H\circ w=0$ near $\Ga$ in $Q_{r_0, t_5, t_6}$. This implies that $\varphi\in C_0^{\infty}(Q_{r_0, t_5, t_6}\b \Ga_{t_5, t_6})$.
By (\ref{eq:D004}) we have
\beq
-\int_{Q_{r_0, t_5, t_6}}\; u\Big(\pd {\varphi}t+\Delta \varphi\Big)=\int_{Q_{r_0, t_5, t_6}}\; cu\varphi\; . \label{eqB:002xx}
\eeq
Note that (\ref{eqA:100}) and (\ref{eq:K005}) imply that
\beq \zeta H\circ w\leq \zeta w H'\circ w\leq VH'\circ w, \label{eqn:003}\eeq
 we have
$
\varphi
\leq \phi  VH'\circ w.
$ Thus,  the right hand side of (\ref{eqB:002xx}) can be estimated by  \beqn
\int_{Q_{r_0, t_5, t_6}}\;cu\varphi&\geq& - \,\|cV\|_{L^{\infty}({Q_{r_0, t_5, t_6}})}\int_{Q_{r_0, t_5, t_6}}\,u\phi  H'\circ w.\label{eqB:005}
\eeqn
On the other hand, direct calculation shows that $$\pd {\varphi}t+\Delta \varphi=\phi A+B $$ where
\beqs
A&=& (\partial_t V+\Delta V)H'\circ w-\partial_t\zeta\Big((w+1)H'\circ w-H\circ w\Big)+\zeta^{-1}|\Na V|^2 H''\circ w,\\
B&=&(\partial_t \phi+\Delta \phi)\zeta H\circ w+2\langle \Na \phi, \Na V \rangle H'\circ w.
\eeqs
By (\ref{eqA:100}) and (\ref{eqA:006}) we have
\beqs
A &\geq &(\partial_t V +\Delta V )H'\circ w -2|\partial_t \zeta| H'\circ w \geq (1-2\|\partial_t\zeta\|_{L^{\infty}(\RR)}) H'\circ w .
\eeqs
Note that
\beqs
\Supp(B)&\subset& \Supp(|\Na \phi|+|\p_t\phi|)\cap\{(x, t)\in Q_{2r_1, t_5, t_6}\;|\; w\geq 0\}\\&\subset&\{(x, t)\in \overline{Q_{2r_1, t_5, t_6}}\b\, Q_{r_1, t_3. t_4}\;|\; w\geq 0\}\\
&\subset &\{(x, t)\in \overline{Q_{2r_1, t_5, t_6}}\b \,Q_{r_1, t_3. t_4}\;|\; \r(x, t)\geq \zeta(t)^{\frac 1{2\ga}}\}\\
&\subset & \{(x, t)\in \overline{Q_{2r_1, t_5, t_6}} \;|\; \r(x, t)\geq  r_1\}\\
&=:&K,
\eeqs where we used the construction of $\zeta(t)$ in Definition \ref{defi:001}.
Thus,  we have
\beqs  |B | &\leq& \Big(\|\partial_t\phi+\Delta\phi\|_{L^{\infty}(K)}\|V\|_{L^{\infty}(K)
}+2\|\Na \phi\|_{L^{\infty}(K)
}\|\Na V\|_{L^{\infty}(K)
}\Big) \chi_{K}\\
&\leq& C(c_{\phi, K},  \ga, r_1)\chi_K,\eeqs
where $c_{\phi, K}=\sup_K\, (|\partial_t \phi|+|\Delta\phi|+|\Na \phi|).$
Combining the above estimates, we have
\beq
\pd {\varphi}t+\Delta \varphi\geq (1-2\|\partial_t\zeta\|_{L^{\infty}(\RR)})\phi H'\circ w-C(c_{\phi, K},  \ga, r_1)\chi_K.\label{eqB:100xx}
\eeq
Combining (\ref{eqB:005}) (\ref{eqB:100xx}) with (\ref{eqB:002xx}), we have
\beq
\Big(1-2\|\partial_t\zeta\|_{L^{\infty}(\RR)}- \,\|cV\|_{L^{\infty}({Q_{r_0, t_5, t_6}})}\Big)\int_{Q_{r_0, t_5, t_6}}\;u\,\phi H'\circ w\leq C(c_{\phi, K},  \ga, r_1)\int_{K}\;u. \nonumber
\eeq
Taking $r_0$ sufficiently small and using the assumption that $c(x, t)$ is locally bounded, we have
$$
1-2\|\partial_t\zeta\|_{L^{\infty}(\RR)}- \,\|cV\|_{L^{\infty}({Q_{r_0, t_5, t_6}})}\geq \frac 12.
$$
Therefore, by the definition of $\phi$ we have
\beq
\int_{Q_{r_1, t_1, t_2}}\;u\, H'\circ w\leq \int_{Q_{r_0, t_5, t_6}}\;u\,\phi H'\circ w\leq C(c_{\phi, K},  \ga, r_1)\int_{K}\;u.\label{eqn:002}
\eeq
Note that the function $H'\circ w$ converges to $1$ on $Q_{r_1, t_1, t_2}\b \Ga_{t_1, t_2}$ as $\dd\ri 0$. Thus, taking $\dd\ri 0$ in (\ref{eqn:002}), we have
that $u$ is integrable on $Q_{r_1, t_1, t_2}$. The lemma is proved. \\

\end{proof}

\subsection{Estimates  of the solution with multiple singularities}

In this subsection, we show that the solution of (\ref{eq:D004}) is $L^1$ across the singularities when  multiple singular curves exist. If any two singular curves don't coincide at any time, we can use Lemma \ref{lem:A003b} for each singular curve and get the $L^1$ estimates. Otherwise, the proof will be much more difficult. The idea  comes from a combination of the arguments in Lemma 4.2 and Lemma 4.3 of \cite{[KT]}, but we need to use some new cutoff functions in Definition \ref{defi:002}. We sketch the proof as follows. First, we control the $L^1$ norm of $u$ near the intersection point $(x_0, t_0)$ by an integral which characterizes the growth of $u$ near the singular curves away from $(x_0, t_0)$(c.f. (\ref{eqE:004})). Next, the integral of $u$ is bounded by the $L^1$ norm of $u$ on some compact set $K$ away from the singular curves(c.f. (\ref{eqE:006})). Combining the above two steps, we can bound  the $L^1$ norm of $u$ near the intersection point.

First, we introduce the following definition.

\begin{defi}Let $\{\xi_1, \xi_2, \cdots, \xi_l\}(t\in [T_1, T_2])$ be continuous curves in $\Si$, and  $I\subset [T_1, T_2].$ We say that $\{\xi_1(t), \cdots, \xi_l(t)\}$ are disjoint on  $I$, if for any time $t_0\in I$, we have
$$\xi_i(t_0)\neq \xi_j(t_0),\quad \forall\, i\neq j.$$
\end{defi}

Let $(x_0, t_0)$ be a point in the singular set. By Lemma \ref{prop:weakcompactness2}, there exists finitely many singular curves passing through $(x_0, t_0)$. There are two cases for the singular curves:
\begin{enumerate}
  \item[$(A)$.] There exists $(t_1, t_2)$ with $t_0\in (t_1, t_2)$ and singular curves $\{\xi_1(t), \cdots, \xi_l(t)\}(t\in (t_1, t_2))$ such that
$\{\xi_1(t), \cdots, \xi_l(t)\}$
are disjoint on $(t_1, t_2)\b \{t_0\}$ and
\beq \xi_1(t_0)=\xi_2(t_0)=\cdots=\xi_l(t_0).  \label{eq:disjoint}\eeq
  \item[$(B)$.]There exists $(t_1, t_2)$ with $t_0\in (t_1, t_2)$,  singular curves
  $\{c_1(t), \cdots, c_k(t)\}(t\in (t_1, t_0])$ and   $\{\td c_1(t), \cdots, \td c_{l}(t)\}(t\in [t_0, t_2))$ such that
\begin{enumerate}
  \item  $\{c_1(t), \cdots, c_k(t)\}$ are disjoint on $(t_1, t_0)$;
  \item  $\{\td c_1(t), \cdots, \td c_{l}(t)\}$ are disjoint on $(t_0, t_2)$;
  \item The singular curves coincide at $t_0$:
  \beq
  c_1(t_0)=\cdots=c_k(t_0)=\td c_1(t_0)=\cdots=\td c_l(t_0)=x_0.
  \eeq
\end{enumerate}
If $k=l$, then this is just the case $(A)$. Note that the union of two Lipschitz curves is still a Lipschitz curve. Thus, for $k<l$ we can construct the curves
\beq
\xi_i(t)=\left\{
           \begin{array}{ll}
             c_i(t), & \forall\; t\in (t_1, t_0], \\
             \td c_i(t), & \forall\; t\in (t_0, t_2),
           \end{array}
         \right. \;\hbox{for }\quad 1\leq i\leq k,
\eeq
and
 \beq
\xi_i(t)=\left\{
           \begin{array}{ll}
             c_k(t), & \forall\; t\in (t_1, t_0], \\
             \td c_i(t), & \forall\; t\in (t_0, t_2),
           \end{array}
         \right. \;\hbox{for }\quad k< i\leq l,
\eeq
Then $\{\xi_1(t), \cdots, \xi_{l}(t)\}(t\in (t_1, t_2))$ are Lipschitz curves. For $k>l$ we can also construct similar curves  $\{\xi_1(t), \cdots, \xi_{k}(t)\}(t\in (t_1, t_2))$.
\end{enumerate}

Summarizing the above discussion, we define
\begin{defi}\label{defi:around}Let $I=[t_1, t_2]$ or $(t_1, t_2)$ where $t_1<t_0<t_2$.
We call that the singular curves $\{\xi_1, \xi_2, \cdots, \xi_l\}(t\in I)$ are around $(x_0, t_0)$ on $I$, if the curves satisfy the conditions in Case $(A)$ or are constructed as in Case $(B)$ on $I$.

\end{defi}

We construct some cutoff functions when the  singular curves are not disjoint.

\begin{defi}\label{defi:002}

Let $0<\un r<\bar r<1$, $T_1<\un T<\un t<\bar t<\bar T<T_2$ and $\{\xi_1, \xi_2, \cdots, \xi_l\}(t\in [T_1, T_2])$ be $\si$-Lipschitz  curves. We assume that $\{\xi_1(t), \cdots, \xi_l(t)\}$
are around $(x_0, t_0)$ on $(T_1, T_2)$ for some $t_0\in (\un t, \bar t)$.
\begin{enumerate}
             \item[(1).] For each $\xi_k$ and $(t_1, t_2)\subset [T_1, T_2]$, we define the notations $Q_{r, t_1, t_2}^{(k)}$ and $ \Ga^{(k)}_{t_1, t_2}$ as in (\ref{eqE:001})-(\ref{eqE:002}), and we define
\beq
Q_{r, t_1, t_2}=\cup_{k=1}^lQ^{(k)}_{ r, t_1, t_2},\quad
\Ga_{t_1, t_2}=\cup_{k=1}^l\Ga^{(k)}_{t_1, t_2},\quad \hat Q_{r, t_1, t_2}=\cap_{k=1}^lQ^{(k)}_{ r, t_1, t_2}. \label{eq:E006}
\eeq

\item[(2).]
For each $\xi_k$ we define the function $\phi_{\xi_k}(x, t; \un r, \bar r,  \un t, \bar t, \un T, \bar T, T_1, T_2)\in C^{\infty}(Q^{(k)}_{1, T_1, T_2})$ as in (\ref{eqA:103}). Then the function
\beq
\phi(x, t; \un r, \bar r,  \un t, \bar t, \un T, \bar T)=1-(1-\phi_{\xi_1})(1-\phi_{\xi_2})\cdots (1-\phi_{\xi_l})\in C^{\infty}(\hat Q_{1, T_1, T_2} )
\eeq satisfies the properties:
\beq
0\leq \phi\leq 1, \quad
\phi=\left\{
       \begin{array}{ll}
         1, & \hbox{on} \;\hat Q_{1, T_1, T_2}\cap \overline {Q_{\un r, \un t, \bar t}},\\
         0, & \hbox{on } \; \hat Q_{1, T_1, T_2}\b \overline {Q_{\bar r, \un T, \bar T}},
       \end{array}
     \right. \label{eqA:103b}
\eeq Moreover,  $\phi$ satisfies the properties
\beq
\Supp(\phi)\cap \hat Q_{1, \un T, \bar T}\subset Q_{\bar r, \un T, \bar T}= \bigcup_{k=1}^l\,\{(x, t)\in \hat Q_{1, \un T, \bar T}\,|\, \r_k(x, t)\leq \bar r\},\label{eq:phi}\eeq and
\beqn &&
\Supp(|\Na \phi|+|\p_t\phi|)\cap \hat Q_{1, \un t,  \bar t} \subset \overline{Q_{\bar r, \un t,  \bar t}}\b Q_{\un  r, \un t,  \bar t}\nonumber\\&\subset&\bigcup_{k=1}^l\,\{(x, t)\in \hat Q_{1,  \un t,  \bar t}\;|\;
\un r\leq \r_k(x, t)\leq \bar r,\;\; \r_i(x, t)\geq \un r, \forall\, i\neq k\}.\label{eq:phigrad}
\eeqn
Here we assumed that $Q_{\bar r, \un T, \bar T}\subset \hat Q_{1, \un T, \bar T}$ by shrinking the interval $[T_1, T_2]$ if necessary.

\item[(3).] Fix $\ga\in (\frac 12, 1)$.  For each $\xi_k$, we define $v_k\in C^{\infty}(\overline{Q^{(k)}_{1, \un T, \bar T}}\b \overline{\Ga^{(k)}_{\un T, \bar T}})$  as  in (2) of  Lemma \ref{lemA:001b}, and let $r_0^{(k)}$ the constant in (2) of  Lemma \ref{lemA:001b} such that the inequalities (\ref{eqA:003})-(\ref{eqA:004}) hold for $(x, t)\in \overline{Q^{(k)}_{r_0^{(k)}, \un T, \bar T}}\b \overline{\Ga^{(k)}_{\un T, \bar T}}$. Set
\beq
r_0:=\min\{r_0^{(1)}, r_0^{(2)}, \cdots, r_0^{(l)}\}. \label{eq:r0}
\eeq
After shrinking the interval $[T_1, T_2]$ if necessary, we can assume that
\beq
 \Ga_{T_1, T_2}\subset \hat Q_{r_0, T_1, T_2}.\label{eq:Q}
\eeq
By (\ref{eq:r0})-(\ref{eq:Q}), we know that the inequalities (\ref{eqA:003})-(\ref{eqA:004}) hold for all functions $v_k$ and all $(x, t)\in {\hat Q_{r_0, \un T, \bar T}}\b {\Ga_{\un T, \bar T}}$.

\item[(4).]
 For any $\ee>0$ and $(x, t)\in  {\hat Q_{r_0, \un T, \bar T}}\b  {\Ga_{\un T, \bar T}}$, we define \beq  \td v(x, t)=\sum_{k=1}^l\,v_k(x, t),\quad \td w_{ \ee}(x, t)=2-\frac {\td v(x, t)}{\log \frac 1{\ee}},\quad \td \varphi_{ \ee}(x, t)= (H\circ \td  w_{ \ee})(x, t), \label{eqC:001}\eeq where $ H$ is  defined in (2) of Definition \ref{defi:001}. Note that $H\circ \td w_{ \ee}=0$ near each $\xi_k$ and this implies that $ \td \varphi_{ \ee}$ vanishes near $\Ga_{\un T, \bar T}$. Moreover, for any $(x, t)\in \hat Q_{r_0, \un T, \bar T}\b \Ga_{\un T, \bar T}$ we have
\beqn
&&\lim_{\ee\ri 0}\td \varphi_{ \ee}(x, t)=H(2),\quad \lim_{\ee\ri 0}|\Na \td\varphi_{ \ee}|(x, t)=0,\\
&& \pd {\td \varphi_{ \ee}}t+\Delta \td \varphi_{\ee}=H''\circ \td w_{\ee}|\Na \td w_{\ee}|^2. \label{eqB:203}
\eeqn
Let \beq \td \r(x, t)=e^{-\td v(x, t)}.\label{eqn:td_r}\eeq Then the inequalities (\ref{eqA:003}) imply that for any  $(x, t)\in {\hat Q_{r_0 \un T, \bar T}}\b {\Ga_{\un T, \bar T}}$
\beq
\r_1\r_2\cdots \r_l\leq \td \r(x, t)\leq (\r_1\r_2\cdots \r_l)^{\ga}. \label{eq:tdr}
\eeq
              \item[(5).]Under the above assumptions, for $\rho>0$ and  $h\in L^1(Q_{r_0, \un t, \bar t})$ we define
$$I(\rho; \un t, \bar t, h, r_0)=\int_{Q_{r_0, \un t, \bar t}\cap \{\rho\leq \td \r(x, t)\leq 1\}}\;\frac {h|\Na \td v|^2}{|\log \rho|^2},$$
where $\td \r(x, t)$ and $\td v(x, t)$ are the function defined in (4) above.

\item[(6). ] Assume that $\{\xi_1(t), \cdots, \xi_l(t)\}$ are disjoint on $[T_1, T_2]$.  We choose $\bar \rho>0$ such that  $Q_{\bar \rho, T_1, T_2}^{(i)}\cap Q_{\bar \rho, T_1, T_2}^{(j)}=\emptyset$ for any $1\leq i\neq j\leq l.$  For any $\rho\in (0, \bar \rho)$, $T_1\leq \un t<\bar t\leq T_2$ and $h\in L^1(Q^{(k)}_{1, \un t, \bar t})$, we define
\beq
I_{\xi_k}(\rho; \un t, \bar t, h, \bar \rho)=\frac 1{|\log \rho|^2}\int_{\un t}^{\bar t}\int_{\rho\leq \r_k(x, t)\leq \bar \rho}\,\frac {h}{\r_k(x, t)^2}.
\eeq
            \end{enumerate}

\end{defi}

The next result gives the $L^1$ estimate of the solution near the singularities when the  singular curves are not disjoint. 

\begin{lem} \label{lemA:003b}Under the same assumption as in Theorem \ref{theoA:main},       for any $(t_1, t_2)\subset (T_1, T_2)$ there exists $r_1=r_1(\rho_0, \Xi_0, l, t_1, t_2, T_1, T_2, \ga)>0$ such that
  \beq
  \|u\|_{L^1(Q_{r_1, t_1, t_2})}\leq C\|u\|_{L^1(K)},\label{eq:L002}
  \eeq where $C$ is a constant depending on $\|c\|_{L^{\infty}(Q_{1, T_1, T_2})}, \ga, \rho_0, \Xi_0, \si,  t_1, t_2, T_1, T_2 $ and $K$ is defined by $K=\overline{Q_{2r_1, T_1, T_2}}\b Q_{r_1, T_1, T_2}$. Moreover, we have
  \beq
  \sup_{\rho\in (0, \frac 12)}I(\rho; t_1, t_2, u, r_0)<+\infty.\label{eqB:207}
  \eeq
\end{lem}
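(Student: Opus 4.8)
\textbf{Proof plan for Lemma \ref{lemA:003b}.} The plan is to run the same weighted-test-function argument as in Lemma \ref{lem:A003b}, but replacing the single cutoff by the multi-curve cutoffs of Definition \ref{defi:002}, and to handle the two possible local pictures of the singular set near a coincidence point separately. First I would reduce to a local statement: by Lemma \ref{prop:weakcompactness2} the singular set consists of finitely many $\si$-Lipschitz curves, so after covering $(t_1,t_2)$ by finitely many subintervals it suffices to prove the estimate on a small time window $(T_1,T_2)$ in which the curves $\{\xi_1,\dots,\xi_l\}$ are \emph{around} $(x_0,t_0)$ in the sense of Definition \ref{defi:around}. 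If the curves are disjoint on the whole window, then $Q^{(i)}$ and $Q^{(j)}$ are pairwise disjoint for small radii and one simply applies Lemma \ref{lem:A003b} to each $\xi_k$ and sums; so the real content is the case where the curves coincide at $t_0$.

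For that case the idea is to test the equation (\ref{eq:D004}) against $\varphi=\phi\,\zeta\,(H\circ w)$, where $\phi$ is the multi-curve cutoff of Definition \ref{defi:002}(2), $v_k$ are the backward-heat barriers of Lemma \ref{lemA:001b}(2), $\td v=\sum_k v_k$, $V=e^{-2\td v}$, $w=\zeta(t)^{-1}V-1$, and $\zeta$ is the time-cutoff of Definition \ref{defi:001}(1). The key point is that $\td\r=e^{-\td v}$ comparable to $(\r_1\cdots\r_l)^\ga$ by (\ref{eq:tdr}), so the level sets $\{w\ge0\}=\{\td\r\ge\zeta(t)^{1/2}\}$ stay away from \emph{every} $\xi_k$; hence $\varphi\in C_0^\infty(Q_{r_0,t_5,t_6}\b\Ga)$ as before. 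Then I would repeat the computation of $\partial_t\varphi+\Delta\varphi=\phi A+B$ verbatim. Using $\partial_tV+\Delta V=4e^{-2\td v}|\Na\td v|^2\ge1$ on $\hat Q_{r_0}$ (the analogue of Lemma \ref{lem:V}, valid because each $v_k$ is backward-caloric and the cross terms $\langle\Na v_i,\Na v_j\rangle$ are controlled on the support of $\phi$, where all $\r_k$ are bounded below), together with (\ref{eqA:100}) and $|\partial_t\zeta|$ small, one gets $A\ge(1-2\|\partial_t\zeta\|_\infty)H'\circ w$. The boundary term $B$ is supported, by (\ref{eq:phigrad}), in $K=\overline{Q_{2r_1}}\b Q_{r_1}$ where $V$ and $\Na V$ are bounded by constants depending on $\ga,r_1$; since $c$ is locally bounded, choosing $r_0$ small makes $1-2\|\partial_t\zeta\|_\infty-\|cV\|_{L^\infty}\ge\tfrac12$, and after letting $\dd\to0$ (so $H'\circ w\to1$ on $Q_{r_1,t_1,t_2}\b\Ga$) Fatou gives (\ref{eq:L002}).

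For the bound (\ref{eqB:207}) on the functional $I(\rho;t_1,t_2,u,r_0)$, I would use a second test function, namely $\td\varphi_\ee=H\circ\td w_\ee$ with $\td w_\ee=2-\td v/\log\tfrac1\ee$ from Definition \ref{defi:002}(4), multiplied by the spatial cutoff $\phi$. Testing (\ref{eq:D004}) against $\phi\,\td\varphi_\ee$ and using $\partial_t\td\varphi_\ee+\Delta\td\varphi_\ee=H''\circ\td w_\ee\,|\Na\td w_\ee|^2$ together with $|\Na\td w_\ee|^2=|\Na\td v|^2/|\log\ee|^2$, the $H''$-term produces exactly an integral of $u|\Na\td v|^2/|\log\ee|^2$ over the annular region $\{\ee\le\td\r\le\ee^{?}\}$, i.e.\ a quantity comparable to $I(\ee^{\,1/\text{const}};t_1,t_2,u,r_0)$; the remaining terms are again controlled by $\|u\|_{L^1(K)}$ via the $L^1$ bound just proved and by the local boundedness of $c$. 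Taking $\ee\to0$ along a suitable sequence and using monotonicity in $\rho$ yields the uniform bound. The main obstacle I anticipate is the verification that $\partial_tV+\Delta V\ge1$ (and more generally that the cross terms $\langle\Na v_i,\Na v_j\rangle$ and the non-sharp constant $\ga\in(\tfrac12,1)$ do not destroy the sign in $A$) uniformly on the support of $\phi$ when several curves nearly coincide; this is where one must use that on $\Supp(\phi)$ at least one $\r_k$ is $\le\bar r$ while, on $\Supp(|\Na\phi|)$, all $\r_k$ are bounded below, so the dangerous terms live where everything is smooth and bounded. Once that sign is secured, the rest is the bookkeeping of Lemma \ref{lem:A003b} carried out $l$ times.
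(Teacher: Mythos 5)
Your plan has a genuine gap at the step you flag as the ``main obstacle'', and the obstacle is in fact fatal for the choice you make. You take $V = e^{-2\td v} = \prod_k e^{-2v_k}$ and claim $\partial_t V + \Delta V \geq 1$ on the support of $\phi$. Computing, since each $v_k$ is backward-caloric, $\partial_t V + \Delta V = 4e^{-2\td v}|\Na\td v|^2$, and $\Na\td v = \sum_k \Na v_k$ with each $\Na v_k$ pointing toward $\xi_k$ with magnitude $\approx \r_k^{-1}$. At a point equidistant between two curves $\xi_i$ and $\xi_j$, the contributions $\Na v_i$ and $\Na v_j$ are nearly antiparallel and of equal magnitude, so $|\Na\td v|$ can be made arbitrarily small while $\r_i, \r_j$ stay bounded below; hence $\partial_t V + \Delta V$ has no positive lower bound on $\Supp(\phi)$. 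The single-curve argument of Lemma \ref{lem:A003b} rests entirely on that lower bound (it is what makes the coefficient of $\phi H'\circ w$ on the good side), so the bookkeeping cannot be ``carried out $l$ times'' once it is lost.

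The paper's proof resolves this by using the \emph{sum} $V = \sum_k V_k = \sum_k e^{-2v_k}$, not the product. Then $\partial_t V + \Delta V = \sum_k 4e^{-2v_k}|\Na v_k|^2 \geq l$ with no cancellation. The price, which the paper states explicitly, is that $\varphi_0 = \phi\,\zeta\,(H\circ w)$ no longer vanishes on $\Ga_{t_5,t_6}$: with the sum, $V$ is small only when \emph{all} $\r_k$ are small, so near $(\xi_1(t),t)$ with $t\neq t_0$ one still has $V\geq V_j>0$ for $j\neq1$. The paper then multiplies by the extra cutoff $\td\varphi_\ee = H\circ\td w_\ee$ built from $\td v$, which does vanish on all of $\Ga$, and must carry the cross terms from $\varphi = \varphi_0\td\varphi_\ee$ through Steps 3--5. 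Controlling those cross terms is precisely what requires the auxiliary functional $I(\rho;\dots)$ and the separate test-function computation of Step 4; the bound (\ref{eqB:207}) comes out as a byproduct of that computation, whereas your plan derives (\ref{eqB:207}) from (\ref{eq:L002}) after the fact, which would also need the $L^1$ bound (\ref{eq:L002}) in the strong form over $Q_{2r_1,t_5,t_6}$ rather than $Q_{r_1,t_1,t_2}$. In short: your localization and reduction to the ``around $(x_0,t_0)$'' picture match the paper, but the product barrier $e^{-2\td v}$ does not work, and the paper's sum barrier plus the additional $\td\varphi_\ee$ factor (and the consequent Steps 3--5) are not optional refinements but the essential content.
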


\begin{proof}We divide the proof into several steps:

{\it Step 1.}
Without loss of generality, we can assume that $c(x, t)\geq  0$ on $Q_{1, T_1, T_2}$. In fact, let $u(x, t)$ be a solution of (\ref{eq:D004}). Then for any $k\in \RR$ the function $\td u(x, t)=u(x, t)e^{kt}$ satisfies the equation
$$\pd {\td u}t=\Delta \td u+(c+k)\td u, \quad \forall\;(x, t)\in
Q_{1, T_1, T_2}\b \Ga_{T_1, T_2}.$$
Since $c$ is locally bounded by the assumption, we can choose $k$ large such that $c+k\geq 0
$ on $Q_{1, T_1, T_2}$. Thus, it suffices to show Lemma \ref{lemA:003b} for $c(x, t)\geq 0.$\\

{\it Step 2.} Assume that $\{\xi_1(t), \cdots, \xi_l(t)\}$
are  around $(x_0, t_0)$ on $[T_1, T_2]$.
 Let $T_1<t_5<t_3<t_1<t_0<t_2<t_4<t_6<T_2$. We construct $v_k, r_0,  \td w_{ \ee}$ and $ \td \varphi_{ \ee}$  as in  Definition \ref{defi:002} by setting
 $$ \un T=t_5,\quad  \bar T=t_6,\quad \un t=t_3, \quad \bar t=t_4.$$
Assume that (\ref{eq:Q}) holds. Let $0<\dd<r_1<\frac {r_0}{2}$ and set
 $\un r=r_1, \bar r=2r_1$.  After shrinking $r_1$ and the interval $[T_1, T_2]$ if necessary, we assume that $Q_{2r_1, T_1, T_2}\subset \hat Q_{r_0, T_1, T_2}.$
We choose $t_7, t_8$ such that
$T_1<t_7<t_5<t_6<t_8<T_2$ and define the function
 \beq \phi=\phi(x, t; r_1, 2r_1,  t_5, t_6, t_7, t_8, T_1, T_2) \label{eqn:phi}\eeq as in Definition \ref{defi:002}. Then by  (\ref{eq:phigrad}) the function $\phi$ satisfies the properties
\beqn &&
\Supp(|\Na \phi|+|\p_t\phi|)\cap  Q_{2r_1, t_5, t_6} \subset  Q_{2r_1, t_5, t_6}\b Q_{r_1, t_5, t_6} \nonumber\\&\subset&\bigcup_{k=1}^l\,\Big\{(x, t)\in Q_{2r_1,  t_5, t_6}\;\Big|\;
 r_1\leq \r_k(x, t)\leq 2r_1,\;\; \r_i(x, t)\geq r_1, \;\forall\, i\neq k\Big\}. \label{eqn:102}
\eeqn
 Moreover,  we define the following functions on $\hat Q_{r_0, t_5, t_6}\b \Ga_{t_5, t_6}   $ \beqs V_k(x, t)&=&e^{-2v_k(x, t)},\quad V(x, t)=\sum_{k=1}^l\,V_k(x, t),\\ w(x, t)&=&\zeta(t)^{-1}V(x, t)-1,\quad \varphi_0(x, t)=\phi(x, t)\zeta(t)(H\circ w)(x, t), \eeqs where  $ \zeta=\zeta(t; t_1, t_2, t_3, t_4, \dd, r_1)$ is the function defined in (1) of Definition \ref{defi:001}.     By using the properties (\ref{eqA:006a})-(\ref{eqA:006}), for any $(x, t)\in  \hat Q_{r_0, t_5, t_6}\b \Ga_{t_5, t_6}   $ we have
\beqn
\sum_{k=1}^l\, \r_k(x, t)^{2}&\leq& V(x, t)\leq \sum_{k=1}^l\, \r_k(x, t)^{2\ga},  \label{eq:H001}\\
l&\leq &\pd Vt+\Delta V\leq 4\sum_{k=1}^l\, \r_k(x, t)^{2\ga-2}, \label{eq:H002}\\
|\Na V|&\leq&2\sum_{k=1}^l\, \r_k(x, t)^{2\ga-1}. \label{eq:H003}
\eeqn  Note that the function  $\varphi_0(x, t)$ vanishes near the point $(\xi_1(t_0), t_0)$, but $\varphi_0(x, t)$ may not be zero on $\Ga_{t_5, t_6}$. The function $\tilde \varphi_{\ee}$ defined in Definition
\ref{defi:002}
 vanishes near $\Ga_{t_5, t_6}$, but it doesn't satisfy the inequality (\ref{eq:H002}) and the inequality (\ref{eqB:100xx}). Therefore, the argument of Lemma \ref{lem:A003b} doesn't work any more.
\\

{\it Step 3.}
  Direct calculation as in the proof of Lemma \ref{lem:A003b}, for any $(x, t)\in Q_{2r_1, t_5, t_6}$ we have
\beqn \varphi_0(x, t)&\leq& \phi(x, t)V(x, t)H'\circ w,\label{eq:c006}\\
\pd {\varphi_0}t+\Delta \varphi_0&\geq& \Big(l -2\|\partial_t\zeta\|_{L^{\infty}(\RR)}\Big)\phi H'\circ w-C(c_{\phi, K},  \ga, r_1)\chi_{K},\label{eq:c005}
\eeqn where $K$ and $ c_{\phi, K}$ are  defined by
\beqn K&=&\Supp(|\Na \phi|+|\p_t\phi|)\cap Q_{2r_1, t_5, t_6}, \label{eq:K1}\\
c_{\phi, K}&=&\sup_{K}(|\p_t\phi|+|\Delta \phi|+|\Na \phi|). \label{eq:cK1}\eeqn
Let $\varphi=\varphi_0\td \varphi_{\ee}\in C_{0}^{\infty}({Q_{2r_1, t_5, t_6}}\backslash \Ga_{t_5, t_6})$. Then we have
\beqs &&
\pd {\varphi}t+\Delta\varphi\\&=&\Big(\pd {\varphi_0}t+\Delta\varphi_0\Big)\td \varphi_{\ee}+\Big(\pd {\td\varphi_{\ee}}t+\Delta\td\varphi_{\ee}\Big)\varphi_0+2\langle
\Na\td\varphi_{\ee}, \Na \varphi_0\rangle\\
&=&\Big(\pd {\varphi_0}t+\Delta\varphi_0\Big)\td \varphi_{\ee}+H''\circ \td \oo_{\ee}|\Na \td \oo_{\ee}|^2\varphi_0+2\phi H'\circ \td \oo_{\ee}H'\circ w\langle \Na \td \oo_{\ee}, \Na V\rangle\\&&+2\zeta H'\circ \td \oo_{\ee} H\circ w\langle\Na \td \oo_{\ee}, \Na \phi \rangle\\
&\geq& \Big((l-2\|\partial_t\zeta\|_{L^{\infty}(\RR)})\phi H'\circ w-C(c_{\phi, K}, \ga, r_1)\chi_{K}\Big)\td \varphi_{\ee}\\&&-2\phi H'\circ w |\Na V|\cdot |\Na \td \oo_{\ee}|\chi_{\{ \td \oo_{\ee}>0\}}-2VH'\circ w H'\circ \td \oo_{\ee}  |\Na \td \oo_{\ee}|\cdot |\Na \phi|,
\eeqs where we used (\ref{eqn:003}) (\ref{eqB:203}) (\ref{eq:c005})  and the definition of $w$.
Combining this with (\ref{eqB:002xx}) and using the assumption $c(x, t)\geq 0$, we have
\beqn 0&\leq&
\int_{Q_{2r_1, t_5, t_6}}\; cu\varphi =-\int_{Q_{2r_1, t_5, t_6}}\; u\Big(\pd {\varphi}t+\Delta \varphi\Big)\nonumber\\
&\leq& -\Big(l-2\|\partial_t\zeta\|_{L^{\infty}(\RR)} \Big)\int_{Q_{2r_1, t_5, t_6}}\;u\,\phi\td \varphi_{\ee} H'\circ w +C(c_{\phi, K},  \ga, r_1)\int_{K}\;u\, \td \varphi_{ \ee}\nonumber\\
&& +2\|\Na V\|_{L^{\infty}({Q_{2r_1, t_5, t_6}})}\int_{Q_{2r_1, t_5, t_6}}\; u\phi H'\circ w   |\Na \td \oo_{\ee}|\chi_{\{  \td \oo_{\ee}>0\}}\nonumber\\&&+2\|V\|_{L^{\infty}({Q_{2r_1, t_5, t_6}})}\int_{Q_{2r_1, t_5, t_6}}\,u  H'\circ w H'\circ \td \oo_{\ee}  |\Na \td \oo_{\ee}|\cdot |\Na \phi|.
\nonumber
\eeqn
Therefore, we have
\beqn &&
\Big(l-2\|\partial_t\zeta\|_{L^{\infty}(\RR)} \Big)\int_{Q_{2r_1, t_5, t_6}}\,u\phi  H'\circ w\,\td \varphi_{ \ee}\nonumber\\&\leq&2\|\Na V\|_{L^{\infty}({Q_{2r_1, t_5, t_6}})}\int_{Q_{2r_1, t_5, t_6}}\; u\phi H'\circ w  |\Na \td \oo_{\ee}|\chi_{\{  \td \oo_{\ee}>0\}}\nonumber\\
&&+C(c_{\phi, K},  \ga, r_1)\int_{K}\;u\, \td \varphi_{ \ee}+2\|V\|_{L^{\infty}({Q_{2r_1, t_5, t_6}})}\int_{Q_{2r_1, t_5, t_6}} u |\Na \phi|\cdot |\Na \td \oo_{\ee}|\nonumber\\
&\leq&2\|\Na V\|_{L^{\infty}({Q_{2r_1, t_5, t_6}})}\int_{Q_{2r_1, t_5, t_6}}\; u\phi H'\circ w \chi_{\{  \td \oo_{\ee}>0\}} \nonumber\\
&&+2\|\Na V\|_{L^{\infty}({Q_{2r_1, t_5, t_6}})}\int_{Q_{2r_1, t_5, t_6}}\; u\phi H'\circ w  |\Na \td \oo_{\ee}|^2\chi_{\{  \td \oo_{\ee}>0\}}\nonumber
\\
&&+C(c_{\phi, K},   \ga, r_1)\int_{K}\;u\, \td \varphi_{ \ee}+2\|V\|_{L^{\infty}({Q_{2r_1, t_5, t_6}})}\int_{Q_{2r_1, t_5, t_6}} u |\Na \phi|\cdot |\Na \td \oo_{\ee}| \label{eqE:004}
\eeqn
The main difficulty is to estimate the integral
\beq \int_{Q_{2r_1, t_5, t_6}}\; u\phi H'\circ w  |\Na \td \oo_{\ee}|^2\chi_{\{  \td \oo_{\ee}>0\}} \label{eqn:004}\eeq
on the right hand side of (\ref{eqE:004}).\\

{\it Step 4.} We estimate the integral (\ref{eqn:004}).
For any $\rho\in (0, \frac 12)$,  we define the functions
$$\bar w_{\rho}(x, t)=\frac 13\Big(2-\frac {\td v(x, t)}{\log(\frac 1{\rho})}\Big),\quad \psi(x, t) =\phi(x, t)H\circ \bar w_{\rho}\in C_0^{\infty}(Q_{2r_1, t_5, t_6}\b \Ga_{t_5, t_6}),$$ where $\td v$ is the function defined in (\ref{eqC:001}).
Note that $\bar w_{\rho}(x, t)$ satisfies $\partial_t \bar w_{\rho}+\Delta  \bar w_{\rho}=0.$
 Direct calculation shows that
\beq
\pd {\psi}t+\Delta\psi=\phi |\Na \bar w_{\rho}|^2 H''\circ \bar w_{\rho}+\Big(\pd {\phi}t+\Delta \phi\Big)H\circ \bar w_{\rho}+2\langle \Na \phi, \Na \bar w_{\rho}\rangle H'\circ \bar w_{\rho}.\nonumber
\eeq
Since $u$ satisfies
\beq
-\int_{Q_{2r_1, t_5, t_6}}\; u\Big(\pd {\psi}t+\Delta \psi\Big)=\int_{Q_{2r_1, t_5, t_6}}\; cu \psi\geq 0\; , \nonumber
\eeq
we have
\beqn &&
\int_{Q_{2r_1, t_5, t_6}}\,u\phi  |\Na \bar w_{\rho}|^2 H''\circ \bar w_{\rho}\nonumber\\
&\leq& -\int_{Q_{2r_1, t_5, t_6}}\,u \Big(\Big(\pd {\phi}t+\Delta \phi\Big)H\circ \bar  w_{\rho}+2\langle \Na \phi, \Na \bar w_{\rho}\rangle H'\circ \bar w_{\rho}\Big)\; . \label{eqC:002}
\eeqn
We estimate each term of (\ref{eqC:002}).
Note that
\beqn&&\{(x, t)\in {Q_{2r_1, t_5, t_6}}\,|\, H''\circ \bar w_{\rho}\geq \min_{\frac 13\leq z\leq \frac 23}H''(z)\}\nonumber\\&\supset& \{(x, t)\in {Q_{2r_1, t_5, t_6}}\,|\, \frac 13\leq \bar \oo_{\rho}\leq \frac 23\}\nonumber\\&
=&\{(x, t)\in {Q_{2r_1, t_5, t_6}}\,|\, 1\leq \td \oo_{\rho}\leq2\}\nonumber\\
&=&\{(x, t)\in {Q_{2r_1, t_5, t_6}}\,|\, \rho\leq \td \r(x, t)\leq 1\}, \label{eq:C101} \eeqn
where $\td \oo_{\rho}$ is defined in (\ref{eqC:001}) and $\td \r(x, t)=e^{-\td v(x, t)}$.
Thus, the  left-hand side of (\ref{eqC:002}) satisfies the inequality
\beqn
\int_{Q_{2r_1, t_5, t_6}}\,u\phi |\Na \bar w_{\rho}|^2 H''\circ \bar w_{\rho} \; &\geq&  C \int_{{Q_{2r_1, t_5, t_6}}}\,\frac {u \phi |\Na \td v|^2\chi_{\{1\leq \td \oo_{\rho}\leq 2\}}}{|\log \rho|^2}\,\nonumber\\&=&C \int_{{Q_{2r_1, t_5, t_6}}}\,u \phi|\Na \td \oo_{\rho}|^2\chi_{\{1\leq \td \oo_{\rho}\leq 2\}}, \label{eqC:003xx}
\eeqn where $C$ is a universal constant.
We choose $2r_1<1$ and by (\ref{eq:tdr}) we have  $\td \r(x, t)< 1$ on $\Supp(\phi)\cap Q_{2r_1, t_5, t_6}$. Thus,  on $\Supp(\phi)\cap Q_{2r_1, t_5, t_6}$ we have
\beq \bar \oo_{\rho}=\frac 13\Big(2-\frac {\td v(x, t)}{\log(\frac 1{\rho})}\Big)\leq \frac 23, \quad H\circ \bar \oo_{\rho}\leq \bar \oo_{\rho}\leq \frac 23. \label{eq:C007bb}  \eeq
Combining this with (\ref{eqC:002}), the first term of the right-hand side of (\ref{eqC:002}) satisfies the inequality
\beqn
-\int_{Q_{2r_1, t_5, t_6}}\,u \Big(\pd {\phi}t+\Delta \phi\Big)H\circ \bar  w_{\rho}&\leq&C(c_{\phi, K})\int_{K}\,u,\label{eqn:101}
\eeqn where $K$ and $c_{\phi, K} $ are given by (\ref{eq:K1})-(\ref{eq:cK1}).
Note that by (\ref{eq:phigrad}) for any $i$ we have $\r_i(x, t)\geq r_1$ on $\Supp(|\Na \phi|)\cap Q_{2r_1, t_5, t_6}$. Combining this with (\ref{eqA:004}),  we have
\beqs
|\Na \td v|^2&\leq& l \sum_{k=1}^l\; |\Na v_k|^2\leq l \sum_{k=1}^l\;\frac 1{\r_k^2}\leq \frac {l^2}{r_1^2},\quad \forall\; (x, t)\in \Supp(|\Na \phi|)\cap Q_{2r_1, t_5, t_6}.
\eeqs
Thus, when $\rho\in (0, \frac 12)$ we have
\beq |\Na \bar \oo_{\rho}|=\frac 23 \frac {|\Na \td v|}{\log \frac 1{\rho}}\leq \frac {2l^2}{(3\log 2)\,r_1^2}, \quad \forall\; (x, t)\in \Supp(|\Na \phi|)\cap Q_{2r_1, t_5, t_6}. \label{eq:K006} \eeq
This implies that the second term of the right-hand side of (\ref{eqC:002}) satisfies
\beq
-\int_{Q_{2r_1, t_5, t_6}}\,2u \langle \Na \phi, \Na \bar w_{\rho}\rangle H'\circ \bar w_{\rho}\leq C(c_{\phi, K}, r_1, l) \int_{K}\,u . \label{eq:C100}
\eeq
Let  $\rho=\ee^2$.  Note that $\td \r(x, t)\leq 1$ on $\Supp(\phi)\cap Q_{2r_1, t_5, t_6}$. By (\ref{eq:C101}) we have
\beqn &&
\{(x, t)\in {Q_{2r_1, t_5, t_6}}\,|\, \td \oo_{\ee}>0 \}\cap \Supp(\phi)\nonumber\\&=&\{(x, t)\in {Q_{2r_1, t_5, t_6}}\,|\,  \td \r(x, t)> \ee^2\} \cap \Supp(\phi)\nonumber\\&
=&\{(x, t)\in {Q_{2r_1, t_5, t_6}}\,|\, \rho\leq \td \r(x, t)\leq 1\} \cap \Supp(\phi), \nonumber\\
&=&\{(x, t)\in {Q_{2r_1, t_5, t_6}}\,|\, 1\leq \td \oo_{\rho}\leq2\}\cap \Supp(\phi). \label{Z:001}
\eeqn
Combining (\ref{eqn:101})(\ref{eq:C100}) with (\ref{eqC:002}), we have
\beq
\int_{Q_{2r_1, t_5, t_6}}\,u\phi  |\Na \bar w_{\rho}|^2 H''\circ \bar w_{\rho}\leq
C(c_{\phi, K}, r_1, l)\int_{K}\,u.
\eeq
This together with (\ref{eqC:003xx}) implies that
\beqn
\int_{{Q_{2r_1, t_5, t_6}}}\,u \phi|\Na \td \oo_{\rho}|^2\chi_{\{1\leq \td \oo_{\rho}\leq 2\}}&\leq& C\int_{Q_{2r_1, t_5, t_6}}\,u\phi  |\Na \bar w_{\rho}|^2 H''\circ \bar w_{\rho}\nonumber\\&\leq&C(c_{\phi, K}, r_1, l)\int_{K}\,u. \label{Z:002}
\eeqn
Thus,
by (\ref{Z:002}) and (\ref{Z:001}) we have
\beqn
\int_{Q_{2r_1, t_5, t_6}\cap \{\rho<\td \r<1\}}\, {u\phi|\Na \td \oo_{\rho}|^2}&=& \int_{{Q_{2r_1, t_5, t_6}}}\,u \phi|\Na \td \oo_{\rho}|^2\chi_{\{1\leq \td \oo_{\rho}\leq 2\}}\nonumber\\
&\leq& C(c_{\phi}, r_1, l)\int_{K}\,u.\label{eq:L001}\eeqn
Moreover,  we have the estimate for the integral (\ref{eqn:004})
\beqn
\int_{Q_{2r_1, t_5, t_6}}\; u\phi H'\circ w  |\Na \td \oo_{\ee}|^2\chi_{\{  \td \oo_{\ee}>0\}}&\leq&  \int_{Q_{2r_1, t_5, t_6}\cap \{\td \oo_{\ee}>0\}}\,\frac {u\phi|\Na \td v|^2}{|\log \ee|^2}\nonumber\\
&=&
4\int_{Q_{2r_1, t_5, t_6}\cap \{1\leq \td \oo_{\rho}\leq2\}}\, {u\phi|\Na \td \oo_{\rho}|^2}\nonumber\\
&\leq&C(c_{\phi, K}, r_1, l)\int_{K}\,u. \label{eqE:006}
\eeqn \\

{\it Step 5.} Now we turn back to the inequality (\ref{eqE:004}).
Moreover, by (\ref{eq:K006}) we have
\beq
 \int_{Q_{2r_1, t_5, t_6}} u |\Na \phi|\cdot |\Na \td \oo_{\ee}|= 2\int_{Q_{2r_1, t_5, t_6}} u |\Na \phi|\cdot \frac {|\Na \td v|}{\log \frac 1{\rho}}\leq C(c_{\phi, K}, l, r_1)\int_{K } u. \label{eq:K007}
 \eeq
Combining (\ref{eqE:004}), (\ref{eq:K007}) with (\ref{eqE:006}), we have
\beqn &&
\Big(l-2\|\partial_t\zeta\|_{L^{\infty}(\RR)} \Big)\int_{Q_{2r_1, t_5, t_6}}\,u\phi  H'\circ w\,\td \varphi_{ \ee}\nonumber\\
&\leq&2\|\Na V\|_{L^{\infty}({Q_{2r_1, t_5, t_6}})}\int_{Q_{2r_1, t_5, t_6}}\; u\phi H'\circ w \,\chi_{\{  \td \oo_{\ee}>0\}} \nonumber\\&&+C(c_{\phi, K}, r_1, l)\Big(\|\Na V\|_{L^{\infty}({Q_{2r_1, t_5, t_6}})}+\|V\|_{L^{\infty}({Q_{2r_1, t_5, t_6}})}\Big)\int_{K}\,u     \nonumber
\\
&&+C(c_{\phi, K},  \ga, r_1)\int_{K}\;u\, \td \varphi_{ \ee}. \label{eq:C102}
\eeqn
Since all singular curves are disjoint on $Q_{2r_1, t_5, t_6}\cap \{w\geq 0\}$ by our assumption,  by Lemma \ref{lem:A003b} $u$ is integrable on $Q_{2r_1, t_5, t_6}\cap \{w\geq 0\}$.
Taking $\ee\ri 0$ in (\ref{eq:C102}) and using the dominated convergence theorem, we have
\beqn &&
\Big(l-2\|\partial_t\zeta\|_{L^{\infty}(\RR)} \Big)\int_{Q_{2r_1, t_5, t_6}}\,u\phi  H'\circ w\,\nonumber\\
&\leq&2\|\Na V\|_{L^{\infty}({Q_{2r_1, t_5, t_6}})}\int_{Q_{2r_1, t_5, t_6}}\; u\phi H'\circ w+C(c_{\phi, K},  \ga, r_1)\int_{K}\;u \, \nonumber\\&&+C(c_{\phi, K}, r_1, l)\Big(\|\Na V\|_{L^{\infty}({Q_{2r_1, t_5, t_6}})}+\|V\|_{L^{\infty}({Q_{2r_1, t_5, t_6}})}\Big)\int_{K}\,u   .  \label{eq:C103}
\eeqn
It follows that
\beqn &&
\Big(l-2\|\partial_t\zeta\|_{L^{\infty}(\RR)}-2\|\Na V\|_{L^{\infty}({Q_{2r_1, t_5, t_6}})} \Big)\int_{Q_{2r_1, t_5, t_6}}\,u\phi  H'\circ w\,\nonumber\\
&\leq&C(c_{\phi, K},  \ga, r_1)\int_{K}\;u. \label{eq:C103b}
\eeqn
By (\ref{eq:zeta}) and (\ref{eq:H003}), we choose $r_1$ small such that $$l-2\|\partial_t\zeta\|_{L^{\infty}(\RR)}-2\|\Na V\|_{L^{\infty}({Q_{2r_1, t_5, t_6}})}>\frac 12l.$$
Combining this with (\ref{eq:C103b}), we have
\beq
\int_{Q_{2r_1, t_5, t_6}}\,u\phi  H'\circ w\,
\leq C(c_{\phi, K}, \ga, r_1, l)\int_{K}\,u.   \label{eq:C001a}
\eeq
Note that the function $H'\circ w$ converges to $1$ on $Q_{r_1, t_1, t_2}\b \Ga_{t_1, t_2}$ as $\dd\ri 0$. Thus, taking $\dd\ri 0$ in (\ref{eq:C001a})  we have
\beq
\int_{Q_{r_1, t_1, t_2}}\,u
\leq C(c_{\phi, K},   \ga, r_1, l)\int_{K}\,u,   \label{eq:C001b}
\eeq which implies (\ref{eq:L002}).
Note that (\ref{eq:L001}) implies (\ref{eqB:207}) since
\beqn
I(\rho; t_1, t_2, u, r_0)&=&\int_{Q_{r_0, t_1, t_2}\cap \{\rho\leq \td \r\leq 1\}}\;\frac {u |\Na \td v|}{|\log \rho|^2} \nonumber\\&= &\int_{Q_{r_1, t_5, t_6}\cap \{\rho<\td \r<1\}}\, {u |\Na \td \oo_{\rho}|^2}+ \int_{Q_{r_0, t_5, t_6}\b Q_{r_1, t_5, t_6} }\, {u |\Na \td \oo_{\rho}|^2}\nonumber\\
&\leq&\int_{Q_{2r_1, t_5, t_6}\cap \{\rho<\td \r<1\}}\, {u\phi|\Na \td \oo_{\rho}|^2}+C(l, r_1)\int_{Q_{r_0, t_5, t_6}\b Q_{r_1, t_5, t_6} }\, u\nonumber\\
&\leq&C(c_{\phi}, r_1, l)\int_{K}\,u+C(l, r_1)\int_{Q_{r_0, t_5, t_6}\b Q_{r_1, t_5, t_6} }\, u<+\infty. \label{Z:003}
\eeqn
 The lemma is proved.

\end{proof}

As a byproduct of the above proof, we have the following result.

\begin{lem}\label{lem:measure2} Under the assumption of Lemma \ref{lem:A003b}, for the singular curve $\xi:[T_1, T_2]\ri \Si$ we have
\beq
\sup_{\rho\in (0, \frac 12)}I_{\xi}(\rho; t_1, t_2, u, 2r_1 )<+\infty. \label{lem:measure2:001}
\eeq

\end{lem}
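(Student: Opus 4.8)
\textbf{Proof plan for Lemma \ref{lem:measure2}.}

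The plan is to extract the desired bound directly from the computation carried out in the proof of Lemma \ref{lem:A003b}, which already produces exactly the kind of estimate we need but for the single-curve geometry. Recall that in that proof we set $T_1<t_5<t_3<t_1<t_2<t_4<t_6<T_2$, constructed the function $v=v_\xi$ from part (2) of Lemma \ref{lemA:001b} (so that $v$ solves the backward heat equation and satisfies \eqref{eqA:003}--\eqref{eqA:004}), and for $\rho\in(0,\frac12)$ formed the test function $\psi=\phi\, H\circ\bar w_\rho$ with $\bar w_\rho=\frac13\big(2-\frac{v}{\log(1/\rho)}\big)$, which satisfies $\partial_t\bar w_\rho+\Delta\bar w_\rho=0$. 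Pairing $u$ against $\partial_t\psi+\Delta\psi$ and using $c\geq 0$ (after the reduction in Step 1 of Lemma \ref{lemA:003b}, which applies verbatim here since one replaces $u$ by $ue^{kt}$), one gets an inequality of the form
\begin{equation}
\int_{Q_{2r_1,t_5,t_6}}u\,\phi\,|\Na\bar w_\rho|^2\,H''\circ\bar w_\rho
\;\leq\; -\int_{Q_{2r_1,t_5,t_6}}u\Big(\big(\partial_t\phi+\Delta\phi\big)H\circ\bar w_\rho+2\langle\Na\phi,\Na\bar w_\rho\rangle H'\circ\bar w_\rho\Big).\nonumber
\end{equation}

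Next I would bound the right-hand side exactly as in Step 4 of Lemma \ref{lemA:003b}: on $\Supp(\phi)$ one has $\td\r(x,t)\le 1$ (here with a single curve $\td\r=e^{-v}$, so $\r_\xi^{\,1}\le\td\r\le\r_\xi^{\,\ga}$ by \eqref{eq:tdr}), hence $H\circ\bar w_\rho\le\bar w_\rho\le\frac23$; and on $\Supp(|\Na\phi|)\cap Q_{2r_1,t_5,t_6}$ one has $\r_\xi\ge r_1$, so by \eqref{eqA:004} $|\Na v|\le 1/r_1$ and $|\Na\bar w_\rho|\le \frac{2}{3(\log 2)r_1}$ uniformly in $\rho\in(0,\frac12)$. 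Therefore the right-hand side is bounded by $C(c_{\phi,K},r_1)\int_K u$, where $K=\Supp(|\Na\phi|+|\partial_t\phi|)\cap Q_{2r_1,t_5,t_6}$. On the left-hand side, on the set $\{1\le \td w_\rho\le 2\}=\{\rho\le\td\r\le 1\}$ we have $\frac13\le\bar w_\rho\le\frac23$, so $H''\circ\bar w_\rho\ge c_0:=\min_{[1/3,2/3]}H''>0$, and $|\Na\bar w_\rho|^2=\frac{4}{9}\,|\Na v|^2/|\log\rho|^2\ge c_1\,\r_\xi^{-2}/|\log\rho|^2$ on $Q_{r_1,t_1,t_2}$ by \eqref{eqA:004} (using that $\phi\equiv 1$ there). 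Since $\{\rho\le\r_\xi(x,t)\le r_1\}\cap(t_1,t_2)\subset\{\rho\le\td\r\le 1\}$ up to shrinking $r_1$, combining the two bounds yields
\begin{equation}
\frac{1}{|\log\rho|^2}\int_{t_1}^{t_2}\!\!\int_{\rho\le\r_\xi(x,t)\le r_1}\frac{u}{\r_\xi(x,t)^2}\;\le\;C\big(c_{\phi,K},r_1,\ga,\rho_0,\Xi_0,\si,t_1,t_2,T_1,T_2\big)\int_K u,\nonumber
\end{equation}
with the right side finite by Lemma \ref{lem:A003b}. Finally, to pass from radius $r_1$ to the prescribed radius $2r_1$ in the statement $I_\xi(\rho;t_1,t_2,u,2r_1)$, note that on the annulus $r_1\le\r_\xi\le 2r_1$ the integrand $u/\r_\xi^2\le u/r_1^2$ and, by Lemma \ref{lem:A003b} again (enlarging $r_1$ by a factor $2$ in that lemma), $u$ is integrable there; adding this harmless contribution and taking the supremum over $\rho\in(0,\frac12)$ gives \eqref{lem:measure2:001}.

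The only point that needs a little care — and the main obstacle — is the compatibility of the various radius parameters: the parameter $r_1$ in $I_\xi(\rho;t_1,t_2,u,2r_1)$ is the same $r_1$ from Lemma \ref{lem:A003b}, and one must make sure that the cutoff $\phi$ and the function $v$ (hence the constant $r_0$ from Lemma \ref{lemA:001b}) are chosen so that $2r_1<r_0/2$ and $Q_{2r_1,t_5,t_6}$ lies inside the region where \eqref{eqA:003}--\eqref{eqA:004} hold. Once this bookkeeping is fixed, the estimate above is just a re-reading of the proof of Lemma \ref{lem:A003b}, so no new analytic input is required; I would simply remark that all constants are exactly those already produced there. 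This makes Lemma \ref{lem:measure2} a genuine byproduct, which is how it is billed.
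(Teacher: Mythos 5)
Your proposal is correct and follows the same route as the paper: the paper's proof of Lemma \ref{lem:measure2} is a one-line citation to inequality (\ref{Z:003}) and Step 4 of the proof of Lemma \ref{lemA:003b} specialized to $l=1$, and your argument simply reconstructs that computation (testing against $\psi=\phi\, H\circ\bar w_\rho$, bounding the right-hand side on $\Supp(|\Na\phi|+|\partial_t\phi|)$, and using $H''\circ\bar w_\rho\geq c_0$ together with $|\Na v|\geq\ga/\r_\xi$ from (\ref{eqA:004}) on the left). The radius bookkeeping at the end, passing from $r_1$ to $2r_1$ via integrability of $u$ on the outer annulus, is consistent with what the paper leaves implicit.
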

\begin{proof} (\ref{lem:measure2:001}) follows directly from the inequality (\ref{Z:003}) and Step 4 of the proof of Lemma \ref{lemA:003b} by choosing $l=1$.

\end{proof}
By using Lemma \ref{lem:A003b}, Lemma \ref{lem:measure2} and following the same arguments as in \cite{[KT]}, we have the following results when  the singular curves
are disjoint.

\begin{lem} \label{lemA:003}(cf. \cite{[KT]}) Under the same assumption as in Theorem \ref{theoA:main}, if we assume that  $\{\xi_1(t), \cdots, \xi_l(t)\}$
are disjoint on $[T_1, T_2]$ and $\bar \rho$ is the constant  in (6) of Definition \ref{defi:002}, then  we have
\begin{enumerate}
   \item[(1).] For each $\xi_k$ and $(t_1, t_2)\subset [T_1, T_2]$, the mapping $\cJ_k: C_0^{\infty}(Q_{\bar \rho, t_1, t_2}^{(k)})\ri \RR$
  $$\cJ_k(f)=\int_{Q_{\bar \rho, t_1, t_2}^{(k)}}\,\Big(u(-\pd {f}t-\Delta f)-cuf\Big)\,dxdt  $$
defines a distribution whose support is contained in $\Ga^{(k)}_{t_1, t_2}$, and satisfies
\beq
|\cJ_k(f)|\leq C(\sup_{\Ga^{(k)}_{t_1, t_2}}|f|)\liminf_{\rho\ri 0}I_{\xi_k}(\rho; t_1, t_2, u, \bar \rho),\label{eqC:005}
\eeq where $C$ is a universal constant. Here $I_{\xi_k}(\rho; t_1, t_2, u, \bar \rho)$ is defined in (6) of Definition \ref{defi:002} and it is finite by Lemma \ref{lem:measure2}.

\item[(2).] There exists  linear functionals $\{\mu_1, \cdots, \mu_l\}$ with
each $\mu_k\in (C_0((T_1, T_2)))'$ such that for all $\varphi\in C_0^{\infty}(Q_{1, T_1, T_2}),$
\beq
\int_{Q_{1, T_1, T_2}}\,u(-\pd {\varphi}t-\Delta \varphi)=\int_{Q_{1, T_1, T_2}}\;cu\varphi
+\sum_{k=1}^l\int_{(T_1, T_2)}\,\varphi(\xi_k(t), t)d\mu_k(t). \label{eqA:001}
\eeq The identity (\ref{eqA:001}) can be rewritten as
\beq
\pd ut-\Delta u=cu+\sum_{k=1}^l\,\dd_{\xi_k}\otimes \mu_k, \quad \hbox{in}\quad \cD'(Q_{1, T_1, T_2}).
\eeq

\item[(3).] Let $\mu_k$ be one of  the measures in (2). For any $\psi\in C_0^{\infty}((T_1, T_2))$ with $\Supp(\psi)\subset (t_1, t_2)$, we have
\beq
\int_{T_1}^{T_2}\,\psi \,d\mu_k=2\lim_{\rho\ri 0}\frac 1{|\log\rho|^2} \,\int_{Q^{(k)}_{\bar \rho, T_1, T_2}}\,|\Na v_k|^2\chi_{\{ v_k\leq |\log \rho|\}} \psi u.\label{eqG:002}
\eeq

\item[(4).] Each measure $\mu_k$ obtained in (3) is positive.

\end{enumerate}

\end{lem}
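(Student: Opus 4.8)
The plan is to establish the four parts of Lemma \ref{lemA:003} in order, following the scheme of \cite{[KT]} but using Lemma \ref{lem:A003b} and Lemma \ref{lem:measure2} to supply the local $L^1$ bounds and the uniform bound on $I_{\xi_k}(\rho;t_1,t_2,u,\bar\rho)$ that make all the integrals below finite. Since $\{\xi_1,\dots,\xi_l\}$ are disjoint on $[T_1,T_2]$, we may work one curve at a time: on a neighborhood $Q^{(k)}_{\bar\rho,T_1,T_2}$ of $\Gamma^{(k)}$, the only singularity present is $\xi_k$, so Lemma \ref{lem:A003b} (with the single curve $\xi_k$) gives $u\in L^1(Q^{(k)}_{r,t_1,t_2})$ for small $r$, and Lemma \ref{lem:measure2} gives $\sup_{\rho\in(0,1/2)}I_{\xi_k}(\rho;t_1,t_2,u,2r_1)<+\infty$.

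For part (1), I would check that $\cJ_k$ is well defined on $C_0^\infty(Q^{(k)}_{\bar\rho,t_1,t_2})$ using the local integrability of $u$ and $cu$, and that $\cJ_k(f)=0$ whenever $\mathrm{Supp}(f)\cap\Gamma^{(k)}=\emptyset$: on such $f$ the equation \eqref{eq:D004} holds classically and integration by parts (all boundary terms vanishing) kills $\cJ_k(f)$. Hence $\cJ_k$ is supported on $\Gamma^{(k)}$. The quantitative bound \eqref{eqC:005} is obtained exactly as in \cite{[KT]}: test the equation against $f\cdot(H\circ w_\rho)$ with the cutoff $w_\rho$ built from $v_k$ as in Definition \ref{defi:002}(4), so that $H\circ w_\rho\to 1$ away from $\Gamma^{(k)}$ and the only surviving term is $\int u f |\nabla w_\rho|^2 H''\circ w_\rho$, which on the annular region $\rho\le\td\r\le 1$ is comparably $|\log\rho|^{-2}\int u|\nabla v_k|^2$; estimating $|\nabla v_k|^2\le \r_k^{-2}$ by \eqref{eqA:004} and passing to $\liminf_{\rho\to 0}$ gives \eqref{eqC:005} with the factor $\sup_{\Gamma^{(k)}}|f|$. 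For part (2), by the Riesz representation theorem the distribution $\cJ_k$, supported on the curve $\Gamma^{(k)}$ and bounded by $\sup_{\Gamma^{(k)}}|f|$, is represented by a measure $\mu_k\in(C_0((T_1,T_2)))'$ via $\cJ_k(f)=\int_{(T_1,T_2)}f(\xi_k(t),t)\,d\mu_k(t)$; summing over $k$ and using disjointness (so the neighborhoods $Q^{(k)}$ can be taken mutually disjoint and a partition of unity reduces a general $\varphi\in C_0^\infty(Q_{1,T_1,T_2})$ to the single-curve pieces plus a piece supported away from all $\Gamma^{(k)}$, on which the equation holds classically) yields \eqref{eqA:001}.

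For part (3), I would take $\psi\in C_0^\infty((T_1,T_2))$ with $\mathrm{Supp}(\psi)\subset(t_1,t_2)$ and, for each $\rho$, use the test function $\varphi_\rho=\phi_{\xi_k}\,(H\circ w_\rho)\,\psi$ where $w_\rho=2-v_k/|\log\rho|$ and $\phi_{\xi_k}$ is the spatial cutoff of Definition \ref{defi:001}(3); since $\partial_t w_\rho+\Delta w_\rho=0$ by \eqref{eqA:005}, the computation of $\partial_t\varphi_\rho+\Delta\varphi_\rho$ produces a main term $\phi_{\xi_k}\psi|\nabla w_\rho|^2 H''\circ w_\rho$ plus terms supported where $|\nabla\phi_{\xi_k}|\ne 0$ (which are bounded uniformly and tend to the contribution away from the singularity) and plus terms with $\partial_t\psi$ that integrate against the regular part of $u$. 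Plugging into \eqref{eqA:001}, letting $\rho\to 0$, and noting $H''\circ w_\rho$ concentrates on $\{v_k\le|\log\rho|\}$ with $|\nabla w_\rho|^2=|\nabla v_k|^2/|\log\rho|^2$, the normalization constant $\int_0^\infty H''=1$ and the factor $2$ (from $H(2)$, i.e. the width of the transition interval for $w_\rho$) give \eqref{eqG:002}; the uniform bound from Lemma \ref{lem:measure2} guarantees the $\liminf$/$\lim$ is finite. Finally part (4) is immediate from \eqref{eqG:002}: $u\ge 0$, $|\nabla v_k|^2\ge 0$, $\chi_{\{v_k\le|\log\rho|\}}\ge 0$, and $\psi\ge 0$ force $\int\psi\,d\mu_k\ge 0$ for every nonnegative test function, hence $\mu_k$ is a positive measure.

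The main obstacle I anticipate is making the limit $\rho\to 0$ in parts (1) and (3) rigorous: one must show that the "error" terms — those involving $|\nabla\phi_{\xi_k}|$, $\partial_t\phi_{\xi_k}$, $\partial_t\psi$, and the lower-order $cu\varphi_\rho$ — converge to their expected limits using only $u\in L^1_{\loc}$ away from $\Gamma^{(k)}$ (dominated convergence, since $H\circ w_\rho\to 1$ boundedly off the curve), while the single genuinely singular term $\int u|\nabla w_\rho|^2 H''\circ w_\rho$ must be controlled uniformly in $\rho$ — and this is exactly what the uniform finiteness of $I_{\xi_k}(\rho;\cdot)$ from Lemma \ref{lem:measure2} provides. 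Coupling this uniform bound with the identification of the concentration set $\{\rho\le\td\r\le 1\}$ (as in \eqref{eq:C101}) and the sign of every factor is what simultaneously delivers the representation, the bound \eqref{eqC:005}, the formula \eqref{eqG:002}, and the positivity.
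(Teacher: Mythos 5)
Your proposal is essentially a detailed unpacking of the Kan--Takahashi argument that the paper invokes: the paper's own proof consists only of citing Lemma~4.4, Theorem~2.1, and Lemma~5.2 of \cite{[KT]} (with $g$ replaced by $c$) and noting positivity follows from the nonnegativity of the integrand in \eqref{eqG:002}, exactly as you did. The scheme you lay out --- support on $\Gamma^{(k)}$ by classical integration by parts off the curve, the bound \eqref{eqC:005} from testing against $f\cdot(H\circ w_\rho)$ and controlling the surviving $\int u f|\Na w_\rho|^2 H''\circ w_\rho$ term by $I_{\xi_k}$, Riesz representation plus disjointness and a partition of unity for \eqref{eqA:001}, the cutoff $\phi_{\xi_k}(H\circ w_\rho)\psi$ for \eqref{eqG:002}, and nonnegativity of every factor for (4) --- is the same as the cited Kan--Takahashi route, so your approach agrees with the paper's in substance, merely replacing "see \cite{[KT]}" by a sketch of what \cite{[KT]} actually does. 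One small imprecision: in your part (1) you at one point mix the single-curve cutoff built from $v_k$ with the multi-curve $\td\r$ of Definition~\ref{defi:002}(4); in the disjoint case the single-curve version (which you do use correctly in part (3)) is the relevant one, and this should be stated uniformly.
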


\begin{proof} Since $\{\xi_1(t), \cdots, \xi_l(t)\}$
are disjoint on $[t_1, t_2]$, we can consider each $\xi_k$ as in  \cite{[KT]}.
After replacing the function $g$ in $(4.21)$ of \cite{[KT]} by the function $c(x, t)$, we know that  part (1) follows directly from Lemma 4.4 of \cite{[KT]}. Part (2) follows from the proof of Theorem 2.1 of \cite{[KT]}(See Page 7303 of \cite{[KT]}), (3) follows from Lemma 5.2 of \cite{[KT]} and (4) follows from the non-negativity of the right-hand side of (\ref{eqG:002}). Since the proof is exactly the same as in \cite{[KT]}, we omit the details here.

\end{proof}

When the singular curves are around $(x_0, t_0)$, the measures $\mu_k$ constructed in Lemma \ref{lemA:003} may blow up as $t\ri t_0$. The next result shows that $\mu_k$ is actually bounded when $t$ is close to $t_0.$

\begin{lem}\label{lem:measure}The same assumption as in Theorem \ref{theoA:main}. Suppose that the singular curves $\{\xi_1(t), \cdots, \xi_l(t)\}$ are around $(x_0, t_0)$ on $[t_1, t_2]$ as in Definition \ref{defi:around}.
Define the measure $\mu$ on $(t_1, t_2)$ by
\beq
\int_{t_1}^{t_2}\,\psi \,d\mu=\lim_{\rho\ri 0}\frac 2{|\log\rho|^2} \,\int_{Q_{1, t_1, t_2}}\,|\Na \td v|^2\chi_{\{ \td v\leq |\log \rho|\}} \psi u\, d\vol \, dt,\label{eq:G005}
\eeq where the right-hand side is finite by (\ref{eqB:207}).  Here $\td v$ is the function defined by (\ref{eqC:001}).
Then $\mu\in (C_0((t_1, t_2)))'$ and for each $\xi_k$ the measure $\mu_k$ obtained by Lemma \ref{lemA:003} satisfies
$$0\leq \ga^4\,\mu_k(t)\leq \mu(t), \quad \forall\,t\in (t_1, t_0)\cup (t_0, t_2),$$
where $\ga\in (\frac 12, 1)$ is the constant chosen in Lemma \ref{lemA:001b}.
\end{lem}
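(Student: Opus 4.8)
The plan is to compare the single-curve measures $\mu_k$ with the aggregate measure $\mu$ built from the combined function $\td v=\sum_j v_j$ by examining both as limits of the same kind of ``energy quotient'' integral and exploiting the key point that \emph{near} $\xi_k$ the function $v_k$ dominates $\td v$ up to the constant $\ga$, whereas away from all the curves the other $v_j$ are bounded. First I would fix a small $\rho>0$, a test function $\psi\in C_0^{\infty}((t_1,t_2))$ with $\psi\ge 0$, and write out both $\int\psi\,d\mu_k$ via the formula (\ref{eqG:002}) of Lemma \ref{lemA:003} and $\int\psi\,d\mu$ via (\ref{eq:G005}), keeping them inside the $\lim_{\rho\to0}\tfrac{2}{|\log\rho|^2}\int\cdots$ form so that the comparison is between explicit integrands $|\Na v_k|^2\chi_{\{v_k\le|\log\rho|\}}$ and $|\Na\td v|^2\chi_{\{\td v\le|\log\rho|\}}$ against $\psi u$. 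Positivity of $\mu_k$ is already in part (4) of Lemma \ref{lemA:003}, so only the inequality $\ga^4\mu_k\le\mu$ needs work.

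The core estimate is local: on the region $\{\rho\le\td\r(x,t)\le 1\}\cap\{x\in\cB_{r_0}(\xi_k(t))\}$ we have, by (\ref{eq:tdr}) and (\ref{eqA:003}), that $\td\r=e^{-\td v}$ with $\td v=v_k+\sum_{j\ne k}v_j$ and each $v_j$ for $j\ne k$ is bounded on a neighborhood of $\xi_k$ bounded away from the other curves' cores; hence $v_k\le\td v\le v_k+O(1)$ there, and crucially the \emph{gradient} satisfies $|\Na\td v|\ge|\Na v_k|-\sum_{j\ne k}|\Na v_j|\ge(1-o(1))|\Na v_k|$ as one localizes near $\xi_k$ (using (\ref{eqA:004}) which gives $|\Na v_k|\sim\r_k^{-1}\to\infty$ while $|\Na v_j|$ stays bounded for $j\ne k$). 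Combined with $\ga\log\frac1{\r_k}\le v_k\le\log\frac1{\r_k}$ and the analogous bound for $v_k$ itself inside $\td v$, one extracts the factor $\ga$ twice from the potential $v_k$ vs.\ $\log\rho$ level-set comparison and twice from $|\Na v_k|^2$ vs.\ $|\Na\td v|^2$, producing $\ga^4$; the cutoffs $\chi_{\{v_k\le|\log\rho|\}}$ and $\chi_{\{\td v\le|\log\rho|\}}$ must be compared carefully, noting $\{\td v\le|\log\rho|\}\subset\{v_k\le|\log\rho|\}$ by $v_k\le\td v$, which goes the right way since we want $\mu_k\le\mu$. Away from $t=t_0$ the curves are disjoint on a neighborhood, so all of this localization is legitimate; the constant $r_0$ of Definition \ref{defi:002}(3) already guarantees (\ref{eqA:003})--(\ref{eqA:004}) hold simultaneously for every $v_k$ on $\hat Q_{r_0,\un T,\bar T}\b\Ga_{\un T,\bar T}$.

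I would then carry out the argument in the following order: (i) reduce to $c\ge0$ and to $\psi\ge0$ supported in $(t_1,t_0)$ (the interval $(t_0,t_2)$ is symmetric), using the shift $u\mapsto ue^{kt}$ as in Step 1 of Lemma \ref{lemA:003b}; (ii) fix $t_1<\bar t<t_0$ so that on $\Supp\psi\subset(t_1,\bar t)$ the curves $\{\xi_j\}$ are disjoint and choose $\un r$ so small that $\cB_{\un r}(\xi_k(t))$ is disjoint from all $\cB_{\un r}(\xi_j(t))$, $j\ne k$, for $t\in\Supp\psi$; (iii) on $Q_{\un r,t_1,\bar t}^{(k)}$ establish the pointwise bounds $v_k\le\td v\le v_k+C$ and $|\Na\td v|\ge(1-\tfrac{C\un r}{1})|\Na v_k|$ and also $|\Na\td v|\le|\Na v_k|+C$; (iv) insert these into the integral representations, discard the contribution from $\hat Q\b\bigcup_k Q^{(k)}_{\un r}$ in $\mu$ (which is nonnegative) and from $Q^{(k)}_{\un r}\b Q^{(k)}_{\bar\rho}$ in $\mu_k$ (bounded, hence vanishing after dividing by $|\log\rho|^2$), so that in the limit only the matching cores survive; (v) take $\rho\to0$ and then let $\un r\to0$ to remove the $o(1)$ loss in the gradient comparison, obtaining $\ga^4\mu_k\le\mu$ as measures on $(t_1,t_0)$, and repeat on $(t_0,t_2)$. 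The finiteness of the right side of (\ref{eq:G005}) is exactly (\ref{eqB:207}) of Lemma \ref{lemA:003b}, so $\mu\in(C_0((t_1,t_2)))'$ follows from the Riesz representation once we know the limit defines a bounded nonnegative linear functional, which is clear since the integrand is nonnegative and dominated by $I(\rho;t_1,t_2,u,r_0)\sup|\psi|$.

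I expect the main obstacle to be step (iii)--(iv): making the gradient comparison $|\Na\td v|^2\ge\ga^2|\Na v_k|^2$ uniform enough that the error terms genuinely vanish after the double limit, and simultaneously controlling the mismatch of the two level-set cutoffs $\chi_{\{v_k\le|\log\rho|\}}$ versus $\chi_{\{\td v\le|\log\rho|\}}$ near the boundary $\td v\approx|\log\rho|$, where the $O(1)$ gap between $v_k$ and $\td v$ is not negligible relative to the ``thickness'' of the transition region. The resolution is that this transition region has $\td\r$ of order $\rho$ up to a bounded factor, so its contribution to the energy quotient is $O(1/|\log\rho|^2)\cdot O(1)\to0$; but verifying this cleanly requires the co-area/substitution estimates already developed in the proof of Lemma \ref{lem:estimate_U} (the $S_\bb(r)$ computations), adapted to the product distance $\td\r=\r_1\cdots\r_l$ near a point where only $\r_k$ is small. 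Everything else is bookkeeping with the cutoffs $\phi$, $\zeta$, $H$ from Definitions \ref{defi:001}--\ref{defi:002} and is parallel to what is already done in Lemmas \ref{lemA:003b} and \ref{lemA:003}.
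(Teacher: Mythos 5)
Your approach is essentially the paper's: localize near $\xi_k$, use (\ref{eqA:003})--(\ref{eqA:004}) to compare $v_k$, $\td v$ and their gradients with $\log(1/\r_k)$ and $\r_k^{-1}$, keep both $\mu$ and $\mu_k$ in the limit-quotient form, and let the two $\ga$-factors on each side combine to $\ga^4$. The paper implements this via the explicit regions $\Om_{k,\rho}$ (where $\r_k$ is between $\rho/\al_0^{l'-1}$ and $\al_0$ while all other $\r_i$ are bounded below) and a change of variable $\rho\mapsto\td\rho$ with $\td\rho^{1/\ga}=\rho/\al_0^{l'-1}$, which makes the level sets of $\mu$ and $\mu_k$ literally coincide with $\Om_{k,\rho}$; your plan of pointwise bounds $v_k\le\td v\le v_k+C$ and $|\Na\td v|=(1+o(1))|\Na v_k|$ plus a shell estimate is the same mechanism in different bookkeeping. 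One small slip: the containment $\{\td v\le|\log\rho|\}\subset\{v_k\le|\log\rho|\}$ by itself \emph{does not} go the right way for $\mu_k\le\mu$ --- the set for $\mu$ is the smaller one --- and it is only your subsequent shell analysis (or the paper's $\rho\mapsto\td\rho$ substitution) that rescues the inequality. Also, the $\ga^2$ on the $\mu$ side comes from $|\Na v_k|\ge\ga\r_k^{-1}$ inserted into $|\Na\td v|^2$, not from a comparison of $|\Na\td v|$ with $|\Na v_k|$, which is $1+o(1)$.

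The genuine gap is in step (ii). You require ``on $\Supp\psi$ the curves $\{\xi_j\}$ are disjoint'' and ``choose $\un r$ so small that $\cB_{\un r}(\xi_k(t))$ is disjoint from all $\cB_{\un r}(\xi_j(t))$, $j\ne k$.'' But under Definition \ref{defi:around} Case (B) with fewer incoming than outgoing branches (or vice versa), the constructed curves $\xi_{l'},\ldots,\xi_l$ \emph{coincide identically on the whole of} $(t_1,t_0]$, not merely at $t_0$. For such $k\ge l'$, no choice of $\un r$ separates the balls, the bound $\td v\le v_k+C$ fails (one gets instead $\td v\ge(l-l'+1)\gamma\log(1/\r_k)-O(1)$), the level sets change shape, and the shell estimate you rely on in the last paragraph is no longer the right picture. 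The paper handles this by working with the $l'$ \emph{distinct} distances to define $\Om_{k,\rho}$ and splitting the argument between $k<l'$ (display (\ref{Z:004})) and $k\ge l'$ (display (\ref{Z:004a})); in the latter the coincidence only helps, producing an extra factor $(l-l'+1)\ge1$ in the lower bound for $|\Na\td v|^2$, which is then discarded. Your plan needs to add this branch; without it the argument silently assumes Case (A).
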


\begin{proof} Since $\{\xi_1(t), \cdots, \xi_l(t)\}$ are around $(x_0, t_0)$ on $(t_1, t_2)$, by Definition \ref{defi:around} we can assume that $\{\xi_1(t), \cdots, \xi_{l'}(t)\}$ are disjoint for some $l'\leq l$ on $[t_1, t_0)$ and
\beq
\xi_{l'}(t)=\xi_{l'+1}(t)=\cdots=\xi_{l}(t),\quad \forall\; t\in (t_1, t_0).
\eeq
 Let $r_0$ be the constant defined by (\ref{eq:r0}). After shrinking $(t_1, t_2)$ if necessary, we can assume that $Q_{r_0', t_1, t_2}\subset \hat Q_{r_0, t_1, t_2}$ for some $r_0'>0$.  Let  $\rho_1>0$ be the constant such that for any $(x, t)\in Q_{r_0', t_1, t_2}$ and $1\leq i\leq l'$ we have $\r_i(x, t)\leq \rho_1.$
Since for any $\dd>0$ the curves $\{\xi_1(t), \cdots, \xi_{l'}(t)\}$ are disjoint on $[t_1, t_0-\dd]$, we define
\beq
d_{\dd}:=\min\{d_g(\xi_i(t), \xi_j(t))\;|\;1\leq i\neq j\leq l', t\in [t_1, t_0-\dd]\,\}>0.
\eeq
Let $\al_0=\frac {d_{\dd}}2$ and  $(x, t)\in Q_{r_0', t_1, t_0-\dd}$.  By the choice of $\al_0$, if $\r_k(x, t)<\al_0$, then we have
\beq
\r_i(x, t)\geq \al_0, \quad \forall\; i\neq k.\label{Z:007}
\eeq
For any $\rho_2>0$, we can find some integer $k\in [1, l']$ such that   if $t\in [t_1, t_0-\dd]$  and $\r_1 \r_2\cdots \r_{l'}\leq \rho_2$ we have
\beq \r_i\geq \al_0,\quad \forall\; i\neq k,\quad \hbox{and}\quad \r_k\leq \frac {\rho_2}{\al_0^{l'-1}}. \label{Z:006}\eeq
We choose $\rho_2$  such that
\beq
\frac {\rho_2}{\rho_1^{l'-1}}=\al_0.\label{Z:010}
\eeq
By  (\ref{Z:006})   for any $k\in \{1, 2, \cdots, l'\}$ and $\rho\in (0, \rho_2)$,
\beqs
\Big\{(x, t)\in Q_{r_0', t_1, t_0-\dd}\;\Big|\;\td v(x, t)\leq \log \frac 1{\rho}\Big\}&\supseteq& \Big\{(x, t)\in Q_{r_0', t_1, t_0-\dd}\;\Big|\;\r_1\r_2\cdots \r_{l'}\geq \rho\Big\}\\
&\supseteq& \Big\{(x, t)\in Q_{r_0', t_1, t_0-\dd}\;\Big|\;\rho_2\geq \r_1\r_2\cdots \r_{l'}\geq \rho \Big\}\\
&=&\bigcup_{k=1}^{l'}\;\Om_{k, \rho},
\eeqs where $\Om_{k, \rho}$ is defined by
\beqn \Om_{k, \rho}&:=&\Big\{ (x, t)\in Q_{r_0', t_1, t_0-\dd} \;\Big|\; \al_0\leq \r_i\leq \rho_1,\;\forall\; i\neq k,  \frac {\rho}{\al_0^{{l'}-1}}\leq \r_k\leq \frac {\rho_2}{\rho_1^{{l'}-1}}\Big\} \nonumber \\
&=& \Big\{ (x, t)\in Q_{r_0', t_1, t_0-\dd} \;\Big|\; \al_0\leq \r_i\leq \rho_1,\;\forall\; i\neq k,  \frac {\rho}{\al_0^{{l'}-1}}\leq \r_k\leq \al_0\Big\}. \label{Z:011}\eeqn
Note that we used (\ref{Z:010}) in the equality of (\ref{Z:011}).
By the definition of $r_0'$ and Lemma \ref{lemA:001b}, for any $(x, t)\in \Om_{k, \rho }$ with $1\leq k\leq l'-1$ we have
\beqs
|\Na \td v|^2&\geq& |\Na v_k|^2-\sum_{i\neq k}|\Na v_i|^2\\
&\geq& \frac {\ga^2}{ \r_k^2}-\sum_{i\neq k}\,\frac 1{\r_i^2}\geq \frac {\ga^2}{ \r_k^2}-\frac {{l}-1}{\al_0^2},
\eeqs
and for any $(x, t)\in \Om_{k, \rho }$ with $l'\leq k\leq l$ we have
\beqs
|\Na \td v|^2&\geq& (l-l'+1)|\Na v_k|^2-\sum_{i=1}^{l'-1}|\Na v_i|^2\\
&\geq&(l-l'+1) \frac {\ga^2}{ \r_k^2}-\sum_{i=1}^{l'-1}\,\frac 1{\r_i^2}\geq (l-l'+1)\frac {\ga^2}{ \r_k^2}-\frac {{l'}-1}{\al_0^2}.
\eeqs
Consequently, by (\ref{eq:G005}) for any $k\in \{1, 2, \cdots, {l'}-1\}$ we have
\beqn
\int_{t_1}^{t_0-\dd}\,\psi \,d\mu&\geq& \lim_{\rho\ri 0}\frac 2{|\log\rho|^2} \,\int_{ \Om_{k, \rho }}\,|\Na \td v|^2 \psi u\,\nonumber\\
&\geq& \lim_{\rho\ri 0}\frac 2{|\log\rho|^2} \,\int_{ \Om_{k, \rho }}\,\Big(\frac {\ga^2}{ \r_k^2}-\frac {{l}-1}{\al_0^2}\Big)\psi u\,\nonumber\\
&=&\ga^2\lim_{\rho\ri 0}\frac 2{|\log\rho|^2} \,\int_{ \Om_{k, \rho }}\,\frac {\psi u}{ \r_k^2} \label{Z:004}
\eeqn and for $k\in \{{l'}, \cdots, l\}$ we have
\beqn
\int_{t_1}^{t_0-\dd}\,\psi \,d\mu &\geq& (l-l'+1)\ga^2\lim_{\rho\ri 0}\frac 2{|\log\rho|^2} \,\int_{ \Om_{k, \rho }}\,\frac {\psi u}{ \r_k^2}\nonumber\\
&\geq&\ga^2\lim_{\rho\ri 0}\frac 2{|\log\rho|^2} \,\int_{ \Om_{k, \rho }}\,\frac {\psi u}{ \r_k^2}.   \label{Z:004a}
\eeqn
On the other hand, taking $\td \rho^{\frac 1{\ga}}=\frac {\rho}{\al_0^{{l'}-1}}$ and using  (\ref{eqG:002}) we have
\beqn
\int_{t_1}^{t_0-\dd}\,\psi \,d\mu_k&=&2\lim_{\td \rho\ri 0}\frac 1{|\log\td \rho|^2} \,\int_{Q^{(k)}_{\bar \rho, t_1, t_0-\dd}}\,|\Na v_k|^2\chi_{\{ v_k\leq |\log\td \rho|\}} \psi u \nonumber\\
&=&2\lim_{\td \rho\ri 0}\frac 1{\ga^2|\log \rho|^2} \,\int_{Q^{(k)}_{\al_0, t_1, t_0-\dd}}\,|\Na v_k|^2\chi_{\{ v_k\leq |\log\td \rho|\}} \psi u\nonumber\\
&\leq&\frac 2{\ga^2}\lim_{ \rho\ri 0}\frac 1{|\log \rho|^2} \,\int_{Q^{(k)}_{\al_0, t_1, t_0-\dd}\cap \{\frac {\rho}{\al_0^{{l'}-1}}\leq \r_k\}}\,\frac { \psi u}{\r_k^2}, \label{Z:005}
\eeqn where we used the fact that
$\{v_k\leq \log \frac 1{\td \rho}\}\subseteq \{\td \rho^{\frac 1{\ga}}\leq \r_k\}$.
Note that
\beqn &&
Q^{(k)}_{\al_0, t_1, t_0-\dd}\cap \{\frac {\rho}{\al_0^{{l'}-1}}\leq \r_k\}\nonumber\\
&=&\Big\{(x, t)\in Q_{\al_0, t_1, t_0-\dd}\;\Big|\;\frac {\rho}{\al_0^{{l'}-1}}\leq \r_k\leq \al_0\Big\}\nonumber\\
&=&\Big\{(x, t)\in Q_{r_0', t_1, t_0-\dd}\;\Big|\;\frac {\rho}{\al_0^{{l'}-1}}\leq \r_k\leq \al_0,\; \al_0\leq \r_i\leq \rho_1,\; \forall\; i\neq k \Big\}= \Om_{k, \rho}, \label{Z:012}
\eeqn where we used (\ref{Z:007}) and (\ref{Z:011}). Combining (\ref{Z:012}) with (\ref{Z:005}), we have
\beq
\int_{t_1}^{t_0-\dd}\,\psi \,d\mu_k\leq \frac 2{\ga^2}\lim_{ \rho\ri 0}\frac 1{|\log \rho|^2} \,\int_{\Om_{k, \rho}   }\,\frac { \psi u}{\r_k^2}.\label{Z:013}
\eeq
The inequalities (\ref{Z:004})-(\ref{Z:004a}) and (\ref{Z:013}) implies that
\beq
\int_{t_1}^{t_0-\dd}\,\psi \,d\mu\geq \ga^4\int_{t_1}^{t_0-\dd}\,\psi \,d\mu_k.
\eeq
Thus, we have
\beq
0\leq \ga^4\mu_k\leq \mu,\quad \forall\; t\in (t_1, t_0).
\eeq  Similarly, we can consider the case when $\{\xi_1(t), \cdots, \xi_{l'}(t)\}$ are disjoint for some $l'\leq l$ on $(t_0, t_2].$
The lemma is proved.

\end{proof}

\subsection{Proof of Theorem \ref{theoA:main}}

In this subsection we show Theorem \ref{theoA:main}. Part (1) of  Theorem \ref{theoA:main} follows from
(\ref{eq:L003}) and (\ref{eq:L002}). For part (2), the proof divides into the following steps.

\emph{Step 1}. Without loss of generality, we can assume that $c(x, t)\leq 0. $  In fact, let $u(x, t)$ be a solution of (\ref{eq:D004}). Then for any $k\in \RR$ the function $\td u(x, t)=u(x, t)e^{kt}$ satisfies the equation
$$\pd {\td u}t=\Delta \td u+(c+k)\td u.$$
Therefore, for any compact set $K$ in $\Si\times [T_1, T_2]$ we can choose $k$ such that the function $\td c:=c+k
$ is nonpositive on $K$. Thus, it suffices to show Theorem \ref{theoA:main} for $c(x, t)\leq 0.$

\emph{Step 2}. Suppose that the curves $\{\xi_1(t), \cdots, \xi_l(t)\}$ are disjoint on $[T_1, T_2]$. Let $T_1<t_1<t_2<T_2$.   Lemma \ref{lem:A003b} implies that $u$ is in $L^1$.
For any $(x, t)\in \Si\times (t_1, t_2)$, we define
\beqn
w_k(x, t)&=&\int_{t_1}^t\,ds\int_{\Si}\, p(x, y, t-s)\td g_k(y, s)d\vol_y,\\
\td g_k(x, t)&=&c(x, t)u(x, t)\chi^{(k)}_{Q_{\frac 12, t_1-\dd_0, t_2}},\\
U_k(x, t)&=&\int_{(t_1, t)}\, p(x, \xi_k(s), t-s)\,d\mu_k(s), \label{Z:014}
\eeqn where $\mu_k$ is the measure obtained in Lemma \ref{lemA:003}, and $\dd_0>0$ is a constant chosen such that $t_1-\dd_0>T_1$.  Then $u-\sum_{k=1}^l\,(U_k+w_k)$ satisfies the heat equation in $\cD'(Q_{\frac 12, t_1, t_2})$, which implies that $u-\sum_{k=1}^l\,(U_k+w_k)$ is bounded in $Q_{\frac 12, t_1, t_2}$. Since $c(x, t)\leq 0$, we have $w_k(x, t)\leq 0$ and
$$u(x, t)\leq  \sum_{k=1}^l\,U_k(x, t)+f(x, t)$$ where $f(x, t)$ is a bounded function on $Q_{\frac 12, t_1, t_2}$. Therefore, by Lemma \ref{lem:estimate_U} $u$ satisfies the inequalities (\ref{eqB:007})-(\ref{eqB:008}).

\emph{Step 3}. In general, the singular curves may not be disjoint. In this case,  we assume that the curves $\{\xi_1(t), \cdots, \xi_l(t)\}$ are around $(x_0, t_0)$ on $(t_1, t_2)$. Consider the interval $(t_1, t_0)$.  By   Definition \ref{defi:around}, we can find an integer $l'\in [1, l]$ such that $\{\xi_1(t), \cdots, \xi_{l'}(t)\}$ are disjoint on  $ (t_1, t_0)$.  By Lemma \ref{lemA:003} we get  positive measures $\mu_k\in (C_0((t_1, t_0)))'$  for each $\xi_k$ with $k\in [1, l']$, and by Lemma \ref{lem:measure} we have
\beq 0\leq\ga^4 \mu_k(t)\leq \mu(t),\quad \forall\; t\in (t_1, t_0). \label{eq:F003}\\
\eeq
For each $k$, we define $U_k$ as in (\ref{Z:014}).
 Using the same argument as in (2), for any $t\in (t_1, t_0)$ we have
\beq u(x, t)\leq \sum_{k=1}^l U_k(x, t)+f(x, t),\quad \forall\, t\in (t_1, t_0),
\eeq
where $f(x, t)$ is a  bounded function. By (\ref{eq:F003}), we have
\beq
u(x, t)\leq \frac 1{\ga^4}\sum_{k=1}^{l'}\,\int_{t_1}^t\,p(x, \xi_k(s), t-s)\,d\mu+f(x, t),\quad \forall\; t\in (t_1, t_0).\label{Z:015}
\eeq Similarly, we can prove that (\ref{Z:015}) also holds for $t\in (t_0, t_2)$.
Therefore, by Lemma \ref{lem:estimate_U} $u$ satisfies the inequalities (\ref{eqB:007})-(\ref{eqB:008}). The theorem is proved.

\section{Proof of main theorems }

In this section, we prove Theorem \ref{theo:main1} and Corollary \ref{cor:main}.

\begin{proof}[Proof of Theorem \ref{theo:main1}] Suppose that the mean curvature flow (\ref{eq:MCF}) reaches a singularity at $(x_0, T)$ with $T<+\infty. $ Then Corollary 3.6 of \cite{[Eckbook]} implies that for all $t<T$ we have
\beq
d(\Si_t, x_0)\leq 2\sqrt{T-t}. \label{eq:I005}
\eeq
We rescale the flow  by
\beq s=-\log(T-t),\quad  \td \Si_s=e^{\frac s2}\Big(\Si_{T-e^{-s}}-x_0\Big). \label{eq:Z002}\eeq
such that the flow $\{(\td\Si_s, \td \x(p, s)), -\log T\leq s< +\infty\}$ satisfies the following properties:
\begin{enumerate}
  \item[(1)]  $\td \x(p, s)$ satisfies the equation
  \beq \Big(\pd {\td \x}s\Big)^{\perp}=-\Big(\td H-\frac 12 \langle \td \x, \n\rangle\Big) \n; \label{eq:A001}\eeq
  \item[(2)] the mean curvature of $\td \Si_s$ satisfies $|\td H(p, s)|\leq \La_0  $ for some $\La_0>0$;
  \item[(3)] $d(\td \Si_s, 0)\leq 2$.
\end{enumerate}
Fix $\tau>0$. By Theorem \ref{theo:removable}, for any sequence $s_i\ri +\infty$ there exists a subsequence, still denoted by $\{s_i\}$, such that   the flow  $\{\td \Si_{s_i+s}, -\tau<s<\tau\}$ converges smoothly to a self-shrinker  with multiplicity one.
 In other words, taking $c_j=e^{\frac {s_j}{2}}$ the flow $\{\td \Si_s^j, -\tau<s<\tau\}$ where $\td \Si_s^j:=c_j e^{\frac s2}(\Si_{T-c_j^{-2}e^{-s}}-x_0)$ converges smoothly to a self-shrinker  with multiplicity one as $j\ri +\infty$. Consider the corresponding flow
$$\td t=-e^{-s}, \quad \Si_{\td t}^j:=\sqrt{-\td t}\;\td \Si^j_{ -\log (-\td t)}=c_j\Big(\Si_{T+c_j^{-2}\td t}-x_0\Big).$$
Therefore,  for fixed $\tau>0$ the flow $\{\Si_{\td t}^j, -e^{\tau}<\td t<-e^{-\tau} \}$ converges smoothly to a smooth self-shrinker flow  with multiplicity one as $j\ri +\infty$.
Theorem \ref{theo:main1} is proved.\\
\end{proof}

\begin{proof}[Proof of Corollary \ref{cor:main}] We follow the argument in the proof of Theorem \ref{theo:main1}. Suppose that
\beq \dd_0:= \sup_{\Si\times [0, T)}\Big(\sqrt{T-t}\cdot |H|(p, t)\Big)<+\infty. \label{eq:A106} \eeq
Then the rescaled mean curvature flow (\ref{eq:Z002}) satisfies $|\td H|\leq \dd_0.$
There exists a sequence of times $s_i\ri +\infty$ such that for any fixed $\tau>0$ the flow  $\{\td \Si_{s_i+s}, -\tau<s<\tau\}$ converges smoothly to a self-shrinker  $\Si_{\infty}\in \cC(N, \rho)$ with multiplicity one. Moreover, the mean curvature of the limit self-shrinker satisfies $\sup_{\Si_{\infty}}|H|\leq \dd_0.$
On the other hand, we have

\begin{lem}\label{lem:gap} For any $ N>0$ and any increasing function $\rho$, there  exists a constant $\dd( N, \rho)>0$ such that any self-shrinker $\Si\in \cC(   N, \rho)$ with $|H|\leq \dd$ must be a plane passing through the origin.

\end{lem}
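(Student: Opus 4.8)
The plan is to argue by contradiction via the smooth compactness of $\cC(D, N, \rho)$. Suppose the conclusion fails; then for each $i\in\NN$ there is a self-shrinker $\Si_i\in\cC(D, N, \rho)$ with $\max_{\Si_i}|H|\leq \frac 1i$, yet $\Si_i$ is not a plane through the origin. By Colding-Minicozzi's compactness theorem (recorded in Definition \ref{defi:soliton} and used repeatedly above), after passing to a subsequence $\Si_i$ converges smoothly to a limit self-shrinker $\Si_{\infty}\in\cC(D, N, \rho)$. Smooth convergence forces $\max_{\Si_{\infty}}|H|=0$, i.e. $H\equiv 0$ on $\Si_{\infty}$. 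Combined with the self-shrinker equation $H=\frac 12\langle \x, \n\rangle$, this yields $\langle \x, \n\rangle\equiv 0$, so $\Si_{\infty}$ is a minimal cone that is also smooth and embedded with polynomial volume growth; hence $\Si_{\infty}$ is a plane. Since $0\in\Si_{\infty}$ is ruled out only if $d(0,\Si_{\infty})>0$, but $H\equiv0$ together with $\langle\x,\n\rangle=0$ at the nearest point to the origin forces that point to satisfy $\langle\x,\n\rangle=\pm|\x|\neq 0$ unless $\x=0$; thus $0\in\Si_{\infty}$, and $\Si_{\infty}$ is a plane through the origin.

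The remaining step is to transfer "$\Si_{\infty}$ is a plane through the origin'' back to the $\Si_i$'s to reach a contradiction. Here I would use the same graph-and-perturbation device already employed in the proof of Lemma \ref{lem:A002b}: since $\Si_i\to\Si_{\infty}$ smoothly on every compact set, for large $i$ each $\Si_i\cap B_{R}(0)$ is a normal graph over $\Si_{\infty}\cap B_{R}(0)$ with small $C^2$ norm. A priori this does not immediately say $\Si_i$ is a plane, because $\Si_i$ could be nontrivially curved far away from $B_R(0)$. To rule this out, I would invoke the classification/rigidity of self-shrinkers with small mean curvature: the only smooth complete embedded self-shrinker in $\RR^3$ that is $C^2$-close to a plane on a large ball and has everywhere small $|H|$ is the plane itself. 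This can be extracted by noting that a self-shrinker with $|H|$ uniformly small and $\int|A|^2\leq\rho(r)$ is, after the same blow-down/compactness argument applied at all scales, forced to have $A\equiv 0$: any nonflat point would, under further rescaling, produce a nonflat self-shrinker with $H\equiv 0$ and bounded geometry, again a plane, contradiction. Thus $\Si_i$ is a plane for large $i$, and by the argument of the previous paragraph it passes through the origin, contradicting our assumption.

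The main obstacle is precisely this last rigidity statement: passing from "close to a plane on a fixed ball" to "globally a plane." One clean way around it is to exploit that $\Si_{\infty}$ being a plane means its entropy (Gaussian density) equals $1$, and entropy is lower semicontinuous under the smooth convergence with the uniform area-ratio bounds in $\cC(D,N,\rho)$, so $\limsup_i\lambda(\Si_i)\leq 1$; but $\lambda(\Si_i)\geq 1$ always, with equality iff $\Si_i$ is a hyperplane (by the characterization of the entropy minimizers among self-shrinkers, e.g. Colding-Ilmanen-Minicozzi-White). This forces $\Si_i$ to be a hyperplane for all large $i$. Hyperplanes are self-shrinkers only if they contain the origin, which gives the contradiction and completes the proof; the constant $\dd(D,N,\rho)$ is then simply the threshold below which the compactness argument applies.
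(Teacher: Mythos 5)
Your overall structure mirrors the paper's: pass to a smoothly convergent subsequence via Colding--Minicozzi compactness, note the limit has $H\equiv 0$ and is therefore a plane through the origin, and then use entropy to transfer rigidity back to $\Si_i$. The first half is essentially identical to the paper's argument (the paper cites Corollary~2.8 of \cite{[CM2]} for the statement that the $H\equiv 0$ limit self-shrinker is a plane through the origin, and your nearest-point observation is a fine substitute).

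The problem lies in the final step, and you have in fact flagged it yourself as the ``main obstacle''. Your first proposed fix --- rescaling at a hypothetical nonflat point of $\Si_i$ to produce a nonflat $H\equiv 0$ self-shrinker --- does not work as stated: rescaling a self-shrinker around a point far from the origin does not produce a self-shrinker in the limit (the drift term $\tfrac12\langle x,\n\rangle$ degenerates to either a minimal-surface limit or a translator, depending on the scale), and you have no curvature control at the nonflat point to guarantee any limit at all. Your second proposed fix has a genuine logical gap: from the smooth multiplicity-one convergence you correctly deduce $\lambda(\Si_i)=F(\Si_i)\to F(\Si_\infty)=1$ (using Lemma~7.10 of \cite{[CM2]} to identify $\lambda$ with the Gaussian density at $(0,1)$ for self-shrinkers), but you then invoke only the \emph{equality case} ``$\lambda=1$ iff hyperplane''. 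Since $\lambda(\Si_i)\to 1$ from above rather than $\lambda(\Si_i)=1$, this does not force $\Si_i$ to be a plane; a priori there could be non-planar self-shrinkers with entropy arbitrarily close to $1$. What is needed is a quantitative \emph{entropy-gap} theorem: there exists $\ee_0>0$ such that every complete smooth embedded self-shrinker in $\RR^3$ with polynomial area growth and $\lambda<1+\ee_0$ is a plane. This is precisely what the paper's citations to Bao \cite{[Bao]} and Guang--Zhu \cite{[GZ]} supply, and it is the missing ingredient here. With that result in hand, the argument closes exactly as you sketched. (As a side remark, the Colding--Ilmanen--Minicozzi--White theorem you mention is about closed self-shrinkers minimizing entropy over spheres; it is neither the equality-case classification for $\lambda=1$ nor the gap theorem, so it is not the right reference in any case.)
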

\begin{proof}[Proof of Lemma \ref{lem:gap}] For otherwise, there exists  a sequence of non-flat self-shrinkers  $\Si_i\in \cC( N, \rho)$  with $\sup_{\Si_i}|H|\leq \dd_i\ri 0.$ By the smooth compactness result of self-shrinkers in \cite{[CM1]},   we can assume that
$\Si_i$ converges smoothly to a self-shrinker $\Si_{\infty}\in \cC( N, \rho)$ with multiplicity one. Since the convergence is smooth, the limit self-shrinker $\Si_{\infty}$ has zero mean curvature and by Corollary 2.8 of \cite{[CM2]} it  must be a plane passing through the origin.

 Let $\Si_{i, t}=\sqrt{1-t}\Si_i$. Then  $\{\Si_{i, t}, 0\leq t<1\}$ is a solution of mean curvature flow (\ref{eq:MCF}) which reaches $x_0=0$ at $T=1$. Consider the Heat kernel function
$$\Phi_{(x_0, T)}(x, t)=\frac 1{4\pi(T-t)} e^{-\frac {|x-x_0|^2}{4(T-t)}},\quad
\forall\; (x, t)\in \Si_{i, t}\times [0, T).$$
Thus, Huisken's monotonicity formula(cf. Theorem 3.1 in \cite{[Hui2]}) implies that
\beqs
\Te(\Si_{i, t},  0, 1):&=&\lim_{t\ri 1}\,\int_{\Si_{i, t}}\, \Phi_{(0, 1)}(x, t)\,d\mu_{i, t}\\
&=&  \frac 1{4\pi}\int_{\Si_{i}}\, e^{-\frac {| x|^2}{4}}\,d \mu_{i}\ri \frac 1{4\pi}\int_{\Si_{\infty}}\, e^{-\frac {| x|^2}{4}}\,d \mu_{\infty}= 1,
\eeqs where we used the fact $\Si_i$ converges smoothly to the plane $\Si_{\infty}$ with multiplicity one. Therefore, by  Theorem 3.5 of White \cite{[White2]} or Theorem 5.6 of \cite{[Eckbook]} we have
\beq |A_{\Si_{i, t}}|(x, t)\leq \frac {C}{r_0}, \label{Z1}\eeq
for some $C, r_0>0$ and for all $(x, t)\in (\Si_{i, t}\cap B_{r_0}(0))\times (1-r_0^2, 1)$. For any $p\in \Si_i$, there exists $t_p\in (1-r_0^2, 1)$ such that for all $t\in (t_p, 1)$ we have $\sqrt{1-t}p\in \Si_{i, t}\cap B_{r_0}(0)$.
Thus, (\ref{Z1}) implies that for any $t\in (t_p, 1)$,
$$|A_{\Si_i}|(p)=\sqrt{1-t}|A_{\Si_{i, t}}|(\sqrt{1-t}p, t)\leq \frac C{r_0}\sqrt{1-t}.$$
Letting $t\ri 1$, we have $|A_{\Si_i}|(p)=0$ which contradicts our assumption that  $\Si_i$ is non-flat.

Alternatively, one can also quote the results of C.Bao (cf. Theorem 1.2 of~\cite{[Bao]}) or Guang-Zhu (cf.~\cite{[GZ]}) to obtain that each $\Sigma_i$ is a plane and derive the same contradiction.
The lemma is proved.

\end{proof}

Therefore, by Lemma \ref{lem:gap} the limit self-shrinker $\Si_{\infty}$ must be a plane passing through the origin.
Thus, Huisken's monotonicity formula implies that
\beqs
\Te(\Si_t,  x_0, T):&=&\lim_{t\ri T}\,\int_{\Si_t}\, \Phi_{(x_0, T)}(x, t)\,d\mu_t\\
&=& \lim_{s_i\ri +\infty} \frac 1{4\pi}\int_{\td \Si_{s_i}}\, e^{-\frac {| x|^2}{4}}\,d\td \mu_{s_i}=1,
\eeqs which implies that $(x_0, T)$ is a regular point by Theorem 3.1 of White \cite{[White2]}. Thus, the flow $\{\Si_t, 0\leq t<T\}$ cannot blow up at $(x_0, T)$. The corollary is proved.
\end{proof}

\begin{appendices}

\section{Krylov-Safonov's parabolic Harnack inequality}\label{appendixA}
In this appendix, we include the parabolic Harnack inequality from Krylov-Safonov \cite{[Kry]}.
First, we introduce some notations. Let  $x=(x^1, x^2, \cdots, x^n)\in \RR^n$. Denote
\beqs |x|&=&\Big(\sum_{i=1}^n(x^i)^2\Big)^{\frac 12},\quad B_R(x)=\{y\in \RR^n\;|\; |x-y|<R\},\\
Q(\te, R)&=&B_R(0)\times (0, \te R^2).\eeqs
Consider the parabolic operator
\beq
L u=-\pd ut+a^{ij}(x, t)u_{ij}+b^i(x, t)u_i-c(x, t)u, \label{eqB:104}
\eeq where the coefficients are measurable and satisfy the conditions
\beqn
\mu |\xi|^2&\leq &a^{ij}(x, t)\xi_i\xi_j\leq \frac 1{\mu}|\xi|^2,\label{eqB:001}\\
|b(x, t)|&\leq &\frac 1{\mu},\label{eqB:002}\\
0&\leq &c(x, t)\leq \frac 1{\mu}.\label{eqB:003}
\eeqn Here $b(x, t)=(b^1(x, t), \cdots, b^n(x, t)). $
Then we have

\begin{theo}\label{theo:B1} (Theorem 1.1 of \cite{[Kry]}) Suppose the operator $L$ in (\ref{eqB:104}) satisfies the conditions (\ref{eqB:001})-(\ref{eqB:003}).  Let $\te>1, R\leq 2, u\in W^{1, 2}_{n+1}(Q(\te, R)), u\geq 0$ in $Q(\te, R)$, and $Lu=0$ on $Q(\te, R)$. Then there exists a constant $C$, depending only on $\te, \mu$ and $n$, such that
\beq
u(0, R^2)\leq C\, u(x, \te R^2),\quad \forall\; x\in B_{\frac R2}(0).
\eeq Moreover, when $\frac 1{\te-1}$ and $\frac 1{\mu}$ vary within finite
bounds, $C$ also varies within finite bounds.
\end{theo}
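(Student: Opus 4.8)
The plan is to follow the classical Krylov--Safonov scheme, which yields a two-sided Harnack inequality for nonnegative solutions of uniformly parabolic equations in non-divergence form; the statement is exactly Theorem 1.1 of \cite{[Kry]}, so I would reproduce that argument while keeping careful track of the dependence of every constant on $\te$, $\mu$ and $n$. The backbone is: (i) a parabolic Aleksandrov--Bakelman--Pucci (ABP) type maximum estimate; (ii) a measure-theoretic growth lemma controlling the distribution function of a nonnegative supersolution; (iii) an iteration of the growth lemma via a parabolic Calder\'on--Zygmund cube decomposition to obtain power decay of the distribution function (the weak Harnack inequality); and (iv) a symmetric local maximum principle for subsolutions, combined with a finite Harnack chain connecting the point $(0,R^2)$ to an arbitrary $(x,\te R^2)$ with $x\in B_{R/2}(0)$.

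First I would establish the parabolic ABP estimate: if $Lu\ge -f$ on a parabolic cylinder and $u\ge 0$ on the parabolic boundary, then $\sup u$ is bounded by the $L^{n+1}$ norm of $f$ over the lower contact set, with constant depending only on $\mu$ and $n$; this replaces the elliptic ABP estimate and uses the measure of the image of the parabolic normal map. Thanks to (\ref{eqB:001})--(\ref{eqB:003}), the zeroth order term $-c\,u$ and the drift $b^iu_i$ are absorbed into $f$ since $u\ge 0$ and the coefficients are bounded. Next, the growth lemma: for a nonnegative supersolution $u$ on $Q(\te,R)$ with $\inf$ at most $1$ on a sub-cylinder taken near the initial time, one shows $|\{u>M\}\cap Q''|\le(1-\mu_0)|Q''|$ for an appropriate later sub-cylinder $Q''$, where $M$ and $\mu_0\in(0,1)$ depend only on $\te,\mu,n$; this is the parabolic analogue of the key Krylov--Safonov lemma and follows from the ABP estimate applied to a suitable barrier.

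Iterating the growth lemma over a stacked family of cylinders through a Calder\'on--Zygmund decomposition adapted to the parabolic metric gives $|\{u>M^k\}\cap Q(\te,R/2)|\le C(1-\mu_0)^k|Q(\te,R/2)|$, hence the weak Harnack inequality
\beq
\left(\frac{1}{|Q^-|}\int_{Q^-} u^{p_0}\right)^{1/p_0}\le C\,\inf_{Q^+}u
\eeq
for some small $p_0=p_0(\te,\mu,n)>0$, with $Q^-$ a lower and $Q^+$ an upper sub-cylinder of $Q(\te,R)$. A parallel, technically easier De Giorgi--Moser iteration, again fed by the ABP estimate, yields the local maximum principle $\sup_{Q^-}u\le C\big(\frac{1}{|Q^{--}|}\int_{Q^{--}}u^{p_0}\big)^{1/p_0}$ for nonnegative subsolutions. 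Since our $u$ solves $Lu=0$ it is at once a sub- and a supersolution; chaining the two bounds along a finite overlapping sequence of space-time cylinders joining $(0,R^2)$ to $(x,\te R^2)$ — with number of links controlled by $\te$ and $n$ — produces $u(0,R^2)\le C\,u(x,\te R^2)$ with $C=C(\te,\mu,n)$, and inspecting each step shows $C$ stays bounded while $1/(\te-1)$ and $1/\mu$ range over a bounded set.

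The main obstacle is steps (ii)--(iii): the parabolic growth lemma and its iteration are genuinely more delicate than in the elliptic case because of the time-directional waiting time — information propagates only forward in $t$, so the sub-cylinders must be offset in time, and the stopping-time decomposition must respect the parabolic scaling $x\sim R$, $t\sim R^2$ as well as the past/future asymmetry. Producing the measure decrement $\mu_0$ and the exponent $p_0$ with the correct scale-invariant and $\te$-stable dependence, and ensuring the Harnack chain length does not blow up as $\te\downarrow 1$, is where all the care concentrates; the remainder is a routine adaptation of the elliptic Krylov--Safonov machinery.
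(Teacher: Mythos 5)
This statement is quoted verbatim from Krylov--Safonov (Theorem 1.1 of \cite{[Kry]}) and is \emph{not proved in the paper}: Appendix A simply cites it and then immediately derives Theorem \ref{theo:B2} from it by the change of unknown $v = e^{-t/\mu}u$ to handle the sign of $c$. So there is no in-paper proof to compare against. Your outline is the standard Krylov--Safonov scheme as it appears in the literature, and the structure is sound: parabolic ABP estimate, the measure-growth lemma, iteration via a parabolically-scaled Calder\'on--Zygmund decomposition giving power decay of the distribution function, weak Harnack inequality, local maximum principle for subsolutions, and a Harnack chain connecting $(0,R^2)$ to $(x,\te R^2)$. You also correctly identify where the difficulty lies: the forward-in-time waiting time, the necessary temporal offset of the sub-cylinders, the parabolic scaling in the stopping-time decomposition, and the degeneration of the chain length as $\te \downarrow 1$ -- which is precisely why the theorem explicitly records that $C$ stays bounded only when $1/(\te-1)$ does.

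Two small calibration remarks. First, the local maximum principle for subsolutions in the Krylov--Safonov framework is usually obtained by a measure-theoretic argument parallel to the weak Harnack inequality (both resting on the ABP estimate and the Calder\'on--Zygmund decomposition), not by Moser/De Giorgi iteration, which needs divergence structure; your phrase ``De Giorgi--Moser iteration'' is therefore a slight misnomer for non-divergence equations, though the spirit (a symmetric $L^{p_0}\to L^\infty$ bound for subsolutions) is right. Second, when absorbing the zeroth-order term into the data, note that it is the one-sidedness $c\ge 0$ from (\ref{eqB:003}) that matters for the maximum principle; the statement is \emph{false} with sign-indefinite $c$ without an additional correction, which is exactly what Theorem \ref{theo:B2} in the paper supplies via the exponential factor. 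Keeping this distinction clear is important because the paper's application (Theorem \ref{theo:B2}) is designed to lift the sign restriction that your reduction depends on.
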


Note that in our case the equation (\ref{eq:F007}) doesn't satisfy the assumption that $c(x, t)\geq 0$ in (\ref{eqB:003}). Therefore, we cannot use Theorem \ref{theo:B1} directly. The following result shows that  the Harnack inequality still works when $c(x, t)$ is bounded.

\begin{theo}\label{theo:B2}Let $\te>1, R\leq 2$. Suppose that $u(x, t)\in W^{1, 2}_{n+1}(Q(\te, R))$ is  a nonnegative solution to the equation
\beq
L u=-\pd ut+a^{ij}(x, t)u_{ij}+b^i(x, t)u_i+c(x, t) u=0,  \label{eqB:004}
\eeq where  the coefficients $a^{ij}(x, t)$ and $b^i(x, t)$ satisfy (\ref{eqB:001})-(\ref{eqB:002}), and $c(x, t)$ satisfies
\beq
|c(x, t)|\leq \frac 1{\mu}.\quad \forall\; (x, t)\in Q(\te, R)\label{eqA:C}
\eeq
 Then there exists a constant $C$, depending only on $\te, \mu$ and $n$, such that
\beq
u(0, R^2)\leq C\, u(x, \te R^2),\quad \forall\; |x|<\frac 12 R.
\eeq
\end{theo}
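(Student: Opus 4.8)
The plan is to reduce Theorem \ref{theo:B2} to Theorem \ref{theo:B1} by an exponential change of the unknown that restores the sign condition on the zeroth-order coefficient. Concretely, I would set $v(x, t) = e^{-\beta t} u(x, t)$ for the constant $\beta = \frac 1{\mu}$. Using the identities $u_t = e^{\beta t}(v_t + \beta v)$, $u_i = e^{\beta t} v_i$ and $u_{ij} = e^{\beta t} v_{ij}$, one checks that $v$ is a nonnegative solution on $Q(\te, R)$ of
$$-\pd vt + a^{ij}(x, t) v_{ij} + b^i(x, t) v_i - \big(\beta - c(x, t)\big) v = 0.$$
The new zeroth-order coefficient $\td c := \beta - c$ satisfies $0 \leq \td c \leq \frac 2{\mu}$ because $|c| \leq \frac 1{\mu}$ by (\ref{eqA:C}), while the ellipticity bound (\ref{eqB:001}) and the first-order bound (\ref{eqB:002}) are unchanged. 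Hence, after replacing $\mu$ by $\frac {\mu}2$, the function $v$ falls exactly under the hypotheses of Theorem \ref{theo:B1}: indeed $v \geq 0$ and $v \in W^{1, 2}_{n+1}(Q(\te, R))$ since $e^{-\beta t}$ is smooth and bounded on $Q(\te, R)$.

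Applying Theorem \ref{theo:B1} to $v$ then yields $v(0, R^2) \leq C\, v(x, \te R^2)$ for all $|x| < \frac R2$, where $C$ depends only on $\te$, $\mu$ and $n$ (the dependence on $\frac {\mu}2$ being the same as on $\mu$ up to a harmless constant). Undoing the substitution gives
$$u(0, R^2) \leq C\, e^{\beta R^2(1 - \te)}\, u(x, \te R^2),$$
and since $\te > 1$ the exponential factor is at most $1$; alternatively, because $R \leq 2$ it is bounded by $e^{4/\mu}$, so in either case the constant $C' = C e^{\beta R^2(1-\te)}$ depends only on $\te$, $\mu$ and $n$, which is the asserted conclusion.

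I do not expect any real obstacle here; the argument is a standard device and the only thing requiring care is bookkeeping of the constants through the rescaling $\mu \mapsto \frac {\mu}2$ and verifying that the solution class $W^{1, 2}_{n+1}$ is preserved under multiplication by $e^{-\beta t}$, both of which are immediate. I would therefore present this as a brief computation-free reduction rather than spelling out the (routine) chain rule verification in full detail.
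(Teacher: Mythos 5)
Your argument is exactly the paper's: set $v=e^{-t/\mu}u$, observe that the transformed zeroth-order coefficient $\beta - c$ lies in $[0,2/\mu]$ so Theorem \ref{theo:B1} applies with $\mu$ replaced by $\mu/2$, then absorb the bounded exponential factor into the constant. Correct, and no meaningful difference in route.
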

\begin{proof}Since $u(x, t)$ is a solution of (\ref{eqB:004}) and $c(x, t)$ satisfies (\ref{eqA:C}), the function $v(x, t)=e^{-\frac 1{\mu}t}u$ satisfies  \beq
-\pd vt+a^{ij}(x, t)v_{ij}+b^i(x, t)v_i+\td c(x, t)=0. \label{eq:B100}
\eeq where \beq -\frac 2{\mu}\leq \td c(x, t)=c(x, t)-\frac 1{\mu}\leq 0.\eeq
Applying Theorem \ref{theo:B1} to the equation (\ref{eq:B100}), we have $$v(0, R^2)\leq C\, v(x, \te R^2),\quad \forall\; |x|<\frac 12 R, $$
where $C$ depends only on $\te, \mu$ and $n$. Thus, for any $x\in B_{\frac R2}(0)$ we have
$$u(0, R^2)\leq Ce^{-k(\te-1)R^2}u(x, \te R^2)\leq C'u(x, \te R^2),$$
where $C'$ depends only on $\te, \mu$ and $n$. Here we used $R\leq 2$  by the assumption. The theorem is proved.

\end{proof}

We generalize Theorem \ref{theo:B2} to a general bounded domain in $\RR^n.$

\begin{theo}\label{theo:B4} Let $\Om$ be a bounded domain in $\RR^n$. Suppose that $u(x, t)\in W^{1, 2}_{n+1}(\Om\times (0, T))$ is  a nonnegative solution to the equation
\beq
L u=-\pd ut+a^{ij}(x, t)u_{ij}+b^i(x, t)u_i+c(x, t) u=0,  \label{eqB:0041}
\eeq where the coefficients $a^{ij}(x, t)$ and $b^i(x, t)$ satisfy (\ref{eqB:001})-(\ref{eqB:002}), and $c(x, t)$ satisfies (\ref{eqA:C}) for a constant $\mu>0$. For any $s, t$ satisfying $0<s<t<T$ and any $x, y\in \Om$ with the following properties
\begin{enumerate}
  \item[(1).] $x$ and $y$ can be connected by a line segment $\ga$ with the length $|x-y|\leq l;$
  \item[(2).] Each point in $\ga$ has a positive distance at least $\dd>0$ from the boundary of $\Om;$
\item[(3)] $s$ and $ t$ satisfy $T_1\leq t-s\leq T_2$ for some $T_1, T_2>0;$
\end{enumerate}
 we have
\beq
u(y, s)\leq C \,u(x, t) , \label{eq:J004}
\eeq where $C$ depends only on $ n, \mu, \min\{s, \dd^2\}, l, T_1$ and $T_2$.
\end{theo}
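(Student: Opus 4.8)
\textbf{Proof plan for Theorem \ref{theo:B4}.}
The plan is to reduce the statement to the already-established local Harnack inequality of Theorem \ref{theo:B2} by a standard chaining argument along the segment $\ga$. First I would fix a radius $r>0$ small enough that $r<\frac 12\dd$ and $2r^2<T_1$; concretely one may take $r$ comparable to $\min\{\sqrt{\dd^2},\sqrt{T_1}\}$ up to a universal factor, so that any ball $B_r(z)$ centered at a point $z$ of $\ga$ stays inside $\Om$ and the parabolic cylinder $Q(\te,r)$ based at such a ball, with a fixed choice $\te=2$, is contained in $\Om\times(0,T)$ once we stay in the time window $[s,t]$. The rescaled function $v(z,\tau)=u(z_0+r z,\tau_0+r^2\tau)$ then satisfies an equation of the form \eqref{eqB:0041} with coefficients obeying the same structure conditions \eqref{eqB:001}--\eqref{eqB:002} and \eqref{eqA:C} (with $\mu$ possibly replaced by $\mu$ times a factor depending only on $r$, hence only on $\dd$ and $T_1$), so Theorem \ref{theo:B2} applies to $v$ and, after undoing the scaling, yields: for every $z$ on $\ga$ and every admissible time $\tau_0$,
\beq
u(z,\tau_0+r^2)\leq C_0\,u(z',\tau_0+\te r^2),\quad \forall\; z'\in B_{r/2}(z),\nonumber
\eeq
where $C_0=C_0(n,\mu,\dd,T_1)$.

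Next I would cover the segment $\ga$ joining $y$ to $x$ by a chain of points $y=z_0,z_1,\dots,z_m=x$ with $|z_{j}-z_{j-1}|<r/2$ and $m\leq \lceil 2l/r\rceil+1$, and simultaneously subdivide the time interval. Set $\si=(t-s)/m$; if $\si$ is too small relative to $r^2$ one first notes $m$ may be enlarged only up to a bound depending on $l,r$, and if $\si$ is too large one inserts extra interior applications of Theorem \ref{theo:B2} at the same spatial point (a self-chaining in time on the cylinder $Q(\te,r)$, which is allowed since $u\geq 0$), so that in all cases one reduces to steps with time increment in a fixed range comparable to $r^2$. Iterating the one-step inequality along the chain gives
\beq
u(y,s)\leq C_0\,u(z_1,s+\si)\leq C_0^2\,u(z_2,s+2\si)\leq\cdots\leq C_0^{m}\,u(x,t),\nonumber
\eeq
and since $m$ is bounded by a constant depending only on $l$ and $r$, hence only on $l,\dd,T_1$, the product $C_0^{m}$ is a constant $C=C(n,\mu,\min\{s,\dd^2\},l,T_1,T_2)$ of the required form. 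The appearance of $\min\{s,\dd^2\}$ rather than $s$ alone is because the very first and very last cylinders must also fit below $t<T$ and above time $0$, so $r$ must additionally be taken $\leq\sqrt{s}$ up to a universal factor; the dependence on $T_2$ enters only through the bound on how many time-subdivision steps are needed when $t-s$ is as large as $T_2$.

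The main obstacle, and the only genuinely delicate point, is bookkeeping the scaling of the structure constants: after the dilation by $r$ the zeroth-order coefficient $c$ picks up a factor $r^2$ and the first-order coefficient $b$ a factor $r$, which is harmless since $r\leq 2$, but one must verify that the ellipticity constant in \eqref{eqB:001} is unchanged and that the hypothesis $R\leq 2$ of Theorem \ref{theo:B2} is met after scaling — this forces the choice $r\leq 2$ and is the reason all constants can be traced back to $n,\mu$ and the geometric quantities $\dd,l,s,T_1,T_2$. A secondary technical wrinkle is handling the case $t-s$ large: one should present the time-chaining explicitly (apply Theorem \ref{theo:B2} with $\te$ chosen so that $(\te-1)r^2$ is a fixed fraction of $T_2/m'$ for a suitable $m'$), but this is routine given that Theorem \ref{theo:B2} already allows $\te>1$ arbitrary with $C$ controlled as $\te$ ranges in a bounded interval. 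No compactness or regularity beyond $W^{1,2}_{n+1}$ is needed; everything is a finite iteration of Theorem \ref{theo:B2}.
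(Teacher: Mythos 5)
Your plan is correct and follows essentially the same chaining strategy as the paper: iterate the local Harnack inequality of Theorem~\ref{theo:B2} along equally spaced points of the segment $\ga$, with the number of steps controlled in terms of $l$, $\dd$, $s$, $t-s$. The paper avoids your explicit rescaling step by feeding $R=2l/N$ and $\te=1+\frac{(t-s)N}{4l^2}$ directly into Theorem~\ref{theo:B2} (which already allows arbitrary $R\leq 2$ and $\te>1$, with $C$ uniform so long as $\frac{1}{\te-1}$ and $\frac 1{\mu}$ stay in bounded ranges) and chooses $N$ at the outset to be the smallest integer exceeding $\max\{\frac{2(t-s)}{s},\,\frac{l}{\min\{\sqrt{s}/4,\dd/4\}}\}$, so that the spatial and temporal subdivisions are automatically compatible and the case analysis you raise for $\sigma$ too small or too large relative to $r^2$ never arises.
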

\begin{proof}
Let $\ga$ be the line segment with the property $(1)$ and $(2)$ connecting $x$ and $y$. We set $$p_0=y,\quad  p_N=x, \quad p_i=p_0+\frac {x-y}{N}i\in \ga$$ for any $0\leq i\leq N.$ Here we choose $N$ to be the smallest integer satisfying
\beq
N>\max\Big\{\frac {2(t-s)}{s}, \frac {l}{\min\{\frac {\sqrt{s}}{4}, \frac {\dd}4 \}}\Big\}. \label{eqB:100}
\eeq We define
\beq
R=\frac {2l}{N}, \quad  \te=1+\frac {t-s}{R^2N}.\label{eqB:101}
\eeq We can check that $R\leq \frac {\dd}2.$
For any $s, t\in (0, T)$, we choose $\{t_i\}_{i=0}^N$ such that $t_0=s, t_N=t$ and \beq t_{i}-t_{i-1}=\frac {t-s}N \label{eqB:105}\eeq for all integers $1\leq i\leq N$. Note that (\ref{eqB:100})-(\ref{eqB:105}) imply that for any $0\leq i\leq N-1$,
$$t_{i+1}-\te R^2\geq s-\te R^2=s-R^2-\frac {t-s}N\geq \frac s4>0$$
and
$$|p_{i+1}-p_i|=\frac {|x-y|}{N}\leq \frac lN=\frac R2.$$
Therefore, for any $0\leq i\leq N-1$ we have  $(t_{i+1}-\te R^2, t_{i+1})\subset (0, T) $ and $p_{i+1}\in B_{\frac R2}(p_i)$.  Applying Theorem \ref{theo:B2} on $B_R(p_i)\times (t_{i+1}-\te R^2, t_{i+1})\subset \Omega\times (0, T)$, we have
\beq
u(p_i, t_i)\leq C\,u(p_{i+1}, t_{i+1}),
\eeq where $C$ depends only on $c, n, \mu$ and $\frac 1{\te-1}=\frac {R^2N}{t-s}. $ Here we used the fact that
$t_i=(t_{i+1}-\te R^2)+R^2.$
Therefore,
\beq
u(y, s)=u(p_0, t_0)\leq C^Nu(p_N, t_N)=C' u(x, t)  \label{eqB:103}
\eeq
where the constant
$C'$ in (\ref{eqB:103}) depends only on $c, n, \mu, \min\{s, \dd^2\}, l,  T_1$ and $T_2$. The theorem is proved.

\end{proof}

A direct corollary of Theorem \ref{theo:B4} is the following result.

\begin{theo}\label{theo:B3} Let $\Om$ be a bounded domain in $\RR^n$. Suppose that $u(x, t)\in W^{1, 2}_{n+1}(\Om\times (0, T))$ is  a nonnegative solution to the equation
\beq
L u=-\pd ut+a^{ij}(x, t)u_{ij}+b^i(x, t)u_i+c(x, t) u=0,  \label{eqB:0042}
\eeq where the coefficients $a^{ij}(x, t)$ and $b^i(x, t)$ satisfy (\ref{eqB:001})-(\ref{eqB:002}), and $c(x, t)$ satisfies (\ref{eqA:C}) for a constant $\mu>0$. Suppose that $\Om', \Om''$ are subdomains in $\Om$ satisfying the following properties:
\begin{enumerate}
  \item[(1).]$\Om'\subset \Om''\subset \Om$, and $\Om''$ has a positive distance $\dd>0$ from the boundary of $\,\Om$;

  \item[(2).] $\Om'$ can be covered by $k$ balls with radius $r$, and all  balls are contained in $\Om''.$
\end{enumerate}
Then for any $s, t$ satisfying $0<s<t<T$ and any $x, y\in \Om'$,
 we have
\beq
u(y, s)\leq C \,u(x, t) , \label{eq:J0041b}
\eeq where $C$ depends only on $n, \mu, \min\{s, \dd^2\}, t-s, r$ and $k$.
\end{theo}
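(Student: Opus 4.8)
\textbf{Proof proposal for Theorem \ref{theo:B3}.}
The plan is to reduce the global Harnack inequality on $\Om'$ to finitely many applications of the two-point, line-segment Harnack inequality from Theorem \ref{theo:B4}, using the covering hypothesis to produce a chain of balls joining any two points of $\Om'$. First I would fix the $k$ balls $B_r(z_1),\dots,B_r(z_k)$ whose union contains $\Om'$ and which are all contained in $\Om''$; since $\Om'$ is (without loss of generality) connected — or, more precisely, since any two points of $\Om'$ lie in a chain of these balls with consecutive balls overlapping — one can order a subchain $B_r(z_{i_0}),\dots,B_r(z_{i_m})$ with $m\le k$ such that $y\in B_r(z_{i_0})$, $x\in B_r(z_{i_m})$, and $B_r(z_{i_j})\cap B_r(z_{i_{j+1}})\ne\emptyset$ for each $j$. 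Choosing a point $p_j$ in the overlap $B_r(z_{i_j})\cap B_r(z_{i_{j+1}})$ (and setting $p_{-1}=y$, $p_m=x$), the line segment joining $p_{j-1}$ to $p_j$ lies in $B_{2r}(z_{i_j})\subset\Om''$ and hence stays at distance $\ge\dd$ from $\p\Om$, with length bounded by $4r$.

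Next I would subdivide the time interval $(s,t)$ into $m+1$ equal pieces $s=s_0<s_1<\dots<s_{m+1}=t$, so $s_{j+1}-s_j=(t-s)/(m+1)\in[(t-s)/(k+1),\,t-s]$, which is bounded above and below in terms of $t-s$ and $k$. For each $j$ I would apply Theorem \ref{theo:B4} with the line segment $[p_{j-1},p_j]$, the length bound $l=4r$, the distance bound $\dd$, and the time pair $(s_j,s_{j+1})$ — note all three hypotheses of Theorem \ref{theo:B4} are met — to obtain
\beq
u(p_{j-1},s_j)\le C_j\, u(p_j,s_{j+1}),\nonumber
\eeq
where $C_j$ depends only on $n$, $\mu$, $\min\{s_j,\dd^2\}$, $4r$, and the upper and lower time bounds, hence ultimately only on $n,\mu,\min\{s,\dd^2\},t-s,r,k$. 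Composing these $m+1\le k+1$ inequalities along the chain gives
\beq
u(y,s)=u(p_{-1},s_0)\le C u(p_m,s_{m+1})=C\,u(x,t),\nonumber
\eeq
with $C=\prod_j C_j$ depending only on the stated quantities.

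The one technical point to be careful about — and the closest thing to an obstacle — is keeping the constant uniform: the constant in Theorem \ref{theo:B4} involves $\min\{s_j,\dd^2\}$ at each intermediate time, and since $s_j\ge s_0=s$ for all $j$, we have $\min\{s_j,\dd^2\}\ge\min\{s,\dd^2\}$, so monotonicity of the constant in that argument (which holds because it enters only through a lower bound on an initial-time gap in the proof of Theorem \ref{theo:B4}) lets us replace every $\min\{s_j,\dd^2\}$ by $\min\{s,\dd^2\}$. Likewise the number of chain links $m+1$ is at most $k+1$, so the product $\prod_j C_j$ is controlled. Everything else is bookkeeping: choosing the overlap points, bounding segment lengths by $4r$, and verifying that all segments stay inside $\Om''$ and hence at distance $\ge\dd$ from $\p\Om$. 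This completes the proof.
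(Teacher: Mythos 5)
Your proof takes essentially the same approach as the paper: connect $y$ to $x$ by a polygonal chain staying inside $\Om''$, subdivide the time interval $(s,t)$ into equal pieces, apply Theorem~\ref{theo:B4} to each segment--subinterval pair, and compose the constants, using $N\le k+2$ links and $s_j\ge s$ to keep $C$ uniform. The only slip is that you invoke $B_{2r}(z_{i_j})\subset\Om''$, which is not given by the hypothesis (only $B_r(z_{i_j})\subset\Om''$ is); since both $p_{j-1}$ and $p_j$ lie in the convex ball $B_r(z_{i_j})$, the segment $[p_{j-1},p_j]$ already lies in $B_r(z_{i_j})\subset\Om''$, with length at most $2r$, which is what you should cite --- a trivial fix that does not affect the argument.
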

\begin{proof}By the assumption, we can find finite many points $\cA=\{q_1, q_2, \cdots, q_k\}$ such that
\beq
\Om'\subset \cup_{q\in \cA}B_r(q)\subset \Om''.
\eeq
For any $x, y\in \Om'$, there exists two points in $\cA$, which we denote by $q_1$ and $q_2$, such that $x\in B_{r}(q_1)$ and $y\in B_{r}(q_2)$. Then $x$ and $y$ can be connected by a polygonal chain $\ga$, which consists of two line segments $\overline{xq_1}, \overline{yq_2}$ and a polygonal chain   with vertices in $\cA$ connecting $q_1$ and $q_2$. Clearly, the number of the vertices of $\ga$ is bounded by $k+2$ and the total length of $\ga$ is bounded by $(k+2) r.$ Moreover, by the assumption we have $\ga\subset \Om''$ and  each point in $\ga$ has a positive distance at least $\dd>0$ from the boundary of $\Om.$

 Assume that the polygonal chain $\ga$ has consecutive  vertices $\{p_0, p_1, \cdots, p_N\}$ with $p_0=y, p_N=x$ and $1\leq N\leq k+2$.  We apply Theorem \ref{theo:B4} for each line segment $\overline{p_ip_{i+1}}$ and the interval $[t_i, t_{i+1}]$, where $\{t_i\}$ is chosen as in (\ref{eqB:105}). Note that
$$\frac {t-s}{k+2}\leq t_{i+1}-t_i=\frac {t-s}{N}\leq t-s.$$
Thus,  for any $0\leq i\leq N-1$ we have
\beq
u(p_i, t_i)\leq Cu(p_{i+1}, t_{i+1}),\label{eqC:003}
\eeq where $C$ depends only on $c, n, \mu, \min\{s, \dd^2\}, r, k$ and $t-s$, and (\ref{eqC:003}) implies (\ref{eq:J0041b}).  This finishes the proof of Theorem \ref{theo:B3}.

\end{proof}

Theorem \ref{theo:B3} can be generalized to Riemannian manifolds by using the partition of unity. Here we omit the proof since the argument is standard. Note that the constant in (\ref{eq:J0041b}) depends on the geometry of $(M, g)$.

\begin{theo}\label{theo:B5} Let $(M, g)$ be a  Riemannian manifold
with boundary $\partial M$ and $\Om\subset M$ a bounded domain which doesn't intersect with $\partial M.$
 Suppose that $u(x, t)\in W^{1, 2}_{n+1}(\Om\times (0, T))$ is  a nonnegative solution to the equation
\beq
L u=-\pd ut+a^{ij}(x, t)\Na_i\Na_ju+b^i(x, t) \Na_i u+c(x, t) u=0,  \label{eqB:0042}
\eeq where the coefficients $a^{ij}(x, t)$ and $b^i(x, t)$ satisfy (\ref{eqB:001})-(\ref{eqB:002}), and $c(x, t)$ satisfies (\ref{eqA:C}) for a constant $\mu>0$. Suppose that $\Om', \Om''$ are subdomains in $\Om$ satisfying the following properties:
\begin{enumerate}
  \item[(1).]$\Om'\subset \Om''\subset \Om$, and $\Om''$ has a positive distance $\dd>0$ from the boundary of $\,\Om$;

  \item[(2).] $\Om'$ can be covered by $k$ balls with radius $r$, and all  balls are contained in $\Om''.$
\end{enumerate}
Then for any $s, t$ satisfying $0<s<t<T$ and any $x, y\in \Om'$,
 we have
\beq
u(y, s)\leq C \,u(x, t) , \label{eq:J0041b}
\eeq where $C$ depends only on $c, n, \mu, \min\{s, \dd^2\}, t-s, r, k$ and $(M, g)$.
\end{theo}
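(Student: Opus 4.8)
\textbf{Proof proposal for Theorem~\ref{theo:B5}.} The plan is to transfer the Euclidean Harnack inequality of Theorem~\ref{theo:B3} to the manifold setting by a covering-and-patching argument, exactly as one reduces elliptic/parabolic estimates on manifolds to flat charts. First I would cover $\overline{\Om''}$ by finitely many coordinate charts $\{(V_\al, \psi_\al)\}$ on $(M,g)$, where each $\psi_\al: V_\al \to \RR^n$ is a diffeomorphism onto its image and, after shrinking, the pulled-back metric $(\psi_\al^{-1})^*g$ satisfies two-sided bounds $\mu_0\,\dd_{ij}\leq g_{ij}\leq \mu_0^{-1}\dd_{ij}$ together with uniform bounds on $\partial g_{ij}$ on $\psi_\al(V_\al)$; here $\mu_0>0$ and the derivative bounds depend only on the geometry of $(M,g)$ restricted to a fixed neighborhood of $\overline{\Om''}$. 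This is possible because $\overline{\Om''}$ is compact and has positive distance $\dd$ from $\partial M$. Since $\Om''$ has distance at least $\dd$ from $\partial \Om$ and is itself at positive distance from $\partial M$, all the balls in condition~(2) and the geodesic segments joining their centers stay inside this neighborhood.

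Next I would observe that in each chart the operator $L$ in \eqref{eqB:0042} takes the form $-\partial_t u + \tilde a^{ij}(x,t)u_{ij}+\tilde b^i(x,t)u_i+\tilde c(x,t)u$, where $\tilde a^{ij} = a^{kl} (\psi_\al)_*{}^i_k (\psi_\al)_*{}^j_l$ absorbs the Jacobian of the chart and the second-order part of $\Na_i\Na_j$, and $\tilde b^i$ absorbs the lower-order Christoffel terms $-a^{ij}\Ga^k_{ij}\partial_k$ plus the transformed first-order term. By the ellipticity bounds \eqref{eqB:001}, the bound \eqref{eqB:002}, the metric and connection bounds of the chart, and the smallness bound \eqref{eqA:C} on $c$ (which is unchanged since $c$ multiplies $u$), there is a single constant $\mu'=\mu'(\mu, n, (M,g))>0$ such that $\tilde a^{ij},\tilde b^i,\tilde c$ satisfy \eqref{eqB:001}--\eqref{eqB:002} and \eqref{eqA:C} with $\mu$ replaced by $\mu'$ on each chart. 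The Lebesgue exponent $W^{1,2}_{n+1}$ is preserved under bi-Lipschitz coordinate changes, so $u\circ\psi_\al^{-1}$ is an admissible nonnegative solution in the sense of Theorem~\ref{theo:B3}. Applying Theorem~\ref{theo:B3} in a chart containing a small ball around a single point (or a pair of nearby points) yields a local Harnack step: for $x,y$ in one chart-ball and $0<s<t<T$ with $T_1\leq t-s\leq T_2$ one gets $u(y,s)\leq C\, u(x,t)$ with $C$ depending only on $n,\mu,\min\{s,\dd^2\}, t-s$, the chart radius, and $(M,g)$.

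The final step is chaining. Given arbitrary $x,y\in\Om'$, use condition~(2) to write a chain of at most $k$ overlapping balls from $y$ to $x$, all contained in $\Om''$; cover this chain by charts as above, choose an increasing partition $s=t_0<t_1<\cdots<t_N=t$ with $N\le k+2$ and $\frac{t-s}{k+2}\le t_{i+1}-t_i\le t-s$ exactly as in the proof of Theorem~\ref{theo:B3}, and apply the local Harnack step on each consecutive pair $(p_i,t_i)\to(p_{i+1},t_{i+1})$. Multiplying the resulting inequalities gives \eqref{eq:J0041b} with a final constant depending only on $c, n,\mu,\min\{s,\dd^2\}, t-s, r, k$ and $(M,g)$. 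The only mildly delicate point — and the step I expect to need the most care — is bookkeeping the dependence of constants: one must check that the chart data (the constant $\mu_0$, the derivative bounds, the number and radii of charts needed) can be chosen depending only on the fixed geometry of $(M,g)$ near $\overline{\Om''}$ and on $\dd$, not on $u$, $s$, or $t$, so that the transformed coefficient bound $\mu'$ and hence the constant in Theorem~\ref{theo:B3} are genuinely uniform. Since everything takes place in a compact region at controlled distance from $\partial M$, this is routine, and no further geometric input (such as curvature or injectivity-radius bounds stated separately) is required beyond smoothness of $g$. Because this patching argument is entirely standard, I would, as the statement already indicates, omit the computational details and simply indicate this reduction.
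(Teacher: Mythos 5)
Your proposal is correct and takes the same approach the paper intends: the paper in fact omits the proof of Theorem~\ref{theo:B5} entirely, remarking only that Theorem~\ref{theo:B3} ``can be generalized to Riemannian manifolds by using the partition of unity'' and that ``the argument is standard.'' Your chart-covering reduction---rewrite $L$ in each chart as a uniformly parabolic Euclidean operator with coefficient bounds controlled by $\mu$, $n$ and the geometry of $(M,g)$ near $\overline{\Om''}$, apply Theorem~\ref{theo:B3} locally, then chain through the $k$ balls of radius $r$---is exactly that standard localization, and your accounting of how the chart data (hence $(M,g)$ and $\dd$) enters the final constant is what the paper's stated dependence on $(M,g)$ is meant to encode.
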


\section{Li-Yau's parabolic Harnack inequality}\label{appendixB}

In this appendix, we include Li-Yau's parabolic Harnack inequality
in \cite{[LY]}. Compared with the Harnack inequality in appendix \ref{appendixA}, Li-Yau's result gives explicit dependence of the constants  on the geometric quantities of the metric. Thus, we can apply Li-Yau's result to a class of Riemannian manifolds and we   obtain uniform bounds of the constants in the Harnack inequality.

\begin{theo}\label{theo:LY}(cf.  Theorem 2.1 of \cite{[LY]}) Let $M$ be a Riemannian manifold with boundary $\partial M.$ Assume $p\in M$ and let $\cB_{2R}(p)$ be a geodesic ball of radius $2R$ centered at $p$ which does not intersect $\partial M$. We denote $-K(2R)$, with $K(2R)\geq 0$, to be a lower bound of the Ricci curvature on $\cB_{2R}(p).$ Let $q(x, t)$ be a function defined on $M\times [0, T]$ which is $C^2$ in the $x$-variable and $C^1$ in the $t$-variable. Assume that
\beq \Delta q\leq \te(2R),\quad |\Na q|\leq \ga(2R)\eeq
on $\cB_{2R}(p)\times [0, T]$ for some constants $\te(2R)$ and $\ga(2R)$. If $u(x, t)$ is a positive solution of the equation
\beq
\Big(\Delta-q-\frac {\partial}{\partial t}\Big)u(x, t)=0
\eeq on $M\times (0, T]$, then for any $\al>1$, $0<t_1<t_2\leq T$, and $x, y\in \cB_R(p)$, we have the inequality
\beq
u(x, t_1)\leq u(y, t_2)\Big(\frac {t_2}{t_1}\Big)^{\frac {n\al}{2}}e^{A(t_2-t_1)+\rho_{\al, R}(x, y, t_2-t_1)},
\eeq where
\beq
A=C\Big(\al R^{-1}\sqrt{K}+\al^3 (\al-1)^{-1}R^{-2}+\ga^{\frac 23}(\al-1)^{\frac 13}\al^{-\frac 13}+(\al \te)^{\frac 12}+\al(\al-1)^{-1}K\Big)
\eeq and
\beq
\rho_{\al, R}(x, y, t_2-t_1)=\inf_{\ga\in \Ga(R)}\Big(\frac {\al}{4(t_2-t_1)}\int_0^1\,|\dot \ga|^2+(t_2-t_1)\int_0^1\,q(\ga(s), (1-s)t_2+st_1)\,ds\Big),
\eeq with $\inf$ taken over all paths in $\cB_R(p)$ parametrized by $[0, 1]$ joining $y$ to $x$.

\end{theo}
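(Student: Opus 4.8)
The plan is to follow the Li--Yau gradient estimate method, which proceeds in two stages: first a pointwise \emph{differential} Harnack inequality, then integration along space-time paths. Setting $f=\log u$, the hypothesis $(\Delta-q-\partial_t)u=0$ becomes the nonlinear equation
\[
\partial_t f=\Delta f+|\Na f|^2-q\qquad\text{on }M\times(0,T].
\]
The target of the first stage is a pointwise bound of the shape
\[
|\Na f|^2-\al\,\partial_t f-\al q\le \frac{n\al^2}{2t}+A'\qquad\text{on }\cB_R(p)\times(0,T]
\]
with $A'$ of the same homogeneity in $\al,R,K,\te,\ga$ as the constant $A$ in the statement; the inequality of the theorem then follows from this by integrating along paths and exponentiating.

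For the first stage I would introduce the auxiliary function $W=t\bigl(|\Na f|^2-\al\,\partial_t f-\al q\bigr)$ and compute $(\Delta-\partial_t)W$ using the Bochner--Weitzenb\"ock formula $\Delta|\Na f|^2=2|\Hess f|^2+2\lan\Na f,\Na\Delta f\ran+2\,\mathrm{Ric}(\Na f,\Na f)$ together with the evolution equation for $f$. The inequality $|\Hess f|^2\ge \tfrac1n(\Delta f)^2$ produces the favourable quadratic term, while the error terms are controlled by the three geometric bounds on $\cB_{2R}(p)$: the Ricci lower bound gives $\mathrm{Ric}(\Na f,\Na f)\ge -K|\Na f|^2$, the bound $\Delta q\le\te$ controls the contribution of the potential's Laplacian, and $|\Na q|\le\ga$ is absorbed against $|\Hess f|$ and $|\Na f|$ via Young's inequality. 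The outcome is a differential inequality of the form $(\Delta-\partial_t)W\ge \tfrac{2}{nt}W^2-2\lan\Na f,\Na W\ran-(\text{lower order in }W)$ with explicitly controlled coefficients. I would then localize: take a cutoff $\phi=\chi\bigl(d(p,\cdot)/R\bigr)$ with $\phi\equiv1$ on $\cB_R(p)$, $\Supp\phi\subset\cB_{2R}(p)$, $|\Na\phi|^2/\phi\le C R^{-2}$ and, by the Laplacian comparison theorem on $\cB_{2R}(p)$ where $\mathrm{Ric}\ge -K$, $\Delta\phi\ge -C(1+\sqrt{K}R)R^{-2}$. Applying the parabolic maximum principle to $\phi W$ on $\cB_{2R}(p)\times[0,T]$ at an interior maximum point $(x_0,t_0)$ — where $\Na(\phi W)=0$ and $(\Delta-\partial_t)(\phi W)\le0$ — and substituting the differential inequality for $W$ and the estimates for $\phi$, after absorbing gradient terms by Young's inequality one obtains a quadratic inequality in $(\phi W)(x_0,t_0)$ whose resolution yields the pointwise bound with $A'$ assembled exactly from the five building blocks $\al R^{-1}\sqrt K$, $\al^3(\al-1)^{-1}R^{-2}$, $\ga^{2/3}(\al-1)^{1/3}\al^{-1/3}$, $(\al\te)^{1/2}$ and $\al(\al-1)^{-1}K$.

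For the second stage, fix $x,y\in\cB_R(p)$ and $0<t_1<t_2\le T$; for any path $\ga:[0,1]\to\cB_R(p)$ joining $y$ to $x$ set $\eta(s)=f\bigl(\ga(s),(1-s)t_2+st_1\bigr)$, so that $f(y,t_2)-f(x,t_1)=\int_0^1\dot\eta\,ds$. Estimating $\dot\eta$ by $|\Na f|\,|\dot\ga|+(t_2-t_1)|\partial_t f|$, using the differential Harnack bound to replace $\partial_t f$, and applying $ab\le\tfrac{\al}{4(t_2-t_1)}a^2+\tfrac{t_2-t_1}{\al}b^2$ to the cross term, one obtains $f(y,t_2)-f(x,t_1)\ge -\tfrac{n\al}{2}\log\tfrac{t_2}{t_1}-A(t_2-t_1)-\rho_{\al,R}(x,y,t_2-t_1)$ after taking the infimum over $\ga$; exponentiating gives the claim. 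The main obstacle is the bookkeeping in the maximum-principle step: one must track precisely how $K$, $\te$, and $\ga$ enter the error terms of the Bochner inequality and then choose the weights in Young's inequality so the final constant has exactly the stated homogeneity. The factors $(\al-1)^{-1}$ arise because the term of size $\tfrac{2(\al-1)}{\al}|\Na f|^2\partial_t f$ left over after completing the square must be absorbed, and keeping this absorption sharp — rather than losing powers of $\al$ or $R$ — is the delicate point.
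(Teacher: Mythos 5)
The paper does not prove this theorem; it is cited directly as Theorem 2.1 of Li and Yau's original paper \cite{[LY]}. Your outline faithfully reconstructs the standard Li--Yau argument from that reference (gradient estimate via Bochner, localized maximum principle with cutoff, then integration along space-time paths) and is correct as a proof strategy.
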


A direct corollary of Theorem \ref{theo:LY} is the following result.

\begin{theo}\label{theo:LYB2} The same assumptions as in Theorem \ref{theo:LY} on $M, \cB_{2R}(p)$ and the function $q(x, t)$.
 If $u(x, t)$ is a positive solution of the equation
\beq
\Big(\Delta-q-\frac {\partial}{\partial t}\Big)u(x, t)=0
\eeq on $\Om\times (0, T]$, where $\Om$ is a connected open subset of $\cB_R(p)$. Let $\Om', \Om''$ are connected open subsets of $\Om$ satisfying the following properties, which we called $(\dd, k, r)$ property:
\begin{enumerate}
  \item[(1).]$\Om'\subset \Om''\subset \Om$, and $\Om''$ has a positive distance $\dd>0$ from the boundary of $\,\Om$;

  \item[(2).] $\Om'$ can be covered by $k$ geodesic balls with radius $r$, and all  balls are contained in $\Om''.$
\end{enumerate}
Then for any $0<t_1<t_2\leq T$ and $x, y\in \Om'$, we have the inequality
\beq
u(x, t_1)\leq Cu(y, t_2),
\eeq where $C$ depends only on $n,   K(2R), \te(2R), \ga(2R), t_1, t_2-t_1, k, \dd$ and $r$.

\end{theo}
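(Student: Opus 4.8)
The plan is to repeat, in the Riemannian setting, the passage from Theorem \ref{theo:B2} to Theorem \ref{theo:B3} carried out in Appendix \ref{appendixA}, with Theorem \ref{theo:LY} now playing the role of the one-ball Harnack inequality. The proof thus has two stages: a one-step estimate between points that are close at scale $\sim\dd$, obtained directly from Theorem \ref{theo:LY}; and a chaining argument that connects arbitrary $x,y\in\Om'$ through the centres of the covering $\{\cB_r(c_j)\}_{j=1}^k$ of $\Om'$ while simultaneously subdividing the time interval $[t_1,t_2]$.

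\emph{Stage 1 (one-step estimate).} Fix $\al=2$ once and for all and set $R_\ast:=\tfrac16\dd$. Since $\Om\subset\cB_R(p)$ is bounded we have $\dd=\mathrm{dist}(\Om'',\p\Om)\le 2R$, and one checks directly that for every $x\in\Om''$ the ball $\cB_{2R_\ast}(x)=\cB_{\dd/3}(x)$ satisfies $\cB_{2R_\ast}(x)\subset\Om$ and $\cB_{2R_\ast}(x)\subset\cB_{R+\dd/3}(p)\subset\cB_{2R}(p)$; in particular it does not meet $\p\Om$, the Ricci curvature on it is $\ge-K(2R)$, and $\Delta q\le\te(2R)$, $|\Na q|\le\ga(2R)$ there. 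Applying Theorem \ref{theo:LY} with $M=\Om$, centre $x$, radius $R_\ast$ and $\al=2$ gives: for all $y$ with $d(x,y)<R_\ast$ and all $0<s<s'\le T$,
\begin{equation*}
u(x,s)\ \le\ u(y,s')\Big(\frac{s'}{s}\Big)^{n}e^{A_\ast(s'-s)+\rho_\ast},
\end{equation*}
where $A_\ast$ is the quantity $A$ of Theorem \ref{theo:LY} evaluated with these data, so $A_\ast=A_\ast\big(n,K(2R),\te(2R),\ga(2R),\dd\big)$ (the radius $R_\ast$ being a function of $\dd$), and where $\rho_\ast$ is estimated by taking the competitor path to be the minimizing geodesic from $y$ to $x$, giving $\rho_\ast\le\tfrac{\dd^2}{72(s'-s)}+(s'-s)\sup_{\cB_{2R}(p)\times[0,T]}|q|$; the last supremum is finite by the $C^2$-hypothesis on $q$ and will be suppressed in the constants below (it is explicitly bounded in all our applications). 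Hence there is $C_1=C_1\big(n,K(2R),\te(2R),\ga(2R),\dd,s,s'-s\big)$ with $u(x,s)\le C_1\,u(y,s')$ whenever $x\in\Om''$ and $d(x,y)<R_\ast$.

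\emph{Stage 2 (chaining).} Given $x,y\in\Om'$, choose $a,b$ with $x\in\cB_r(c_a)$, $y\in\cB_r(c_b)$. Since $\Om'$ is connected and covered by the open balls $\cB_r(c_j)$, the incidence graph (nerve) of this covering is connected, so there is a chain $a=j_0,j_1,\dots,j_m=b$ with $m\le k-1$ and $\cB_r(c_{j_i})\cap\cB_r(c_{j_{i+1}})\ne\emptyset$; pick $z_i$ in each intersection. The route $x\to c_{j_0}\to z_0\to c_{j_1}\to z_1\to\cdots\to z_{m-1}\to c_{j_m}\to y$ decomposes into at most $2k$ minimizing geodesic arcs, each of which runs from a centre $c_j$ to a point of its own ball $\cB_r(c_j)\subset\Om''$ (or the reverse), hence stays inside $\cB_r(c_j)\subset\Om''$ and therefore at distance $\ge\dd$ from $\p\Om$; routing through the centres is exactly what guarantees this, because a minimizing geodesic issuing from $c_j$ to a point at distance $<r$ never leaves $\cB_r(c_j)$. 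Subdividing each such arc (of length $<r$) into $\lceil 6r/\dd\rceil$ sub-arcs of length $<R_\ast$ produces a polygonal chain $x=P_0,P_1,\dots,P_N=y$ in $\Om''$ with $d(P_{i-1},P_i)<R_\ast$ and $N\le N(k,r,\dd):=2k\lceil 6r/\dd\rceil$. Setting $s_i:=t_1+\tfrac iN(t_2-t_1)$, applying the Stage 1 estimate to each $(P_{i-1},s_{i-1})\to(P_i,s_i)$, and multiplying,
\begin{equation*}
u(x,t_1)=u(P_0,s_0)\ \le\ u(P_N,s_N)\prod_{i=1}^N\Big(\frac{s_i}{s_{i-1}}\Big)^{n}\exp\Big(\sum_{i=1}^N A_\ast(s_i-s_{i-1})+\sum_{i=1}^N\rho_\ast^{(i)}\Big).
\end{equation*}
The product telescopes to $(t_2/t_1)^n$, the first exponential sum equals $A_\ast(t_2-t_1)$, and $\sum_i\rho_\ast^{(i)}\le\tfrac{N^2\dd^2}{72(t_2-t_1)}+(t_2-t_1)\sup|q|$; all of these are controlled by $n,K(2R),\te(2R),\ga(2R),t_1,t_2-t_1,k,r,\dd$. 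This gives $u(x,t_1)\le C\,u(y,t_2)$ with $C$ depending only on the listed quantities, which is Theorem \ref{theo:LYB2}.

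\emph{The main point.} As the authors remark for the Euclidean analogue, the argument is routine; the only step requiring care in the Riemannian setting is keeping every geodesic arc and every Li--Yau ball inside $\Om\cap\cB_{2R}(p)$ despite the absence of geodesic convexity or any injectivity-radius bound. This is handled by (i) always invoking Theorem \ref{theo:LY} on balls of radius $2R_\ast=\tfrac13\dd\le\mathrm{dist}(\Om'',\p\Om)$, so that curvature, gradient and boundary-avoidance hypotheses are inherited from $\cB_{2R}(p)$, and (ii) routing the connecting path through the centres $c_j$, so that each arc is a minimizing geodesic from a centre into its own ball and therefore automatically stays there. Everything else is the bookkeeping of constants through a bounded number $N=N(k,r,\dd)$ of one-step applications, together with the telescoping of the $(s_i/s_{i-1})^{n\al/2}$ factors; I expect this bookkeeping, rather than any conceptual difficulty, to be the bulk of the work.
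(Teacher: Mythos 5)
Your proof is correct and carries out in detail the paper's terse two-sentence sketch: the paper likewise chains Li--Yau's inequality (Theorem \ref{theo:LY}) along a bounded-length path in $\Om''$ that stays at distance $\ge\dd$ from $\p\Om$, taking $R\sim\dd$ and $\al=2$, and your routing through the ball centres plus the time-subdivision bookkeeping is exactly the content that the paper leaves implicit. The subtlety you flag---that $\rho_{\al,R}$ in Theorem \ref{theo:LY} carries a hidden dependence on $\sup|q|$ that does not appear in the stated list of parameters for $C$---is a real, if harmless, imprecision in the paper's statement, since $q$ is explicitly bounded in every application made of this lemma.
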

\begin{proof}By the assumption on $\Om', \Om''$ and $\Om$, $x$ and $y$ can be connected by a path $\ga$ in $\Om''$ with bounded length and every point in $\ga$ has a distance at least $\dd$ from the boundary of $\Om$. Thus, the theorem follows directly from Theorem \ref{theo:LY} by choosing  $R=\dd$ and $\al=2$.

\end{proof}

In the proof of Lemma \ref{claim2}, we need to use Theorem \ref{theo:LYB2} to a class of surfaces with bounded geometry. In order to show that the constants in the Harnack inequality is uniformly bounded, we have the following result.

\begin{theo}\label{theo:LYB3}Fix $R>0.$ We assume that
\begin{enumerate}
\item[(1).]   $\Si_i^2\subset \RR^3$ is a sequence of complete surfaces which converges smoothly to a complete surface $\Si$ in $\RR^3$;

\item[(2).] The Ricci curvature of $\Si\cap B_R(0)$ is bounded by a constant $-K$ with $K\geq 0.$ Here $B_R(0)\subset \RR^3$ denotes the extrinsic ball centered at $0$ with radius $R$;

\item[(3).] $\Om_i, \Om_i', \Om_i''$ are bounded domains in $\Si_i\cap B_{\frac R2}(0)$ with $\Om_i'\subset \Om_i''\subset \Om_i$, and  $\Om_i, \Om_i', \Om_i''$ converges smoothly to $\Om, \Om', \Om'' $ with $\Om'\subset\Om''\subset \Om\subset \Si\cap B_{\frac R2}(0)$ respectively. Here the smooth convergence of $\Om_i$ to $\Om$ means that for any $\ee>0$ and sufficiently large $i$, there exists a smooth function $u_i$ on $\Om$ with $|u_i|_{C^2(\Om)}\leq \ee$ such that $\Om_i$ can be written as a normal exponential graph of $u_i$ over $\Om$;

  \item[(4).] $\Om''$ has a positive geodesic distance $\dd>0$ from the boundary of $\,\Om$;

  \item[(5).] $\Om'$ can be covered by $k$ geodesic balls with radius $r\in (0, \frac {\dd}2)$, and all  balls are contained in $\Om''$;

\item[(6).] $q_i(x, t)$ is a function defined on $\Si_i\times [0, T]$ which is $C^2$ in the $x$-variable and $C^1$ in the $t$-variable. Assume that
\beq \Delta_{g_i} q_i\leq \te,\quad |\Na q_i|_{g_i}\leq \te\eeq
on $\Om_i\times [0, T]$ for some constant $\te$.

\end{enumerate}
If $f_i(x, t)$ are positive functions satisfying
\beq
\Big(\Delta_{g_i}-q_i(x, t)-\frac {\partial}{\partial t}\Big)f_i(x, t)=0 \label{eq:fib}
\eeq on $\Om_i\times (0, T]$, where $q_i(x, t)\in C^2(\Si_i\times [0, T])$,   then for any $0<t_1<t_2\leq T$ and $x, y\in \Om'_i$, we have the inequality
\beq
f_i(x, t_1)\leq Cf_i(y, t_2),
\eeq where $C$ depends only on $n,   K, \te,  t_1, t_2-t_1, k, \dd$ and $r$.

\end{theo}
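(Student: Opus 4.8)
The plan is to reduce Theorem \ref{theo:LYB3} to Theorem \ref{theo:LYB2} applied on a \emph{fixed} ambient surface, namely the limit $\Si$, by pulling back all the data via the normal exponential graph maps. First I would fix $\ee_0>0$ small (to be chosen in terms of $K,\te,\dd,r,k$) and, using assumption (3), pass to $i$ large enough that $\Om_i$ is the normal exponential graph of a function $u_i$ over $\Om$ with $|u_i|_{C^2(\Om)}\le\ee_0$. Let $\Psi_i\colon\Om\to\Om_i$ denote this graph diffeomorphism. Pulling back the induced metric $g_i$ to $\Om$ gives a metric $\hat g_i=\Psi_i^*g_i$ on $\Om$, and since $\Psi_i\to\mathrm{id}$ in $C^2$, we have $\hat g_i\to g$ (the induced metric on $\Om\subset\Si$) in $C^1$; in particular, for $i$ large, the Ricci lower bound on $(\Om,\hat g_i)$ is $-2K$ (using assumption (2) and continuity), geodesic distances in $(\Om,\hat g_i)$ are comparable to those in $(\Om,g)$ up to a factor $2$, the domains $\Om',\Om''$ still satisfy a $(\dd/2,k,2r)$-type nesting-and-covering property with respect to $\hat g_i$, and one can enlarge $\Om$ slightly inside $\Si\cap B_{R/2}(0)$ so that a fixed geodesic ball $\cB_{2\dd}(p)\subset(\Si,g)$ with $\cB_{2\dd}(p)\supset\Om$ does not meet any boundary — all of this is uniform in $i$.

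Next I would transport the PDE. Set $\hat f_i=f_i\circ\Psi_i$ and $\hat q_i=q_i\circ\Psi_i$ on $\Om$. Since $\Psi_i$ is a diffeomorphism intertwining $(\Om,\hat g_i)$ with $(\Om_i,g_i)$, equation \eqref{eq:fib} becomes
\beq
\Big(\Delta_{\hat g_i}-\hat q_i(x,t)-\frac{\partial}{\partial t}\Big)\hat f_i(x,t)=0,\quad (x,t)\in\Om\times(0,T].
\nonumber
\eeq
Moreover, because $\Psi_i\to\mathrm{id}$ in $C^2$ and $\hat g_i\to g$ in $C^1$, the bounds $\Delta_{g_i}q_i\le\te$, $|\Na q_i|_{g_i}\le\te$ translate into $\Delta_{\hat g_i}\hat q_i\le 2\te$ and $|\Na\hat q_i|_{\hat g_i}\le 2\te$ on $\Om$ for $i$ large. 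Thus $\hat f_i$ is a positive solution, on the \emph{single} domain $\Om\subset\Si$ equipped with the varying but $C^1$-convergent metrics $\hat g_i$, of an equation of exactly the form treated by Theorem \ref{theo:LYB2}, with all of the relevant geometric constants ($n=2$, Ricci lower bound $-2K$, the bounds $2\te$ on $\Delta\hat q_i$ and $|\Na\hat q_i|$, the nesting data $\dd/2$, $k$, $2r$, and the time interval $[t_1,t_2]$) controlled \emph{uniformly in $i$}.

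Then I would simply invoke Theorem \ref{theo:LYB2}: for each large $i$ and any $x,y\in\Om'$, $\hat f_i(x,t_1)\le C\,\hat f_i(y,t_2)$, where $C$ depends only on $n=2$, $K$, $\te$, $t_1$, $t_2-t_1$, $k$, $\dd$, $r$ — in particular \emph{not} on $i$. Undoing the pullback via $\Psi_i$ (which is a bijection $\Om'\to\Om_i'$), this reads $f_i(x',t_1)\le C f_i(y',t_2)$ for all $x',y'\in\Om_i'$, which is the desired conclusion. For the finitely many small $i$ not covered by the "large $i$" regime, one can either absorb them by enlarging $C$ (each such $i$ gives a constant by Theorem \ref{theo:LYB2} applied directly on $\Si_i$, since $\Si_i$ itself has bounded geometry on $B_{R/2}(0)$ by smooth convergence) or note they are irrelevant in the application, where only the tail matters.

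The main obstacle is bookkeeping the uniformity: one must verify that \emph{every} constant entering Theorem \ref{theo:LY}'s formula for $A$ and $\rho_{\al,R}$ — the Ricci bound $K(2R)$, the auxiliary ambient radius $R$ (here $\dd$), the bounds $\te(2R),\ga(2R)$ on $\Delta\hat q_i$ and $|\Na\hat q_i|$, and the covering/nesting constants — survives the $C^2$-perturbation $\Psi_i$ with $i$-independent bounds. The only subtlety is that Theorem \ref{theo:LY} requires a geodesic ball $\cB_{2R}(p)$ in the ambient surface not meeting $\partial M$; since $\Om\subset\Si\cap B_{R/2}(0)$ and $\Si$ is a complete surface, for a suitable interior point $p$ and $R'$ of order $\dd$ we can arrange $\cB_{2R'}(p)\supset\Om$ staying inside $\Si\cap B_{R}(0)$, and the Ricci bound there is controlled by assumption (2). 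Once this geometric setup is pinned down, everything else is a routine translation, and the theorem follows.
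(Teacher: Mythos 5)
Your proposal is correct and, at bottom, is the same argument the paper uses: exploit the smooth convergence $\Si_i\ri\Si$ via the graph map $\varphi_i(x)=x+u_i(x)\n(x)$ to obtain a uniform $(\dd',k',r')$ nesting-and-covering property, then invoke Theorem~\ref{theo:LYB2} with $i$-independent constants. The only cosmetic difference is direction: you pull the metric and equation back to the fixed domain $\Om\subset\Si$ as $\hat g_i=\Psi_i^*g_i$, $\hat f_i=f_i\circ\Psi_i$, while the paper pushes the cover forward, sending $\cB_r(p_\al)\subset\Om''$ to $\varphi_i(\cB_r(p_\al))\subset\cB_{i,r+\ee}(\varphi_i(p_\al))\subset\Om_i''$ and then applies Theorem~\ref{theo:LYB2} directly on $\Si_i$. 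One small caution in your version: you cite $|u_i|_{C^2}\le\ee_0$ to conclude $\hat g_i\to g$ in $C^1$, but the Ricci bound entering Li--Yau requires control of second derivatives of the metric, hence $C^3$ control of $u_i$; this is fine because assumption (1) grants smooth (not merely $C^2$) convergence, which is exactly how the paper also guarantees a uniform Ricci lower bound on $\Si_i\cap B_R(0)$ for large $i$.
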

\begin{proof}It suffices to show that $\Om_i', \Om_i''$ and $\Om_i$ satisfy the $(\dd', k', r')$ property of  Theorem \ref{theo:LYB2} with uniform constants $\dd', k'$ and $r'$. By the smooth convergence of $\Om_i$ to $\Om$, we define the map $\varphi_i: \Om\ri \Om_i$ by
\beq
\varphi_i(x)=x+u_i(x)\n(x),\quad \forall\;x\in \Om,\label{eqB:A1}
\eeq where $u_i(x)$ is the graph function of $\Om_i$ over $\Om$ and $\n(x)$ denotes the normal vector of $\Si$ at $x$.   Note that $\varphi_i(\Om)=\Om_i$ and $\varphi_i$ converges in $C^2$  to the identity map on $\Om$ as $i\ri +\infty. $ By the assumption (5), there exists $k$ points $\{p_{\al}\}_{\al=1}^k\subset \Om'$ and $\ee>0$ such that
\beq \Om'\subset \cup_{\al=1}^k\cB_r(p_{\al}),\quad \cB_r(p_{\al})\subset \Om''_{4\ee}, \eeq
where $\Om_{4\ee}''=\{x\in \Om''\,|\;d_{\Si}(x, \p \Om'')\geq 4\ee\}.$
Therefore, we have
\beq
\Om_i'=\varphi_i(\Om')\subset \varphi_i\Big(\cup_{\al=1}^k\cB_r(p_{\al})\Big)
=\cup_{\al=1}^k\varphi_i\Big(\cB_r(p_{\al}).\label{eqB:A3}
\Big)
\eeq Since the $C^l$ norms of $u_i$ in (\ref{eqB:A1}) are small, for large $i$ we have
\beq
 \varphi_i(\cB_{r}(p_{\al}))\subset \cB_{i, r+\ee}(\varphi_i(p_{\al}))\subset \Om_{i, 2\ee}'', \label{eqB:A2}
\eeq where $\Om_{i, 2\ee}''=\{x\in \Om_{i}''\,|\;d_{\Si_i}(x, \p \Om_{i}'' )\geq 2\ee\}$ and $\cB_{i, r}(p)$ denotes the geodesic ball of $\Si_i$ centered at $p$ with radius $r$.
Combining (\ref{eqB:A3}) with (\ref{eqB:A2}), we have
\beq
\Om_i'\subset \cup_{\al=1}^k\,\cB_{r+\ee}(\varphi_i(p_{\al}))\subset \Om_{i, 2\ee}''\subset \Om_i''.
\eeq Therefore, $\Om_i'$ can be covered by $k$ geodesic balls with radius $r+\ee$, and all  balls are contained in $\Om''_i$. It is clear that $\Om''_i$ has a positive geodesic distance $\frac {\dd}2>0$ from the boundary of $\,\Om_i$ for large $i$. Thus, $\Om_i', \Om_i''$ and $\Om_i$ satisfy the $(\frac {\dd}2, k, r+\ee)$ property and the theorem follows directly from Theorem \ref{theo:LYB2}.

\end{proof}

\section{The linearized equation  of rescaled mean curvature flow}\label{appendixC}
In this appendix, we follow the calculation in Appendix A of Colding-Minicozzi \cite{[CM4]} to show (\ref{eq:F008}). See also Appendix A of Colding-Minicozzi \cite{[CM5]}.
 Let $\Si$ be a hypersurface in $\RR^{n+1}$ and $\Si_u$ the graph of a function $u$ over $\Si.$ Then $\Si_u$ is give by
\beq
\Si_u=\{x+u(x)\n(x)\;|\;x\in \Si\},
\eeq where $\n(x)$ denotes the normal vector of $\Si$ at $x$. We assume that $|u|$ is small.
Let $e_{n+1}$ be the gradient of the signed distance function to $\Si$ and $e_{n+1}$ equals $\n$ on $\Si.$  We define
\beq
\nu_u(p)=\sqrt{\frac {\det g_{ij}^u(p)}{\det g_{ij}(p)}},\quad w_i(p)=\langle e_{n+1}, \n_u\rangle,\quad  \eta_u(p)=\langle p+u(p)\n(p), \n_u\rangle \label{eqC:C1}
\eeq where $g_{ij}$ denotes the metric on $\Si$ at $p$, $g_{ij}^u$ is the induced metric on $\Si_u$ and $\n_u$ is the normal to $\Si_u.$

\begin{lem}\label{lem:C1}(Lemma A.3 of \cite{[CM4]}) There are functions $w, \nu, \eta$ depending on $(p, s, y)\in \Si\times \RR\times T_p\Si$ that are smooth for $|s|$ less than the normal injectivity radius of $\Si$ so that
\beqn
w_u(p)&=&w(p, s, y)=\sqrt{1+|B^{-1}(p, s)(y)|^2},\label{eqC:C2}\\
\nu_u(p)&=&\nu(p, s, y)=w(p, s, y)\det(B(p, s)),\label{eqC:C3}\\
\eta_u(p)&=&\eta(p, s, y)=\frac {\langle p, \n(p) \rangle+s-\langle p, B^{-1}(p, s)(y) \rangle}{w(p, s, y)} \label{eqC:C4}
\eeqn where the linear operator $B(p, s)=Id-s A(p)$. Finally, we have
\begin{enumerate}
\item[(1).] $w$ satisfies \beqn
w(p, s, 0)&=&1, \quad \p_sw(p, s, 0)=0,\label{eqC:C7}\\
\p_{y_{\al}}w(p, s, 0)&=&0,\quad  \p_{y_{\al}}\p_{y_{\bb}}w(p, 0, 0)=\dd_{\al\bb}.\label{eqC:C8}
\eeqn

\item[(2).] $\nu$ satisfies  \beqn \nu(p, 0, 0)&=&1,\quad
\p_s\nu(p, 0, 0)=H(p),\\
\p_{p_j}\p_s\nu(p, 0, 0)&=&H_j(p),\quad \p_{y_{\al}}\p_{y_{\bb}}\nu(p, 0, 0)=\dd_{\al\bb},\\
\p_s^2\nu(p, 0, 0)&=&H^2(p)-|A|^2(p).
\eeqn
\item[(3).] $\eta$ satisfies
\beqn
\eta(p, 0, 0)&=&\langle p, \n\rangle,\quad  \p_s\eta(p, 0, 0)=1,\\
\p_{y_{\al}}\eta(p, 0, 0)&=&-p_{\al}.
\eeqn

\item[(4).] Furthermore, we have
\beqn
\p_{y_i}\nu(p, 0, 0)&=&0,\quad \p_{p_{j}}\p_{y_i}\nu(p, 0, 0)=0,\label{eqC:E1}\\
\p_s\p_{p_{j}}\p_{y_i}\nu(p, 0, 0)&=&0,\quad \p_{y_k}\p_{p_{j}}\p_{y_i}\nu(p, 0, 0)=0.\label{eqC:E2}
\eeqn
\end{enumerate}

\end{lem}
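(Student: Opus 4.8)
The statement to prove is Lemma~\ref{lem:C1}, which collects the Taylor expansions (at $s=0$, $y=0$) of the geometric quantities $w_u$, $\nu_u$, $\eta_u$ associated to the normal graph $\Si_u$ over $\Si$. The plan is to follow the scheme of Appendix~A of Colding--Minicozzi \cite{[CM4]}, reducing everything to elementary differentiation of the three closed-form expressions \eqref{eqC:C2}--\eqref{eqC:C4}, once those expressions themselves have been established.

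\textbf{Step 1: parametrize $\Si_u$ and compute the induced metric.} Write points of $\Si_u$ as $F(p)=p+u(p)\n(p)$, and expand in local coordinates $\{p_j\}$ on $\Si$ using the Weingarten relation $\p_j\n=-A_{jk}g^{kl}\p_l$. Setting $s=u(p)$ and $y_\al=\p_\al u(p)$ (so that $y\in T_p\Si$ records the first derivatives of $u$), one finds $\p_jF=(B(p,s))_{jk}\,e_k+y_j\,\n$ with $B=\mathrm{Id}-sA$. From this the induced metric $g^u_{ij}$, its determinant, and the unit normal $\n_u$ all become explicit rational/algebraic functions of $(p,s,y)$; this yields \eqref{eqC:C2}--\eqref{eqC:C4} directly, with $w=\sqrt{1+|B^{-1}y|^2}$ arising as the normalization of $\n_u$, $\nu=w\det B$ as the area-element ratio, and $\eta=\langle F,\n_u\rangle$ computed from the formula for $\n_u$. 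The smoothness claim for $|s|$ below the normal injectivity radius is just invertibility of $B(p,s)$ there (its eigenvalues are $1-s\kappa_i\neq 0$).

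\textbf{Step 2: differentiate.} All of items (1)--(4) are now purely computational. For $w$: since $w$ depends on $y$ only through $|B^{-1}y|^2$, which is quadratic in $y$ and vanishes to second order, \eqref{eqC:C7}--\eqref{eqC:C8} follow, the Hessian at $s=0$ being $\p_{y_\al}\p_{y_\bb}|y|^2=2\dd_{\al\bb}$ divided by $2$ from the square root — one checks $\p_{y_\al}\p_{y_\bb}w(p,0,0)=\dd_{\al\bb}$. For $\nu=w\det B$: use $\det B=\det(\mathrm{Id}-sA)=1-sH+\tfrac{s^2}{2}(H^2-|A|^2)+O(s^3)$, so $\p_s\nu(p,0,0)=-\p_s\det B|_{?}$ — here one must be careful with the sign convention for $\n$ and $H$ in the paper; matching to the stated $\p_s\nu(p,0,0)=H(p)$ fixes the orientation. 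The mixed derivative $\p_{p_j}\p_s\nu(p,0,0)=H_j(p)$ comes from differentiating $-\p_s\det B=\tr A=H$ in $p$; $\p_s^2\nu(p,0,0)=H^2-|A|^2$ from the quadratic term; and $\p_{y_\al}\p_{y_\bb}\nu(p,0,0)=\dd_{\al\bb}$ because $\det B|_{s=0}=1$ and the $y$-Hessian of $\nu$ at $s=y=0$ equals that of $w$. For $\eta$: from \eqref{eqC:C4}, at $s=y=0$ one has $w=1$, $B^{-1}=\mathrm{Id}$, giving $\eta(p,0,0)=\langle p,\n\rangle$, $\p_s\eta(p,0,0)=1$, and $\p_{y_\al}\eta(p,0,0)=-\langle p,e_\al\rangle=-p_\al$. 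For item (4): $\nu$ is even in $y$ up to the relevant order — more precisely its odd-in-$y$ part starts at order $|y|$ times $s$-dependent factors that themselves vanish appropriately — so all the listed third-order mixed derivatives involving an odd number of $y$-derivatives at $(p,0,0)$ vanish; this is checked by noting $w$ and $\det B$ contribute no odd-order-in-$y$ terms at $s=0$ through the orders needed.

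\textbf{Expected main obstacle.} The computations are routine multivariable calculus, so the only real subtlety — and the place one must be most careful — is bookkeeping of conventions: the sign of the normal $\n$, the resulting sign of $A$ and $H$, and the orientation of the graph map, since these determine whether $\p_s\nu(p,0,0)$ equals $+H$ or $-H$. I would pin these down at the start by specifying that $\n$ is the outward normal as in \eqref{eq:MCF} and that $e_{n+1}$ agrees with $\n$ on $\Si$, and then carry the signs consistently through $B=\mathrm{Id}-sA$. A secondary bit of care is needed in item (4) to organize the third-order Taylor expansion of $\nu$ in $(s,y)$ far enough to read off the vanishing of $\p_s\p_{p_j}\p_{y_i}\nu$ and $\p_{y_k}\p_{p_j}\p_{y_i}\nu$; since only finitely many coefficients are needed, this is finite but slightly tedious. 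Everything else is a direct substitution into \eqref{eqC:C2}--\eqref{eqC:C4}. Once Lemma~\ref{lem:C1} is in hand, equation \eqref{eq:F008} for the height-difference function follows by expanding the rescaled mean curvature flow operator $H-\tfrac12\langle x,\n\rangle$ evaluated on $\Si_{u^+}$ and $\Si_{u^-}$ to first order in $u^\pm$ and subtracting, exactly as in \cite{[CM4]}; the quadratic remainders are absorbed into the small coefficients $a_i^{pq},b_i^p,c_i$, which tend to zero as $t_i\to\infty$ because $u^\pm_i\to 0$ in $C^2$ on compact subsets away from the singular set.
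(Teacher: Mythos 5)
Your overall plan (derive the closed-form expressions \eqref{eqC:C2}--\eqref{eqC:C4} and then differentiate) is reasonable, but note that the paper itself does not reprove items (1)--(3) or the formulas: it simply cites Lemma A.3 of \cite{[CM4]} for those and supplies a proof only of item (4). It is precisely in item (4) that your argument has a genuine gap. The parity observation — that $\nu(p,s,y)=w(p,s,y)\det B(p,s)$ is an even function of $y$ for every fixed $(p,s)$, since $w$ depends on $y$ only through the quadratic form $|B^{-1}y|^2$ and $\det B$ is $y$-independent — does correctly prove $\p_{y_i}\nu(p,0,0)=0$, $\p_{p_j}\p_{y_i}\nu(p,0,0)=0$, and $\p_s\p_{p_j}\p_{y_i}\nu(p,0,0)=0$, because each of these is a single $y$-derivative of a function that is even in $y$ and hence vanishes at $y=0$ for all $(p,s)$. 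But the fourth identity, $\p_{y_k}\p_{p_j}\p_{y_i}\nu(p,0,0)=0$, has two $y$-derivatives, so the function being evaluated is again even in $y$ and its value at $y=0$ is not forced to vanish by parity. As written, your proposal does not address this identity at all — it says only that the mixed derivatives with an odd number of $y$-derivatives vanish — so a needed step is missing.

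The cleanest way to close the gap is to observe that $B(p,0)=\mathrm{Id}$, hence $\nu(p,0,y)=\sqrt{1+|y|^2}$ is independent of $p$; therefore $\p_{p_j}\nu(p,0,y)\equiv 0$ for all $y$, and consequently $\p_{y_k}\p_{y_i}\p_{p_j}\nu(p,0,0)=0$. (This $p$-independence at $s=0$ also gives $\p_{p_j}\p_{y_i}\nu(p,0,0)=0$ directly, but not the third identity $\p_s\p_{p_j}\p_{y_i}\nu(p,0,0)=0$, which genuinely requires the parity argument.) The paper instead proves item (4) by a Leibniz-rule computation on $\nu=w\cB$, using the intermediate facts $\p_{p_j}B(p,0)=0$, $\p_{p_j}J(p,0)=0$, $\p_{p_j}\cB(p,0)=0$, and the resulting vanishing of the first and third mixed derivatives of $w$ at $(p,0,0)$, which amounts to the same underlying information but is assembled differently. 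Your sign caveat about the convention giving $\p_s\nu(p,0,0)=H$ versus $-\mathrm{tr}\,A$ is a fair flag, but it is fixed by the paper's orientation of $\n$ and the definition of $A$, not by any free choice in this lemma.
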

\begin{proof} Part (1)-(3) and (\ref{eqC:C2})-(\ref{eqC:C4}) follow directly from Lemma A.3 of \cite{[CM4]}. It suffices to show Part (4). Following the notations in the proof of Lemma A.3 of \cite{[CM4]}, we assume that $(p, s)$ is the Fermi coordinates on the normal tubular neighorhood of $\Si$ so that $s$ measures the signed distance to $\Si.$ We define
\beq
B(p, s)\equiv(\mathrm{Id}-s A(p)): T_p\Si\ri T_p\Si. \label{eqC:C6}
\eeq
Let $\cB(p, s)=\det(B(p, s))$ and $J(p, s)=B^{-1}(p, s)$.
Then we have
\beqn \cB(p, 0)&=&1,\quad \p_s\cB(p, 0)=H(p), \\
\p_{y_i}\cB(p, s)&\equiv& 0,\quad \p_{p_j}B(p, 0)=-s\p_{p_j}A|_{s=0}=0, \label{eqC:D1}\\
\p_{p_j}\cB(p, 0)&=&\cB(p, 0)\cdot \tr( \p_{p_j}B(p, 0))=0.\label{eqC:D5}
\eeqn
Since $J=B^{-1}$, we have
$$\p_{p_j}JB+J\p_{p_j}B=0.$$
This implies that \beq
\p_{p_j}J(p,  0)=-J(p, 0)\cdot\p_{p_j}B(p, 0)\cdot J(p, 0)=0.\label{eqC:C9}
\eeq
Note that  by (\ref{eqC:C2}), $w$ can be rewritten as
\beq
w(p, s, y)=\sqrt{1+J_{\al\bb}J_{\al \ga}y_{\bb}y_{\ga}}. \label{eqC:C5}
\eeq It follows immediately that
\beqs
\p_{y_i}w&=&\frac 1{2w}J*J*y,\\
\p_{p_i}w&=&\frac 1{w}\p_{p_i}J*J*y*y,\\
\p_s\p_{y_i}w&=&-\frac 1{2w^2}\p_sw\cdot J*J*y+\frac 1w\p_sJ*J*y,\\
\p_{p_j}\p_{y_i}w&=&-\frac 1{2w^2}\p_{p_j}w\cdot J*J*y+\frac 1w\p_{p_j}J*J*y,
\eeqs where the notation ``*" denotes the multiplication of two matrices. Furthermore, we calculate
\beqs\p_s\p_{p_j}\p_{y_i}w&=&w^{-3}\p_sw\p_{p_j}w\cdot J*J*y-\frac 1{2w^2}\p_s\p_{p_j}w\cdot J*J*y\\
&&-\frac 1{w^2}\p_{p_j}w\cdot \p_sJ*J*y-\frac 1{w^2}\p_sw\p_{p_j}J*J*y\\&&+
\frac 1w\p_s\p_{p_j}J*J*y+\frac 1w\p_{p_j}J*\p_sJ*y,\\
\p_{y_k}\p_{p_j}\p_{y_i}w&=&w^{-3}\p_{y_k}w\p_{p_j}w\cdot J*J*y-\frac 1{2w^2}\p_{y_k}\p_{p_j}w\cdot J*J*y\\
&&-\frac 1{w^2}\p_{p_j}w\cdot \p_{y_k}J*J*y-\frac 1{2w^2}\p_{p_j}w\cdot J*J\\&&-\frac 1{w^2}\p_{y_k}w\p_{p_j}J*J*y+\frac 1w\p_{y_k}\p_{p_j}J*J*y\\&&+\frac 1w\p_{p_j}J*\p_{y_k}J*y+\frac 1w\p_{p_j}J*J.
\eeqs
Combining the above identities with (\ref{eqC:C7})(\ref{eqC:C8}) and (\ref{eqC:C9}), we have
\beqn
\p_{y_i}w(p, 0, 0)&=&0,\quad \p_{p_i}w(p, 0, 0)=0,\label{eqC:D2} \\
\p_s\p_{y_i}w(p, 0, 0)&=&0,\quad
\p_{p_j}\p_{y_i}w(p, 0, 0)=0,\label{eqC:D3}\\
\p_s\p_{p_j}\p_{y_i}w(p, 0, 0)&=&0,\quad \p_{y_k}\p_{p_j}\p_{y_i}w(p, 0, 0)=0.\label{eqC:D4}
\eeqn
Moreover, we calculate the derivatives of the function $\nu(p, s, y)=w(p, s, y)\cB(p, s)$
\beqs
\p_{y_i}\nu&=&\p_{y_i}w \cB+w\p_{y_i}\cB,\\
\p_{p_{j}}\p_{y_i}\nu&=&\p_{p_{j}}\p_{y_i}w \cB+\p_{y_i}w \p_{p_{j}}\cB
+\p_{p_{j}}w\p_{y_i}\cB+w\p_{p_{j}}\p_{y_i}\cB,\\
\p_s\p_{p_{j}}\p_{y_i}\nu&=&\p_s\p_{p_{j}}\p_{y_i}w \cB+\p_{p_{j}}\p_{y_i}w \p_s\cB+\p_s\p_{y_i}w \p_{p_{j}}\cB+\p_{y_i}w \p_s\p_{p_{j}}\cB\\
&&+\p_s\p_{p_{j}}w\p_{y_i}\cB+\p_{p_{j}}w\p_{y_i}\p_s\cB+\p_sw\p_{p_{j}}\p_{y_i}\cB
+w\p_s\p_{p_{j}}\p_{y_i}\cB,\\
\p_{y_k}\p_{p_{j}}\p_{y_i}\nu&=&\p_{y_k}\p_{p_{j}}\p_{y_i}w \cB+\p_{p_{j}}\p_{y_i}w \p_{y_k}\cB+\p_{y_k}\p_{y_i}w \p_{p_{j}}\cB+\p_{y_i}w \p_{y_k}\p_{p_{j}}\cB\\
&&+\p_{y_k}\p_{p_{j}}w\p_{y_i}\cB+\p_{p_{j}}w\p_{y_i}\p_{y_k}\cB+\p_{y_k}w\p_{p_{j}}\p_{y_i}\cB
+w\p_{y_k}\p_{p_{j}}\p_{y_i}\cB.
\eeqs
Combining this with (\ref{eqC:D1})-(\ref{eqC:D5}), (\ref{eqC:D2})-(\ref{eqC:D4}), we have (\ref{eqC:E1})-(\ref{eqC:E2}). The lemma is proved.

\end{proof}

We have the following expression for the mean curvature of $\Si_u$.
\begin{lem}\label{lem:C2}(Corollary A.30 of \cite{[CM4]}) The mean curvature $H_u$ of $\Si_u$ is given by
\beq
H_u(p)=\frac {w}{v}\Big(\partial_s\nu-\p_{p_{\al}}\p_{y_{\al}}
\nu-(\p_s\p_{y_{\al}}\nu)u_{\al}(p)-(\p_{y_{\bb}}\p_{y_{\al}}\nu)
u_{\al\bb}(p)\Big),
\eeq where $w, \nu$ and their derivatives are all evaluated at $(p, u(p), \Na u(p)). $\\

\end{lem}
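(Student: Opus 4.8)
The plan is to derive the formula from the first variation of area, following Appendix~A of \cite{[CM4]} and using the parametrizations and chain–rule identities recorded in Lemma~\ref{lem:C1}. By the definition of $\nu_u$ in (\ref{eqC:C1}) one has $\Area(\Si_u)=\int_{\Si}\nu_u(p)\,d\mu_{\Si}$, where $\mu_{\Si}$ is the fixed area measure of $\Si$. For a compactly supported test function $\phi$, I would look at the one–parameter family of graphs $\Si_{u+t\phi}$ with normal–exponential parametrizations $\Phi_{u+t\phi}(p)=p+(u+t\phi)(p)\n(p)$, whose variation field at $t=0$ is $\phi\,\n(p)$. At the point $\Phi_u(p)\in\Si_u$ this equals $\phi\,e_{n+1}$, since in $\RR^{n+1}$ the gradient of the signed distance to $\Si$ along the normal line through $p$ is exactly $\n(p)$; hence the component of the variation field normal to $\Si_u$ has length $\phi$ times the $e_{n+1}$–component of the unit normal $\n_u$. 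Writing $\Si_u$ locally as a graph over $T_p\Si$, this component equals $w_u^{-1}$, with $w_u=w(p,u(p),\Na u(p))=\sqrt{1+|B^{-1}(\Na u)|^2}\ge1$ the graph factor of (\ref{eqC:C2}).

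Next I would compute $\frac{d}{dt}\big|_{t=0}\Area(\Si_{u+t\phi})$ in two ways. On one hand, the first variation formula together with $\Phi_u^{*}(d\mu_{\Si_u})=\nu_u\,d\mu_{\Si}$ gives
\[
  \frac{d}{dt}\Big|_{t=0}\Area(\Si_{u+t\phi})=\int_{\Si}\frac{\phi}{w_u}\,H_u\,\nu_u\,d\mu_{\Si}.
\]
On the other hand, differentiating $\int_{\Si}\nu_{u+t\phi}\,d\mu_{\Si}$ under the integral sign and applying the chain rule to $\nu_{u+t\phi}(p)=\nu\big(p,(u+t\phi)(p),\Na(u+t\phi)(p)\big)$ yields
\[
  \frac{d}{dt}\Big|_{t=0}\Area(\Si_{u+t\phi})=\int_{\Si}\Big((\p_s\nu)\,\phi+(\p_{y_{\al}}\nu)\,(\Na\phi)_{\al}\Big)\,d\mu_{\Si},
\]
where every derivative of $\nu$ is evaluated at $(p,u(p),\Na u(p))$ and repeated indices are summed.

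The key step is the integration by parts in the second term. I would write it as $\int_{\Si}\langle Y,\Na\phi\rangle\,d\mu_{\Si}$ with the vector field $Y^{\al}(p)=\p_{y_{\al}}\nu\big(p,u(p),\Na u(p)\big)$, so that $\int_{\Si}\langle Y,\Na\phi\rangle=-\int_{\Si}(\div_{\Si}Y)\,\phi$. Evaluating $\div_{\Si}Y$ at a fixed point $p_0$ in geodesic normal coordinates, the Christoffel contributions vanish and the $p$–derivative falls on all three slots of $\nu(p,u(p),\Na u(p))$, giving
\[
  (\div_{\Si}Y)(p_0)=\p_{p_{\al}}\p_{y_{\al}}\nu+(\p_{s}\p_{y_{\al}}\nu)\,u_{\al}(p_0)+(\p_{y_{\bb}}\p_{y_{\al}}\nu)\,u_{\al\bb}(p_0).
\]
Equating the two expressions for the first variation and using that $\phi$ is arbitrary, one obtains $w_u^{-1}\nu_u\,H_u=\p_s\nu-\p_{p_{\al}}\p_{y_{\al}}\nu-(\p_s\p_{y_{\al}}\nu)\,u_{\al}-(\p_{y_{\bb}}\p_{y_{\al}}\nu)\,u_{\al\bb}$, which is precisely the asserted identity with $w=w_u$ and $v=\nu_u$. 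As a consistency check, at $u\equiv0$ all three correction terms vanish and $\p_s\nu=H$, $w=v=1$ by Lemma~\ref{lem:C1}, so $H_0=H$; keeping first order in $u$ one recovers $H_u=H-\DD u-|A|^2u+O(u^2)$, the correct linearization of the mean curvature.

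I expect the main obstacle to be bookkeeping rather than any conceptual difficulty: one must track carefully which of the three arguments of $\nu(p,s,y)$ each derivative hits when expanding $\div_{\Si}Y$, verify that the first derivatives of the metric (hence all Christoffel symbols) vanish at the centre of geodesic normal coordinates so that the covariant and coordinate expressions agree there, and pin down the prefactor $w/v$ from the angle relation (the $e_{n+1}$–component of $\n_u$ is $w_u^{-1}$) together with $\nu_u=w_u\det B$ from (\ref{eqC:C3}). Since this computation is essentially that of Appendix~A of \cite{[CM4]}, I would present it in the streamlined form above and cite \cite{[CM4]} for the remaining elementary verifications.
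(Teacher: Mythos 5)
The paper does not actually prove Lemma~\ref{lem:C2}; it simply cites Corollary~A.30 of~\cite{[CM4]}, so there is no internal argument to compare against. Your reconstruction via the first variation of area is correct and is the standard route: writing $\Area(\Si_u)=\int_\Si\nu_u\,d\mu_\Si$, identifying the normal component of the variation field as $\phi\langle e_{n+1},\n_u\rangle=\phi/w_u$, equating this with the chain-rule expansion $\int_\Si\big((\p_s\nu)\phi+(\p_{y_\al}\nu)\phi_\al\big)$, and integrating by parts in geodesic normal coordinates yields exactly $w_u^{-1}\nu_u H_u=\p_s\nu-\p_{p_\al}\p_{y_\al}\nu-(\p_s\p_{y_\al}\nu)u_\al-(\p_{y_\bb}\p_{y_\al}\nu)u_{\al\bb}$. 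Your consistency checks (recovering $H_0=H$ and the linearization $H_u\approx H-\Delta u-|A|^2u$) agree with the expansion (\ref{eqD:D1}) obtained later in the appendix. One small observation worth keeping in mind: the display (\ref{eqC:C1}) sets $w_u(p)=\langle e_{n+1},\n_u\rangle$, while Lemma~\ref{lem:C1} gives $w_u=\sqrt{1+|B^{-1}y|^2}\ge1$; these are reciprocals of each other, so (\ref{eqC:C1}) contains a typo. You have implicitly used the correct reading, $\langle e_{n+1},\n_u\rangle=w_u^{-1}$, which is what makes your first-variation identity come out with the right prefactor $w/\nu$.
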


Combining Lemma \ref{lem:C1} with Lemma \ref{lem:C2}, we can show (\ref{eq:F008}).

\begin{lem}\label{lem:linearized2} The function $u_i=u_i^+-u_i^-$  satisfies the following parabolic equations  on $\Om_{\ee, R}(I)\times I$
\beq
\pd {u_i}t=\Delta_0u_i-\frac 12\langle x, \Na u_i\rangle+|A|^2 u_i+\frac {u_i}2+a_i^{pq}u_{i, pq}+b_i^pu_{i, p}+c_i\,u_i,
\eeq where $\Delta_0$ denotes the Laplacian operator on $\Si_{\infty}$ with respect to the induced metric, and
the coefficients $a_i^{pq}, b_i^p $ and $c_i$ are small and tend to zero as $u_i^+$ and $u_i^-$ tend to zero.

\end{lem}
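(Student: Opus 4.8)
\textbf{Proof strategy for Lemma~\ref{lem:linearized2}.}
The plan is to derive the equation for $u_i^{\pm}$ separately and then subtract. The key observation is that the rescaled mean curvature flow~(\ref{eq:BB001}) is, after writing the evolving surfaces $\Si_{i,t}^{\pm}$ as normal graphs of $u_i^{\pm}(\cdot,t)$ over the fixed self-shrinker $\Si_{\infty}$, a fully nonlinear scalar parabolic equation
\[
\pd{u_i^{\pm}}{t} = -\,\frac{\eta_u}{w_u}\Big(H_u - \tfrac12\langle x+u_i^{\pm}\n,\n_u\rangle\Big),
\]
where the geometric quantities $w_u,\nu_u,\eta_u$ and $H_u$ are exactly those introduced in Lemma~\ref{lem:C1} and Lemma~\ref{lem:C2}, all evaluated at $(p,u_i^{\pm}(p),\Na u_i^{\pm}(p))$. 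First I would record this equation, using that the normal speed of the graph $x+u^{\pm}\n$ over $\Si_{\infty}$ equals $\tfrac{\eta_u}{w_u}\,\partial_t u^{\pm}$ (this is the standard relation between the ambient normal velocity and the time derivative of the graph function; see Appendix A of~\cite{[CM4]}). Since $\Si_{\infty}$ itself satisfies $H=\tfrac12\langle x,\n\rangle$, the zeroth-order terms in $u$ will cancel and we will be left with an expression that vanishes when $u^{\pm}\equiv 0$.

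Next I would Taylor-expand the right-hand side in $(s,y)=(u^{\pm},\Na u^{\pm})$ around $(0,0)$, keeping the linear terms explicitly and absorbing everything quadratic-and-higher (as well as the linear terms with coefficients that vanish at $(0,0)$) into error coefficients $a_i^{pq},b_i^p,c_i$. Here is where Lemma~\ref{lem:C1} does all the work: using $H_u$ from Lemma~\ref{lem:C2}, the derivative identities $\nu(p,0,0)=1$, $\partial_s\nu=H$, $\partial_{p_j}\partial_s\nu=H_j$, $\partial_{y_\al}\partial_{y_\bb}\nu=\dd_{\al\bb}$, $\partial_s^2\nu=H^2-|A|^2$, together with $\partial_{y_i}\nu(p,0,0)=\partial_{p_j}\partial_{y_i}\nu(p,0,0)=\partial_s\partial_{p_j}\partial_{y_i}\nu(p,0,0)=\partial_{y_k}\partial_{p_j}\partial_{y_i}\nu(p,0,0)=0$ from part~(4), and the corresponding expansions of $w_u$ and $\eta_u$, one computes that the linearization of $H_u-\tfrac12\langle x+u\n,\n_u\rangle$ at $u=0$ is precisely the Jacobi-type operator $-\Delta_0 u + \tfrac12\langle x,\Na u\rangle - |A|^2 u - \tfrac12 u$; the $H_j$ and $-p_\al$ terms combine with the drift coming from differentiating $\langle x,\n_u\rangle$. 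Multiplying through by the prefactor $-\tfrac{\eta_u}{w_u}$, which equals $-\langle x,\n\rangle^{-1}\cdot(\ldots)$ or more simply tends to a bounded factor as $u\to0$, and reorganizing, yields the stated equation with remainder coefficients that are continuous functions of $(u^{\pm},\Na u^{\pm},\Na^2 u^{\pm})$ vanishing when these arguments vanish.

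Finally, to pass from $u_i^{\pm}$ to $u_i=u_i^+-u_i^-$ I would subtract the two equations. Because the nonlinearity $F(u,\Na u,\Na^2 u)$ is smooth in its arguments, the difference $F(u^+,\ldots)-F(u^-,\ldots)$ can be written, by the mean-value theorem applied along the segment joining the $1$-jets of $u^+$ and $u^-$, as $\td a_i^{pq}u_{i,pq}+\td b_i^p u_{i,p}+\td c_i u_i$ with coefficients given by integrals of $DF$ over that segment; since $DF$ at $(0,0,0)$ reproduces the linear operator, the $\td a_i^{pq},\td b_i^p,\td c_i$ differ from the already-extracted $a_i^{pq},b_i^p,c_i$ only by terms that again vanish as $u_i^{\pm}\to0$. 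Renaming, this is exactly~(\ref{eq:F008}). The one point requiring a little care — and the main technical obstacle — is bookkeeping the drift term: verifying that the first-order contributions coming from differentiating $\langle x,\n_u\rangle$ in the shrinker term, from the $H_j$ term in $\partial_{p_j}\partial_s\nu$, and from the prefactor $\eta_u/w_u$ all assemble into the single clean drift $-\tfrac12\langle x,\Na u_i\rangle$ with no leftover first-order piece with a non-vanishing coefficient. This is a finite computation identical in structure to Appendix~A of~\cite{[CM4]} and~\cite{[CM5]}, and I would simply cite those references for the linearization of the rescaled mean curvature operator rather than reproduce it, noting only the modifications needed because our base surface $\Si_{\infty}$ is a genuine self-shrinker rather than a plane.
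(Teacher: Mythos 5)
Your approach is essentially the same as the paper's: write each sheet $\Si_{i,t}^{\pm}$ as a normal graph $u_i^{\pm}$ over $\Si_{\infty}$, use the graphical formulas for $H_u$, $w_u$, $\eta_u$ from Lemmas~\ref{lem:C1}--\ref{lem:C2}, expand around $u=0$ using the derivative identities to identify $L$, and handle the difference $u_i^+-u_i^-$ by the fundamental theorem of calculus along the segment of graph functions joining $u_i^-$ to $u_i^+$. The paper organizes the bookkeeping slightly differently---it does not first derive a PDE for each $u_i^{\pm}$ separately, but applies the fundamental theorem of calculus once, directly to the differences $H_{u_i^+}-H_{u_i^-}$, $\eta_{u_i^+}-\eta_{u_i^-}$, $\p_t u_i^+ w_{u_i^+}-\p_t u_i^- w_{u_i^-}$, and then expands the resulting $\tau$-integrands around $(s,y,Q)=(0,0,0)$; your version (linearize each equation around $0$, then MVT on the difference) runs the same expansion twice and is a touch more redundant, but reaches the same conclusion by the same ingredients.

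One slip to fix: your reduction of the RMCF equation to a scalar PDE for the graph function has the wrong prefactor. Since $\x_u(p)=p+u(p)\n(p)$, we have $\p_t\x_u=\p_t u\,\n$, so the normal speed is $\langle\p_t\x_u,\n_u\rangle=\p_t u\,\langle\n,\n_u\rangle=w_u\,\p_t u$, giving
\[
\p_t u_i^{\pm}\,w_{u_i^{\pm}}=-\Big(H_{u_i^{\pm}}-\tfrac12\,\eta_{u_i^{\pm}}\Big),
\]
where $\eta_u=\langle\x_u,\n_u\rangle$ is exactly the quantity appearing in the shrinker term, not a factor multiplying $\p_t u$. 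Your formula $\p_t u^{\pm}=-\frac{\eta_u}{w_u}(\cdots)$ should read $\p_t u^{\pm}=-\frac{1}{w_u}(\cdots)$. This is a bookkeeping error only; since $w_u\to1$ as $u\to0$, after the correction your extraction of the Jacobi operator goes through unchanged and matches the paper's (\ref{eqC:F1})--(\ref{eqC:F2}).
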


\begin{proof}We divide the proof into several steps.

\emph{Step 1.}We calculate the difference of the mean curvature of $\Si_{u_i^+}$ and $\Si_{u_i^-}$.
 Let $u=u_i^+-u_i^-$ and $\td u_{\tau}=u_i^-+\tau u$ for $\tau\in [0, 1]$. Thus,  we have $\td u_0=u_i^-$ and $\td u_1=u_i^+$.
Note that
\beq
H_{u_i^+}(p)-H_{u_i^-}(p)=\int_0^1\,\p_{\tau}(H_{\td u_{\tau}}(p))\,d\tau.\label{eqC:E6}
\eeq
For any function $f(p, s, y)$, we calculate the derivative with respect to $\tau$
\beq
\p_{\tau}(f(p, \td u_{\tau}, \Na \td u_{\tau}))=(\p_sf)(p, \td u_{\tau}, \Na \td u_{\tau})\cdot u+(\p_{y_{\al}}f)(p, \td u_{\tau}, \Na \td u_{\tau})\cdot  u_{\al},\label{eqC:E11}
\eeq where $u_{\al}=\p_{x_{\al}}u.$ Therefore, we have
\beqs
\p_{\tau}(\p_s \nu)&=&\p_s^2\nu\cdot u+\p_{y_i}\p_{s}\nu\cdot u_i,\\
\p_{\tau}(\p_{p_{\al}}\p_{y_{\al}}
\nu )&=&\p_s\p_{p_{\al}}\p_{y_{\al}}
\nu\cdot u+\p_{y_i}\p_{p_{\al}}\p_{y_{\al}}
\nu\cdot u_i,\\
\p_{\tau}\Big((\p_s\p_{y_{\al}}\nu)\td  u_{\tau, \al} \Big)&=&(\p_s\p_{y_{\al}}\nu) u_{\al}+(\p_s^2\p_{y_{\al}}\nu)   u \td u_{\tau, \al}+(\p_s\p_{y_i}\p_{y_{\al}}\nu)  u_i \td u_{\tau, \al}\\
\p_{\tau}\Big( (\p_{y_{\bb}}\p_{y_{\al}}\nu)
 \td  u_{\tau, \al\bb}\Big)&=&(\p_{y_{\bb}}\p_{y_{\al}}\nu)
  u_{\al\bb}+ \p_s\p_{y_{\bb}}\p_{y_{\al}}\nu\cdot u\td u_{\tau, \al\bb} +\p_{y_i}\p_{y_{\bb}}\p_{y_{\al}}\nu \cdot u_i \td u_{\tau, \al\bb},
\eeqs where $\td u_{\tau, \al}=\p_{x_{\al}}\td u_{\tau}$ and $\td u_{\tau, \al\bb}=\p_{x_{\al}}\p_{x_{\bb}}\td u_{\tau}.
$
By Lemma \ref{lem:C2} we have
\beqn &&
\p_{\tau}(H_{\td u_{\tau}}(p))\nonumber\\&=& \Big(\p_s\Big(\frac w{\nu}\Big)u+\p_{y_i}\Big(\frac w{\nu}\Big) u_i\Big)\cdot \Big(\partial_s\nu-\p_{p_{\al}}\p_{y_{\al}}
\nu-(\p_s\p_{y_{\al}}\nu)\td u_{\tau, \al}-(\p_{y_{\bb}}\p_{y_{\al}}\nu)
\td u_{\tau, \al\bb}\Big)\nonumber\\
&&+\Big(\frac w{\nu}\Big)\cdot \p_{\tau} \Big(\partial_s\nu-\p_{p_{\al}}\p_{y_{\al}}
\nu-(\p_s\p_{y_{\al}}\nu)\td u_{\tau, \al}-(\p_{y_{\bb}}\p_{y_{\al}}\nu)
\td u_{\tau, \al\bb}\Big)\nonumber\\
&=&E u+F_{\al}u_{\al}+G_{\al\bb}u_{\al\bb}, \label{eqC:E7}
\eeqn where $E, F$ and $G$ are given by
\beqn
E(p, \td u_{\tau})&=& \p_s\Big(\frac w{\nu}\Big)\Big(\partial_s\nu-\p_{p_{\al}}\p_{y_{\al}}
\nu-(\p_s\p_{y_{\al}}\nu)\td u_{\tau, \al}-(\p_{y_{\bb}}\p_{y_{\al}}\nu)
\td u_{\tau, \al\bb}\Big) \nonumber\\
&&+\Big(\frac w{\nu}\Big)\cdot\Big(\p_s^2\nu-\p_s\p_{p_{\al}}\p_{y_{\al}}
\nu-(\p_s^2\p_{y_{\al}}\nu)    \td u_{\tau, \al}-\p_s\p_{y_{\bb}}\p_{y_{\al}}\nu\cdot \td u_{\tau, \al\bb}\Big) \label{eqD:E1}\\
F_{\ga}(p, \td u_{\tau})&=&\p_{y_\ga}\Big(\frac w{\nu}\Big)\Big(\partial_s\nu-\p_{p_{\al}}\p_{y_{\al}}
\nu-(\p_s\p_{y_{\al}}\nu)\td u_{\tau, \al}-(\p_{y_{\bb}}\p_{y_{\al}}\nu)
\td u_{\tau, \al\bb}\Big) \nonumber\\
&&+\Big(\frac w{\nu}\Big)\Big(\p_{y_\ga}\p_{s}\nu-\p_{y_\ga}\p_{p_{\al}}\p_{y_{\al}}
\nu-\p_s\p_{y_{\ga}}\nu -(\p_s\p_{y_i}\p_{y_{\ga}}\nu) \td u_{\tau, i}\nonumber\\&&-\p_{y_\ga}\p_{y_{\bb}}\p_{y_{\al}}\nu \cdot  \td u_{\tau, \al\bb}\Big)\label{eqD:E2}\\
G_{\al\bb}(p, \td u_{\tau})&=&-\Big(\frac w{\nu}\Big)\cdot\p_{y_{\bb}}\p_{y_{\al}}\nu. \label{eqD:E3}
\eeqn
In view of (\ref{eqD:E1})-(\ref{eqD:E3}), we define the functions depending on $(p, s, y, Q)\in \Si\times \RR\times T_p(\Si)\times GL(2, \RR)$ such that
\beqs
E(p, s, y, Q)&=&\p_s\Big(\frac w{\nu}\Big)(p, s, y)\Big(\partial_s\nu-\p_{p_{\al}}\p_{y_{\al}}
\nu-(\p_s\p_{y_{\al}}\nu) y_{ \al}-(\p_{y_{\bb}}\p_{y_{\al}}\nu)
Q_{\al\bb}\Big) \nonumber\\
&&+\Big(\frac w{\nu}\Big)(p, s, y)\cdot\Big(\p_s^2\nu-\p_s\p_{p_{\al}}\p_{y_{\al}}
\nu-(\p_s^2\p_{y_{\al}}\nu)    y_{ \al}-\p_s\p_{y_{\bb}}\p_{y_{\al}}\nu\cdot Q_{\al\bb}\Big),\\
F_{\ga}(p, s, y, Q)&=&\p_{y_\ga}\Big(\frac w{\nu}\Big)(p, s, y)\cdot\Big(\partial_s\nu-\p_{p_{\al}}\p_{y_{\al}}
\nu-(\p_s\p_{y_{\al}}\nu) y_{ \al}-(\p_{y_{\bb}}\p_{y_{\al}}\nu)
Q_{  \al\bb}\Big) \nonumber\\
&&+\Big(\frac w{\nu}\Big)\Big(\p_{y_\ga}\p_{s}\nu-\p_{y_\ga}\p_{p_{\al}}\p_{y_{\al}}
\nu-\p_s\p_{y_{\ga}}\nu -(\p_s\p_{y_i}\p_{y_{\ga}}\nu) y_{ i}\nonumber\\&&-\p_{y_\ga}\p_{y_{\bb}}\p_{y_{\al}}\nu \cdot  Q_{ \al\bb}\Big),\\
G_{\al\bb}(p, s, y)&=&-\Big(\frac w{\nu}\Big)(p, s, y)\cdot\p_{y_{\bb}}\p_{y_{\al}}\nu.
\eeqs
Let $\hat u_\la=\la \td u_{\tau}$ for $\la\in [0, 1]$. Then we have
\beqn
E(p, u_{\tau})&=&E(p, 0)+\int_0^1\;\p_{\la}(E(p, \hat u_{\la}))\,d\la,\label{eqC:E8}\\
F_{\ga}(p, u_{\tau})&=&F_{\ga}(p, 0)+\int_0^1\;\p_{\la}(F_{\ga}(p, \hat u_{\la}))\,d\la, \label{eqC:E9}\\
G_{\al\bb}(p, u_{\tau})&=&G_{\al\bb}(p, 0)+\int_0^1\;\p_{\la}(G_{\al\bb}(p, \hat u_{\la}))\,d\la.\label{eqC:E10}
\eeqn
Note that
\beqn
\p_{\la}(E(p, \hat u_{\la}))&=&(\p_sE)\cdot \td u_{\tau}+(\p_{y_i}E)\cdot\td u_{\tau, i}+(\p_{Q_{\al\bb}}E)\cdot \td u_{\tau, \al\bb} \label{eq:XY001}\\
\p_{\la}(F_{\ga}(p, \hat u_{\la}))&=&(\p_sF_{\ga})\cdot \td u_{\tau}+(\p_{y_i}F_{\ga})\cdot\td u_{\tau, i}+(\p_{Q_{\al\bb}}F_{\ga})\cdot \td u_{\tau, \al\bb},\\
\p_{\la}(G_{\al\bb}(p, \hat u_{\la}))&=&(\p_sG_{\al\bb})\cdot \td u_{\tau}+(\p_{y_i}G_{\al\bb})\cdot\td u_{\tau, i}, \label{eq:XY002}
\eeqn where the right-hand sides of (\ref{eq:XY001})-(\ref{eq:XY002})  are evaluated at $(p, s, y, Q)=(p,  \hat u_{\la}, \Na  \hat u_{\la}, \Na^2  \hat u_{\la})$. By Lemma \ref{lem:C1}, we have
\beqn
E(p, 0)&=&-|A|^2,\label{eqC:E3}\\
F_{\ga}(p, 0)&=&0,\label{eqC:E4}\\
G_{\al\bb}(p, 0)&=&-\dd_{\al\bb}.\label{eqC:E5}
\eeqn
Combining (\ref{eqC:E8})(\ref{eq:XY001}) with (\ref{eqC:E3}), we have
\beqn
E(p, u_{\tau})&=&-|A|^2+\td u_{\tau}\int_0^1\,(\p_sE)(p, \hat u_{\tau}, \Na \hat u_{\tau}, \Na^2\hat u_{\tau} )\,d\la\nonumber\\
&&+\td u_{\tau, i}\int_0^1\,(\p_{y_i}E)(p, \hat u_{\tau}, \Na \hat u_{\tau}, \Na^2\hat u_{\tau} )\,d\la\nonumber\\
&&+\td u_{\tau, \al\bb}\int_0^1\,(\p_{Q_{\al\bb}}E)(p, \hat u_{\tau}, \Na \hat u_{\tau}, \Na^2\hat u_{\tau} )\,d\la.\label{eqC:E12}
\eeqn
Similar, we have
\beqn
F_{\ga}(p, u_{\tau})&=&\td u_{\tau}\int_0^1\,(\p_sF_{\ga})(p, \hat u_{\tau}, \Na \hat u_{\tau}, \Na^2\hat u_{\tau} )\,d\la\nonumber\\
&&+\td u_{\tau, i}\int_0^1\,(\p_{y_i}F_{\ga})(p, \hat u_{\tau}, \Na \hat u_{\tau}, \Na^2\hat u_{\tau} )\,d\la\nonumber\\
&&+\td u_{\tau, \al\bb}\int_0^1\,(\p_{Q_{\al\bb}}F_{\ga})(p, \hat u_{\tau}, \Na \hat u_{\tau}, \Na^2\hat u_{\tau} )\,d\la\label{eqC:E13}
\eeqn
and
\beqn
G_{\al\bb}(p, u_{\tau})&=&-\dd_{\al\bb}+\td u_{\tau}\int_0^1\,(\p_sG_{\al\bb})(p, \hat u_{\tau}, \Na \hat u_{\tau} )\,d\la\nonumber\\
&&+\td u_{\tau, i}\int_0^1\,(\p_{y_i}G_{\al\bb})(p, \hat u_{\tau}, \Na \hat u_{\tau})\,d\la.\label{eqC:E14}
\eeqn
Combining (\ref{eqC:E12})-(\ref{eqC:E14}), (\ref{eqC:E6}) with (\ref{eqC:E7}), we have
\beqn
H_{u_i^+}(p)-H_{u_i^-}(p)&=&\int_0^1\,
\p_{\tau}(H_{u_{\tau}}(p))\,d\tau\nonumber\\
&=&-|A|^2u-\Delta u+a_1^{\al\bb}u_{\al\bb}+b_1^{\ga}u_{\ga}+c_1u,\label{eqD:D1}
\eeqn where the coefficients $a_1^{\al\bb}, b_1^i$ and $c_1$ are small, and tend to zero as $u_i^+$ and $u_i^-$ tend to zero.\\

\emph{Step 2. } We calculate the difference of $\eta_{u_i^+}$ and $\eta_{u_i^-}$.
Note that
\beq
\eta_{u_i^+}(p)=\eta_{u_i^-}(p)+\int_0^1\,
\p_{\tau}(\eta_{\td u_{\tau}}(p))\,d\tau. \label{eqD:D2}
\eeq
By (\ref{eqC:E11}), we have
\beq
\p_{\tau}(\eta_{\td u_{\tau}}(p))=(\p_s\eta)(p, \td u_{\tau}, \Na \td u_{\tau})\cdot u+(\p_{y_{\al}}\eta)(p, \td u_{\tau}, \Na \td u_{\tau})\cdot u_{\al}, \label{eqD:D3}
\eeq where the function $\eta$ of the right-hand side is defined by (\ref{eqC:C4}).
Let $\hat u_{\la}=\la \td u_{\tau}$ as in Step 1. Then we have
\beqn
(\p_s\eta)(p, \td u_{\tau}, \Na \td u_{\tau})&=&(\p_s\eta)(p,0, 0)+\td u_{\tau}\int_0^1\,(\p_s^2\eta)(p, \hat u_{\la}, \Na \hat u_{\la})\,d\la \nonumber\\
&&+\td u_{\tau, \al}\int_0^1\,(\p_{y_{\al}}\p_s\eta)(p, \hat u_{\la}, \Na \hat u_{\la})\,d\la. \label{eq:D4}
\eeqn
Similarly, we have
\beqn
(\p_{y_{\al}}\eta)(p, \td u_{\tau}, \Na \td u_{\tau})&=& (\p_{y_{\al}}\eta)(p, 0, 0)
+\td u_{\tau}\int_0^1\,(\p_s\p_{y_{\al}}\eta)(p, \hat u_{\la}, \Na \hat u_{\la}),d\la \nonumber\\
&&+\td u_{\tau, \bb}\int_0^1\,(\p_{y_{\al}}\p_{y_{\bb}}\eta)(p, \hat u_{\la}, \Na \hat u_{\la})\,d\la. \label{eq:D5}
\eeqn
Combining (\ref{eqD:D2})-(\ref{eq:D5}) with Part (3) of Lemma \ref{lem:C1}, we have
\beq
\eta_{u_i^+}(p)=\eta_{u_i^-}(p)+u-\langle p, \Na u\rangle+  b_2^iu_i+c_2 u, \label{eq:D6}
\eeq where $c_2$ and $b_2^{\al}$ are small and tend to zero as $u_i^+$ and $u_i^-$ tend to zero. \\

\emph{Step 3.} We calculate the difference of $\phi_{u_i^+}(p)$ and $\phi_{u_i^-}(p)$ where $\phi_u=H_u-\frac 12\langle \x_u, \n_u\rangle.$
  Combining (\ref{eq:D6}) with (\ref{eqD:D1}), we have
\beq
\phi_{u_i^+}(p)-\phi_{u_i^-}(p) = -Lu+a_3^{\al\bb}u_{\al\bb}+b_3^{\ga}u_{\ga}+c_3u,\label{eqC:F2}
\eeq where $a_3^{\al\bb}, b_3^{\ga}$ and $c_3$ are small and tend to zero as $u_i^+$ and $u_i^-$ tend to zero.
Note that
\beq
 (\p_t {\x_{u_i^+}} )^{\perp}=\langle \p_t{\x_{u_i^+}}, \n_{u_i^+}\rangle=\p_t {u_i^+}\langle \n, \n_{u_i^+} \rangle=\p_t {u_i^+}w_{u_i^+}, \label{eq:D7}
\eeq where $w_{u_i^+}$ is defined by (\ref{eqC:C2}), and $\x_{u_i^-}$ satisfies a similar equation as (\ref{eq:D7}).
Moreover, we have
\beqn
\p_t {u_i^+}w_{u_i^+}-\p_t {u_i^-}w_{u_i^-}=\int_0^1\,\p_{\tau}(\p_t\td u_{\tau}w_{\td u_{\tau}})\,d\tau.
\eeqn
As in (\ref{eqC:E11}), we have
\beqn &&
\p_{\tau}(\p_t\td u_{\tau}w_{\td u_{\tau}})\nonumber\\&=& \p_tu \,w_{\td u_{\tau}}+
\p_t\td u_{\tau}\,\p_{\tau}w_{\td u_{\tau}}\nonumber\\
&=&\p_tu \,w_{\td u_{\tau}}+
\p_t\td u_{\tau}\Big((\p_sw)(p, \td u_{\tau}, \Na \td u_{\tau} )\cdot u+(\p_{y_i}w)(p, \td u_{\tau}, \Na \td u_{\tau} )\cdot u_i\Big).\label{eq:D8}
\eeqn
Since $w(p, 0, 0)=1$ by (\ref{eqC:C7}), we have
\beqs w_{\td u_{\tau}}(p)&=&1+\int_0^1\,(\p_{\la}w_{\hat u_{\la}})(p)\,d\la\\
&=&1+\td u_{\tau}\int_0^1\,(\p_sw)(p, \hat u_{\la}, \Na \hat u_{\la})\,d\la+\td u_{\tau, i}\int_0^1\,(\p_{y_i}w)(p, \hat u_{\la}, \Na \hat u_{\la})\,d\la.
\eeqs
Combining the above identities, we have
\beqn
\p_t {u_i^+}w_{u_i^+}-\p_t {u_i^-}w_{u_i^-}&=&\p_t u(1+b_4^i\td u_{\tau, i}+c_4 \td u_{\tau})+b_5^i u_{ i}+c_5  u, \label{eqC:F1}
\eeqn where $c_4, c_5, b_4^i $ and $b_5^{i}$ are small and tend to zero as $u_i^+$ and $u_i^-$ tend to zero. Combining (\ref{eqC:F1}) (\ref{eqC:F2}) with the equation of rescaled mean curvature flow, we have
\beqs
\pd ut&=&\frac 1{1+b_4^i\td u_{\tau, i}+c_4 \td u_{\tau}}\Big(\p_t {u_i^+}w_{u_i^+}-\p_t {u_i^-}w_{u_i^-}-b_5^i u_{ i}-c_5  u\Big)\\
&=&\frac 1{1+b_4^i\td u_{\tau, i}+c_4 \td u_{\tau}}\Big(Lu+a_6^{\al\bb}u_{\al\bb}+b_6^{\ga}u_{\ga}+c_6u\Big)\\
&=&Lu+a_7^{\al\bb}u_{\al\bb}+b_7^{\ga}u_{\ga}+c_7u,
\eeqs where $a_6^{\al\bb}, b_6^{\ga}, c_6, a_7^{\al\bb}, b_7^{\ga}$ and $c_7$ are small and tend to zero as $u_i^+$ and $u_i^-$ tend to zero.  The lemma is proved.
\end{proof}

\end{appendices}

\vskip10pt
Haozhao Li,  Institute of Geometry and Physics, and Key Laboratory of Wu Wen-Tsun Mathematics,   School of Mathematical Sciences, University of Science and Technology of China, No. 96 Jinzhai Road, Hefei, Anhui Province, 230026, China;  hzli@ustc.edu.cn.\\

Bing  Wang,  Institute of Geometry and Physics, and Key Laboratory of Wu Wen-Tsun Mathematics, School of Mathematical Sciences, University of Science and Technology of China, No. 96 Jinzhai Road, Hefei, Anhui Province, 230026, China; topspin@ustc.edu.cn.\\


\begin{thebibliography}{10}

  \bibitem{[Andrews]}  B. Andrews. \emph{ Noncollapsing in mean-convex mean curvature flow},  Geom. Topol., 16(2012), no. 3, 1413-1418.

 \bibitem{[AB]} B. Andrews, P. Bryan,  \emph{Curvature bound for curve shortening flow via distance comparison and a direct proof of Grayson's theorem},  J. Reine Angew. Math. 653 (2011), 179-187.

\bibitem{[Bam]} R. Bamler, \emph{Convergence of Ricci flows with bounded scalar curvature},  Ann. Math. , 188(2018), 753-831.

\bibitem{[Bao]}C. Bao, \emph{A note on the entropy of mean curvature flow},  Sci. China Math.,  58(2015), no. 12,   2611-2620.

\bibitem{[BGV]}N. Berline, E. Getzler, M. Vergne, \emph{Heat Kernels and Dirac Operators},  Grundlehren Text Editions. Springer-Verlag, Berlin, 2004. x+363 pp.

 \bibitem{[Bra]}K. A. Brakke, \emph{The motion of a surface by its mean curvature},  Math. Notes 20, Princeton Univ. Press, Princeton, N.J., 1978.

\bibitem{[Brendle]}S. Brendle, \emph{Embedded self-similar shrinkers of genus 0},  Ann. of Math. (2) 183 (2016), no. 2, 715-728.

\bibitem{[BH]}S. Brendle, G. Huisken. \emph{Mean curvature flow with surgery of mean convex surfaces in $\mathbb{R}^3$},  Invent. Math. 203 (2016), no. 2, 615-654.



 \bibitem{[Calabi]} E. Calabi, \emph{Extremal K\"ahler metrics},  Seminar on Differential Geometry, pp. 259-290, Ann. of Math. Stud.,  102, Princeton Univ. Press, Princeton, N.J., 1982.

 \bibitem{[CC]}X. X. Chen, J. R. Cheng, \emph{On the constant scalar curvature K\"ahler metrics (II)-existence results}, arXiv:1801.00656.


\bibitem{[CW1]}X. X. Chen, B. Wang, \emph{Space of Ricci flows I},  Comm. Pure Appl. Math. 65 (2012), no. 10, 1399-1457.

\bibitem{[CW2]}X. X. Chen, B. Wang, \emph{Space of Ricci flow II}, arXiv:1405.6797.

\bibitem{[CW2A]}X. X. Chen, B. Wang, \emph{Space of Ricci flow II---Part A: moduli of singular Calabi-Yau spaces},  Forum of Mathematics, Sigma(2017), Vol. 5.

\bibitem{[CW2B]} X.X. Chen, B. Wang, \emph{Space of Ricci flow II---Part B: weak compactness of the flows},  J. Differ. Geom.116(2020), 1-123.

\bibitem{[CW3]} X.X. Chen, B. Wang, \emph{Remarks of weak-compactness along K\"ahler Ricci flow}, Proceedings of the Seventh International Congress of Chinese Mathematicians, 203-234.

\bibitem{[ChenYin]} B. L. Chen, L. Yin, \emph{Uniqueness and pseudolocality theorems of the mean curvature flow},  Comm. Anal. Geom. 15 (2007), no. 3, 435-490.


\bibitem{[CS]}H. Choi, R. Schoen, \emph{The space of minimal embeddings of a surface into a three-dimensional manifold of positive Ricci curvature},  Invent. Math. 81 (1985), no. 3, 387-394.

\bibitem{[CMbook2]}T. H. Colding, W. P. Minicozzi II, \emph{A course in minimal surfaces}, Graduate Studies in Mathematics, 121. American Mathematical Society, Providence, RI, 2011. xii+313 pp.

\bibitem{[CM1]}T. H. Colding, W. P. Minicozzi II, \emph{Smooth compactness of self-shrinkers}, Comment. Math. Helv. 87 (2012), no. 2, 463-475.

\bibitem{[CM2]}T. H. Colding, W. P. Minicozzi II,  \emph{Generic mean curvature flow I: generic singularities},  Ann. of Math. (2) 175 (2012), no. 2, 755-833.

\bibitem{[CM4]}T. H. Colding, W. P. Minicozzi II, \emph{Uniqueness of blowups and Lojasiewicz inequalities},  Ann. of Math. (2) 182 (2015), 221-285.

\bibitem{[CM3]}T. H. Colding, W. P. Minicozzi II,  \emph{The singular set of mean curvature flow with generic singularities},  Invent. Math. 204 (2016), no. 2, 443-471.



\bibitem{[CM5]}T. H. Colding, W. P. Minicozzi II, \emph{Dynamics of closed singularities},  arXiv:1808.03219.



\bibitem{[Cooper]} A. Cooper, \emph{A characterization of the singular time of the mean curvature flow},  Proc. Amer. Math. Soc. 139 (2011), no. 8, 2933-2942.

\bibitem{[Eckbook]}K. Ecker,  \emph{Regularity theory for mean curvature flow}, Progress in Nonlinear Differential Equations and their Applications, 57. Birkh\"auser Boston, Inc., Boston, MA, 2004.

\bibitem{[EH1]}K. Ecker, G. Huisken, \emph{Mean curvature evolution of entire graphs},  Ann. of Math. (2) 130 (1989), no. 3, 453-471.

\bibitem{[EH2]}K. Ecker, G. Huisken, \emph{Interior estimates for hypersurfaces moving by mean curvature}, Invent. Math. 105 (1991), no. 3, 547-569.

\bibitem{[Gage1]}M. E. Gage, \emph{An isoperimetric inequality with applications to curve shortening}, Duke Math. J. 50 (1983), no. 4, 1225-1229.

\bibitem{[Gage2]}M. E. Gage, \emph{Curve shortening makes convex curves circular}, Invent. Math. 76 (1984), no. 2, 357-364.

\bibitem{[GH]} M. E. Gage,  R. S. Hamilton,\emph{ The heat equation shrinking convex plane curves},  J. Differ. Geom. 23 (1986), no. 1, 69-96.

\bibitem{[Grayson]} M. A. Grayson,\emph{ The heat equation shrinks embedded plane curves to round points},  J. Differ. Geom. 26 (1987), no. 2, 285-314.

\bibitem{[GZ]}Q. Guang,  J. J. Zhu, \emph{Rigidity and curvature estimates for graphical self-shrinkers},  Calc. Var. (2017) 56:176.



\bibitem{[HK13]}R. Haslhofer, B. Kleiner, \emph{Mean curvature flow of mean convex hypersurfaces}, Comm. Pure Appl. Math.,  70(3):511-546, 2017.

\bibitem{[HK14]}R. Haslhofer, B. Kleiner, \emph{Mean curvature flow with surgery}, Duke Math. J.,  166(9):1591--1626, 2017.

\bibitem{[Hui1]}G. Huisken, \emph{Flow by mean curvature of convex surfaces into spheres},  J. Differ. Geom. 20 (1984), no. 1, 237-266.

\bibitem{[Hui2]}G. Huisken, \emph{Asymptotic behavior for singularities of the mean curvature flow},  J. Differ. Geom. 31 (1990), no. 1, 285-299.

\bibitem{[HS99a]}G. Huisken,  C. Sinestrari, \emph{Mean curvature flow singularities for mean convex surfaces},  Calc. Var. PDE. 8 (1999), no. 1, 1-14.

\bibitem{[HS99b]}G. Huisken,  C. Sinestrari, \emph{Convexity estimates for mean curvature flow and singularities of mean convex surfaces},  Acta Math. 183 (1999), no. 1, 45-70.

\bibitem{[HS09]}G. Huisken,  C. Sinestrari, \emph{Mean curvature flow with surgeries of two-convex hypersurfaces},  Invent. Math. 175 (2009), no. 1, 137-221.

\bibitem{[Il]}T. Ilmanen, \emph{Singularities of mean curvature flow of surfaces},  Preprint.

\bibitem{[I2]}T. Ilmanen, \emph{Lectures on mean curvature flow and related equations}, Preprint.

\bibitem{[KT]}T. Kan, J. Takahashi, \emph{Time-dependent singularities in semilinear parabolic equations: Behavior at the singularities},  J. Differential Equations,  260(2016), no. 10,  7278-7319.

\bibitem{[LS1]}N. Q. Le, N. Sesum,  \emph{The mean curvature at the first singular time of the mean curvature flow},  Ann. I. H. Poincar\'e-AN 27 (2010), no. 6, 1441-1459.



\bibitem{[LS3]}N. Q. Le, N. Sesum, \emph{Blow-up rate of the mean curvature during the mean curvature flow and a gap theorem for self-shrinkers}, Comm. Anal. Geom. 19 (2011), no. 4, 633-659.

\bibitem{[LW]}H. Li, B. Wang, \emph{The extension problem of the mean curvature flow (I)},  Invent. Math. 218 (2019), no. 3, 721-777.

\bibitem{[LWZ]}H. Li, B. Wang, K. Zheng, \emph{ Regularity scales and convergence of the Calabi flow},  J. Geom. Anal. 28 (2018), no. 3, 2050-2101.



 \bibitem{[Li]} P. Li,  \emph{Gemetric analysis}, Cambridge Studies in Advanced Mathematics, 134. Cambridge University Press, Cambridge, 2012.

\bibitem{[LY]}P. Li, S. T. Yau, \emph{On the parabolic kernel of the Schr\"odinger operator},  Acta Math. 156 (1986), no. 3-4, 153-201.

\bibitem{[Lieb]}G. M. Lieberman,  \emph{Second order parabolic differential equations},  World Scientific Publishing Co., Inc., River Edge, NJ, 1996. xii+439 pp.

\bibitem{[LS4]}L. Z. Lin, N.  Sesum, \emph{Blow-up of the mean curvature at the first singular time of the mean curvature flow},  Calc. Var. PDE. 55 (2016), no. 3, Art. 65, 16 pp.

\bibitem{[Kry]}N.  Krylov, M.  Safonov, \emph{A certain property of solutions of parabolic equations with measurable coefficients},  Izv. Akad. Nauk SSSR Ser. Mat., 44:1 (1980), 161-175; Math. USSR-Izv., 16:1 (1981), 151-164.



\bibitem{[Sesum1]} N. Sesum, \emph{Curvature tensor under the Ricci flow},  Amer. J. Math. 127 (2005), no. 6, 1315-1324.

\bibitem{[Sun]}A. Sun, \emph{Local entropy and generic multiplicity one singularities of mean curvature flow of surfaces}, arXiv:1810.08114.

\bibitem{[TZ]} G.Tian, Z.L.  Zhang, \emph{Regularity of K\"ahler-Ricci flows on Fano manifolds}, Acta math. 216(2016), no. 1, 127-176.

\bibitem{[WangBing11]} B. Wang, \emph{On the conditions to extend Ricci flow(II)}, Int. Mat. Res. Not. 2012 (14), 3192-3223.

\bibitem{[WangLu]}L. Wang, \emph{Asymptotic structure of self-shrinkers},  arXiv:1610.04904.

\bibitem{[WangMT]}M. T. Wang, \emph{The mean curvature flow smoothes Lipschitz submanifolds},  Comm. Anal. Geom. 12 (2004), no. 3, 581-599.

\bibitem{[White3]}B. White,  \emph{Curvature estimates and compactness theorems in 3-manifolds for surfaces that are stationary for parametric elliptic functionals},  Invent. Math. 88 (1987), no. 2, 243-256.

\bibitem{[White00]} B. White, \emph{The size of the singular set in mean curvature flow of mean-convex sets},  J. Amer. Math. Soc. 13 (2000), no. 3, 665-695.

\bibitem{[White03]} B. White, \emph{The nature of singularities in mean curvature flow of mean-convex sets},  J. Amer. Math. Soc. 16 (2003), no. 1, 123-138.

\bibitem{[White2]} B. White, \emph{A local regularity theorem for mean curvature flow}, Ann. of Math. (2) 161 (2005), no. 3, 1487-1519.

\end{thebibliography}
\end{document}